\providecommand{\R}{\mathbb{R}}
\providecommand{\C}{\mathbb{C}}
\providecommand{\N}{\mathbb{N}}
\providecommand{\Z}{\mathbb{Z}}
\providecommand{\Sp}{\mathbb{S}}
\providecommand{\B}{\mathbb{B}}
\providecommand{\pri}{\mathbb{P}}
\providecommand{\apr}{\mathbb{A}}
\providecommand{\G}{\mathbb{G}}
\providecommand{\refl}{\underline{\mathsf{R}}}
\providecommand{\pyr}{\mathbb{Y}}
\providecommand{\rot}{\mathsf{R}} 
\providecommand{\st}{\, :\ } 
\providecommand{\dir}{\mathrm{D}}
\providecommand{\neum}{\mathrm{N}}
\providecommand{\indexform}[1]{Q^{#1}}
\providecommand{\hsd}{\mathscr{H}} 
\providecommand{\twsthom}{\sigma}  
\providecommand{\cycgrp}[1]{\boldsymbol{#1}}
\providecommand{\lipdom}{\Omega}
\providecommand{\potential}{q}
\providecommand{\robinpotential}{r}
\providecommand{\closure}[1]{\overline{#1}}  
\providecommand{\bdy}[1]{\partial_{#1}}
\providecommand{\dbdy}{\bdy{\dir}}
\providecommand{\nbdy}{\bdy{\neum}}
\providecommand{\rbdy}{\bdy{\mathrm R}}
\providecommand{\ltwosob}[1]{H^{#1}}
\providecommand{\sob}{\ltwosob{1}}
\providecommand{\sobd}[1]{\sob_{#1}}
\providecommand{\twoff}[1]{A^{#1}}
\providecommand{\met}[1]{g^{#1}}
\providecommand{\auxmet}{g_{\mathrm{aux}}}
\providecommand{\hausmeas}[2]{\mathscr{H}^{#1}(#2)}
\providecommand{\hausint}[2]{d\hausmeas{#1}{#2}}
\providecommand{\weakbf}{T}
\providecommand{\eigenvalc}[3]{\lambda_{#2}^{#3}(#1)}
\providecommand{\eigenvalsym}[4]{\eigenvalc{#1}{#2}{#3,#4}}
\providecommand{\eigenspc}[3]{E_{#3}^{#2}(#1)}
\providecommand{\eigenspsym}[4]{\eigenspc{#1}{#2}{#3,#4}}
\providecommand{\proj}{\pi}
\providecommand{\equivind}[1]{\ind_{#1}}
\providecommand{\equivnul}[1]{\nul_{#1}}
\providecommand{\symind}[2]{\ind_{#1}^{#2}}
\providecommand{\symnul}[2]{\nul_{#1}^{#2}}
\providecommand{\grp}{G}
\providecommand{\invproj}[2]{\proj_{#1,#2}}
\providecommand{\nrmlsgn}[1]{\sgn_{#1}}
\providecommand{\inttext}{\mathrm{int}}
\providecommand{\exttext}{\mathrm{ext}}
\providecommand{\intbdy}{\partial_\inttext}
\providecommand{\extbdy}{\partial_{\exttext}}
\providecommand{\dint}[1]{#1^{\mathrm{D}_\inttext}}
\providecommand{\nint}[1]{#1^{\mathrm{N}_\inttext}}
\providecommand{\subspace}{\underset{\clap{\scriptsize subspace}}{\subset}}
\providecommand{\taubar}{\underline{\tau}}
\providecommand{\force}{\mathcal{F}}
\providecommand{\disloc}{\mathcal{D}}
\providecommand{\ptofd}{\mathcal{E}}
\providecommand{\catr}{\Gamma}
\providecommand{\discr}{\Delta}
\providecommand{\interr}{\Theta}
\providecommand{\catdom}{\Lambda^K}
\providecommand{\margindom}{\Lambda^M}
\providecommand{\sunder}{{\underline{\sigma}}}
\providecommand{\thunder}{{\underline{\theta}}}
\providecommand{\wbar}{\overline{w}}
\providecommand{\mubar}{\overline{\mu}}
\DeclarePairedDelimiter\abs{\lvert}{\rvert}
\DeclarePairedDelimiter\nm{\lVert}{\rVert}
\DeclarePairedDelimiter\sk{\langle}{\rangle}
\DeclarePairedDelimiter\interval{]}{[}
\DeclarePairedDelimiter\Interval{[}{[}
\DeclarePairedDelimiter\intervaL{]}{]}
\DeclarePairedDelimiter\IntervaL{[}{]}
\DeclarePairedDelimiter\floor{\lfloor}{\rfloor}
\DeclarePairedDelimiter\ceil{\lceil}{\rceil}
\DeclareMathOperator{\Ogroup}{O} 
\DeclareMathOperator{\SOgroup}{SO} 
\DeclareMathOperator{\ind}{ind} 
\DeclareMathOperator{\sgn}{sgn}
\DeclareMathOperator{\nul}{nul}
\DeclareMathOperator{\ord}{ord}
\DeclareMathOperator{\dom}{dom}
\DeclareMathOperator{\arcosh}{arcosh}
\DeclareMathOperator{\sech}{sech}
\DeclareMathOperator{\csch}{csch}
\definecolor{Blau}{cmyk}{1, 0.5, 0, 0}
\definecolor{Orange}{cmyk}{ 0,.5, 1, 0}
\tikzset{declare function={
step(\x)=min(max(\x,1.0001),1.9999);
cutoff(\x)=exp(-1/(2-step(\x)))/(exp(-1/(2-step(\x)))+exp(-1/(step(\x)-1)));
}}
\tikzset{
    scale plot marks/.is choice,
    scale plot marks/true/.style={},	
    scale plot marks/false/.code={
        \def\pgfuseplotmark##1{\pgftransformresetnontranslations\csname pgf@plot@mark@##1\endcsname}
    },
every mark/.append style={scale plot marks=false},
plus/.style={mark=+,mark size=2.25pt},
vdash/.style={mark=|,mark size=2.25pt},
hdash/.style={mark=-,mark size=2.25pt},
bullet/.style={mark=*,mark size=1.125pt},
}
\newenvironment{leqnoalign}{%
 \tagsleft@true
 \align%
 }
 {\endalign}
\let\TagsLeftOn\tagsleft@true
\let\TagsLeftOff\tagsleft@false
\pgfmathsetmacro{\unitscale}{3.6}  
\theoremstyle{plain}
\newtheorem{theorem}{Theorem}[section]
\newtheorem{lemma}[theorem]{Lemma}
\newtheorem{corollary}[theorem]{Corollary} 
\newtheorem{proposition}[theorem]{Proposition}
\theoremstyle{definition}
\theoremstyle{remark}
\newtheorem{remark}[theorem]{Remark}
\numberwithin{equation}{section}
\title{Disc stackings and their Morse index}
\author{Alessandro Carlotto, Mario B. Schulz, David Wiygul}
\date{
\vspace*{-3ex}
} 
\newcommand\printaddress{{
\setlength{\parindent}{17pt}
\bigskip 
\par
\vbox{
{\scshape Alessandro Carlotto}
\newline Universit\`a di Trento, 
Dipartimento di Matematica,
via Sommarive 14, 
38123 Povo di Trento, 
Italy
\newline
\textit{E-mail address:} 
\texttt{alessandro.carlotto@unitn.it}
\par\medskip
{\scshape Mario B. Schulz}
\newline Universit\`a di Trento, 
Dipartimento di Matematica,
via Sommarive 14, 
38123 Povo di Trento, 
Italy
\newline
\textit{E-mail address:} 
\texttt{mario.schulz@unitn.it}
\par\medskip
{\scshape David Wiygul}
\newline Universit\`a di Trento, 
Dipartimento di Matematica,
via Sommarive 14, 
38123 Povo di Trento, 
Italy
\newline
\textit{E-mail address:} 
\texttt{davidjames.wiygul@unitn.it}
\par
}
}}
\begin{document}

\maketitle

\begin{abstract}
We construct free boundary minimal disc stackings, with any number of strata, in the three-dimensional Euclidean unit ball, and prove uniform, linear lower and upper bounds on the Morse index of all such surfaces. 
Among other things, our work implies for any positive integer $k$ the existence of $k$-tuples of distinct, pairwise non-congruent, embedded free boundary minimal surfaces all having the same topological type.
In addition, since we prove that the equivariant Morse index of any such free boundary minimal stacking, with respect to its maximal symmetry group, is bounded from below by (the integer part of) half the number of layers, it follows that any possible realization of such surfaces via an equivariant min-max method would need to employ sweepouts with an arbitrarily large number of parameters. This also shows that it is only for $N=2$ and $N=3$ layers that free boundary minimal disc stackings can be obtained by means of one-dimensional mountain pass schemes.
\end{abstract}


\section{Introduction}

We collect and summarize in the following statement the main results contained in the present article:

\begin{theorem}\label{thm:Main}
Let $N\geq 2$. 
There exists $m_0=m_0(N)$ such that for every integer $m>m_0$ there exist a (properly embedded) free boundary minimal surface $\Sigma_{N,m}$ in $\B^3$ having genus and number of boundary components respectively equal to
\begin{align*}
\gamma &=\begin{cases}
\frac{1}{2}(m-1)(N-2)& \text{ if $N$ is even,}\\[.5ex]
\frac{1}{2}(m-1)(N-1)& \text{ if $N$ is odd,}
\end{cases} & 
\beta &=\begin{cases}
m& \text{ if $N$ is even,}\\[.5ex]
1& \text{ if $N$ is odd.}
\end{cases}	
\end{align*}
and symmetry group
\begin{equation*}
\G=
\begin{cases}
\pri_m  \text{ (the prismatic group of order $4m$) } &\text{ if $N$ is even, }
\\
\apr_m  \text{ (the antiprismatic group of order $4m$) } &\text{ if $N$ is odd; } 
\end{cases}
\end{equation*}
as $m\to\infty$ the sequence $(\Sigma_{N,m})_{m\geq m_0}$ converges in the sense of varifolds with multiplicity $N$ (and smoothly away from the boundary circle) to the equatorial disc.
Furthermore the $\G$-equivariant Morse index and nullity of $\Sigma_{N,m}$ satisfy the following two-sided bounds:
\begin{align*}
\ind_{\G}(\Sigma_{N,m})
&\geq
\floor{N/2}
\shortintertext{and}
\ind_{\G}(\Sigma_{N,m}) + \nul_{\G}(\Sigma_{N,m})
&\leq
\begin{cases}
2N-1 &\mbox{if } N \ \text{is even},
\\
2N-2 &\mbox{if } N \ \text{is odd}.
\end{cases}
\end{align*}
\end{theorem}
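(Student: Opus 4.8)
The plan is to split the argument into three largely independent parts: (i) a gluing construction that produces the surfaces $\Sigma_{N,m}$ together with their topology, symmetry group and varifold degeneration; (ii) a lower bound $\ind_\G(\Sigma_{N,m})\geq\floor{N/2}$ obtained by exhibiting that many negative directions for the index form; and (iii) an upper bound for $\ind_\G+\nul_\G$, obtained through a localization and representation-theoretic analysis carried out uniformly in $m$. Throughout I will use that the equatorial disc $D$, for the natural free boundary index form $Q_D(f)=\int_D\abs{\nabla f}^2-\int_{\partial D}f^2$, has index one and nullity two, the unique negative direction being rotationally invariant and the two Jacobi fields spanning the restrictions of the linear coordinate functions, hence of isotypic type $n=\pm 1$ under rotation about the normal axis.

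For part (i) I would build, for each large $m$, an approximately minimal initial surface by stacking $N$ nearly-parallel normal graphs over $D$ (the layers) and joining layer $i$ to layer $i+1$, for $i=1,\dots,N-1$, along $m$ small catenoidal bridges attached near $\partial\B^3$ and meeting it orthogonally; the bridges of consecutive interfaces are aligned when $N$ is even (yielding the prismatic symmetry $\pri_m$ and $\beta=m$ boundary circles) and staggered when $N$ is odd (yielding $\apr_m$ and $\beta=1$), and an Euler-characteristic count for the $(N-1)m$ attached bands, together with tracking of boundary circles, produces the stated $\gamma$ and $\beta$. One then solves the free boundary minimal surface equation by a Lyapunov--Schmidt/fixed-point scheme in $\G$-equivariant weighted Hölder spaces: away from the bridges the linearized operator on each layer is the Jacobi operator of $D$, which on $\G$-equivariant data is invertible with inverse bounded independently of $m$---the obstructing $n=\pm1$ Jacobi fields not being $\G$-equivariant---so that the full linearized operator can be inverted modulo a finite-dimensional cokernel supported near the bridges, which is then killed by adjusting the bridge scales, their vertical positions and the balancing/dislocation parameters. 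Sending the construction parameter (morally $1/m$) to zero forces the bridge scales to zero and the layers to flatten onto $D$, giving convergence to $N\cdot D$ as varifolds with multiplicity $N$ and smooth convergence on compact subsets of $D\setminus\partial D$.

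For part (ii) I would work directly with the index form $Q$ of $\Sigma_{N,m}$. The reflection in $\G$ that interchanges layer $i$ with layer $N+1-i$ and reverses the normal forces any $\G$-equivariant normal field to have opposite values on these two layers; restricting $Q$ to the $\floor{N/2}$-dimensional space of $\G$-equivariant, rotationally invariant fields which, for each $i=1,\dots,\floor{N/2}$, are essentially supported on layers $i$ and $N+1-i$ (and the bridges meeting them) with common layer profile equal to the negative direction of $D$, one checks that the layer contributions give a strictly negative quadratic form while the bridge contributions, though numerous, are controlled: each bridge is, up to small error, a fixed rescaled catenoidal piece, and the balancing forces the relevant cross-terms to be of lower order, so that $Q$ is negative definite on this $\floor{N/2}$-dimensional space. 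Hence $\ind_\G(\Sigma_{N,m})\geq\floor{N/2}$.

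Part (iii) is the technical heart. I would bound $\ind_\G(\Sigma_{N,m})+\nul_\G(\Sigma_{N,m})$ by decomposing an arbitrary $\G$-equivariant function through a partition of unity adapted to the $N$ layer regions and the $(N-1)m$ bridge regions: on each layer region $Q$ is coercive on the orthogonal complement of a bounded-dimensional space, with constants independent of $m$, because the Jacobi operator of $D$ restricted to $\G$-equivariant data has a spectral gap bounded below uniformly in $m$; on each bridge region the corresponding rescaled catenoid contributes only a bounded amount of index and nullity. The crucial point is that $\G$-equivariance identifies the $m$ bridges of a given interface, so in the equivariant count the $(N-1)m$ bridge regions contribute as only $N-1$ units; combining the per-unit bounds with estimates on the cutoff cross-terms, and tracking constants carefully (the even/odd distinction reflecting the aligned versus staggered patterns), yields $\ind_\G+\nul_\G\leq 2N-1$ for $N$ even and $\leq 2N-2$ for $N$ odd. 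The main obstacle, pervading both (i) and (iii), is precisely this uniformity in $m$: because the number of bridges diverges the gluing is not a standard finite construction, and non-equivariantly the index is expected to grow linearly in $m$, so one must run the entire spectral analysis on a fundamental domain for $\G$ and use the representation theory of $\pri_m$ and $\apr_m$ to separate the contribution of the layers from that of the growing collar of bridges.
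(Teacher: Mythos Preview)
Your overall architecture matches the paper's: gluing construction first, then index bounds via localization. But the specific tools you propose for the index bounds differ substantially from what the paper does, and your lower-bound argument has a gap.

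For both index bounds the paper uses the Montiel--Ros domain-decomposition inequalities (Proposition~\ref{prop:mr}): one partitions $\Sigma_{N,m}$ into subdomains and bounds the equivariant index from below by the sum of \emph{Dirichlet} indices of the pieces, and bounds index plus nullity from above by the sum of \emph{Neumann} indices plus nullities. This is sharper than the partition-of-unity (IMS) localization you describe because there are no cutoff error terms at all. The partition uses three types of regions: catenoid regions $\catr_i$, disc regions $\discr_i$, and intermediate annular regions $\interr_i$ in between; after passing to $\G$-orbits one has $n=\floor{N/2}$ catenoid orbits and (depending on parity) $n$ or $n+1$ disc and intermediate orbits. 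The precise constants $2N-1$ and $2N-2$ come from model spectral estimates (Lemmata~\ref{lem:cat_bounds}--\ref{lem:margin_bounds_equivariant}): each catenoid orbit contributes at most $2$ to the Neumann side (at most $1$ for the middle one when $N$ is even), each disc and intermediate orbit at most $1$ (and $0$ for the middle ones when $N$ is odd). Summing gives exactly the stated bounds; a vaguer ``per-unit bounds plus cutoff cross-terms'' argument would not.

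The paper's lower bound likewise comes from the \emph{catenoid} regions, not the disc layers: each long half-catenoid with adjoined Dirichlet boundary has index at least one (Lemma~\ref{lem:low_spec_cat}~\ref{itm:low_spec_cat:dir}, the first eigenvalue tending to $-1$), there are $n$ distinct $\G$-orbits of such regions, and the Montiel--Ros Dirichlet inequality gives $\ind_\G\geq n$ immediately. Your disc-based test functions are a different route, and the step ``bridge contributions\dots are controlled; balancing forces the relevant cross-terms to be of lower order'' is where the argument is incomplete. Balancing concerns the mean curvature of the initial surface and says nothing about cross-terms in the index form. More seriously, the natural cutoff cost through each of the $O(m)$ bridges is of order $1/\log(1/(m\tau))\sim 1/m$ (since $\tau$ is exponentially small in $m$), so the total cutoff cost is $O(1)$---the \emph{same} order as the $-2\pi$ you gain from each disc's Robin term. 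One can probably rescue your approach by also harvesting the $-\int\abs{A}^2u^2$ contribution on the bridges, but that is not a lower-order correction, and the argument becomes exactly as delicate as the one the paper sidesteps by localizing on catenoids instead.

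Two smaller points: the paper verifies that $\G$ is the \emph{maximal} symmetry group a posteriori by an umbilic-counting argument (Proposition~\ref{pro:umbilic_count} and Lemma~\ref{lem:umbilics_fixed_by_rots}), not merely by inheriting the imposed symmetries; and the topology is computed via Riemann--Hurwitz on the $\Z_m$-quotient (Lemma~\ref{lem:TopologicalType}), which is cleaner than tracking boundary circles through the gluing.
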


In particular, if we consider the case when $N$ (the number of layers) is even, this statement ensures the solvability of the ``realization problem'' for infinitely many new topological types; in fact, it provides us with plenty of new examples of free boundary minimal surfaces in $\B^3$ with many boundary components \emph{and} correspondingly large genus, that is to say infinitely many ``diagonal sequences'' of solutions to the aforementioned topological realization problem, the very first such instances to the best of our knowledge. 

The previous theorem follows by combining Proposition \ref{pro:final_existence_with_estimates} for the core existence result, Proposition~\ref{pro:final_embd} for the embeddedness claim as well as for the determination \emph{a posteriori} of the maximal symmetry group, Lemma \ref{lem:TopologicalType} for the topological type, and the discussion in Section \ref{sec:Index} (see in particular Proposition \ref{pro:Index_Equiv_UpperBounds} and Proposition \ref{pro:Index_Equiv_LowerBounds}) for the Morse index estimates; the reader is referred to these statements, especially the first one, for further details and a more accurate description of the free boundary minimal surfaces we construct. 

An additional, striking consequence of such a construction (together with the characterization of the maximal symmetry group) is given in the following statement:

\begin{corollary}[Polymorphism]\label{cor:Polym}
For any positive integer $k$ there exists in the unit ball of the three-dimensional Euclidean space a collection of at least $k$ (connected, properly embedded) non-congruent free boundary minimal surfaces all having the same topological type.
\end{corollary}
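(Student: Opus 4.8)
The plan is to read off the desired collection directly from the families $(\Sigma_{N,m})_{m>m_0(N)}$ produced in Theorem~\ref{thm:Main}, using the observation that for \emph{odd} $N$ every such surface has exactly one boundary component, while its genus equals $\tfrac12(m-1)(N-1)$. So the task reduces to arranging, for a prescribed $k$, many pairs $(N,m)$ with $N$ odd that yield one and the same genus (hence one and the same topological type), and then distinguishing the resulting surfaces up to congruence.

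Concretely, fix $k\geq 1$ and set $N_i:=2i+1$ for $i=1,\dots,k$, so that the $N_i$ are distinct odd integers with $N_i-1=2i$ even. Let $L:=\operatorname{lcm}(2,4,\dots,2k)$ and $M:=\max_{1\leq i\leq k}m_0(N_i)$, with $m_0$ as in Theorem~\ref{thm:Main} (i.e.\ as in Proposition~\ref{pro:final_existence_with_estimates}). Choosing a positive integer $t$ with $tL/(2k)+1>M$ and putting $D:=tL$, one has $2i\mid D$ for every $i$, so that $m_i:=1+D/(2i)$ is a well-defined integer with $m_i\geq 1+tL/(2k)>M\geq m_0(N_i)$; thus the surface $\Sigma_{N_i,m_i}\subset\B^3$ is defined, connected, properly embedded, has $\beta=1$ boundary circle, and has genus $\gamma_i=\tfrac12(m_i-1)(N_i-1)=\tfrac12\cdot\tfrac{D}{2i}\cdot 2i=\tfrac{D}{2}$, which is independent of $i$. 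Hence $\Sigma_{N_1,m_1},\dots,\Sigma_{N_k,m_k}$ all have the topological type of a compact genus-$(D/2)$ surface with a single boundary component.

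To see that these $k$ surfaces are pairwise non-congruent, I would invoke Proposition~\ref{pro:final_embd}: the maximal symmetry group of $\Sigma_{N_i,m_i}$ is the antiprismatic group $\apr_{m_i}$, of order $4m_i$. Since $i\mapsto m_i=1+D/(2i)$ is strictly decreasing, the integers $m_1,\dots,m_k$ are pairwise distinct, so the groups $\apr_{m_1},\dots,\apr_{m_k}$ have pairwise distinct orders and are in particular pairwise non-isomorphic. As any congruence of $\B^3$ carrying one of our surfaces onto another would conjugate the corresponding maximal symmetry groups into one another, no two of the $\Sigma_{N_i,m_i}$ can be congruent, and the corollary follows.

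I do not expect a genuine obstacle in this argument. The only mild subtlety is that the threshold $m_0$ depends on the number of layers $N$; but since only the finitely many values $N_1,\dots,N_k$ are used, it is enough to take the common genus $D/2$ sufficiently large, which is precisely what the choice of $t$ ensures. (One could instead attempt to use the even-$N$ families, but there the pair $(\gamma,\beta)$ already determines $(N,m)$, so the odd-$N$ case is the one to exploit.)
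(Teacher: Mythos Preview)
Your proof is correct and follows essentially the same strategy as the paper: restrict to odd $N$ so that every surface has connected boundary, arrange the parameters so that $\tfrac12(m-1)(N-1)$ is constant, and distinguish the resulting surfaces by the order $4m$ of their maximal symmetry group $\apr_m$. The only difference is cosmetic: the paper takes $N_i=1+2p_i$ with $p_1<\cdots<p_k$ the first $k$ odd primes and $m_i=1+q\prod_{j\neq i}p_j$, whereas you take $N_i=2i+1$ and use an $\operatorname{lcm}$ to guarantee integrality of the $m_i$; your choice is arguably slightly more elementary, but both arguments are equally short and rely on the same inputs from Theorem~\ref{thm:Main} and Proposition~\ref{pro:final_embd}.
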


\begin{proof}
Given $k$ a positive integer, let $p_1<p_2<\ldots< p_k$ be the first $k$ odd primes and let $m_0(\cdot)$ be as provided by Theorem \ref{thm:Main}. There exists a positive integer $q$ such that 
\[
q\prod_{i=1}^{k-1}p_i>\max_{i=1,\ldots, k} \ m_0(1+2p_i).
\]
That being said, we take 
\(
\gamma=q\prod_{i=1}^k p_i.
\)
Thanks to Theorem \ref{thm:Main} we thus get a collection of exactly $k$ free boundary minimal surfaces $\Sigma_{N_1,m_1},\ldots, \Sigma_{N_k,m_k}$ in $\B^3$, all with genus $\gamma$ and connected boundary, that are indexed by taking for $i\in\{1,\ldots,k\}$
\begin{align*}
N_i&=1+2p_i, &
m_i&=1+q\prod_{j\neq i}p_j.
\end{align*}
Such surfaces are pairwise non-congruent because each given pair can be distinguished by means of their maximal symmetry group, since in particular such a group will contain, for $\Sigma_{N_i,m_i}$, exactly $4m_i$ elements.
\end{proof}

This result should be contrasted with the uniqueness theorem obtained in 1986 by Nitsche \cite{Nitsche1985} (which is, in this specific context, the only unconditional topological uniqueness theorem to date) and compared to the main theorem obtained by the authors in \cite{CSWnonuniqueness}, where we constructed infinitely many \emph{pairs} ($k=2$) of free boundary minimal surfaces in $\B^3$ each having same topology and symmetry group.

Our construction has PDE-theoretic character and, roughly speaking, consists of firstly designing (for any given $N$ and $m$) properly embedded surfaces in $\B^3$ that meet the unit sphere orthogonally and are almost minimal (that is to say: have mean curvature close to zero after appropriately rescaling) and then perturbing such objects to exact minimality while preserving the other features, in particular the free boundary property.
To confront the cokernel one eventually encounters, such \emph{intial surfaces}, which are then deformed to exact solutions to our problem, actually come in finite-dimensional families; if we neglect, for expository convenience, the description of the actual degrees of freedom one needs to maintain (for which we refer the reader to the first part of Section~\ref{sec:Stackings}) any such initial surface consists of $N$ parallel horizontal discs, with a certain number of thin half-catenoidal ribbons joining, according to a certain symmetry pattern depending on the parity of $N$, pairs of adjacent discs. Unsurprisingly, the second step above -- that can be described as a perturbative deformation -- turns out to be feasible only when the parameter $m$ is large enough (depending on $N$).

Constructions of this type have already been implemented, in the specific setting of $\B^3$, in the special case $N=2$ (often referred to as \emph{doubling}) by work of Folha--Pacard--Zolotareva (see \cite{FolPacZol17}) and then when $N=3$ (often referred to as \emph{tripling}) by the third-named author and Kapouleas in \cite{KapouleasWiygul17}, where the possibility of extending the result to any positive $N$ is also mentioned (cf. Remark~1.2 therein).
Our methods here are certainly closer in spirit and substance to the latter of such references, although -- setting aside the predictable, yet significant challenges imposed by handling the case of general $N\geq 2$ layers -- they differ from it by some specific aspects, as we now briefly describe.
First, rather than constructing the initial surfaces
in two stages
(starting with so-called pre-initial surfaces
and then later arranging orthogonality
of the intersection with the sphere)
as in \cite{KapouleasWiygul17}*{Sections 3--4},
we instead apply the map
\eqref{def:Phi},
as in \cite{CSWnonuniqueness},
to enforce orthogonality at the boundary
from the start.
We find this choice,
although inessential,
more convenient and direct
for setting up the comparisons we make,
in the course of carrying out our index estimates,
between regions
of the surfaces we produce
and various models.
Second,
and more inevitably,
the incorporation of arbitrarily many discs
significantly complicates
the balancing analysis
(that is the object of Appendix \ref{app:forces}, to be
compared to \cite{KapouleasWiygul17}*{Lemma 7.15})
needed to control some of the obstructions
encountered in solving the linearized problem. The latter study, concerning the balancing problem, should also be compared to the stacking construction presented by the third-named author, in his earlier work \cite{Wiygul2020}, for nearly-Clifford tori in the round three-dimensional sphere $\Sp^3$.

From another perspective, it is also to be recalled here that stacking constructions are, at least in principle, also tractable by means of variational methods, i.\,e. by constructing unstable critical points of the area functional in the category of relative cycles of the Euclidean unit ball through equivariant min-max schemes. As is well-known, such methods necessarily (for compactness purposes) rely upon suitably weak notions of ``surfaces'' and ``convergence'' and thus pose very serious problems in trying to fully control the topological type of the critical points they produce. In the case when $N=2$ an approach, along such lines, to construct free boundary minimal surfaces having genus zero and an assigned number $\beta$ of boundary components, and converging in the sense of varifolds (with multiplicity two and smoothly away from the boundary) to the horizontal disc as $\beta\to\infty$, has first been presented by Ketover in \cite{Ketover16FB}; however, the methodology to properly control the topology of the resulting surfaces has only very recently been developed by the second-named author and Franz (see \cite{FraSch23}), thus resulting -- among other things -- in Theorem 5.1 therein. Similarly, the case $N=3$, i.\,e. the construction of free boundary minimal surfaces with connected boundary and any pre-assigned genus, and converging in the sense of varifolds (with multiplicity three and smoothly away from the boundary) to the horizontal disc, had been obtained in earlier work by the first- and second-named author with Franz \cite{CarFraSch20}, thereby settling -- in particular -- the long-standing question about the existence of genus-one free boundary minimal surfaces in $\B^3$ having connected boundary.  

Concerning the (equivariant and absolute) Morse index bounds, our results here (as presented in the statement of Theorem \ref{thm:Main} and in Remark \ref{rem:IndexTop} below) crucially build on the methodology developed by the authors in their most recent work \cite{CSWSpectral}, which in turn was inspired by the approach pioneered by Montiel and Ros in \cite{MontielRos} and by later work of the third-named author and Kapouleas \cite{KapouleasWiygulIndex}. 
Loosely speaking, the idea is that the symmetries we enforce throughout the gluing construction, as well as the corresponding building blocks, suggest natural partitions of the free boundary minimal surfaces we construct into a certain number of pairwise congruent domains, and the knowledge (or partial knowledge) of the index of the domains with interior Dirichlet or, respectively, Neumann boundary conditions allows us to suitably bound from below or, respectively, from above the actual Morse index of the surfaces, possibly in the presence of an underlying group action and a twisting homomorphism.
A point of novelty we wish to stress is the following one. 
We recall here that among the direct applications of such methodology is the possibility of proving linear lower bounds for free boundary minimal surfaces in $\B^3$ having prismatic (or, more generally, pyramidal) symmetry group; the exact statement is recalled below, see Proposition \ref{indBelowPyr}. 
Such a statement accounts for the order of the natural cyclic subgroup (of rotations) inside the symmetry group in question, but does not account for the number of layers of the surface. Our lower bounds here, that are Proposition \ref{pro:Index_LowerBounds} (absolute) and Proposition~\ref{pro:Index_Equiv_LowerBounds} (maximally equivariant), do provide such refinements, taking both factors into careful account.

That background information being provided, the following two remarks focus on additional key implications of our work, and are of special importance within the general framework of index estimates for minimal surfaces, and their variational (re)-construction.

\begin{remark}[Implications for min-max stacking constructions]\label{rem:MinMax}
It follows, by simply combining the equivariant index estimate stated above with the main (variational) index estimate in \cite{FranzIndex}, that:
\begin{itemize}
\item it is \emph{only} for $N=2$ (doubling) and $N=3$ (tripling) that one can realize free boundary minimal stacking constructions in $\B^3$ by employing $1$-parameter sweepouts (which have indeed been successfully implemented in \cite{FraSch23} and earlier in \cite{CarFraSch20}, respectively);
\item the (least) number $k$ of parameters so that $k$-parameter sweepouts would generate a mountain pass scheme that can, at least in principle (that is to say: modulo the delicate problem of controlling the topology of the resulting surfaces), produce free boundary minimal stacking constructions in $\B^3$ with $N$ layers, albeit not uniquely determined by the estimates above, shall grow exactly linearly in $N$; the family above seems to be the first -- in whichever setting -- explicitly displaying this type of behavior.
\end{itemize}
\end{remark}

 \begin{remark}[Index growth in terms of topological data]\label{rem:IndexTop}
Our methods also allow us to get effective two-sided bounds on the absolute Morse index and nullity of $\Sigma_{N,m}$. In particular, in Proposition \ref{pro:Index_LowerBounds} and Proposition \ref{pro:Index_UpperBounds} we prove the following estimates:
\begin{align*}
\ind(\Sigma_{N,m})
&\geq\max\{(N-1), 2\} m,
\\
\ind(\Sigma_{N,m}) + \nul(\Sigma_{N,m})
&\leq m(5N-3)+N.
\end{align*}
Such bounds, when expressed in terms of the topological data $\gamma(N,m)$ and $\beta(N,m)$, take the form:
\[
\ind(\Sigma_{N,m}) \geq 2 \gamma(N,m)+\beta(N,m)+N-2 = N-\chi(N,m)
\]
where $\chi(N,m)$ denotes the Euler characteristic of the surface in question, and
\begin{equation*}
\ind(\Sigma_{N,m}) + \nul(\Sigma_{N,m})
\leq
\begin{cases}
10\gamma(N,m)+7\beta(N,m)+6(N-1)-4
 &\mbox{if } N \ \text{is even},
\\
10\gamma(N,m)+\beta(N,m)+6(N-1)+2m &\mbox{if } N \ \text{is odd}.
\end{cases}
\end{equation*}
These result significantly improve the general index estimate obtained in \cite{AmbCarSha18-Index}, as well as the ineffective linear upper bound in \cite{Lim17}.
\end{remark}

On yet another front, we also wish to add one more comment, of special interest from the ``spectral analysis'' perspective, and to be considered in light of \cite{FraSch11}, \cite{FraSch13}, \cite{FraSch16}.
Indeed, we point out that,
by a result
\cite[Theorem 5.2]{KusnerMcGrathFirstSteklov}
of Kusner and McGrath,
each $\Sigma_{N,m}$ has $1$ as its first Steklov eigenvalue
and in fact 
the span of the coordinate functions
as its corresponding eigenspace,
which follows by observing that
each symmetry group $\G$
and each quotient $\Sigma_{N,m} / \G$
indeed satisfy the description
given in Section 6 therein. 

Lastly, recalling the ample, well-established (albeit partial) parallel between the theory of closed minimal surfaces in $\Sp^3$ and that of free boundary minimal surfaces in $\B^3$, it would definitely be of interest to study the analogous stacking problem for nearly equatorial spheres in $\Sp^3$. Such a construction appears -- in its full generality -- non-trivial already at the linear level, when it comes to determining the actual degrees of freedom in placing the catenoidal tunnels connecting adjacent spheres; in the doubling case ($N=2$) these aspects have been thoroughly investigated by Kapouleas in \cite{KapouleasSphereDoublingI} and then in later joint work with McGrath (see \cite{KapouleasMcGrathSphereDoublingII}).

The present work is meant to be a continuation of the earlier articles \cite{CSWnonuniqueness} and \cite{CSWSpectral}, as well as -- to some extent -- of \cite{CarFraSch20} by the first two authors with Franz, and for this reason, we have decided to privilege the analysis in $\B^3$. 
In retrospect, what emerges from the present work is perhaps not only, or primarily, the construction in itself, but the first sketch of a comparative picture of variational vs. perturbative methods, which in turn crucially builds on combined information coming from the fine analysis, in both contexts, of the (equivariant or absolute) Morse index. A deeper understanding of such a landscape is a fascinating matter for future investigation.

\paragraph{Acknowledgments.} 
This project has received funding from the European Research Council (ERC) under the European Union’s Horizon 2020 research and innovation programme (grant agreement No. 947923). 
The research of M.\,S. was funded by the Deutsche Forschungsgemeinschaft (DFG, German Research Foundation) under Germany's Excellence Strategy EXC 2044 -- 390685587, Mathematics M\"unster: Dynamics--Geometry--Structure, and the Collaborative Research Centre CRC 1442, Geometry: Deformations and Rigidity. 
Part of this article was finalized while A.\,C. was visiting the ETH-FIM and then while A.\,C. and D.\,W. were visting the Mathematisches Forschungsinstitut Oberwolfach on the occasion of the program \emph{Differentialgeometrie im Grossen}; the excellent working conditions at both institutes are gratefully acknowledged.
Finally, the authors thank the anonymous referee for their valuable suggestions.

\section{Free boundary minimal stackings}\label{sec:Stackings}

\paragraph{Notation for symmetries.} In order to define the symmetry groups we will work with, we need to consider two classes of generators:
\begin{itemize}
\item given a plane $\Pi \subset \R^3$ through the origin,
we write $\refl_\Pi \in \Ogroup(3)$
for reflection through $\Pi$;
\item given a directed line $\xi \subset \R^3$ through the origin and an angle $\theta \in \R$, we write $\rot_\xi^\theta$ for rotation about $\xi$ through angle $\alpha$ in the usual right-handed sense. 
\end{itemize}
That said, given symmetries $\mathsf{T}_1, \ldots, \mathsf{T}_k \in \Ogroup(3)$, we write $\sk{\mathsf{T}_1, \ldots, \mathsf{T}_k}$ for the subgroup they generate.
The order-$2$ groups generated by reflections through planes will figure repeatedly in the sequel, 
so for succinctness of notation we agree that whenever the label of a given plane is marked in bold we mean instead the generated group of reflections: 
for example, given a plane $\Pi \subset \R^3$ through the origin, we set $\cycgrp{\Pi}\vcentcolon=\sk{\refl_\Pi}$.  
We will employ the apex $+$ (respectively: $-$) to denote functions that are even (respectively: odd) with respect to the reflection through $\Pi$.
In this article, we are particularly interested in four symmetry groups:
\begin{alignat}{3}
\label{eqn:cyclic}&\text{the \emph{cyclic group} }&
\Z_k&\vcentcolon=\sk[\Big]{\rot_{\{x=y=0\}}^{2\pi/k}} 
&&\text{ of order } k, \qquad 
\\ 
\label{eqn:pyramidal}&\text{the \emph{pyramidal group} }&
\pyr_k&\vcentcolon=\sk[\Big]{\rot_{\{x=y=0\}}^{2\pi/k},~\refl_{\{y=x \tan \frac{\pi}{2k}\}}}
&&\text{ of order } 2k, 
\\ 
\label{eqn:prismatic}&\text{the \emph{prismatic group} } & 
\pri_k&\vcentcolon=\sk[\Big]{\rot_{\{x=y=0\}}^{2\pi/k},~\refl_{\{y=x \tan \frac{\pi}{2k}\}},~\refl_{\{z=0\}}}
&&\text{ of order } 4k,
\intertext{which is the maximal symmetry group of a right prism over a regular $k$-gon as visualized e.\,g. in Section~2 of \cite{CSWnonuniqueness}, }
\label{eqn:antiprismatic}&\text{the \emph{antiprismatic group} }&
\apr_k&\vcentcolon=\sk[\Big]{\rot_{\{x=y=0\}}^{2\pi/k},~\refl_{\{y=x \tan \frac{\pi}{2k}\}},~\rot_{\{y=z=0\}}^\pi}
&&\text{ of order } 4k,
\end{alignat}
which is the maximal symmetry group of a standard antiprism over a regular $k$-gon as visualized e.\,g. in Section 2 of \cite{CSWnonuniqueness}. 

\paragraph{Formal disc stackings and their topology.}
The surfaces we construct will be indexed by integers $N \geq 2$ and $m \gg 1$, 
the former indicating the number of copies of $\B^2$ to be considered and the latter specifying the pyramidal group $\pyr_m$ to be imposed on the construction;
correspondingly the construction features $(N-1)m$ half-catenoidal ribbons, i.\,e. $m$ ribbons between each pair of adjacent discs. 
In this section we agree to employ the word ``ribbon'' to denote a compact surface that is homeomorphic to $[0,1]\times [0,1]$; further specifications, of geometric character, will be added in the sequel of the article. 

One also imposes reflectional symmetry
through the plane $\{z=0\}$ in the case of even $N$
and rather through the line $\{y=z=0\}$ in the case of odd $N$,
so that the full symmetry group imposed
is correspondingly
\begin{equation*}
\G
=
\G_{N, m}
\vcentcolon=
\begin{cases}
\pri_m & \text{ for $N$ even,} 
\\
\apr_m & \text{ for $N$ odd,} 
\end{cases}
\end{equation*}
as defined in \eqref{eqn:prismatic}--\eqref{eqn:antiprismatic}. 
In particular, we stress that in either case the cyclic group $\Z_m$ of order $m$ is a subgroup of $\G$, as also apparent from the equations above. 

As anticipated in the introduction, the free boundary minimal surface
corresponding to given data $(N,m)$
is produced in two stages.
First an approximate free boundary minimal surface,
called an initial surface,
is assembled by taking unions of the sets
in \eqref{building_blocks} below
for suitably chosen values of the parameters
appearing there.
Then the initial surface is perturbed
to an exact free boundary minimal surface
by solving the corresponding nonlinear boundary value problem.
Actually, in this last step,
one encounters an obstruction space,
at the linear level,
of dimension $N-1$.
For this reason,
corresponding to each pair of data $(N,m)$
one constructs in the first step
not just a single initial surface
but rather
an $(N-1)$-parameter family
of initial surfaces.
The $N-1$ parameters will be associated
with the sizes and heights
of the catenoidal waists.

It will be convenient to write $n\vcentcolon=\floor{N/2}=\max\{k\in\Z\st k\leq N/2\}$ for the greatest integer no larger than $N/2$, so that
\begin{equation*}
N=\begin{cases}
2n &\text{ for $N$ even,} \\
2n+1 &\text{ for $N$ odd.}
\end{cases}
\end{equation*}
Before proceeding to define the initial surfaces
let us pause to discuss the topology of the disc stackings, as combinatorially described above. In fact, we prove a more general result.

\begin{lemma}\label{lem:TopologicalType}
Given $3\leq m\in\N$ let $\Z_m$ be the cyclic group of order $m$ as defined in \eqref{eqn:cyclic}. 
Let $\Gamma\subset\B^3$ be a properly embedded, $\Z_m$-equivariant surface which is obtained by taking the union of $N\in\N$ disjoint, parallel, horizontal discs and connecting each adjacent pair through $m$ ribbons. 
Let $\gamma$ denote the genus of $\Gamma$ and $\beta$ the number of its boundary components. 
Then 
\begin{align*}
\gamma &=\begin{cases}
\frac{1}{2}(m-1)(N-2)& \text{ if $N$ is even,}\\[.5ex]
\frac{1}{2}(m-1)(N-1)& \text{ if $N$ is odd,}
\end{cases} & 
\beta &=\begin{cases}
m& \text{ if $N$ is even,}\\[.5ex]
1& \text{ if $N$ is odd.}
\end{cases}	
\end{align*}
\end{lemma}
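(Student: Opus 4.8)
The natural approach is to compute the Euler characteristic $\chi(\Gamma)$ via a CW-decomposition built directly from the combinatorial description, and then separately determine $\beta$ by tracking how the ribbons interact with the boundary circles of the discs; the genus then follows from $\chi = 2 - 2\gamma - \beta$. First I would observe that $\Gamma$ deformation retracts onto the CW-complex consisting of $N$ vertices (one per disc) and $(N-1)m$ edges (one per ribbon), since each disc is contractible and each ribbon, being homeomorphic to $[0,1]^2$ and attached to two discs along two opposite sides, contributes a $1$-cell. Hence $\chi(\Gamma) = N - (N-1)m$, independently of the parity of $N$. This already pins down $\gamma$ once $\beta$ is known, via $\gamma = \tfrac{1}{2}\bigl(2 - \beta - N + (N-1)m\bigr)$.

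The heart of the matter, and the step where the two parities genuinely diverge, is the count of boundary components $\beta$. The boundary $\partial\Gamma \subset \partial\B^3$ is what remains of the $N$ circles $\partial(\text{disc}_j)$ after the ribbons are attached: each ribbon, near $\partial\B^3$, removes two small arcs — one from the boundary circle of each of the two discs it joins — and glues in two arcs of its own, so topologically $\partial\Gamma$ is obtained from $N$ disjoint circles by performing $(N-1)m$ "surgeries," each of which connects the two circles it straddles (or, if they are already in the same component, splits a component into two). The key structural input is the $\Z_m$-equivariance together with the fact that the ribbons between each adjacent pair of discs come in a single $\Z_m$-orbit of $m$ ribbons, arranged cyclically around the axis. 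I would argue that the $m$ ribbons joining disc $j$ to disc $j+1$ fuse $\partial(\text{disc}_j)$ and $\partial(\text{disc}_{j+1})$ into a single circle when $m$ is... — more precisely, thinking of the two circles with $m$ interleaved connecting bands, the result of attaching all $m$ bands between two circles is a single circle if the bands are attached in the "untwisted" cyclic pattern, and this is forced here because $\Gamma$ is embedded and the discs are parallel and horizontal (so no band can be attached with a flip relative to the others). Carrying this out inductively over $j = 1, \dots, N-1$: after processing the bands between discs $1, \dots, N$ in order, one is left with a single boundary circle — this gives $\beta = 1$, which matches the odd case. For even $N$ the discrepancy must come from the imposed reflection through $\{z=0\}$ being absent in the hypothesis of the lemma; since the lemma only assumes $\Z_m$-equivariance, I would instead conclude that the stated formula for $\beta$ presupposes the specific stacking pattern and re-examine: the even-$N$ value $\beta = m$ versus the odd-$N$ value $\beta = 1$ signals that in the even case the surgeries do \emph{not} all merge circles, and the correct bookkeeping is that consecutive pairs alternate between "merging" and "splitting" behavior, or — more plausibly — that the lemma's hypotheses implicitly include the reflectional symmetry and the count proceeds by a parity-dependent Euler-characteristic-free argument on $\partial\Gamma$.

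Given the above, the cleanest route is in fact to \textbf{compute $\beta$ by a direct Euler-characteristic argument on the one-dimensional complex $\partial\Gamma$ itself}. The boundary $\partial\Gamma$ is a graph: its vertices are the $2(N-1)m$ endpoints of the $2(N-1)m$ boundary arcs contributed by ribbons, and its edges are those ribbon arcs together with the complementary arcs of the $N$ disc-circles; a short count gives $\chi(\partial\Gamma)$, and since $\partial\Gamma$ is a disjoint union of $\beta$ circles, $\chi(\partial\Gamma) = 0$ — so this does \emph{not} determine $\beta$, confirming that $\beta$ genuinely requires the combinatorial/embedding structure and cannot be extracted from Euler characteristics alone. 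Therefore the decisive argument is the cyclic merging analysis sketched above, made rigorous: model $\partial\Gamma$ as the boundary of a ribbon graph (fat graph) with vertices the discs and edges the ribbons, where the cyclic ordering at each vertex and the identification data along each edge are dictated by the embedding and the $\Z_m$-action; then $\beta$ is the number of boundary cycles of this fat graph, computable by following the standard "face-tracing" permutation. The main obstacle I anticipate is precisely verifying that the ribbon-graph data forced by "$\Z_m$-equivariant, properly embedded, parallel horizontal discs" is the one I claim — i.e., ruling out exotic attaching patterns — and then carrying out the face-tracing permutation carefully enough to see the $\beta = m$ (even) versus $\beta = 1$ (odd) dichotomy; everything else, including the genus computation via $2 - 2\gamma - \beta = \chi(\Gamma) = N - (N-1)m$, is routine arithmetic.
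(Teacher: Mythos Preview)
Your Euler characteristic computation is correct and in fact cleaner than the paper's: the deformation retraction to a graph with $N$ vertices and $(N-1)m$ edges gives $\chi(\Gamma)=N-(N-1)m$ immediately, whereas the paper reaches the same number via Riemann--Hurwitz applied to the quotient $\Gamma/\Z_m$ (which they first show is a disc). So on the $\chi$ side you have a genuinely simpler argument.

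The gap is entirely in your boundary-component count. Your key claim---that the $m$ ribbons joining two adjacent disc boundaries ``fuse $\partial(\text{disc}_j)$ and $\partial(\text{disc}_{j+1})$ into a single circle''---is false, and already the case $N=2$ exposes it: two parallel discs joined by $m$ equivariant half-necks is the Fraser--Schoen/Folha--Pacard--Zolotareva surface, which has $\beta=m$, not $\beta=1$. If you trace one boundary component you get: short arc on $\partial D_1$, up one side of a ribbon, short arc on $\partial D_2$, down the adjacent side of the \emph{next} ribbon, and you close up after visiting exactly one gap on each circle---so there are $m$ such loops. Your subsequent speculation that the lemma secretly requires the reflectional symmetry is therefore a red herring; only $\Z_m$-equivariance is used.

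The correct mechanism, which the paper proves by a two-step induction, is an \emph{alternation}: attaching a new disc via $m$ equivariant ribbons to a surface with connected boundary produces $m$ boundary components, while attaching a new disc via $m$ ribbons to a surface with $m$ boundary components (one ribbon landing in each) produces a single boundary component. Starting from $N=1$ (one disc, $\beta=1$) this gives the sequence $1,m,1,m,\ldots$, hence $\beta=1$ for $N$ odd and $\beta=m$ for $N$ even. You actually brushed against this (``consecutive pairs alternate between `merging' and `splitting' behavior'') but dismissed it; that throwaway remark is the right idea, and your proposed fat-graph face-tracing would, if carried out, rediscover exactly this alternation. Once $\beta$ is correct, your formula $2\gamma = (m-1)(N-1)-(\beta-1)$ finishes the proof.
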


\begin{figure}%
\pgfmathsetmacro{\rib}{5mm}
\pgfmathsetmacro{\lw}{3pt}
\begin{tikzpicture}[line join=round,scale=2.5,baseline={(0,0)},thick]
\foreach\k in {1,2,3}{
\begin{scope}[shift={(\k-1,0)}]
\path(0,0.5)coordinate(d0)--++(1,0)coordinate(d3)
node[pos=0.15,sloped,minimum size=\rib,inner sep=0](d1){}
node[pos=0.44,sloped,minimum size=\rib,inner sep=0](d2){}
;
\path[Blau](0,0)to[out=60,in=-120,looseness=3]
node[pos=0.09,sloped,minimum size=\rib,inner sep=0](b1){}
node[pos=0.37,sloped,minimum size=\rib,inner sep=0](b2){}
(1,0)
;
\draw[line width=\lw,black!33]
(d0)--(d1.center)..controls(d1.east)and(b1.east)..(b1.center)
(b2.center)..controls(b2.west)and(d2.west)..(d2.center)--(d3);
\path(d1)to[bend right=45]coordinate[midway](tip)(d2);
\path(tip)node[above=4ex,inner sep=1pt](rib){$\mathrm{rib}_\k$};
\begin{scope}
\clip(b1.center)--(d0)to[bend right=120,looseness=4](d3)--(b2.center)to[bend left]cycle;
\draw[line width=\lw,black!33] (0,0)to[out=60,in=-120,looseness=3](1,0);
\end{scope}
\draw[Blau](d0)--(d3) 
 (0,0)to[out=60,in=-120,looseness=3](1,0);
\end{scope}
\draw[latex-](tip)to[bend left=4](rib);
}
\draw[Blau](2,0.5)node[above left]{$\partial D$}(2,-0.4)node[above]{$\partial\Gamma'$};
\pgfresetboundingbox
\useasboundingbox(0,-0.5)rectangle(3,0.9);
\end{tikzpicture}
\hfill
\begin{tikzpicture}[line join=round,scale=2.5,baseline={(0,0)},thick]
\foreach\k in {1,2,3}{
\begin{scope}[shift={(\k-1,0)}]
\path(0,0.5)coordinate(d0)--++(1,0)coordinate(d3)
node[pos=0.28,sloped,minimum size=\rib,inner sep=0](d1){}
node[pos=0.62,sloped,minimum size=\rib,inner sep=0](d2){}
;
\path[Blau](0.9,0)arc(0:360:0.43 and 0.3)
node[pos=0.18,sloped,minimum size=\rib,inner sep=0](b2){}
node[pos=0.33,sloped,minimum size=\rib,inner sep=0](b1){}
node[pos=0.75,below](beta){$\beta_\k$}
;
\draw[line width=\lw,black!33]
(d0)--(d1.center)..controls(d1.east)and(b1.east)..(b1.center)
(b2.center)..controls(b2.west)and(d2.west)..(d2.center)--(d3);
\path(d1)to[bend right=45]coordinate[midway](tip)(d2);
\path(tip)node[above=4ex,inner sep=1pt](rib){$\mathrm{rib}_\k$};
\begin{scope}
\clip(b1.center)to[bend right=45](d0)|-(beta.center)-|(d3)to[bend right=45](b2.center)to[bend left=60]cycle;
\draw[line width=\lw,black!33](0.9,0)arc(0:360:0.43 and 0.3);
\end{scope}
\draw[Blau](d0)--(d3)
(0.9,0)arc(0:360:0.43 and 0.3);
\end{scope}
\draw[latex-](tip)to[bend left=4](rib);
}
\draw[Blau](2,0.5)node[above]{$\partial D$};
\pgfresetboundingbox
\useasboundingbox(0,-0.5)rectangle(3,0.9);
\end{tikzpicture}
\caption{The two cases for the structure of the boundary when attaching a horizontal disc $D$ equivariantly through vertical half-neck ribbons $\mathrm{rib}_k$.}%
\label{fig:boundary_components}%
\end{figure}

\begin{proof}
We determine the number $\beta$ of boundary components by induction in $N$.  
For $N=1$ we simply have one disc (without any ribbons) which clearly has connected boundary. 
Then suppose $\Gamma'\subset\R^3$ is any embedded, $\Z_m$-equivariant surface with connected boundary and let $\Gamma''$ be the embedded surface which we obtain from $\Gamma'$ by $\Z_m$-equivariantly attaching a horizontal disc $D$ through $m$ vertical half-neck ribbons $\mathrm{rib}_1,\ldots,\mathrm{rib}_m$. 
Then $\Gamma''$ has exactly $m$ pairwise isometric boundary components. 
Indeed, the $k$-th boundary component formally consists of four segments (see Figure \ref{fig:boundary_components} left image):
\begin{itemize}[nosep]
\item the right vertical boundary component of $\mathrm{rib}_{k}$, 
\item the (short) segment on $\partial\Gamma'$ between the $\mathrm{rib}_{k}$ and $\mathrm{rib}_{k+1}$,
\item the left vertical boundary component of $\mathrm{rib}_{k+1}$, 
\item the (short) segment on $\partial D$ between the $\mathrm{rib}_k$ and $\mathrm{rib}_{k+1}$.
\end{itemize}
Conversely, suppose that $\Gamma'\subset\R^3$ is any embedded, $\Z_m$-equivariant surface with $m$ pairwise isometric boundary components $\beta_1,\ldots,\beta_m$ and let $\Gamma''$ be the embedded surface which we obtain from $\Gamma'$ by $\Z_m$-equivariantly attaching a horizontal disc $D$ as above. 
Then $\partial\Gamma''$ is obtained from $\partial D$ by taking the ``connected sum'' with $\beta_k$ through the vertical boundary segments of the ribbon $\mathrm{rib}_{k}$; more precisely, index $i=1,\ldots, k$ we remove two half-discs and attach a half cylinder (a ribbon) inbetween. In particular, $\partial\Gamma''$ is connected (see Figure \ref{fig:boundary_components} right image).

\begin{figure}%
\centering 
\pgfmathsetmacro{\thetaO}{72}
\pgfmathsetmacro{\phiO}{90+1*45/2} 
\tdplotsetmaincoords{\thetaO}{\phiO} 
\begin{tikzpicture}[tdplot_main_coords,line cap=round,line join=round,scale=\unitscale,baseline={(0,0,0)}]
\draw(0,0,0)node[inner sep=0,scale={\unitscale/4.015}](S){\includegraphics[page=2]{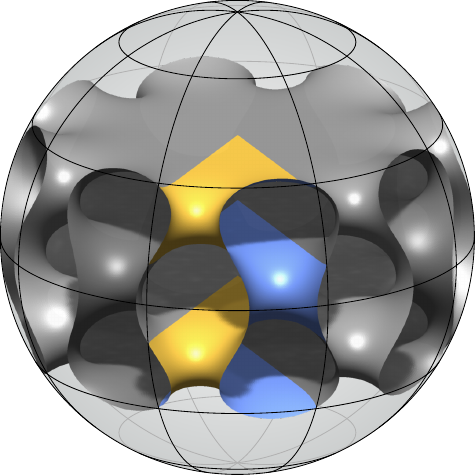}};
\end{tikzpicture}
\hfill
\begin{tikzpicture}[tdplot_main_coords,line cap=round,line join=round,scale=\unitscale,baseline={(0,0,0)}]
\draw(0,0,0)node[inner sep=0,scale={\unitscale/4.015}](S){\includegraphics[page=1]{figures}};
\end{tikzpicture}
\caption{A free boundary minimal disc stacking with $N=5$ (left image) respectively $N=4$ (right image) and $m=8$. 
Highlighted is a fundamental domain with respect to the $\Z_m$-action. 
}%
\label{fig:fundamentaldomain-Zm-action}%
\end{figure} 

It remains to compute the genus $\gamma$.  
Given any $\Z_m$-equivariant surface $\Sigma$, we call a closed subset $\Omega\subset\Sigma$ \emph{fundamental domain} of $\Sigma$ if 
\(
\Sigma=\bigcup_{\varphi\in\Z_m}\varphi(\Omega)	
\) 
and if the set $\varphi_1(\Omega)\cap \varphi_2(\Omega)$ has no interior in $\Sigma$ for any $\varphi_1,\varphi_2\in\Z_m$ with $\varphi_1\neq\varphi_2$. 
We claim that we can choose a connected fundamental domain $\Omega$ of $\Gamma$ which is a topological disc with piecewise smooth boundary.  
By assumption, $\Gamma$ contains a disjoint union of $\Z_m$-equivariant discs $D_1,\ldots,D_N$ such that the closure of the complement $\Gamma\setminus(D_1\cup\ldots\cup D_N)$ has $(N-1)m$ connected components $K_j^i$ for $j=1,\ldots,N-1$ and $i=1,\ldots,m$, each of which is homeomorphic to $[0,1]\times[0,1]$.  
These ribbons are labelled such that $K_j^i$ connects $D_j$ and $D_{j+1}$ and such that 
$K_j^1$ and $K_{j+1}^1$ are adjacent along a boundary segment of $\Gamma$.  
There clearly exists a connected fundamental domain $D_1^1$ of $D_1$ intersecting $K_1^1$ and 
 a connected fundamental domain $D_N^1$ of $D_N$ intersecting $K_{N-1}^1$. 
Since $K_j^1$ and $K_{j+1}^1$ are adjacent with disjoint $\Z_m$-orbits, there also exists a connected fundamental domain $D_j^1$ of $D_j$ intersecting both $K_j^1$ and $K_{j+1}^1$ for any $j\in\{2,\ldots,N-1\}$. 
Note that all these sets can be chosen to have piecewise smooth boundary. 
Hence, the set $\Omega\vcentcolon=K_1^1\cup\ldots\cup K_{N-1}^1\cup D_1^1\cup\ldots\cup D_{N}^1$ has all the desired properties, in particular, it is a topological disc being an acyclic concatenation of topological discs.  
Two examples are visualized in Figure \ref{fig:fundamentaldomain-Zm-action}.  

Given the disc-type fundamental domain $\Omega$ of $\Gamma$, the quotient $\Gamma'\vcentcolon=\Gamma/\Z_m$ can be constructed by identifying certain pairs of consecutive segments in $\partial\Omega$ (i.\,e. the ones in the interior of $\Gamma$) which means that $\Gamma/\Z_m$ is still a topological disc. 
By assumption, $\Gamma$ intersects the singular locus of the corresponding group action 
(i.\,e. the vertical axis of rotation), exactly $N$ times: once for each horizontal disc. 
The Riemann--Hurwitz formula (see e.\,g. \cite[Chapter IV.3]{Freitag2011} and \cite[Remark B.2]{CarFraSch20}) implies 
\begin{align*}
\chi(\Gamma)&=m\chi(\Gamma')-(m-1)N. 
\end{align*}
Substituting $\chi(\Gamma)=2-2\gamma-\beta$ and $\chi(\Gamma')=1$ we obtain 
$2-2\gamma-\beta=m-(m-1)N$, or equivalently 
\begin{align*}
2\gamma&=(m-1)(N-1)-(\beta-1). 
\end{align*}
Since $\beta=1$ if $N$ is odd and $\beta=m$ if $N$ is even as shown above, the claim follows.  
\end{proof}

\paragraph{Building blocks of the construction.}
Let us now turn to the accurate geometric description of the initial surfaces.
Outside a neighborhood of $\Sp^2$
each initial surface will be a union
of $N$ exact horizontal discs.
To construct the portions of the initial surfaces
near $\Sp^2$
we find it convenient to work
with a variant of standard spherical coordinates
(which in particular make it simple
to enforce orthogonality
of the initial surfaces to $\Sp^2$).
Specifically, we define the map
\begin{equation}
\label{def:Phi}
\begin{aligned}
\Phi
\colon
\IntervaL{0,\tfrac{1}{3}}
  \times
  \R
  \times
  \IntervaL{-\tfrac{\pi}{4},\tfrac{\pi}{4}}
&\to
  \B^3
\\
(\sigma,\theta,\omega)
&\mapsto
(1-\sigma)\bigl(
  \cos \theta \, \cos \omega, ~
  \sin \theta \, \cos \omega, ~
  \sin \omega
  \bigr),
\end{aligned}
\end{equation}
a covering of a
(large) neighborhood of the equator,
where we denote Cartesian coordinates in the domain by
$(\sigma,\theta,\omega)$
and in the target by $(x,y,z)$.

We will define the initial surfaces
by first assembling certain surfaces with boundary
in the domain of $\Phi$.
It will be clear later that by taking the datum $m$
sufficiently large
(in terms of other pieces of data, to be explained)
we ensure that these last surfaces
are contained in the above box
on which $\Phi$ is injective with smooth inverse.
We then apply $\Phi$
and extend the image surfaces in $\B^3$
by further applying the symmetry group $\G$.
The initial surfaces are then completed
by attaching the $N$ exact horizontal discs
mentioned above.
We turn to the details.

Given an integer $m>0$, we define the domain
\begin{align}\label{eqn:Lambda}
\Lambda^\Phi_m
&\vcentcolon=
  \bigl\{ 
    (\sigma,\theta) \in \R^2
    \st
    0 \leq \sigma \leq \tfrac{1}{3}
    \mbox{ and }
    \abs{\theta} \leq \tfrac{\pi}{2m}
  \bigr\}.
\intertext{Given also $\tau>0$
(later to represent the waist radius
of an exact catenoid in the domain of $\Phi$),
we in turn define the two subdomains
$\Lambda^{\Phi,+}_{m,\tau},
 \Lambda^{\Phi,-}_{m,\tau}
 \subset
 \Lambda^\Phi_m$ by}
\notag
\Lambda^{\Phi,\pm}_{m,\tau}
&\vcentcolon=
    \Lambda^\Phi_m
  \setminus
  \bigl\{
    \sigma^2
    +(\theta \mp \tfrac{\pi}{2m})^2
    <
    \tau^2
  \bigr\},
\intertext{and given as well $\tau_+,\tau_->0$, we set (see Figure \ref{fig:subdomainsLambda}, left pair) }
\label{eqn:Lambdatau}
\Lambda^\Phi_{m,\tau_+,\tau_-}
&\vcentcolon=
  \Lambda^{\Phi,+}_{m,\tau_+}
  \cap
  \Lambda^{\Phi,-}_{m,\tau_-}.
\end{align}%
\begin{figure}
\pgfmathsetmacro{\mpar}{5} 
\hfill
\begin{tikzpicture}[line cap=round,line join=round,scale=1.5*\unitscale,baseline={(0,0)}]
\fill[black!33](0,-pi/2/\mpar)rectangle(1/3,pi/2/\mpar);	
\draw[->](0,0)--(0.4,0)node[right]{$\sigma$};	
\draw[->](0,0)--(0,1/2)node[below right,inner sep=0]{~$\theta$};	
\draw plot[plus](0,0)node[below left]{$0$};
\draw plot[vdash](1/3,0)node[below]{$\frac{1}{3}$};
\draw plot[hdash](0,pi/2/\mpar)node[left]{$\frac{\pi}{2m}$};
\path(1/6,0)node[above=1ex]{$\Lambda^\Phi_m$};
\end{tikzpicture}
\hfill
\pgfmathsetmacro{\tauplus}{0.125}
\pgfmathsetmacro{\tauminus}{0.15}
\begin{tikzpicture}[line cap=round,line join=round,scale=1.5*\unitscale,baseline={(0,0)}]
\fill[black!33](\tauminus,-pi/2/\mpar)arc(0:90:\tauminus)
--(0,pi/2/\mpar-\tauplus)arc(-90:0:\tauplus)
-|(1/3,0)|-cycle;
\draw[->](0,0)--(0.4,0)node[right]{$\sigma$};	
\draw[->](0,0)--(0,1/2)node[below right,inner sep=0]{~$\theta$};	
\draw plot[plus](0,0)node[below left]{$0$};
\draw plot[vdash](1/3,0)node[below]{$\frac{1}{3}$};
\draw plot[hdash](0,pi/2/\mpar)node[left]{$\frac{\pi}{2m}$};
\path(1/6,0)node[above=1ex]{$\Lambda^\Phi_{m,\tau_+,\tau_-}$};
\draw[latex-latex](0, pi/2/\mpar)--++(0:\tauplus) node[midway,below]{$\tau_+$};
\draw[latex-latex](0,-pi/2/\mpar)--++(0:\tauminus)node[midway,above]{$\tau_-$};
\end{tikzpicture}
\hfill
\pgfmathsetmacro{\phiO}{120} 
\pgfmathsetmacro{\hpar}{0.27} 
\tdplotsetmaincoords{72}{\phiO} 
\begin{tikzpicture}[scale=\unitscale,line cap=round,line join=round,tdplot_main_coords,baseline={(0,0,0)}]
\shade[tdplot_screen_coords,ball color=white] (0,0) circle (1);
\tdplotdrawarc[black!50]{(0,0,0)}{1}{\phiO}{\phiO+180}{}{} 
\draw[tdplot_screen_coords] (0,0) circle (1);
\draw[black!50](0,0,0)--(1,0,0)(0,0,0)--(0,1,0)(0,0,0)--(0,0,1);
\draw[->](1,0,0)--(1.2,0,0);
\draw[->](0,1,0)--(0,1.2,0);
\draw[->](0,0,1)--(0,0,1.1);
\tdplotdrawarc[]{(0,0,0)}{1}{\phiO}{\phiO-180}{}{}
\begin{scope}[dotted]
\tdplotdrawarc{(0,0,\hpar)}{{sqrt(4/9-\hpar^2)}}{90/\mpar}{360-90/\mpar}{}{} 
\tdplotdrawarc{(0,0,\hpar)}{{sqrt(1-\hpar^2)}}{90/\mpar}{360-90/\mpar}{}{} 
\draw(0,0,\hpar)--({sqrt(4/9-\hpar^2)*cos(90/\mpar)},{sqrt(4/9-\hpar^2)*sin(90/\mpar)},\hpar);
\draw(0,0,\hpar)--({sqrt(4/9-\hpar^2)*cos(-90/\mpar)},{sqrt(4/9-\hpar^2)*sin(-90/\mpar)},\hpar);
\end{scope}
\fill[black!90]({sqrt(1-\hpar^2)*cos(-90/\mpar)},{sqrt(1-\hpar^2)*sin(-90/\mpar)},\hpar)
arc(-90/\mpar:90/\mpar:{sqrt(1-\hpar^2)})
--({sqrt(4/9-\hpar^2)*cos(90/\mpar)},{sqrt(4/9-\hpar^2)*sin(90/\mpar)},\hpar)
arc(90/\mpar:-90/\mpar:{sqrt(4/9-\hpar^2)})--cycle
;
\draw[latex-latex,tdplot_screen_coords](0,0)--++(0,\hpar)node[midway,right]{$h^B$};
\tdplotdrawarc[latex-latex]{(0,0,0)}{1}{0}{90/\mpar}{below}{$\frac{\pi}{2m}$} 
\begin{scope}[white,very thin]
\foreach\spar in {0,2,...,10}{
\tdplotdrawarc{(0,0,\hpar)}{{sqrt((1-\spar/30)^2-\hpar*\hpar)}}{-90/\mpar}{90/\mpar}{}{} 
} 
\foreach\tpar in {-90,-70,...,90}{
\draw({sqrt(4/9-\hpar^2)*cos(\tpar/\mpar)},{sqrt(4/9-\hpar^2)*sin(\tpar/\mpar)},\hpar)--({sqrt(1-\hpar^2)*cos(\tpar/\mpar)},{sqrt(1-\hpar^2)*sin(\tpar/\mpar)},\hpar);
} 
\end{scope}
\draw[black!90]({sqrt(1-\hpar^2)*cos(-90/\mpar)},{sqrt(1-\hpar^2)*sin(-90/\mpar)},\hpar)
arc(-90/\mpar:90/\mpar:{sqrt(1-\hpar^2)})
--({sqrt(4/9-\hpar^2)*cos(90/\mpar)},{sqrt(4/9-\hpar^2)*sin(90/\mpar)},\hpar)
arc(90/\mpar:-90/\mpar:{sqrt(4/9-\hpar^2)})--cycle
;
\end{tikzpicture}
\caption{The  domains defined in \eqref{eqn:Lambda}--\eqref{eqn:Lambdatau} and the image of $\Phi\bigl(\sigma,\theta,\omega^B_{m,h^B}(\sigma,\theta)\bigr)$ defined on $\Lambda^\Phi_m$.}%
\label{fig:subdomainsLambda}%
\end{figure}%
Next suppose we are given additionally
$h^B\in\interval{-2/3,2/3}$
to represent the height of the center
of a disc layer in the initial surface
 -- frequently 
we will attach the label $B$
to objects associated  with disc layers
(approximate copies of $\B^2$)
and the label $K$
to objects
associated with catenoidal ribbons
(images under $\Phi$ of subsets
of exact catenoids).
We define the function 
$
 \omega^B_{m,h^B}
 \colon
 \Lambda^\Phi_m
 \to
 \R
$
by
\begin{equation}
\label{eqn:disc_def_fun_Phi}
\omega^B_{m,h^B}(\sigma,\theta)
  \vcentcolon=
  \arcsin \frac{h^B}{1-\sigma},
\end{equation}
whose graph has image under $\Phi$
a neighborhood of a circumferential segment
in the horizontal disc in $\B^3$ at height $h^B$ (see Figure \ref{fig:subdomainsLambda}, right image). 
Given also
$h^K \in \R$
(to represent the height of the center
of an exact catenoid in the domain of $\Phi$)
in the initial surface)
and defining $\sgn \colon \R \to \R$ by
\begin{equation*}
\sgn x
\vcentcolon=
\begin{cases}
\hphantom{-}1 &\mbox{if} \quad x \geq 0
\\
-1 &\mbox{if} \quad x<0,
\end{cases}
\end{equation*}
we further define
the two functions
$
 \omega^{K,\pm}_{m,\tau,h^K,h^B}
 \colon
 \Lambda^{\Phi,\pm}_{m,\tau}
 \to
 \R
$
by
\begin{equation}
\label{eqn:cat_def_fun_Phi}
\omega^{K,\pm}_{m,\tau,h^K,h^B}(\sigma,\theta)
\vcentcolon=
h^K + \sgn (h^B-h^K) \, \tau
  \arcosh
  \frac{
    \sqrt{
      \sigma^2
      +\left(
        \theta \mp \frac{\pi}{2m}
      \right)^2
    }
  }
  {\tau},
\end{equation}
whose graph is a subset of the exact catenoid
(in the domain of $\Phi$)
with center
$\bigl(0, \pm \pi/(2m), h^K\bigr)$
and waist radius $\tau$.

\begin{figure}%
\pgfmathsetmacro{\xscalepar}{6}
\pgfmathsetmacro{\mpar}{10}
\begin{tikzpicture}[semithick,line cap=round,line join=round,scale=3,xscale=\xscalepar,baseline={(0,0)},declare function={
 psithetaplus(\t)=cutoff( abs( \t-pi/(2*\mpar) )*4*\mpar/pi );
psithetaminus(\t)=cutoff( abs( \t+pi/(2*\mpar) )*4*\mpar/pi );
}] 
\pgfmathsetmacro{\tmax}{pi/2/\mpar}
\draw[->](-\tmax-0.15/\xscalepar,0)--(\tmax+0.15/\xscalepar,0)node[above]{$\theta$};
\draw[->](0,-0)--(0,1.15);
\draw plot[plus] (0,0)node[below]{$0$};
\draw plot[hdash](0,1)node[left]{$1$};
\draw plot[vdash] (pi/2/\mpar,0)node[below]{$\frac{\pi}{2m}$};
\draw plot[vdash] (pi/4/\mpar,0)node[below]{$\frac{\pi}{4m}$};
\draw plot[vdash](-pi/2/\mpar,0)node[below]{$\mathllap-\frac{\pi}{2m}$};
\draw plot[vdash](-pi/4/\mpar,0)node[below]{$\mathllap-\frac{\pi}{4m}$};
\draw[dotted]
 (pi/2/\mpar,0)--++(0,1)
 (pi/4/\mpar,0)--++(0,1)
(-pi/2/\mpar,0)--++(0,1)
(-pi/4/\mpar,0)--++(0,1)
;
\draw[Orange,thick,domain=-\tmax:0,samples=200,smooth,variable=\x]plot({\x},{psithetaminus(\x)})--(\tmax,0)
(-\tmax,1)node[above]{$\psi^-_m$};
\draw[Blau,thick,domain=0:\tmax,samples=200,smooth,variable=\x]
(-\tmax,0)--
plot({\x},{psithetaplus(\x)})
(\tmax,1)node[above]{$\psi^+_m$}; 
\draw[dashed,xscale=1/\xscalepar,<-](0,0,2/3)--(0,0,0)node[pos=0,right=1pt]{$\sigma$};
\end{tikzpicture}
\hfill
\begin{tikzpicture}[semithick,line cap=round,line join=round,scale=3,xscale=\xscalepar,baseline={(0,0)},declare function={
psisigma(\s)=cutoff(\mpar*\s);
}] 
\draw[->](0,0)--(1/3+0.15/\xscalepar,0)node[above]{$\sigma$};
\draw[->](0,-0)--(0,1.15);
\draw plot[plus] (0,0)node[below]{$0$};
\draw plot[hdash](0,1)node[left]{$1$};
\draw plot[vdash] (1/\mpar,0)node[below]{$\frac{1}{m}$};
\draw plot[vdash] (2/\mpar,0)node[below]{$\frac{2}{m}$};
\draw plot[vdash] (1/3,0)node[below]{$\frac{1}{3}$}; 
\draw[dotted](1/\mpar,0)--++(0,1); 
\draw[thick,domain=0:1/3,samples=200,smooth,variable=\x]plot({\x},{psisigma(\x)})(0,1)
node[above right]{~$\psi^\sigma_m$}; 
\draw[dashed,xscale=1/\xscalepar,<-](0,0,-2/3)--(0,0,2/3)node[pos=0,right]{$\theta$};
\end{tikzpicture}
\caption{Profiles of the cutoff functions $\psi^\pm_m,\psi^\sigma_m\colon\Lambda^\Phi_m\to\R$ defined in \eqref{eqn:cat_cutoffs}.}%
\label{fig:cat_cutoffs}%
\end{figure}

With the aim of gluing together the preceding functions
we fix a smooth function $\Psi \colon \R \to \R$
that is constantly $1$ on $\{x \leq 1\}$
and constantly $0$ on $\{x \geq 2\}$,
and we introduce in turn the three cutoff functions
$
 \psi^\pm_m, \psi^\sigma_m
 \colon
 \Lambda^\Phi_m
 \to \R
$
given by
\begin{align}
\label{eqn:cat_cutoffs}
\psi^\pm_m(\sigma,\theta)
&\vcentcolon=
\Psi\Bigl(
    \tfrac{4m}{\pi}
    \abs[\big]{
      \theta \mp \tfrac{\pi}{2m}
    }
  \Bigr),
&
\psi^\sigma_m(\sigma,\theta)
&\vcentcolon=
\Psi(m\sigma)
\end{align}
as shown in Figure \ref{fig:cat_cutoffs}. 
Given as well $h^K_+,h^K_- \in \R$ we define the function
$
 \omega_{m,h^B,\tau_+,h^K_+}
 \colon
 \Lambda^{\Phi,+}_{m,\tau_+}
 \to
 \R
$
by
\begin{align}
\label{angular_height_function_one_bridge}
\omega_{m,h^B,\tau_+,h^K_+}
&\vcentcolon=
\psi^\sigma_m\psi^+_m \omega^{K,+}_{m,\tau_+,h^K_+,h^B}
  +\psi^\sigma_m(1-\psi^+_m)h^B
  +(1-\psi^\sigma_m)\omega^B_{m,h^B}
\intertext{and the function
$
 \omega_{m, h^B,\tau_+,h^K_+,\tau_-,h^K_-}
 \colon
 \Lambda^\Phi_{m,\tau_+,\tau_-}
 \to
 \R
$
by}
\label{angular_height_function_two_bridges}
\omega_{m, h^B,\tau_+,h^K_+,\tau_-,h^K_-}
&\vcentcolon=
\psi^\sigma_m\psi^+_m \omega^{K,+}_{m,\tau_+,h^K_+,h^B}
  +\psi^\sigma_m(1-\psi^+_m-\psi^-_m)h^B
  +(1-\psi^\sigma_m)\omega^B_{m,h^B}
\\\notag
&\hphantom{{}\vcentcolon={}}
  +\psi^\sigma_m\psi^-_m \omega^{K,-}_{m,\tau_-,h^K_-,h^B}
\end{align}
as plotted in Figure \ref{fig:graphGamma}. 
We denote the graphs of these last two functions by
\begin{equation*}
\begin{aligned}
\Gamma_{m,h^B,\tau_+,h^K_+}
  &\vcentcolon=
  \left\{
    \left(
      \sigma,\theta,
      \omega_{m,h^B,\tau_+,h^K_+}
        (\sigma,\theta)
    \right)
    \st
    (\sigma,\theta) \in \Lambda^{\Phi,+}_{m,\tau_+}
  \right\},
\\
\Gamma_{m,h^B,\tau_+,h^K_+,\tau_-,h^K_-}
  &\vcentcolon=
  \left\{
    \left(
      \sigma,\theta,
      \omega_{m,h^B,\tau_+,h^K_+,\tau_-,h^K_-}
        (\sigma,\theta)
    \right)
    \st
    (\sigma,\theta) \in \Lambda^\Phi_{m,\tau_+,\tau_-}
  \right\}
\end{aligned}
\end{equation*}%
\begin{figure}%
\pgfmathsetmacro{\globalscale}{15}
\pgfmathsetmacro{\mpar}{8}
\pgfmathsetmacro{\taupar}{0.05}
\pgfmathsetmacro{\hB}{0.2}
\pgfmathsetmacro{\hKplus}{0.35} 
\pgfmathsetmacro{\hKminus}{0.1} 
\pgfmathsetmacro{\thetaO}{72.0000}
\pgfmathsetmacro{\phiO}{-30.0000}
\tdplotsetmaincoords{\thetaO}{\phiO} 
\tdplottransformmainscreen{1}{0}{0}
\pgfmathsetmacro{\VecHx}{\tdplotresx}
\pgfmathsetmacro{\VecHy}{\tdplotresy}
\tdplottransformmainscreen{0}{1}{0}
\pgfmathsetmacro{\VecVx}{\tdplotresx}
\pgfmathsetmacro{\VecVy}{\tdplotresy}
\begin{tikzpicture}[scale=\globalscale,tdplot_main_coords,line cap=round,line join=round,baseline={(0,0,0)}]
\fill[black!30](0,pi/2/\mpar-\taupar,0)arc(-90:0:\taupar)
--(1/3, pi/2/\mpar,0)
--(1/3,-pi/2/\mpar,0)
--(0,-pi/2/\mpar,0) 
--cycle;
\draw(0,0,0)node[scale=\globalscale]{\includegraphics[page=3]{figures}};
\pgfresetboundingbox
\path(0,pi/2/\mpar-\taupar,0)arc(-90:0:\taupar)--(1/3, pi/2/\mpar,0)--(1/3,-pi/2/\mpar,0)--(0,-pi/2/\mpar,0)--cycle;
\draw[->](0,0,0)--(1/3+0.1,0,0)node[below]{$\sigma$};
\draw[->](0,0,0)--(0,pi/2/\mpar+0.05,0)node[below]{$\theta$};
\draw[->](0,0,0)--(0,0,0.45)node[below right,inner sep=0]{~$\omega_{m,h^B,\tau_+,h^K_+}$};
\begin{scope}[semithick]
\draw[latex-latex](0,-0,0)--++(0,0,\hB)node[pos=0.6,right,inner sep=1pt]{$h^B$};
\draw[latex-latex](0,-\taupar+pi/2/\mpar,0)--++(0,0,\hKplus)node[pos=0.33,left,inner sep=1pt]{$h^K_+$};
\end{scope}
\begin{scope}[very thin]
\draw[latex-latex](0,-\taupar+pi/2/\mpar,0)
--++(0,\taupar,0)node[pos=0.5,below]{$\tau_+$};
\draw[densely dotted](0,pi/2/\mpar,0)--++(\taupar,0,0);
\end{scope}
\draw(1/6,-pi/4/\mpar,0)node[scale=2,cm={\VecHx,\VecHy,\VecVx,\VecVy,(0,0)}]{$\Lambda^{\Phi,+}_{m,\tau_+}$};
\end{tikzpicture}
\hfill
\begin{tikzpicture}[scale=\globalscale,tdplot_main_coords,line cap=round,line join=round,baseline={(0,0,0)}]
\fill[black!30](0,pi/2/\mpar-\taupar,0)arc(-90:0:\taupar)
--(1/3, pi/2/\mpar,0)
--(1/3,-pi/2/\mpar,0)
--(\taupar,-pi/2/\mpar,0)arc(0:90:\taupar)
--cycle;
\draw(0,0,0)node[scale=\globalscale]{\includegraphics[page=4]{figures}};
\pgfresetboundingbox
\path(0,pi/2/\mpar-\taupar,0)arc(-90:0:\taupar)--(1/3, pi/2/\mpar,0)--(1/3,-pi/2/\mpar,0)--(\taupar,-pi/2/\mpar,0)arc(0:90:\taupar)--cycle;
\draw[->](0,0,0)--(1/3+0.1,0,0)node[below]{$\sigma$};
\draw[->](0,0,0)--(0,pi/2/\mpar+0.05,0)node[below]{$\theta$};
\draw[->](0,0,0)--(0,0,0.45)node[below right,inner sep=0]{~$\omega_{m,h^B,\tau_+,h^K_+,\tau_-,h^K_-}$};
\begin{scope}[semithick]
\draw[latex-latex](0,-0,0)--++(0,0,\hB)node[pos=0.6,right,inner sep=1pt]{$h^B$};
\draw[latex-latex](0,-\taupar+pi/2/\mpar,0)--++(0,0,\hKplus)node[pos=0.33,left,inner sep=1pt]{$h^K_+$};
\draw[latex-latex](0, \taupar-pi/2/\mpar,0)--++(0,0,\hKminus)node[pos=0.33,right,inner sep=1pt]{$h^K_-$};
\end{scope}
\begin{scope}[very thin]
\draw[latex-latex](0,-\taupar+pi/2/\mpar,0)
--++(0,\taupar,0)node[pos=0.5,below]{$\tau_+$};
\draw[latex-latex](0,+\taupar-pi/2/\mpar,0)
--++(0,-\taupar,0)node[pos=0.5,below]{$\tau_-$};
\draw[densely dotted](0,pi/2/\mpar,0)--++(\taupar,0,0)(0,-pi/2/\mpar,0)--++(\taupar,0,0);
\end{scope}
\draw(0.2,-pi/4/\mpar,0)node[scale=2,cm={\VecHx,\VecHy,\VecVx,\VecVy,(0,0)}]{$\Lambda^\Phi_{m,\tau_+,\tau_-}$};
\end{tikzpicture}
\caption{The graphs $\Gamma_{m,h^B,\tau_+,h^K_+}$ (left) and $\Gamma_{m,h^B,\tau_+,h^K_+,\tau_-,h^K_-}$ (right). 
Roughly speaking, 
the red, green, respectively blue regions are shaped by the first, second respectively third summand of \eqref{angular_height_function_one_bridge} and \eqref{angular_height_function_two_bridges}; 
the yellow region is shaped by the fourth summand of \eqref{angular_height_function_two_bridges}.
}%
\label{fig:graphGamma}%
\bigskip
\bigskip
\pgfmathsetmacro{\thetaO}{72}
\pgfmathsetmacro{\phiO}{90+1*45/2} 
\tdplotsetmaincoords{\thetaO}{\phiO} 
\begin{tikzpicture}[tdplot_main_coords,line cap=round,line join=round,scale=\unitscale,baseline={(0,0,0)}]
\draw(0,0,0)node[inner sep=0,scale={\unitscale/4.015}](S){\includegraphics[page=5]{figures}};
\end{tikzpicture}
\hfill
\begin{tikzpicture}[tdplot_main_coords,line cap=round,line join=round,scale=\unitscale,baseline={(0,0,0)}]
\draw(0,0,0)node[inner sep=0,scale={\unitscale/4.015}](S){\includegraphics[page=6]{figures}};
\end{tikzpicture}
\caption{The sets $D_{m,h^B,\tau_+,h^K_+}$ (left) and $D_{m,h^B,\tau_+,h^K_+,\tau_-,h^K_-}$ (right) defined in \eqref{building_blocks} with the same choice of parameters as in Figure \ref{fig:graphGamma}; 
appropriate heights $h^B$, $h^K_\pm$ and waist radii $\tau_\pm$ still need to be chosen (cf. Figure \ref{fig:Initialsurface}). 
}
\label{fig:DinBall}%
\end{figure}%
Finally, we map these regions into $\B^3$ using $\Phi$,
extend them by applying the pyramidal symmetries of the construction,
and adjoin the discs
\begin{equation*}
B_{h^B}\vcentcolon=\bigl\{(x,y,h^B) \st x^2+y^2\leq\tfrac{4}{9}-(h^B)^2\bigr\}
\end{equation*}
at height $h^B$ with radius 
$(1-1/3)\cos\omega^B_{m,h^B}(1/3,0)=(2/3)\sqrt{1-(3h^B/2)^2}$, 
to obtain the sets
\begin{equation}
\label{building_blocks}
\begin{aligned}
D_{m,h^B,\tau_+,h^K_+}
  &\vcentcolon=
  B_{h^B}
  \cup
  \pyr_m
  \Phi
  \left(
    \Gamma_{m,h^B,\tau_+,h^K_+}
  \right),
\\
D_{m,h^B,\tau_+,h^K_+,\tau_-,h^K_-}
  &\vcentcolon=
  B_{h^B}
  \cup
  \pyr_m
  \Phi
  \left(
    \Gamma_{m,h^B,\tau_+,h^K_+,\tau_-,h^K_-}
  \right)
\end{aligned}
\end{equation}
visualized in Figure \ref{fig:DinBall}. 
Note that all of the above regions are invariant
under rotation through angle $2\pi/m$
about the $z$-axis (in either direction)
and that
\begin{equation*}
D_{m,h^B,\tau_+,h^K_+,\tau_-,h^K_-}
=
\rot_{\widehat{z}}^{\pi/m}
D_{m,h^B,\tau_-,h^K_-,\tau_+,h^K_+},
\end{equation*}
where $\rot_{\widehat{z}}^{\pi/m} \in \Ogroup(3)$
is rotation about the positively directed $z$-axis
through angle $\pi/m$ in the conventional positive sense.

\paragraph{Specifications of the initial surfaces.}
Each initial surface will be a union of $N$
surfaces derived from \eqref{building_blocks}, two of the first type (to form the top
and the bottom of the corresponding
initial surface),
and $N-2$ of the second type (in between),
suitably aligned and
with appropriately selected parameters.
To make the definition we must specify
$N$ disc heights and $N-1$ waist heights
\begin{equation*}
-1 \ll
h^B_1 < h^K_1 < h^B_2 < h^K_2 < \cdots < h^K_{N-1} < h^B_N
\ll 1,
\end{equation*}
as well as $N-1$ waist radii
$0<\tau_1, \tau_2, \ldots, \tau_{N-1} \ll 1$
associated with the corresponding
$h^K_i$.
Once all these quantities
have been specified, each initial surface
will be defined as the union of the
$N$ surfaces in \eqref{layers_with_half_cats}; let us proceed step by step towards that definition.

The symmetry group $\G$ we impose
subjects these quantities
to constraints
that we express with the aid of the following notation
(which will be useful for related purposes
throughout the construction).
For any integer $d \geq 0$
we define the linear subspaces
$
 \R^d_{\updownarrow+},
 \R^d_{\updownarrow-}
 \subseteq
 \R^d
$
by
\begin{equation*}
\R^d_{\updownarrow\pm}
\vcentcolon=
\{
  \vec{v} \in \R^d
  \st
    v_i
    =
    \pm
    v_{d+1-i}
    \text{ for each }
    i=1,\ldots,d
\}.
\end{equation*}
For any $S \subset \R^d$
(not necessarily a subspace)
we in turn define
the two sets
\(
S_{\updownarrow \pm}
\vcentcolon=
S \cap \R^d_{\updownarrow \pm}.
\) 
Note that
\begin{align*}
\dim \R^d_{\updownarrow+}
&=
  \ceil{d/2},
&
\dim \R^d_{\updownarrow-}
&=
  \floor{d/2},
&
\R^d
&=
  \R^d_{\updownarrow+}
    \oplus
    \R^d_{\updownarrow-}.
\end{align*}
In enforcing $\G$-invariance
of the initial surfaces
we thus accordingly require
\begin{equation*}
\vec{h}^B \in \R^N_{\updownarrow-},
\quad
\vec{h}^K \in \R^{N-1}_{\updownarrow-},
\quad
\vec{\tau} \in \R^{N-1}_{\updownarrow+},
\end{equation*}
where we have bundled the above
heights and radii into vectors in the obvious way.
Rather than prescribing these quantities directly
we will instead specify them in terms of
data
\begin{equation*}
\vec{\zeta} \in \R^{N-1}_{\updownarrow+},
\quad
\vec{\xi} \in \R^{N-1}_{\updownarrow-}.
\end{equation*}
Each initial surface will then be uniquely determined
by a choice of data $(N,m,\vec{\zeta},\vec{\xi}\,)$.

Conceptually the specification
of $(\vec{\tau},\vec{h}^K,\vec{h}^B)$
in terms of $(N,m,\vec{\zeta},\vec{\xi}\,)$
is made in stages,
which we summarize with the chain of dependence
\begin{align*}
(N,m) &\leadsto \vec{\taubar},
&
(m,\vec{\taubar},\vec{\zeta}\,)
&\leadsto \vec{\tau},
&
(N,m,\vec{\tau},\vec{\xi}\,) &\leadsto (\vec{h}^K,\vec{h}^B).
\end{align*}
In more detail, we first take
\(
\vec{x}=(x_1,\ldots,x_{N-1}) \in \R^{N-1}_{\updownarrow+}
\)
as in Lemma \ref{lem:limiting_waist_ratios} 
and we then define
$\vec{\taubar} \in \R^{N-1}_{\updownarrow+}$
by
\begin{equation}
\label{eqn:taubar_def}
\taubar_i=\taubar_{i;N,m}
=
\frac{x_i}{m}
  \exp
  \left(
    \frac{x_{n-1}-1}
    {1 + N \bmod 2}
    \cdot
    \frac{m}{2}
  \right),
\end{equation}
where we understand $x_0=0$ (in case $n=1$).
We emphasize that $x_{n-1}<1$,
so that the argument of the exponential
is strictly negative.
The above choice of $\vec{\taubar}$
is made to ensure that the initial surface
is approximately balanced in a certain sense
(see Appendix \ref{app:forces} for full details);
this geometric condition in turn implies,
roughly speaking,
that a certain component of the projection
of the initial mean curvature onto
the cokernel is not too large.

Next, to help achieve finer control
over the cokernel,
we define
$\vec{\tau} \in \R^{N-1}_{\updownarrow+}$
by
\begin{equation}
\label{eqn:tau_def}
\tau_i=\tau_{i;N,m,\vec{\zeta}}
\vcentcolon=
\taubar_i
\exp
  \Bigl(
    \zeta_n
    +\frac{\zeta_i}{m}
  \Bigr).
\end{equation}
Thus variation of $\vec{\zeta}$
adjusts the waist radii.
We pause to point out that
for each integer $N \geq 2$,
for each $i=1,\ldots,N$,
for each
$\vec{\zeta} \in \R^{N-1}_{\updownarrow+}$,
and for each $q>0$
we have, for some $C(N)>0$,
\begin{align}
\label{eqn:tau_size}
\lim_{m \to \infty}
\frac{\tau_i}{\tau_1}
&\leq
C(N),
&
\lim_{m \to \infty} m^q \tau_1 &= 0.
\end{align}
Finally,
we specify the heights $\vec{h}^K, \vec{h}^B$.
Conceptually,
the data $(N,m,\tau)$ determine
heights $\vec{\underline{h}}{}^K$
and $\vec{\underline{h}}{}^B$
by matching conditions
(essentially by the requirement that the spacing
between disc layers should
agree with the vertical extent
of each intermediate catenoidal ribbon,
which is set,
via the definitions leading up to
\eqref{building_blocks},
by $m$ and
the corresponding $\tau$).
To control the cokernel, however,
it is necessary to be able to adjust
the heights of the centers of the catenoids
via the $\vec{\xi}$ parameters,
affecting also $\vec{h}^B$
through modified matching conditions
that average the contributions 
on a given $h^B$ from neighboring catenoids.
Since we will never need to refer
to either
$\vec{\underline{h}}{}^K$
or $\vec{\underline{h}}{}^B$
directly,
we do not distinguish them explicitly
in the following specification.

To be precise, we define the vector 
$\vec{\delta h}^K \in \R^{N-1}_{\updownarrow+}$
whose entries 
\begin{equation}
\label{eqn:cat_extent}
(\delta h^K)_i
=
(\delta h^K)_{i;N,m,\vec{\zeta}}
\vcentcolon=
2\tau_i \arcosh \frac{1}{m\tau_i}
\end{equation}
determine the vertical extent 
of the catenoidal ribbon at height $h^K_i$,
and
$
 \vec{\disloc}
 =
 \vec{\disloc}_{N,m,\vec{\zeta},\vec{\xi}}
 \in
 \R^N_{\updownarrow+}
$
by
\begin{equation}
\label{eqn:disloc_def}
\disloc_i=\disloc_{i;N,m,\vec{\zeta},\vec{\xi}}
\vcentcolon=
\begin{cases}
0
  &\mbox{for } i \in \{1,N\}
\\
\frac{1}{2}(\xi_i - \xi_{i-1})\tau_n
  &\mbox{for } 1<i<N,
\end{cases}
\end{equation}
which we call the dislocation
at disc layer $i$.
For future convenience
we define in $\R^d$,
for any integer $d \geq 2$,
the three subspaces
\begin{align*}
\R^{d,0,0}
&\vcentcolon=
 \{
    \vec{v} \in \R^d
    \st
    v_1=v_d=0
  \},
&
\R^{d,0,0}_{\updownarrow\pm}
&\vcentcolon=
  \R^{d,0,0} \cap \R^d_{\updownarrow\pm}.
\end{align*}
In particular we then have
$\vec{\disloc} \in \R^{N,0,0}_{\updownarrow+}$.
Now $\vec{h}^K$ and $\vec{h}^B$
are uniquely determined
by the symmetry constraints
$\vec{h}^K \in \R^{N-1}_{\updownarrow-}$,
$\vec{h}^B \in \R^N_{\updownarrow-}$
and the matching conditions
\begin{equation}
\label{eqn:dislocated_matching}
\begin{aligned}
h^K_i - h^B_i
  &=
  \tfrac{1}{2}(\delta h^K)_i
  +\disloc_i
&\quad
  (1 \leq i &\leq N-1),
\\
h^B_i - h^K_{i-1}
  &=
  \tfrac{1}{2}(\delta h^K)_{i-1}
  +\disloc_i
&
  (2 \leq i&\leq N).
\end{aligned}
\end{equation}
Indeed, the symmetry constraints
force
\begin{equation*}
N \mbox{ even} \Rightarrow h^K_n = 0,
\qquad
N \mbox{ odd} \Rightarrow h^B_{n+1} = 0,
\end{equation*}
anchoring the configurations,
and then (in both cases)
the remaining $h^K_i$ and $h^B_i$
are uniquely determined
from these anchors and the already specified
$(\delta h^K)_i$ and $\disloc_i$
via \eqref{eqn:dislocated_matching};
the required symmetries then follow from those
already enforced for the latter quantities.
Before continuing
we observe,
for repeated future use,
that
for each integer $N \geq 2$,
for each integer $i=1,\ldots,N-1$,
for each
$\vec{\zeta} \in \R^{N-1}_{\updownarrow+}$,
and for each
$\vec{\xi} \in \R^{N-1}_{\updownarrow-}$
we have
\begin{equation}
\label{eqn:height_bound}
\lim_{m \to \infty}
  \frac{\abs{h^B_i}+\abs{h^K_i}+\abs{h^B_{i+1}}}
      {m\tau_1}
\leq
C(N)
\end{equation}
for some $C(N)>0$,
having made use of
\eqref{eqn:taubar_def},
\eqref{eqn:tau_def},
\eqref{eqn:tau_size},
\eqref{eqn:cat_extent},
\eqref{eqn:disloc_def},
and \eqref{eqn:dislocated_matching},
and assuming
$m > \abs{\vec{\zeta}}+\abs{\vec{\xi}}$.

\begin{figure}
\begin{tikzpicture}[scale=\unitscale,baseline={(0,0)}]
\draw(0,0)node[inner sep=0,scale={\unitscale/4.015}](S0){\includegraphics[page=7]{figures}};
\draw[orange]
(-0.05,-0.13-0.3)node{$K_1$}
(-0.05, 0.13-0.3)node{$K_3$}
(-0.4, 0.00-0.3)node{$K_2$};
\pgfmathsetmacro{\hpar}{0.08}
\draw[color={rgb,255:red,158; green,136; blue,0  }](1, 3*\hpar-0.03)node[right]{$D_4$};
\draw[color={rgb,255:red,59 ; green,183; blue,0  }](1,   \hpar-0.03)node[right]{$D_3$};
\draw[color={rgb,255:red,6  ; green,167; blue,166}](1,  -\hpar-0.03)node[right]{$D_2$};
\draw[color={rgb,255:red,109; green,84 ; blue,254}](1,-3*\hpar-0.03)node[right]{$D_1$};
\end{tikzpicture}
\hfill
\begin{tikzpicture}[scale=\unitscale,baseline={(0,0)}]
\draw(0,0)node[inner sep=0,scale={\unitscale/4.015}](S1){\includegraphics[page=8]{figures}};
\end{tikzpicture}
\caption{Visualisation of the initial surface $\Sigma_{N,m,\vec{\zeta},\vec{\xi}}$ 
for $N=4$, $m=8$, $\vec{\zeta}=(0,0,0)$ and 
$\vec{\xi}=(0,0,0)$ (left image) respectively 
$\vec{\xi}=(-2,0,2)$ (right image). 
Highlighted are the regions $K_1,K_2,K_3$ defined in \eqref{eqn:catr_init_def} and the regions $D_1,\ldots,D_4$ defined in \eqref{layers_with_half_cats}. 
}
\label{fig:Initialsurface}%
\end{figure}

\paragraph{Assembly of the initial surfaces.}
Referring to \eqref{building_blocks} and the specifications of the preceding subsection,
we define the $N$ regions
\begin{align}
\notag
D_1=D_{1;N,m,\vec{\zeta},\vec{\xi}}
  &\vcentcolon=
  D_{m,h^B_1,\tau^K_1,h^K_1},
\\
\label{layers_with_half_cats}
D_i=D_{i;N,m,\vec{\zeta},\vec{\xi}}
  &\vcentcolon=
  \rot_{\widehat{z}}^{(i-1)\pi/m}
  D_{m,h^B_i,\tau^K_i,h^K_i,\tau^K_{i-1},h^K_{i-1}}
  \quad \mbox{for } 2 \leq i \leq N-1,
\\\notag
D_N=D_{N;N,m,\vec{\zeta},\vec{\xi}}
  &\vcentcolon=
  \rot_{\widehat{z}}^{N\pi/m}
  D_{m,h^B_N,\tau^K_{N-1},h^K_{N-1}}.
\intertext{The initial surfaces are defined as the unions of the above regions
as visualized in Figure \ref{fig:Initialsurface}:}
\label{eqn:init_surf_def}
\Sigma=\Sigma_{N,m,\vec{\zeta},\vec{\xi}}
&\vcentcolon=
\bigcup_{i=1}^N
  D_{i;N,m,\vec{\zeta},\vec{\xi}}.
\end{align}
We write $\met{\Sigma}$
for the induced metric on $\Sigma$,
$\eta^\Sigma$ for its outward unit conormal
on $\partial_\Sigma$,
and $\twoff{\Sigma}$ and $H^\Sigma$
for the scalar projections
of, respectively,
the vector-valued
second fundamental form and mean curvature
of $\Sigma$
onto the unit normal which points upward
at the point $\{x=y=0\} \cap D_1$.

\paragraph{Estimates of the initial geometry.}

For each $i=1,\ldots, N-1$ we set
\begin{equation*}
a_i\vcentcolon=\arcosh \frac{1}{2m\tau_i}
\end{equation*}
and define the map $\kappa_i=\kappa_{i; N, m, \vec{\zeta}, \vec{\xi}}$ by 
\begin{equation}
\label{def:kappa}
\begin{aligned}
\kappa_i
  \colon
  \IntervaL{-a_i, a_i} \times \IntervaL{-\tfrac{\pi}{2},\tfrac{\pi}{2}}
  &\to
  \Sigma=\Sigma_{N, m, \vec{\zeta}, \vec{\xi}}
\\
(t,\vartheta)
  &\mapsto
  \Phi
  \left(
        \tau_i \cosh t \cos \vartheta,~
        (-1)^{i-1}\tfrac{\pi}{2m}+\tau_i \cosh t \sin \vartheta,~
        h^K_i+\tau_i t
  \right),
\end{aligned}
\end{equation}
whose image we denote by
\begin{equation*}
K_i
=
K_{i; N, m, \vec{\zeta}, \vec{\xi}}
\vcentcolon=
\kappa_i\bigl(\IntervaL{-a_i,a_i} \times \IntervaL{-\tfrac{\pi}{2},\tfrac{\pi}{2}}\bigr)
\subset
\Sigma
\end{equation*}
as visualized in Figure \ref{fig:Initialsurface}; note that  each $\kappa_i$ is a diffeomorphism onto its image. 
Given $0 \leq R \leq \frac{1}{2m}$, we further define the regions
\begin{equation}
\label{eqn:catr_init_def}
K_i(R)
=
K_{i;N, m, \vec{\zeta}, \vec{\xi}\,}(R)
\vcentcolon=
\bigl\{
  \kappa_i(t,\vartheta)
  \st
  \tau_i \cosh t \leq R
  \mbox{ and } \abs{\vartheta} \leq \pi/2
\bigr\},
\end{equation}
so we have in particular
$K_i=K_i\bigl(1/(2m)\bigr)$.
We also define on each initial surface
$\Sigma = \Sigma_{N,m,\vec{\zeta},\vec{\xi}}$
the function
\begin{equation*}
\rho \colon \Sigma \to \interval{0,\infty}
\end{equation*}
to be the unique $\G$-invariant
function on $\Sigma$
whose restriction to each $D_i$
is as follows.
Recalling from \eqref{def:Phi}
the domain of the map $\Phi$,
let
\begin{equation}
\label{eqn:axes_def}
L_i
\vcentcolon=
\Biggl\{
  (\sigma,\theta,\omega) \in \dom \Phi
  \st
  \sigma = 0
  \mbox{ and }
  \begin{cases}
    \frac{m\theta}{\pi} \in \frac{1}{2} + 2\Z
      &\mbox{ for } i=1
  \\
    \frac{m\theta}{\pi} \in \frac{1}{2} + \Z
      &\mbox{ for } 2 \leq i \leq N-1
  \\
    \frac{m\theta}{\pi} \in (-1)^N\frac{1}{2} + 2\Z
      &\mbox{ for } i=N
  \end{cases}
\Biggr\},
\end{equation}
let $d^{L_i} \colon \R^3 \to \R$
be the distance-to-$L_i$ function,
and let
\begin{equation}
\label{eqn:rho_def}
\begin{aligned}
\rho|_{D_i \setminus \Phi(\{d^{L_i} < 2/m\})}
  &\vcentcolon=
  m,
\\
\Phi^*\rho|_{\Phi^{-1}(D_i) \cap \{d^{L_i} < 2/m\}}
  &\vcentcolon=
\frac{1}{d^{L_i}} \cdot (\Psi \circ md^{L_i})
  +m \cdot (1-\Psi \circ md^{L_i}).
\end{aligned}
\end{equation}
The function $\rho$ is introduced
for the purpose of defining on the initial surface
the conformal metric $\rho^2\met{\Sigma}$
(whose analogue in each of
\cites{KapouleasYangTorusDoubling, KapouleasWiygul17}
is called the $\chi$ metric),
which will be used to study the linearized problem.
In particular we have
on each $K_i$
\begin{align}
\label{eqn:rho_on_cat}
\kappa_i^*\rho
&=
\tau_i^{-1} \sech t,
&
d(\kappa_i^*\rho)
&=
-(\kappa_i^*\rho) \tanh t \, dt.
\end{align}
while elsewhere
\begin{align}
\label{eqn:rho_off_cat}
\frac{m}{3}
 &\leq \rho|_{\Sigma \setminus \pyr_m \bigcup K_j}
  \leq 3m,
&
\nm{\rho}_{
    C^k\bigl(
      \overline{\Sigma \setminus \pyr_m \bigcup K_j}, ~
      \rho^2\met{\Sigma}
    \bigr)
}
&\leq
C(k)m
\end{align}
for each integer $k \geq 0$
and some corresponding constant $C(k)>0$
independent of $m$.

\begin{lemma}[Geometric estimates on the $K_i$ regions]
\label{lem:geo_ests_cat}
Given data $(N,\vec{\zeta},\vec{\xi}\,)$ as above 
there exist $C=C(N)>0$ and 
$m_0=m_0(N,\abs{\vec{\zeta}},\abs{\vec{\xi}})>0$
such that for every integer $m>m_0$
the following items hold for each
$K_i\subset\Sigma=\Sigma_{N,m,\vec{\zeta},\vec{\xi}}$:
\begin{enumerate}[label={\normalfont(\roman*)}]
\item \label{itm:geo_ests_cat:rho}
$\nm{\rho^{-1}}_{C^0(K_i(R))}\leq CR$
for all $\tau_i < R \leq \frac{1}{2m}$,

\item \label{itm:geo_ests_cat:met}
$
 \nm[\big]{
   \kappa_i^*\rho^2\met{\Sigma}
   -(dt^2+d\vartheta^2)
   }_{
     C^2(
       \kappa_i^{-1}(K_i(R)),~
       dt^2+d\vartheta^2
     )
    }
 \leq CR$
for all $\tau_i < R \leq \frac{1}{2m}$,

\item \label{itm:geo_ests_cat:twoff}
$
 \nm[\big]{
   \kappa_i^*\rho^{-2}\abs{\twoff{\Sigma}}_{\met{\Sigma}}^2
   -2 \sech^2 t
 }_{C^1(\kappa_i^{-1}(K_i),~ dt^2+d\vartheta^2)}
 \leq
 C\tau_1
$,

\item \label{itm:geo_ests_cat:mc}
$
 \nm[\big]{\rho^{-1}H^\Sigma|_{K_i}}_{C^1(K_i, ~
     \rho^2\met{\Sigma})}
 \leq
 C\tau_1
$.
\end{enumerate}
\end{lemma}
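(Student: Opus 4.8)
The plan is to deduce all four estimates by a direct computation from the explicit formula $\kappa_i=\Phi\circ\gamma_i$ of \eqref{def:kappa}, where $\gamma_i(t,\vartheta)=\bigl(\tau_i\cosh t\cos\vartheta,\,(-1)^{i-1}\tfrac{\pi}{2m}+\tau_i\cosh t\sin\vartheta,\,h^K_i+\tau_i t\bigr)$ parametrises the exact catenoid of waist radius $\tau_i$ in $\dom\Phi$ and $\Phi$ is as in \eqref{def:Phi}; for $m>m_0(N,\abs{\vec\zeta},\abs{\vec\xi})$ the image of $\gamma_i$ lies where $\Phi$ is injective with smooth inverse, and the constants $C(N)$ below are those furnished by \eqref{eqn:tau_size} and \eqref{eqn:height_bound} (in particular $\tau_j\le C(N)\tau_1$, $\abs{h^K_i}\le C(N)m\tau_1$, and $m^q\tau_1\to0$ for every $q>0$). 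Item \ref{itm:geo_ests_cat:rho} is then immediate: by \eqref{eqn:rho_on_cat} one has $\kappa_i^{*}\rho^{-1}=\tau_i\cosh t$, which by \eqref{eqn:catr_init_def} is $\le R$ on $K_i(R)$. For \ref{itm:geo_ests_cat:met} I would record the elementary identity $\Phi^{*}g_{\mathrm{eucl}}=d\sigma^2+(1-\sigma)^2d\omega^2+(1-\sigma)^2\cos^2\omega\,d\theta^2$ (that is, $\Phi$ merely writes the Euclidean metric in reflected, shifted spherical coordinates); pulling this back under $\gamma_i$ and abbreviating $r:=1-\tau_i\cosh t\cos\vartheta$ (the Euclidean distance of $\kappa_i(t,\vartheta)$ from the origin) and $\omega=h^K_i+\tau_i t$, the cancellations $\cos^2\vartheta+\sin^2\vartheta=1$, $1+\sinh^2t=\cosh^2t$ give
\[
\kappa_i^{*}\met{\Sigma}=\tau_i^2\cosh^2 t\,(dt^2+d\vartheta^2)+\tau_i^2\bigl[(r^2\cos^2\omega-1)\,\Theta_1+(r^2-1)\,\Theta_2\bigr],
\]
with $\Theta_1,\Theta_2$ fixed quadratic differentials polynomial in $\sinh t,\cosh t,\sin\vartheta,\cos\vartheta$. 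On $K_i(R)$ one has $\tau_i\cosh t\le R$, every $(t,\vartheta)$-derivative of $\tau_i\cosh t$ is again $O(\tau_i\cosh t)=O(R)$, and $\abs\omega=O(m\tau_1)$, so that $r^2-1$, $r^2\cos^2\omega-1=r^2-1-r^2\sin^2\omega$ and all their $(t,\vartheta)$-derivatives are $O(R)$ (using $\sin^2\omega=O((m\tau_1)^2)\le CR$ from \eqref{eqn:tau_size}); multiplying by $\rho^2=\tau_i^{-2}\sech^2 t$, which renders $\Theta_1,\Theta_2$ tensors with $C^2$-bounded coefficients, yields \ref{itm:geo_ests_cat:met}.

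The substance lies in \ref{itm:geo_ests_cat:twoff}--\ref{itm:geo_ests_cat:mc}. I would compute the Euclidean unit normal $\nu$ and the second fundamental form $A_{ab}=\partial_a\partial_b\kappa_i\cdot\nu$ from $\kappa_i=rN$, with $N(\theta,\omega)=(\cos\theta\cos\omega,\sin\theta\cos\omega,\sin\omega)$ and $\{N,N_\theta/\cos\omega,N_\omega\}$ the standard orthonormal frame — a finite but lengthy calculation. Collecting terms, and using $\cos^2\vartheta+\sin^2\vartheta=1$ and $1+\sinh^2t=\cosh^2t$ once more, one obtains $A_{ab}=A^{0}_{ab}+\delta A_{ab}$, where $A^{0}_{ab}$ is the second fundamental form of the exact waist-$\tau_i$ catenoid (so $A^{0}_{ab}=O(\tau_i)$, $A^{0}_{tt}+A^{0}_{\vartheta\vartheta}=0$, $A^{0}_{t\vartheta}=0$), and — the crucial point — the correction satisfies
\[
\delta A_{ab}=O(\tau_i^2\cosh t)+O\bigl(\tau_i(m\tau_1)^2\bigr)+O\bigl(m\tau_1\tau_i^2\cosh^2 t\bigr),
\]
together with the same bound for its first $(t,\vartheta)$-derivatives. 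The gain of a whole power of $\tau_i$ over $A^{0}_{ab}$ (at the price of at most one extra factor $\cosh t$) again issues from the two identities above: individually many of the terms making up $A_{ab}$ are only $O(\tau_i)$, but their $O(\tau_i)$-parts assemble exactly into the tracefree $A^{0}_{ab}$.

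Assembling, with $\bar g^{ab}$ the inverse of $\kappa_i^{*}\met\Sigma=\tau_i^2\cosh^2 t\,[\delta_{ab}+O(\tau_i\cosh t)]$,
\begin{align*}
\kappa_i^{*}\bigl(\rho^{-2}\abs{\twoff{\Sigma}}^2_{\met{\Sigma}}\bigr)
&=\tau_i^2\cosh^2 t\;\bar g^{ac}\bar g^{bd}A_{ab}A_{cd}\\
&=2\sech^2 t+O(\tau_i\sech t)+O\!\Bigl(\tfrac{\delta A}{\tau_i\cosh^2 t}\Bigr)+O\!\Bigl(\tfrac{(\delta A)^2}{\tau_i^2\cosh^2 t}\Bigr),
\end{align*}
and on $K_i=K_i(1/(2m))$, where $\tau_i\cosh t\le 1/(2m)$, each remainder is $O(\tau_1)$ by \eqref{eqn:tau_size}--\eqref{eqn:height_bound} (for instance $m\tau_1\tau_i^2\cosh^2t/(\tau_i\cosh^2t)=m\tau_1\tau_i\le C\tau_1$ and $(m\tau_1\tau_i^2\cosh^2t)^2/(\tau_i^2\cosh^2t)=m^2\tau_1^2(\tau_i\cosh t)^2\le\tfrac14\tau_1^2$), which is \ref{itm:geo_ests_cat:twoff}. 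For \ref{itm:geo_ests_cat:mc} the minimality of the exact catenoid forces the leading terms to cancel: $\bar g^{ab}A^{0}_{ab}=\tau_i^{-2}\sech^2 t\,(A^{0}_{tt}+A^{0}_{\vartheta\vartheta})+O(\sech t)=O(\sech t)$, whence $\kappa_i^{*}(\rho^{-1}H^\Sigma)=\tau_i\cosh t\,\bar g^{ab}A_{ab}=O(\tau_i)+O\bigl(\delta A/(\tau_i\cosh t)\bigr)=O(\tau_1)$. In every case the claimed $C^1$ bound (resp.\ $C^2$ bound in \ref{itm:geo_ests_cat:met}) follows by applying these same estimates to the first (resp.\ up to second) $(t,\vartheta)$-derivatives of the explicit expressions, since $\partial_t\bigl(\tau_i^a\cosh^b t\bigr)=O(\tau_i^a\cosh^b t)$ and likewise for $\partial_\vartheta$, so differentiation only introduces bounded factors.

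I expect the main obstacle to be precisely the sharp bound on $\delta A_{ab}$ in the third paragraph: a crude estimate — say, that $\Phi$ is an isometry up to $O(1/m)$ on the whole of $K_i$ — only yields $\delta A_{ab}=O(\tau_i\cosh t)$ and hence errors of size $O(1/m)$ in \ref{itm:geo_ests_cat:twoff}--\ref{itm:geo_ests_cat:mc}, which is far weaker than $O(\tau_1)$. Obtaining $O(\tau_1)$ genuinely requires carrying the second-fundamental-form computation to the end and recording the exact cancellations, equivalently exploiting the precise manner in which $\Phi$ deforms the minimal catenoid $C_i$ into a nearly minimal surface of $\B^3$.
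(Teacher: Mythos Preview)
Your approach is essentially the same as the paper's: both reduce everything to an explicit computation of $\kappa_i^*\met{\Sigma}$ and $\kappa_i^*\twoff{\Sigma}$ from the parametrisation $\kappa_i=\Phi\circ\gamma_i$, record the exact catenoidal leading terms, and bound the remainders using \eqref{eqn:tau_size}, \eqref{eqn:height_bound} and the pointwise sizes of $\sigma_i=\tau_i\cosh t\cos\vartheta$ and $\omega_i=h^K_i+\tau_i t$. The paper simply relegates the long formulae to Appendix~\ref{app:geo_ests} (equations \eqref{eqn:cat_met_calc}, \eqref{eqn:cat_twoff_calc}, \eqref{eqn:cat_aux_fun_ests}), obtaining from them the intermediate estimate $\nm{\tau_i^{-1}\kappa_i^*\twoff{\Sigma}-(-1)^{i-1}(dt^2-d\vartheta^2)}_{C^1}\le CR$ and then deducing \ref{itm:geo_ests_cat:twoff}--\ref{itm:geo_ests_cat:mc}; your $A=A^0+\delta A$ decomposition and the bound $\delta A_{ab}=O(\tau_i^2\cosh t)+O(\tau_i(m\tau_1)^2)+O(m\tau_1\tau_i^2\cosh^2 t)$ are exactly what one reads off from \eqref{eqn:cat_twoff_calc} term by term.

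Your diagnosis of the main obstacle is also on target: a bound using only the \emph{supremum} $R=1/(2m)$ of $\tau_i\cosh t$ on $K_i$ would give errors $O(R)=O(1/m)$ in \ref{itm:geo_ests_cat:twoff}--\ref{itm:geo_ests_cat:mc}, whereas tracking the \emph{pointwise} size $\tau_i\cosh t$ (so that, e.g., the metric correction multiplied by $\sech^2 t$ becomes $O(\tau_i\sech t)\le\tau_i$) is what upgrades this to $O(\tau_1)$. The paper achieves the same via the explicit structure of \eqref{eqn:cat_met_calc}--\eqref{eqn:cat_twoff_calc} together with \eqref{eqn:cat_aux_fun_ests}.
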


\begin{proof}
Item \ref{itm:geo_ests_cat:rho}
is clear from
\eqref{eqn:catr_init_def}
and \eqref{eqn:rho_on_cat}.
For the remaining items
we can appeal to the computations
in Appendix \ref{app:geo_ests}.
Item \ref{itm:geo_ests_cat:met}
follows from
\eqref{eqn:cat_met_calc}
and
\eqref{eqn:cat_aux_fun_ests},
recalling \eqref{eqn:tau_size}.
Using \eqref{eqn:cat_aux_fun_ests} again
but now in combination with \eqref{eqn:cat_twoff_calc},
we find
\begin{equation*}
\nm[\big]{
  \tau_i^{-1}\kappa_i^*\twoff{\Sigma}
  -(-1)^{i-1}(dt^2-d\vartheta^2)
}_{C^1(\kappa_i^{-1}(K_i(R)), ~ dt^2+d\vartheta^2)}
\leq
CR,
\end{equation*}
whence follow, using also item \ref{itm:geo_ests_cat:met}, items \ref{itm:geo_ests_cat:twoff} and \ref{itm:geo_ests_cat:mc}.
\end{proof}

\begin{figure}\centering
\pgfmathsetmacro{\rmax}{1.15}
\pgfmathsetmacro{\mpar}{18}
\begin{tikzpicture}[semithick,line cap=round,line join=round,scale=3,declare function={psi(\r)=cutoff( (1-\r)*\mpar/5 );}] 
\draw[->](0,0)--(\rmax,0)node[above]{$r$};
\draw[->](0,-0)--(0,1.2);
\draw plot[plus] (0,0)node[below]{$0$};
\draw plot[hdash](0,1)node[left]{$1$};
\draw plot[vdash] (1,0)node[below]{$1$};
\draw plot[vdash] (1-5/\mpar,0); 
\draw plot[vdash] (1-10/\mpar,0); 
\draw[dotted](1,0)--++(0,1.2)(1-5/\mpar,0)--++(0,1)(1-10/\mpar,0)--++(0,1.2); 
\path (1-5/\mpar,1/2)--(1,1/2)node[midway,inner sep=1pt](5/m){$\frac{5}{m}$}coordinate[pos=0](a)coordinate[pos=1](b);
\path (1-10/\mpar,1.2)--++(10/\mpar,0)node[midway,inner sep=1pt](10/m){$\frac{10}{m}$}
coordinate[pos=0](c)coordinate[pos=1](d);
\draw[-latex](5/m)--(a);
\draw[-latex](5/m)--(b);
\draw[-latex](10/m)--(c);
\draw[-latex](10/m)--(d);
\draw[thick,domain=0:\rmax,samples=200,smooth,variable=\x]plot({\x},{psi(\x)});
\end{tikzpicture} 
\caption{Graph of the cutoff function $r\mapsto\Psi\left(\frac{m}{5}\bigl(1-r\bigr)\right)$.}%
\label{fig:varpi}%
\end{figure}

Recalling that $\Psi \colon \R \to \R$
is a fixed smooth cutoff
function that is constantly $1$
on $\{x \leq 1\}$
and constantly $0$
on $\{x \geq 2\}$,
we define
$\psi^\varpi_m \colon \R^2 \to \R$ by
\begin{align*}
\psi^\varpi_m(x,y)
&\vcentcolon=
\Psi\left(\frac{m}{5}\Bigl(1-\sqrt{x^2+y^2}\Bigr)\right)
\intertext{(see Figure \ref{fig:varpi}) and
$\varpi^m \colon \R^3 \to \R^2 \times \{0\}$ by}
\varpi^m(x,y,z)
  &\vcentcolon=
  \frac{(x,y,0)}
  {
    1-\psi^\varpi_m(x,y)
    +\psi^\varpi_m(x,y)
      \sqrt{\frac{x^2+y^2}{x^2+y^2+z^2}}
  }.
\end{align*}
Note that
\begin{alignat}{3}
\label{eqn:varpi_on_core}
\varpi^m(x,y,z)
  &=(x,y,0)
  &&\text{ on } &
  \bigl\{\textstyle\sqrt{x^2+y^2} &\leq 1-10m^{-1}\bigr\},
\\
\label{eqn:varpi_on_Phi}
 \varpi^m(\Phi(\sigma,\theta,\omega))
 &=
 \Phi(\sigma,\theta,0) ~
  &&\text{ on } &
  \Phi^{-1}\bigl(\bigl\{\textstyle\sqrt{x^2+y^2}
  &\geq
  1-5m^{-1}\bigr\}\bigr),
\end{alignat}
$\varpi^m|_{\{z=0\}}$ is the identity,
and
$\varpi^m(\B^3)=\B^2=\B^3 \cap \{z=0\}$.
Next,
for each region $D_i$
in \eqref{layers_with_half_cats}
we define
\begin{equation}
\label{eqn:varpi_i_def}
\varpi_i
=
\varpi_{i; N, m, \vec{\zeta}, \vec{\xi}}
\vcentcolon=
\varpi^m|_{D_{i; N, m, \vec{\zeta}, \vec{\xi}}}
  \colon
  D_{i; N, m, \vec{\zeta}, \vec{\xi}}
  \to
  \B^2.
\end{equation}
Given $0 \leq R \leq \frac{1}{2m}$,
we further define the regions (see Figure \ref{fig:Initialsurface})
\begin{equation}
\label{eqn:discr_init_def}
D_i(R)
=
D_{i; N, m, \vec{\zeta}, \vec{\xi}\,}(R)
\vcentcolon=
\closure{
  D_i
  \setminus
  \pyr_m \bigcup_j K_j(R)
}
\end{equation}
where the overline indicates the closure
in the initial surface $\Sigma$;
in particular we have $D_i=D_i(0)$.

Because of obstructions we will confront
to solving the linearized problem
it is necessary to isolate a certain
component of the initial mean curvature.
For this purpose
(and later for expressing these obstructions)
we define for each $D_i$
a function
$\wbar_i \in C^\infty(\Sigma)$
with support contained in $D_i$
by
\begin{equation}
\label{eqn:wbar_def}
\wbar_i
\vcentcolon=
\rho^{-2}\varpi_i^*
  [
    (\partial_x^2+\partial_y^2)
    \widehat{\overline{v}}
  ],
\end{equation}
where $\widehat{\overline{v}}$
is the unique
$\pyr_m$-invariant function
on $\B^2$
with support contained in
$\{\sqrt{x^2+y^2} \geq 1-2/m\}$
and satisfying
\begin{equation}
\label{eqn:vbarhat_ess_def}
\widehat{\overline{v}}(\Phi(\sigma,\theta,0))
=
\psi_m^\sigma(\psi_m^+ - \psi_m^-)(\sigma,\theta)
\quad \forall (\sigma,\theta) \in \Lambda^\Phi_m,
\end{equation}
for which we recall
the three cutoff functions
$\psi_m^\sigma$, $\psi_m^{\pm}$
from \eqref{eqn:cat_cutoffs}. We do not really need the functions
$\wbar_1$ and $\wbar_N$,
but for purposes of notation
it is somewhat convenient to have them defined.
For any given $\vec{V} \in \R^{N,0,0}$
we write
\begin{equation*}
\vec{V} \cdot \vec{\wbar}
\vcentcolon=
\sum_{i=1}^N V_i \wbar_i
=
\sum_{i=2}^{N-1} V_i \wbar_i.
\end{equation*}
Note that
$\vec{V} \in \R^{N,0,0}_{\updownarrow+}$ implies that 
$\vec{V} \cdot \vec{\wbar}$ is $\G$-equivariant.

\begin{lemma}
[Geometric estimates on the $D_i$ regions]
\label{lem:geo_ests_disc}
Given data $(N,\vec{\zeta},\vec{\xi})$
as above
there exist
$C=C(N)>0$
and
$m_0=m_0(N,\abs{\vec{\zeta}},\abs{\vec{\xi}})>0$
such that for every integer $m>m_0$
the following items hold
for each
$
D_i 
  \subset 
  \Sigma
  =
  \Sigma_{N,m,\vec{\zeta},\vec{\xi}}
$:

\begin{enumerate}[label={\normalfont(\roman*)}]
\item \label{itm:geo_ests_disc:diffeo}
  $\varpi_i$ is a diffeomorphism onto its image,

\item \label{itm:geo_ests_disc:flat}
  $
  \twoff{\Sigma}|_{\Sigma \cap (1-\frac{3}{m})\B^3}
  =
  0
  $,

\item \label{itm:geo_ests_disc:met_flat}
$
\nm[\big]{
  \met{\Sigma}
  -\varpi_i^*(dx^2+dy^2)
}_{
  C^2(
    \Sigma \cap (1-\frac{3}{m})\B^3, ~
    \met{\Sigma}
  )
}
\leq
\tau_1^{3/2}
$,

\item \label{itm:geo_ests_disc:met_Phullback}
$
\nm[\big]{
\Phi^*\met{\Sigma}
  -
  (d\sigma^2+(1-\sigma)^2 \, d\theta^2)
}_{
  C^2(
    \Phi^{-1}(D_i(m^{-4}))
      \cap \{\sigma \leq \frac{5}{m}\}, ~
    d\sigma^2 + d\theta^2
  )
}
\leq
\tau_1^{3/2}
$,

\item \label{itm:geo_ests_disc:twoff_Phullback}
$
\nm[\big]{
\Phi^*\abs{\twoff{\Sigma}}_{\met{\Sigma}}^2
}_{
  C^1(
    \Phi^{-1}(D_i(m^{-4}))
      \cap \{\sigma \leq \frac{5}{m}\}, ~
    d\sigma^2 + d\theta^2
  )
}
\leq
\tau_1^{3/2}
$,

\item \label{itm:geo_ests_disc:conf_met}
  $
  \nm[\big]{\rho^2\met{\Sigma}
  -\rho^2\varpi_i^*(dx^2+dy^2)}_{
    C^2(D_i(m^{-4}), ~\rho^2\met{\Sigma})
  }
  \leq
  Cm^{-1}
  $,

\item \label{itm:geo_ests_disc:twoff}
  $
  \nm[\big]{\abs{\twoff{\Sigma}}_{\met{\Sigma}}^2}_{
    C^1(D_i(m^{-4}), ~\rho^2\met{\Sigma})
  }
  \leq C\tau_1^{3/2}
  $,

\item
\label{itm:geo_ests_disc:mc}
$
 \nm[\big]{H^\Sigma-\rho^2\disloc_i \wbar_i}_{
   C^1(D_i((4m)^{-1}), ~\rho^2\met{\Sigma})
   }
 \leq
 Cm^2\tau_1
$,
\end{enumerate}
where, we recall, $H^\Sigma$
is the scalar projection of the vector-valued
mean curvature of $\Sigma$
onto the unit normal
which points upward at $D_1 \cap \{x=y=0\}$.
\end{lemma}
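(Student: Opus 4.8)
The plan is to verify the eight items by direct computation from the explicit building blocks, relying throughout on three crude facts: the waist radius $\tau_1$ is exponentially small in $m$ (by \eqref{eqn:taubar_def} and \eqref{eqn:tau_def}), so it dominates any fixed power of $m$ (cf.\ \eqref{eqn:tau_size}); the a priori bounds $\abs{h^B_i}+\abs{h^K_i}\lesssim_N m\tau_1$ and $\tau_i\lesssim_N\tau_1$ supplied by \eqref{eqn:tau_size} and \eqref{eqn:height_bound}; and the elementary bound that each $k$-th derivative of a cutoff of the form $\Psi(m\,\cdot\,)$ is $O(m^k)$. Every item is asserted only for $m>m_0(N,\abs{\vec\zeta},\abs{\vec\xi})$, the threshold being needed so that the relevant surfaces lie in the box on which $\Phi$ is injective, the a priori bounds are in force, and the powers of $m$ occurring below are absorbed by $\tau_1$.

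First I would dispose of the regions where the surface is essentially flat. For \ref{itm:geo_ests_disc:diffeo}: on $\{\sqrt{x^2+y^2}\le 1-10/m\}$ the map $\varpi^m$ is the vertical projection, hence a diffeomorphism on the nearly horizontal (by \eqref{eqn:height_bound}) disc region; on $\{\sqrt{x^2+y^2}\ge 1-5/m\}$ one computes $\varpi^m(\Phi(\sigma,\theta,\omega))=(1-\sigma)(\cos\theta,\sin\theta,0)$, manifestly a diffeomorphism in $(\sigma,\theta)$; and on the overlap annulus the explicit formula for $\varpi^m$ shows that it differs from the vertical projection, together with all its derivatives, by $O(m^2\tau_1^2)$ (the $z$-coordinate on $D_i$ there being $O(m\tau_1)$), so $\varpi_i$ remains an injective local diffeomorphism. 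For \ref{itm:geo_ests_disc:flat}: a point $\Phi(\sigma,\theta,\omega)$ lies in $(1-3/m)\B^3$ exactly when $\sigma\ge 3/m$, where $\psi^\sigma_m\equiv 0$, so \eqref{angular_height_function_one_bridge}--\eqref{angular_height_function_two_bridges} reduce the graph to that of $\omega^B_{m,h^B}$, whose $\Phi$-image lies in the flat disc $B_{h^B}$; hence $\Sigma\cap(1-3/m)\B^3$ is a union of flat discs and $\twoff{\Sigma}$ vanishes there, and \ref{itm:geo_ests_disc:met_flat} is immediate on this flat portion of $D_i$ (there $\met{\Sigma}$ equals $\varpi_i^*(dx^2+dy^2)$ up to the negligible discrepancy just described). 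The catenoidal ribbons $K_j$ meet none of $(1-3/m)\B^3$, and wherever they enter the domains of items \ref{itm:geo_ests_disc:met_Phullback}--\ref{itm:geo_ests_disc:mc} they are either covered by the graphical analysis below (on $D_i(m^{-4})$ the restriction of $\omega$ to the ribbon is the exact catenoid function $\omega^{K,\pm}$, no cutoff being active) or handled by appeal to Lemma \ref{lem:geo_ests_cat} and the computations behind it; so the substance lies in the ``near-sphere graph'' region.

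There the surface is the graph of $\omega=\omega_{m,\ldots}$ in $\Phi$-coordinates, for which a computation gives $\Phi^*(dx^2+dy^2+dz^2)=d\sigma^2+(1-\sigma)^2\cos^2\omega\,d\theta^2+(1-\sigma)^2\,d\omega^2$. On $D_i(m^{-4})$ the catenoidal factors $r=\sqrt{\sigma^2+(\theta\mp\tfrac{\pi}{2m})^2}$ satisfy $r\ge m^{-4}$, so each $k$-th derivative of $\omega^{K,\pm}$ is $O(\tau_i/r^k)=O(m^{4k}\tau_1)$; combining with the cutoff bounds and \eqref{eqn:height_bound}, one checks that $\omega$ and all its derivatives through third order are $O(m^A\tau_1)$ for some $A=A(N)$. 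Substituting into the graphical formulae for the induced metric and second fundamental form, $\Phi^*\met{\Sigma}-(d\sigma^2+(1-\sigma)^2\,d\theta^2)$ and $\Phi^*\abs{\twoff{\Sigma}}_{\met{\Sigma}}^2$ become quantities that are quadratic in $(\omega,\nabla\omega)$ up to lower order, hence of size $O(m^{2A}\tau_1^2)$ in $C^2$, respectively $C^1$, which is $\le\tau_1^{3/2}$ once $m$ is large: this gives \ref{itm:geo_ests_disc:met_Phullback} and \ref{itm:geo_ests_disc:twoff_Phullback}, and, transporting via $\varpi_i$ (which near $\Sp^2$ satisfies $\varpi_i^*(dx^2+dy^2)=d\sigma^2+(1-\sigma)^2\,d\theta^2$ in $\Phi$-coordinates), also \ref{itm:geo_ests_disc:met_flat} and \ref{itm:geo_ests_disc:twoff} on the remaining part of $D_i$; finally, multiplying through by $\rho^2\le Cm^8$ and invoking $\nm{\rho}_{C^k(\rho^2\met{\Sigma})}\le Cm$ from \eqref{eqn:rho_off_cat} (with \eqref{eqn:rho_on_cat} and Lemma \ref{lem:geo_ests_cat} on the ribbon part) yields the much coarser \ref{itm:geo_ests_disc:conf_met}.

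The delicate point is \ref{itm:geo_ests_disc:mc}, where the designed combination $\rho^2\disloc_i\wbar_i$ is essential. On $D_i((4m)^{-1})$, off the ribbons (on which Lemma \ref{lem:geo_ests_cat} applies directly, since $\rho\le Cm$ there), $\Sigma$ is a vertical graph over $B_{h^B_i}$; I would write $u_i$ for its graph function, so that $u_i\equiv 0$ in the flat interior and $u_i=(1-\sigma)\sin\omega-h^B_i$ near $\Sp^2$ in $\Phi$-coordinates. Expanding this through \eqref{angular_height_function_one_bridge}--\eqref{angular_height_function_two_bridges}, the partition-of-unity structure of the cutoffs, the matching conditions \eqref{eqn:dislocated_matching} --- which are exactly what force the coefficient of the cutoff product $\psi^\sigma_m(\psi^+_m-\psi^-_m)$ to equal the dislocation $\disloc_i$ rather than $0$ --- and the bound $\arcosh\tfrac{1}{m\tau_i}-\arcosh\tfrac{r}{\tau_i}=O(1)$ on the supports of the transitions, one obtains $u_i=\disloc_i\,(\widehat{\overline{v}}\circ\varpi_i)+O(\tau_1)$, the $O(\tau_1)$ remainder having $k$-th derivatives $O(m^k\tau_1)$. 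Then $H^\Sigma=\Delta_{\met{\Sigma}}u_i+Q(u_i,\nabla u_i,\nabla^2 u_i)$ with $Q$ superquadratic; since $\Delta_{\met{\Sigma}}$ agrees with $\varpi_i^*(\partial_x^2+\partial_y^2)$ up to the metric error bounded in \ref{itm:geo_ests_disc:conf_met}, since $\disloc_i\lesssim\tau_1$ while $(\partial_x^2+\partial_y^2)\widehat{\overline{v}}=O(m^2)$, and since two derivatives across a transition zone of width $\sim 1/m$ cost $O(m^2)$, every term other than $\disloc_i\varpi_i^*\bigl[(\partial_x^2+\partial_y^2)\widehat{\overline{v}}\bigr]=\rho^2\disloc_i\wbar_i$ is $O(m^2\tau_1)$ --- including the $O(m\tau_1)$ mean curvature of the non-minimal conical pieces $\Phi(\{\omega=h^B_i\})$ --- while measuring derivatives in $\rho^2\met{\Sigma}$, which costs a factor $\rho^{-1}\sim 1/m$ per derivative, absorbs the extra derivative needed for the $C^1$ bound. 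I expect this last item to be the main obstacle: correctly pinning down the leading term $\rho^2\disloc_i\wbar_i$ requires careful bookkeeping of the matching conditions and of which cutoff transitions contribute at which order, whereas every other item reduces, after passage to the near-sphere region, to routine (if lengthy) estimates that are forced through by the exponential smallness of $\tau_1$.
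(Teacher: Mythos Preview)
Your approach is essentially the same as the paper's: both treat the flat core directly from the construction, derive items \ref{itm:geo_ests_disc:met_Phullback}--\ref{itm:geo_ests_disc:twoff_Phullback} by viewing $D_i$ as a graph $\omega=h_i(\sigma,\theta)$ in $\Phi$-coordinates and bounding $h_i$ and its derivatives via \eqref{eqn:tau_size}, \eqref{eqn:height_bound}, pass to \ref{itm:geo_ests_disc:conf_met}--\ref{itm:geo_ests_disc:twoff} through $\varpi_i$ and $\rho$, and isolate the dislocation contribution to $H^\Sigma$ in \ref{itm:geo_ests_disc:mc} via the matching conditions \eqref{eqn:dislocated_matching}. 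The paper simply packages the graphical computations into the explicit formulae of Appendix~\ref{app:geo_ests} (especially \eqref{eqn:dsc_met_calc}, \eqref{eqn:dsc_twoff_calc}, \eqref{eqn:dsc_def_fun_est}, \eqref{eqn:dsc_mc_est}, \eqref{eqn:disloc_mc_est}) rather than redoing them inline, and for \ref{itm:geo_ests_disc:diffeo}, \ref{itm:geo_ests_disc:met_flat} it makes the comparison of $\varpi_i$ with vertical projection quantitative by writing $\varpi_i(x,y,h^B_i)=s_i(x,y)(x,y,0)$ and bounding $\nm{s_i-1}_{C^3}\leq Cm^3(h^B_i)^2$, but the logic is identical to yours.
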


\begin{proof}
By construction
(see in particular
\eqref{building_blocks}
and the prior supporting definitions)
$\Sigma \cap (1-\frac{3}{m})\B^3$
consists of $N$ horizontal Euclidean discs,
verifying item \ref{itm:geo_ests_disc:flat}
and also the equality
\begin{equation}
\label{flat_metric}
\met{\Sigma}|_{\Sigma \cap (1-\frac{3}{m})\B^3}
=
d(x|_\Sigma)^2 + d(y|_\Sigma)^2.
\end{equation}
We will next check items \ref{itm:geo_ests_disc:diffeo}
and \ref{itm:geo_ests_disc:met_flat}.
The only point of item \ref{itm:geo_ests_disc:diffeo}
that may not be obvious is the injectivity.
In view of \eqref{eqn:varpi_on_Phi}
and the fact that near $\partial \B^3$ each $D_i$
(recall \eqref{layers_with_half_cats} and \eqref{building_blocks})
was constructed as a graph in the domain of the map $\Phi$,
it suffices to check injectivity on
$\Sigma \cap (1-\frac{3}{m})\B^3$
(where, for the purposes of comparing
the values of $\sqrt{x^2+y^2}$ and $\sqrt{x^2+y^2+z^2}$,
we bear in mind the height estimate \eqref{eqn:height_bound}
and the comparison of $m$ and $\tau_1$ in \eqref{eqn:tau_size}).
For this we will compare $\varpi_i$ to orthogonal projection onto $\{z=0\}$,
which comparison will also establish \ref{itm:geo_ests_disc:met_flat},
in light of \eqref{flat_metric}.

Defining
$s_i \colon \R^2 \to \R$ by
\begin{equation*}
\frac{1}{s_i(x,y)}
\vcentcolon=
1
  -\frac{
    (h^B_i)^2
      \psi_m^\varpi(x,y)
   }
   {
      \sqrt{x^2+y^2+(h^B_i)^2}
      \bigl(
        \sqrt{x^2+y^2}
        +\sqrt{x^2+y^2+(h^B_i)^2}
      \bigr)
    },
\end{equation*}
we have
\begin{equation*}
\forall
    (x,y,h^B_i)
    \in
    D_i \cap \bigl(1-\tfrac{3}{m}\bigr)\B^3
\quad
\varpi_i(x,y,h^B_i)=s_i(x,y)(x,y,0)
\end{equation*}
and
\begin{equation*}
\nm{s_i-1}_{C^3(\B^2,dx^2+dy^2)}
\leq
Cm^3(h_i^B)^2
\end{equation*}
for some $C>0$
(independent of $N,m,\vec{\zeta},\vec{\xi}\,$),
which,
recalling \eqref{eqn:height_bound} and \eqref{eqn:tau_size}
and taking $m$ sufficiently large
in terms of a universal constant,
completes the proof of items
\ref{itm:geo_ests_disc:diffeo}
and
\ref{itm:geo_ests_disc:met_flat}.

For the remainder of the proof
we refer to calculations in Appendix \ref{app:geo_ests}.
Item \ref{itm:geo_ests_disc:met_Phullback}
follows by applying 
\eqref{eqn:dsc_def_fun_est}
in \eqref{eqn:dsc_met_calc},
while \ref{itm:geo_ests_disc:twoff_Phullback}
follows by applying
\eqref{dsc_nu_norm}
and
\eqref{eqn:dsc_def_fun_est}
in  \eqref{eqn:dsc_twoff_calc}
(and in both cases again appealing to \eqref{eqn:tau_size}).

From item \ref{itm:geo_ests_disc:met_flat}
and conjoint application of
item \ref{itm:geo_ests_disc:met_Phullback},
\eqref{eqn:varpi_on_Phi},
and \eqref{eqn:Phi_pullback_metric}
we ensure
\begin{equation*}
\nm[\big]{\met{\Sigma}-\varpi_i^*(dx^2+dy^2)}_{
  C^2(D_i(m^{-4}), ~\met{\Sigma})
}
\leq
Cm^{-1}
\end{equation*}
(the error of order $m^{-1}$ arising from the comparison
of the metrics $d\sigma^2+d\theta^2$ and $d\sigma^2+(1-\sigma)^2 d\theta^2$
on $\{\sigma \leq \frac{5}{m}\}$),
while from items
\ref{itm:geo_ests_disc:flat}
and \ref{itm:geo_ests_disc:twoff_Phullback},
along with \eqref{eqn:Phi_pullback_metric},
we ensure
\begin{equation*}
\nm[\big]{\abs{\twoff{\Sigma}}_{\met{\Sigma}}^2}_{
  C^1(
    D_i(m^{-4}),
    ~\met{\Sigma}
  )
}
\leq
C\tau_1^{3/2},
\end{equation*}
both by taking $m$ sufficiently large in terms of
$N,\abs{\vec{\zeta}},\abs{\vec{\xi}}$,
and with $C>0$ a constant
independent of $m,\vec{\zeta},\vec{\xi}$.
Using \eqref{eqn:rho_on_cat},
\eqref{eqn:rho_off_cat},
and item \ref{itm:geo_ests_cat:met}
of Lemma \ref{lem:geo_ests_cat},
the above two displayed estimates imply
items \ref{itm:geo_ests_disc:conf_met}
and \ref{itm:geo_ests_disc:twoff}.

Finally,
noting the choice of direction
in defining the (scalar-valued) mean curvature
at the end of the statement
of the lemma
and referring to the definition
\eqref{eqn:wbar_def}
of $\wbar_i$,
we obtain
item \ref{itm:geo_ests_disc:mc}
from the estimates
\eqref{eqn:dsc_mc_est}
and \eqref{eqn:disloc_mc_est}
(and from item \ref{itm:geo_ests_disc:flat}).
In order
to pass from the estimates
with respect to $m^2(d\sunder^2+d\thunder^2)$
made in \eqref{eqn:dsc_mc_est} and \eqref{eqn:disloc_mc_est}
to the desired estimate with respect
to $\rho^2\met{\Sigma}$
we apply
item \ref{itm:geo_ests_disc:conf_met},
\eqref{eqn:varpi_on_Phi},
and \eqref{eqn:Phi_pullback_metric}.
\end{proof}

\paragraph{On the construction of free boundary minimal stackings of any order -- linear theory.}

For any initial surface $\Sigma$,
each integer $k \geq 0$,
each $\alpha,\beta \in \interval{0,1}$,
any $u \in C^{k,\alpha}(\Sigma)$,
and any $f \in C^{k,\alpha}(\partial\Sigma)$
we define
\begin{equation}
\label{eqn:global_norms}
\begin{aligned}
\nm{u}_{k,\alpha,\beta}
&\vcentcolon=
  \nm{m^{-\beta}\rho^{\beta}u}_{
    C^{k,\alpha}(\Sigma,\rho^2\met{\Sigma})
    },
\\
\nm{f}_{k,\alpha,\beta}
&\vcentcolon=
  \nm{m^{-\beta}\rho^{\beta}f}_{
    C^{k,\alpha}(\partial\Sigma,\rho^2\met{\Sigma})
    },
\\
\nm{u}'_{\alpha,\beta}
&\vcentcolon=
  \nm{u}_{0,\alpha,\beta}
  +
  m^2\nm{u}_{
      C^{0,\alpha}(
        \Sigma
        \cap 
        \{
          \sqrt{x^2+y^2} \leq 1-10/m
        \}, ~
        m^2\met{\Sigma} 
        )
      },
\\
\nm{(u,f)}'_{\alpha,\beta}
&\vcentcolon=
  \nm{u}'_{\alpha,\beta}
    +\nm{f}_{1,\alpha,\beta}.
\end{aligned}
\end{equation}
Now we can state our global estimate of the initial mean curvature.

\begin{lemma}[Estimate of the initial mean curvature]
\label{lem:initial_mc}
For any integer $N \geq 2$,
any parameters
$\vec{\zeta} \in \R^{N-1}_{\updownarrow+}$
and
$\vec{\xi} \in \R^{N-1}_{\updownarrow-}$,
and any reals
$\alpha,\beta \in \interval{0,1}$
there exist
$m_0=m_0(N,\abs{\vec{\zeta}}+\abs{\vec{\xi}})>0$
and $C=C(N,\alpha,\beta)>0$
such that 
on every initial surface
$\Sigma_{N,m,\vec{\zeta},\vec{\xi}}$ with $m>m_0$
there holds
\begin{equation*}
\nm{
  \rho^{-2}H^\Sigma
  -\vec{\disloc} \cdot \vec{\wbar}
}'_{\alpha,\beta}
\leq
C\tau_1.
\end{equation*}
\end{lemma}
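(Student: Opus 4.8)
The plan is to obtain the global estimate by patching together the local mean‑curvature estimates already established on the catenoidal necks and on the complementary disc regions, after two observations that dispose of the two summands in $\nm{\cdot}'_{\alpha,\beta}$. Throughout put $F=\rho^{-2}H^\Sigma-\vec{\disloc}\cdot\vec{\wbar}$.

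\emph{The core summand vanishes.} First I would check that $F\equiv 0$ on $\Sigma\cap\{\sqrt{x^2+y^2}\leq 1-10m^{-1}\}$. By the height bound \eqref{eqn:height_bound} together with \eqref{eqn:tau_size}, this set lies for $m$ large inside $(1-3m^{-1})\B^3$, where $\twoff{\Sigma}=0$ and hence $H^\Sigma=0$ by Lemma~\ref{lem:geo_ests_disc}\ref{itm:geo_ests_disc:flat}; and each $\wbar_i$ vanishes there, because by \eqref{eqn:varpi_on_core} the restriction of $\varpi_i$ to $\{\sqrt{x^2+y^2}\leq 1-10m^{-1}\}$ is $(x,y,z)\mapsto(x,y,0)$, whose image lies in $\{\sqrt{x^2+y^2}<1-2m^{-1}\}$, off the support of $\widehat{\overline{v}}$ and therefore off that of $(\partial_x^2+\partial_y^2)\widehat{\overline{v}}$ by \eqref{eqn:vbarhat_ess_def} and \eqref{eqn:wbar_def}. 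So the second summand of $\nm{\cdot}'_{\alpha,\beta}$ contributes nothing, and it remains to bound $\nm{F}_{0,\alpha,\beta}=\nm{m^{-\beta}\rho^\beta F}_{C^{0,\alpha}(\Sigma,\rho^2\met{\Sigma})}$ by $C\tau_1$.

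\emph{The two regions, and patching.} On each neck $K_i$ one reads off from \eqref{def:kappa} that $\sigma\leq(2m)^{-1}$ and $\abs{\theta-(-1)^{i-1}\tfrac{\pi}{2m}}\leq(2m)^{-1}$, so that, in view of \eqref{eqn:varpi_on_Phi}, \eqref{eqn:cat_cutoffs} and \eqref{eqn:vbarhat_ess_def}, $\widehat{\overline{v}}$ is locally constant (in fact $\equiv\pm1$) near the relevant images $\varpi_\ell(K_i)$; hence each $\wbar_\ell$ vanishes on every $K_i$ (by this local constancy when $D_\ell\supset K_i$, and trivially by support otherwise), so $\vec{\disloc}\cdot\vec{\wbar}=0$ on $\pyr_m\bigcup_j K_j$. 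There $F=\rho^{-1}(\rho^{-1}H^\Sigma)$, and since $\kappa_i^*\rho^{-1}=\tau_i\cosh t\leq(2m)^{-1}$ while, by \eqref{eqn:rho_on_cat}, $\rho^{-1}\abs{d\rho}_{\rho^2\met{\Sigma}}\leq C$, the weight $m^{-\beta}\rho^{\beta-1}$ (using $\beta<1$ and $\rho\geq 2m$ on $K_i$) has $C^1(K_i,\rho^2\met{\Sigma})$‑norm $\leq Cm^{-1}$; combining this with Lemma~\ref{lem:geo_ests_cat}\ref{itm:geo_ests_cat:mc} and the product rule for Hölder norms gives $\nm{m^{-\beta}\rho^\beta F}_{C^{0,\alpha}(K_i,\rho^2\met{\Sigma})}\leq Cm^{-1}\tau_1$. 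On $D_i((4m)^{-1})$, disjointness of the supports of the $\wbar_\ell$ gives $\vec{\disloc}\cdot\vec{\wbar}=\disloc_i\wbar_i$, so $F=\rho^{-2}(H^\Sigma-\rho^2\disloc_i\wbar_i)$; by \eqref{eqn:rho_on_cat}--\eqref{eqn:rho_off_cat} one has $\tfrac{m}{3}\leq\rho\leq 4m$ here, whence $\nm{m^{-\beta}\rho^{\beta-2}}_{C^1(D_i((4m)^{-1}),\rho^2\met{\Sigma})}\leq Cm^{-2}$, and Lemma~\ref{lem:geo_ests_disc}\ref{itm:geo_ests_disc:mc} yields $\nm{m^{-\beta}\rho^\beta F}_{C^{0,\alpha}(D_i((4m)^{-1}),\rho^2\met{\Sigma})}\leq Cm^{-2}\cdot m^2\tau_1=C\tau_1$. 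Since $\Sigma=\bigl(\pyr_m\bigcup_j K_j\bigr)\cup\bigl(\bigcup_i D_i((4m)^{-1})\bigr)$, with the two families overlapping along the transition annuli $\pyr_m\bigcup_j\overline{K_j\setminus K_j((4m)^{-1})}$ -- which have $O(1)$ diameter in $\rho^2\met{\Sigma}$ -- these local Hölder bounds patch (through a cutoff whose gradient is bounded in $\rho^2\met{\Sigma}$) to $\nm{F}_{0,\alpha,\beta}\leq C\tau_1$; $\G$‑invariance of $\rho$ and of the norms reduces everything to one representative of each $K_i$ and $D_i$, so $C=C(N,\alpha,\beta)$ and $m_0$ absorbs the finitely many thresholds (note $m^q\tau_1\to 0$ for all $q$ by \eqref{eqn:tau_size}).

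\emph{Expected main obstacle.} The genuine content is confined to the two vanishing facts -- $H^\Sigma\equiv 0$ and $\wbar_i\equiv 0$ on the core, $\vec{\disloc}\cdot\vec{\wbar}\equiv 0$ on $\pyr_m\bigcup_j K_j$ -- which is exactly what makes the otherwise delicate obstruction term $\vec{\disloc}\cdot\vec{\wbar}$ drop out precisely where it could not be controlled head‑on, and to the correct reconciliation of the three weightings in play ($\rho^{-1}H^\Sigma$ against $\rho^2\met{\Sigma}$, $H^\Sigma-\rho^2\disloc_i\wbar_i$ against $\rho^2\met{\Sigma}$, and the $m^{-\beta}\rho^\beta$‑ and $m^2$‑weightings defining $\nm{\cdot}'_{\alpha,\beta}$), for which the pointwise control of $\rho$ in \eqref{eqn:rho_on_cat}--\eqref{eqn:rho_off_cat} is exactly what is needed; the remaining ingredients ($C^1\hookrightarrow C^{0,\alpha}$ on domains of bounded geometry, and the patching across transition annuli) are routine. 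The heavy computations are already packaged in Lemmas~\ref{lem:geo_ests_cat} and \ref{lem:geo_ests_disc} and in Appendix~\ref{app:geo_ests}.
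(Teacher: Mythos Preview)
Your proof is correct and follows the same route as the paper, which simply says the estimate is ``a straightforward consequence'' of Lemma~\ref{lem:geo_ests_cat}\ref{itm:geo_ests_cat:mc}, Lemma~\ref{lem:geo_ests_disc}\ref{itm:geo_ests_disc:flat}, Lemma~\ref{lem:geo_ests_disc}\ref{itm:geo_ests_disc:mc}, and the definition \eqref{eqn:global_norms} of the norm. You have faithfully unpacked that one sentence: your vanishing observations (that $H^\Sigma$ and $\vec{\wbar}$ both vanish on the flat core, and that $\vec{\disloc}\cdot\vec{\wbar}$ vanishes on the necks because $\widehat{\overline{v}}$ is locally constant there) and your bookkeeping of the $\rho$-weights are exactly the ingredients the paper leaves implicit.
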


\begin{proof}
The estimate is a straightforward consequence
of
Lemma~\ref{lem:geo_ests_cat} \ref{itm:geo_ests_cat:mc},
Lemma~\ref{lem:geo_ests_disc} \ref{itm:geo_ests_disc:flat},
Lemma~\ref{lem:geo_ests_disc}  \ref{itm:geo_ests_disc:mc},
and the definition
of the $\nm{\cdot}'_{\alpha,\beta}$ norm
in \eqref{eqn:global_norms}.
\end{proof}

We shall also append the suffix $\G$
to the notation for a given function space
over $S \in \{\Sigma, \partial \Sigma\}$
to denote the corresponding subspace
of $\G$-equivariant functions:
\begin{align*}
C^{k,\alpha}_\G(S)
&\vcentcolon=
\{
  u \in C^{k,\alpha}(S)
  \st
  \forall \rot \in \G
  \quad
  u = (\nrmlsgn{\nu^{\Sigma}}(\rot))(u \circ \rot^{-1})
\}
&
(S&\in \{\Sigma, \partial \Sigma\}),
\end{align*}
where $\nrmlsgn{\nu^{\Sigma}}(\rot)=1$
if the symmetry $\rot$ preserves each global unit normal for $\Sigma$,
while $\nrmlsgn{\nu^{\Sigma}}(\rot)=-1$
if instead $\rot$ exchanges the two global unit normals.
At this point we fix also,
for each initial surface
$\Sigma_{N, m, \vec{\zeta}, \vec{\xi}}$,
a smooth diffeomorphism
\begin{equation*}
P_{\vec{\zeta},\vec{\xi}}
=
P_{N,m,\vec{\zeta},\vec{\xi}}
\colon
\Sigma_{N, m, \vec{0}, \vec{0}}
\to
\Sigma_{N, m, \vec{\zeta}, \vec{\xi}}
\end{equation*}
such that
$P_{\vec{0},\vec{0}}$ is the identity,
$P_{\vec{\zeta},\vec{\xi}}$ commutes with $\G$,
$P_{\vec{\zeta},\vec{\xi}}$ regarded as a map into $\B^3$
depends smoothly on
$(\vec{\zeta},\vec{\xi}\,)$,
and moreover 
for any integers $N \geq 2$,
$k \geq 0$
and any reals $c>0$,
$\alpha, \beta \in \interval{0,1}$
there exist $m_0=m_0(c,k,\alpha,\beta,N)>0$
and $C=C(c,k,\alpha,\beta,N)$
such that for every integer $m>m_0$,
all
$\vec{\zeta} \in \IntervaL{-c,c}^{N-1}_{\updownarrow+}$,
$\vec{\xi} \in \IntervaL{-c,c}^{N-1}_{\updownarrow-}$,
and any functions
$
 u
 \in
 C^{k,\alpha}(\Sigma_{N,m,\vec{\zeta},\vec{\xi}})
$,
$
 v
 \in
 C^{k,\alpha}(\Sigma_{N,m,\vec{0},\vec{0}})
$
\begin{align}
\label{eqn:diffeo_pullback_ests}
\nm{P_{\vec{\zeta},\vec{\xi}~}^*u}_{k,\alpha,\beta}
&\leq
  C\nm{u}_{k,\alpha,\beta},
&
\nm{P_{\vec{\zeta},\vec{\xi}}^{-1*}v}_{k,\alpha,\beta}
&\leq
  C\nm{v}_{k,\alpha,\beta}.
\end{align}

\begin{figure}%
\centering
\pgfmathsetmacro{\xscalepar}{6}
\pgfmathsetmacro{\mpar}{20}
\begin{tikzpicture}[semithick,line cap=round,line join=round,scale=3,xscale=\xscalepar,baseline={(0,0)},
declare function={wi(\s)=cutoff(2*\mpar*abs(\s-3/\mpar));}
] 
\draw[->](0,0)--(1/3+0.15/\xscalepar,0)node[above]{$\sigma$};
\draw[->](0,-0)--(0,1.15);
\draw plot[plus] (0,0)node[below]{$0$};
\draw plot[hdash](0,1)node[left]{$1$};
\draw plot[vdash] (4/\mpar,0)node[below]{$\frac{4}{m}$};
\draw plot[vdash] (3/\mpar,0)node[below]{$\frac{3}{m}$};
\draw plot[vdash] (2/\mpar,0)node[below]{$\frac{2}{m}$};
\draw plot[vdash] (1/3,0)node[below]{$\frac{1}{3}$}; 
\draw[dotted](3/\mpar,0)--++(0,1); 
\draw[thick,domain=2/\mpar:4/\mpar,samples=200,smooth,variable=\x]
(0,0)--plot({\x},{wi(\x)})--(1/3,0);
\end{tikzpicture}
\caption{Profile of the cutoff function $\Psi(2m\lvert{\sigma-\frac{3}{ms}}\rvert)$ employed in \eqref{eqn:w_ess_def}.}%
\label{fig:w_ess_def}%
\end{figure}

For each $D_i \subset \Sigma$ we define
$w_i \in C^\infty(\Sigma)$
with support contained in $D_i$
to be the unique $\pyr_m$-invariant
function on $D_i$
with support contained in
$\Phi(\{\frac{2}{m} \leq \sigma \leq \frac{4}{m}\})$
and satisfying 
\begin{equation}
\label{eqn:w_ess_def}
\forall (\sigma,\theta) \in \Lambda^\Phi_m
\quad
w_i\bigl(\varpi_i^{-1}(\Phi(\sigma,\theta,0))\bigr)
=
(-1)^{i-1}\Psi\bigl(2m\abs{\sigma-\tfrac{3}{m}}\bigr),
\end{equation}
for which we recall
that $\Psi \colon \R \to \R$
is a fixed cutoff function
that is constantly $1$
on $\intervaL{-\infty,1}$
and constantly $0$
on $\Interval{2,\infty}$ 
(see Figure \ref{fig:w_ess_def}).
For any given $\vec{V} \in \R^N$ we write
\begin{equation*}
\vec{V} \cdot \vec{w}
\vcentcolon=
\sum_{i=1}^N V_i w_i.
\end{equation*}
Note that
$
\vec{V} \in \R^N_{\updownarrow-}
$ implies that $\vec{V} \cdot \vec{w}$ is $\G$-equivariant.

Before proceeding further, let us pause for introducing an important piece of notation: given an initial surface $
 \Sigma=
 \Sigma_{N, m, \vec{\zeta}, \vec{\xi}}
$ we let $J_{\Sigma}$ denote its Jacobi operator, i.\,e. we set
$J_{\Sigma}:=\Delta_{\Sigma}+\abs{\twoff{\Sigma}}^2$; furthermore, we define the Robin operator $\eta^{\Sigma}-1$ where $\eta^{\Sigma}$, we recall, is the outward unit conormal to $\Sigma\subset\B^3$.

\begin{proposition}
[Global solution operator modulo cokernel]
\label{pro:global_inverse}
Let $N \geq 2$ be an integer,
$c>0$,
$\alpha, \beta \in \interval{0,1}$.
There exist $m_0,C>0$ such that
on every initial surface
$
 \Sigma=
 \Sigma_{N, m, \vec{\zeta}, \vec{\xi}}
$
with $\abs{\vec{\zeta}}+\abs{\vec{\xi}}<c$ 
and $m>m_0$
there is a linear map
\begin{equation*}
R_\Sigma
\colon
C^{0,\alpha}_\G(\Sigma)
  \oplus
  C^{1,\alpha}_\G(\partial\Sigma)
\to
C^{2,\alpha}_\G(\Sigma)
  \oplus
  \R^N_{\updownarrow-}
  \oplus
  \R^{N,0,0}_{\updownarrow+}
\end{equation*}
such that
for any $(E,f)$ in its domain 
the corresponding image
$(u,\vec{\mu},\vec{\mubar}) \vcentcolon= R_\Sigma(E,f)$
satisfies
\begin{align*}
\rho^{-2}J_{\Sigma}u
&=
  E
    +\vec{\mu} \cdot \vec{w}
    +\vec{\mubar} \cdot \vec{\wbar},
&
\rho^{-1}(\eta^{\Sigma}u-u)&=f,
&
\nm{u}_{2,\alpha,\beta}
    +\abs{\vec{\mu}}
    +\abs{\vec{\mubar}}
&\leq
  C\nm{(E,f)}'_{\alpha,\beta}.
\end{align*}
Furthermore,
for any data as above,
we have continuity of the map
\begin{align*} 
C_\G^{0,\alpha}(\Sigma_{N,m,\vec{0},\vec{0}}) 
\oplus
C_\G^{1,\alpha}(\partial \Sigma_{N,m,\vec{0},\vec{0}})
\oplus
[-c,c]^{N-1}_{\updownarrow+}
\oplus
[-c,c]^{N-1}_{\updownarrow-}
&\to
C_\G^{2,\alpha}(\Sigma_{N,m,\vec{0},\vec{0}})
\oplus \R^N_{\updownarrow-}
\oplus \R^{N,0,0}_{\updownarrow+} 
\\[.5ex]
\bigl(E, f, \vec{\zeta}, \vec{\xi}\,\bigr)
&\mapsto\bigl(P_{\vec{\zeta}, \vec{\xi}~}^*u,\vec{\mu}, \vec{\mubar}\bigr),
\shortintertext{where $(u, \vec{\mu}, \vec{\mubar})
  \vcentcolon=
  R_{\Sigma_{\vec{\zeta},\vec{\xi}}}
    P_{\vec{\zeta},\vec{\xi}}^{-1*}
    (E,f)$. }
\end{align*}
\end{proposition}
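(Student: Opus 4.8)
The plan is to follow the now-standard strategy of semilocal linear gluing in the spirit of \cite{KapouleasWiygul17}, adapted to the free boundary setting as in \cite{CSWnonuniqueness}: build a global parametrix by patching together model solution operators associated to the catenoidal ribbons $K_i$ and to the disc regions $D_i$, modulo an explicit finite-dimensional substitute cokernel, and then remove the resulting residual error by a Neumann series. The weighted H\"older norms \eqref{eqn:global_norms}, with their conformal factor $\rho$ and their powers of $m$, are calibrated precisely so that every geometric and commutator error is $o(1)$ as $m\to\infty$. For the model operator on a ribbon $K_i$, rescaling by $\tau_i$ and passing to the cylindrical coordinates $(t,\vartheta)$ of \eqref{def:kappa} turns $\rho^{-2}J_\Sigma$ into a perturbation --- of size $O(\tau_1)$ in the relevant norms, by Lemma~\ref{lem:geo_ests_cat} --- of the exact catenoid Jacobi operator $\partial_t^2+\partial_\vartheta^2+2\sech^2 t$ on $[-a_i,a_i]\times[-\pi/2,\pi/2]$, subject, along the two edges lying on $\Sp^2$, to a Robin condition that to leading order is Neumann (any half-catenoid meets a plane through its axis orthogonally); after doubling to a closed angular variable one solves the model equation modulo the span of the dilation- and vertical-translation-type Jacobi fields of the catenoid, with the growth controlled by the weight $\beta$. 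For the model operator on a disc region $D_i$, the diffeomorphism $\varpi_i$ together with items \ref{itm:geo_ests_disc:flat}, \ref{itm:geo_ests_disc:conf_met}, \ref{itm:geo_ests_disc:twoff} of Lemma~\ref{lem:geo_ests_disc} identify $\rho^{-2}J_\Sigma$ with an $O(m^{-1})$ perturbation of the flat Laplacian on $\B^2$ subject to the Robin condition $\partial_r-1$, whose homogeneous version has kernel only $\mathrm{span}\{x,y\}$; this is killed by the imposed $\pyr_m$-equivariance (already the $\Z_m$-rotation invariance suffices, for $m\geq 3$), so the $\G$-equivariant disc model is uniquely and boundedly solvable.

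The two families of model problems cannot be matched across the annular transition regions without allowing a finite-dimensional obstruction, and the content of the statement is that the functions $w_i$ of \eqref{eqn:w_ess_def} and $\wbar_i$ of \eqref{eqn:wbar_def} furnish exactly the one required: $w_i$, supported at the catenoid-to-disc interface $\Phi(\{2/m\leq\sigma\leq 4/m\})$, is dual to the approximate kernel element adjusting the relative position of neck and disc and accounts for the factor $\vec{\mu}\in\R^N_{\updownarrow-}$, while $\wbar_i$, supported near the equatorial circle, carries the leading mean-curvature contribution of the dislocation (cf.\ Lemma~\ref{lem:geo_ests_disc}\,\ref{itm:geo_ests_disc:mc}) and accounts for $\vec{\mubar}\in\R^{N,0,0}_{\updownarrow+}$ (the constraint $v_1=v_N=0$ reflecting that the top and bottom discs carry no dislocation). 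Cutting off the semilocal solutions by the fixed functions of \eqref{eqn:cat_cutoffs} and \eqref{eqn:w_ess_def} --- supported where, by Lemmas~\ref{lem:geo_ests_cat} and \ref{lem:geo_ests_disc}, all metric comparisons are uniformly good --- and summing, one obtains a linear map $\widetilde R_\Sigma$ producing $(u,\vec{\mu},\vec{\mubar})$ with
\begin{equation*}
\rho^{-2}J_\Sigma u = E + \vec{\mu}\cdot\vec{w} + \vec{\mubar}\cdot\vec{\wbar} + \mathcal{E},\qquad \rho^{-1}(\eta^\Sigma u-u) = f + \mathcal{E}',
\end{equation*}
where the residual $(\mathcal{E},\mathcal{E}')$ collects the cutoff commutators (each carrying a cutoff derivative, hence a gain of $m^{-1}$ in the weighted norms) and the discrepancy between $\rho^{-2}J_\Sigma$ and the model operators, and is bounded by $C\epsilon_m\nm{(E,f)}'_{\alpha,\beta}$ with $\epsilon_m\to 0$; the model estimates together with standard Schauder theory additionally yield the a priori bound $\nm{u}_{2,\alpha,\beta}+\abs{\vec{\mu}}+\abs{\vec{\mubar}}\leq C\nm{(E,f)}'_{\alpha,\beta}$.

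For $m$ large enough that $C\epsilon_m<1$, iterating $\widetilde R_\Sigma$ against the successive residuals produces geometrically convergent series for $u,\vec{\mu},\vec{\mubar}$ and defines the exact operator $R_\Sigma$ with the asserted estimate; $\G$-equivariance is preserved at every step, since all cutoffs and model operators respect the symmetry and the normal-sign twisting built into $C^{k,\alpha}_\G$. For the parameter-dependent continuity assertion one transports the whole construction to the fixed domain $\Sigma_{N,m,\vec{0},\vec{0}}$ by means of the diffeomorphisms $P_{\vec{\zeta},\vec{\xi}}$ and invokes the estimates \eqref{eqn:diffeo_pullback_ests}: the cutoffs are parameter-independent, the data feeding the model operators (the lengths $a_i$, the heights $h^B_i$, $h^K_i$, the conformal factor $\rho$) depend continuously on $(\vec{\zeta},\vec{\xi}\,)$ via their defining formulae \eqref{eqn:taubar_def}--\eqref{eqn:dislocated_matching}, and the Neumann series converges uniformly over $[-c,c]^{N-1}_{\updownarrow+}\times[-c,c]^{N-1}_{\updownarrow-}$; hence the composite map $(E,f,\vec{\zeta},\vec{\xi}\,)\mapsto(P_{\vec{\zeta},\vec{\xi}}^*u,\vec{\mu},\vec{\mubar})$ is continuous.

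The principal difficulty is the precise bookkeeping of the substitute cokernel: one must verify that the ``force'' directions $\vec{w}$ and the ``dislocation'' directions $\vec{\wbar}$ are exactly the obstructions rendering every model problem --- and then the patched global problem --- solvable, with no residual approximate kernel surviving beyond what is carried by the $P_{\vec{\zeta},\vec{\xi}}$-family and is later absorbed into the parameters $\vec{\zeta}$, $\vec{\xi}$ themselves. Interlocked with this is the calibration of the weights so that the analysis on the long thin cylinders modelling the ribbons $K_i$ --- whose lengths $a_i=\arcosh\frac{1}{2m\tau_i}$ diverge as $m\to\infty$ --- together with every commutator and geometric error, remains uniformly small in $m$; checking that these exponents fit together consistently is the technical heart of the argument.
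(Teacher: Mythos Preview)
Your high-level plan (local parametrices on the ribbons and the discs, cutoff patching, Neumann iteration) is the paper's plan, but the mechanism by which the substitute cokernel enters is misidentified, and this leads to a genuine gap.

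In the paper the catenoid piece carries \emph{no} cokernel: Lemma~\ref{lem:cat_model_sol} furnishes a bounded right inverse $R_K$ for $L_K$ on the infinite half-strip in the weighted space, and one simply applies it to $\Psi_K E$ --- there is no ``solving modulo dilation- and translation-type Jacobi fields'' on the ribbon side. Conversely, the disc model is \emph{not} the Robin problem $\partial_r u-u=f$ (whose equivariant kernel is indeed trivial) but the pure Neumann problem \eqref{eqn:disc_bvp}: after the conformal rescaling by $\rho$ the zeroth-order boundary term is $O(m^{-1})$ and is thrown into the iteration. The Neumann Laplacian on $\B^2$ has a one-dimensional cokernel (the constants), and $\mu_i$ is precisely the scalar making the solvability condition \eqref{eqn:disc_bvp_solvability} hold; that is the origin of $\vec{\mu}\cdot\vec{w}$.

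The step your outline actually misses --- and without which the iteration does \emph{not} close --- is the improved decay \eqref{eqn:uD_decay} of the disc solution near the catenoid attachment points. The cutoff $1-\Psi_D=\Psi\circ\frac{m^2}{\rho}$ transitions where $\rho\sim m^2$; with only the basic bound $\nm{u_D}_{2,\alpha,\beta}\leq C$ the commutator error there has size $m^{-\beta}\rho^{\beta}\cdot(\rho/m)^{-\beta}\sim 1$ in $\nm{\cdot}'_{\alpha,\beta}$, so your claimed ``gain of $m^{-1}$ from cutoff derivatives'' is false at exactly this transition. The paper arranges smallness by forcing each $u_i$ to \emph{vanish} at the points of $\Sp^1$ hit by the catenoid axes. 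For the outermost discs one vanishing suffices and is absorbed by the free additive constant \eqref{eqn:disc_bvp_first_vanishing}; for each interior disc two conditions are needed, and the second is enforced by adding $\bar{\mu}_i\widehat{\overline{v}}$ via \eqref{eqn:disc_bvp_second_vanishing} --- this is the true origin of $\vec{\mubar}\cdot\vec{\wbar}$ and explains at once why $\bar{\mu}_1=\bar{\mu}_N=0$. Since $u_i$ is then harmonic and vanishing at those points, standard estimates give $\nm{u_D}_{2,\alpha,(1+\beta)/2}\leq C\nm{(E,f)}'_{\alpha,\beta}$, and the transition error drops to $O(m^{(\beta-1)/2})$, which is what drives the Neumann series.
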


\begin{proof}
Suppose
$
(E,f)
\in
C^{0,\alpha}_\G(\Sigma)
  \oplus
  C^{1,\alpha}_\G(\partial\Sigma)
$,
and recall that $\Psi \colon \R \to \R$
is a fixed smooth function
that is constantly $1$
on $\{x \leq 1\}$
and $0$ on $\{x \geq 2\}$.
Set
\begin{align*}
\Psi_K
&\vcentcolon=
\Psi \circ \tfrac{8m}{\rho},
&
E_K
&\vcentcolon=
\Psi_K E,
&
f_K
&\vcentcolon=
 \Psi_K f,
\end{align*}
and, recalling Lemma \ref{lem:cat_model_sol} that concerns the construction of the resolvent $R_K$ on half-catenoids,
define
$u_{K_i} \colon \Sigma \to \R$
(for $1 \leq i \leq N-1$)
and
$u_K \colon \Sigma \to \R$
by
\begin{align*}
u_{K_i}
  &\vcentcolon=
  \kappa_i^{-1*}R_K(\kappa_i^*E_K, ~ \kappa_i^*f_K),
&
u_K|_{K_i}
  &\vcentcolon=
  \Psi_K u_{K_i}
\end{align*}
for each $K_i$.
It follows easily that
$u_K \in C^{2,\alpha}_\G(\Sigma)$
with
\begin{equation}
\label{eqn:uK_est}
\nm{u_K}_{2,\alpha,\beta}
\leq
C\nm{(E,f)}'_{\alpha,\beta};
\end{equation}
here and throughout the proof
$C>0$ is a constant whose value
may vary from instance to instance
but can be chosen
independently of $m$, $\vec{\zeta}$, $\vec{\xi}$, and $c$,
so that all asserted estimates hold.

Moreover,
defining
$E_\Psi \colon \Sigma \to \R$
and $f_\Psi \colon \partial \Sigma \to \R$
by
\begin{align*}
E_\Psi|_{K_i}
&\vcentcolon=
[\rho^{-2}J_\Sigma, \Psi_K]u_{K_i}
  +(\Psi_K-1)E,
\\
f_\Psi|_{K_i}
&\vcentcolon=
[\rho^{-1}, \Psi_K]u_{K_i}
  +(\Psi_K-1)f
\end{align*}
(the first term in each case being the commutator
of the two enclosed operators)
and using items
\ref{itm:geo_ests_cat:rho},
\ref{itm:geo_ests_cat:met},
and \ref{itm:geo_ests_cat:twoff}
of Lemma \ref{lem:geo_ests_cat}
in order to control
$
\kappa_i^*\rho^{-2}J_\Sigma
-L_K \kappa_i^*
$
and
$
\kappa_i^*\rho^{-1}(\eta^\Sigma-1)
-\eta^{\catdom_\infty} \kappa_i^*
$,
we also have
\begin{equation}
\begin{aligned}
\label{eqn:uK_error_est}
\nm[\big]{
\bigl(
    \rho^{-2}J_\Sigma u_K
      -E_K - E_\Psi, ~
    \rho^{-1}(\eta^\Sigma-1)u_K
      -f_K - f_\Psi
\bigr)
}'_{\alpha,\beta}
&\leq
Cm^{-1}\nm{(E,f)}'_{\alpha,\beta},
\\
\nm{(E_\Psi, f_\Psi)}'_{\alpha,\beta}
&\leq
C\nm{(E,f)}'_{\alpha,\beta}.
\end{aligned}
\end{equation}
Next set
\begin{align*}
E_D&\vcentcolon=E - E_K - E_\Psi,
&
f_D&\vcentcolon=F - F_K - F_\Psi,
&
\Psi_D&\vcentcolon=\Psi \circ \tfrac{m^2}{\rho}
\end{align*}
and define $u_D \in C^{2,\alpha}(\Sigma)$ by
\(
u_D|_{D_i}
  \vcentcolon=
  (1-\Psi_D) \varpi_i^*u_i
\)
for each $D_i$, with $u_i \in C^{2,\alpha}(\B^2)$ constructed as follows.
First, define
$E_i \in C^{0,\alpha}(\B^2)$ and 
$f_i \in C^{1,\alpha}(\partial \B^2)$
by
\begin{align*}
E_i
&\vcentcolon=
    \varpi_i^{-1*}(\rho^2 E_D),
&
f_i
&\vcentcolon=
  \varpi_i^{-1*}(\rho f_D),
\end{align*}
with both functions extended to
take the constant value zero
outside the image of $\varpi_i$.
Next,
working with standard Cartesian
$(x,y)$ and polar $(r,\theta)$
coordinates
on $\B^2$,
let $\mu_i$
be the unique real such that
\begin{equation}
\label{eqn:disc_bvp_solvability}
\int_{\B^2}
\Bigl(E_i + \mu_i\varpi_i^{-1*}(\rho^2 w_i)\Bigr)
  \, dx \, dy
=\int_{\Sp^1} f_i \, d\theta,
\end{equation}
and let $\widetilde{u}_i \in C^{2,\alpha}(\B^2)$
be the unique solution to the Neumann problem
\begin{equation}
\label{eqn:disc_bvp}
\left\{
\begin{aligned}
(\partial_x^2+\partial_y^2)\widetilde{u}_i
  &=E_i + \mu_i\varpi_i^{-1*}(\rho^2 w_i)
    && \text{ in } \B^2 \\
  \partial_r\widetilde{u}_i
  &=f_i
    && \text{ on } \ \Sp^1=\partial\B^2
\end{aligned}
\right.
\end{equation}
such that
\begin{equation}
\label{eqn:disc_bvp_first_vanishing}
\begin{aligned}
  \widetilde{u}_1\bigl(\cos\tfrac{\pi}{2m}, \sin\tfrac{\pi}{2m}\bigr)
  &=0
  \quad
  &&(i=1),
\\
  \widetilde{u}_i\bigl(\cos\tfrac{\pi}{2m}, \sin\tfrac{\pi}{2m}\bigr)
  +\widetilde{u}_i\bigl(\cos\tfrac{\pi}{2m}, -\sin\tfrac{\pi}{2m}\bigr)
  &=0
  \quad
  &&(2 \leq i \leq N-1),
\\
\widetilde{u}_N\bigl(\cos\tfrac{\pi}{2m}, (-1)^N\sin\tfrac{\pi}{2m}\bigr)
&=0
  \quad
  &&(i=N).
\end{aligned}
\end{equation}
Note, referring to \eqref{eqn:w_ess_def}, that
$m^{-1}\abs[\big]{\int_{\B^2} \varpi_i^{-1}(\rho^2 w_i)}$ 
is a strictly positive constant independent of $m$
and that condition \eqref{eqn:disc_bvp_solvability}
is necessary and sufficient for the existence
of a solution to \eqref{eqn:disc_bvp};
any two such solutions differ by a free constant,
so imposition of \eqref{eqn:disc_bvp_first_vanishing}
uniquely determines $\widetilde{u}_i$.
Moreover, we have the estimate
\begin{equation}
\label{eqn:first_soln_on_disc_est}
\abs{\mu_i}
+\nm{
  \widetilde{u}_i
  }_{C^0(\B^2)}
\leq
C\nm{(E,f)}'_{\alpha,\beta},
\end{equation}
for which we emphasize the presence
of the second term in the definition of the
$\nm{\cdot}'_{\alpha,\beta}$ norm
in \eqref{eqn:global_norms}.

Defining
$\vec{G}, \vec{W} \in \R^N$
by
\begin{align*}
G_i&\vcentcolon=\int_{\B^2} E_i \, dx \, dy,
&
W_i&\vcentcolon=\int_{\B^2} \varpi_i^{-1*}(\rho^2w_i) \, dx \, dy,
\shortintertext{note also that}
\vec{G}&\in
  \begin{cases}
    \R^N_{\updownarrow+}
      &\text{ for $N$ even, }
    \\[1ex]
    \R^N_{\updownarrow-}
      &\text{ for $N$ odd, }
  \end{cases}
&
\vec{W}&\in
  \begin{cases}
  \R^N_{\updownarrow-}
      &\text{ for $N$ even, }
    \\[1ex]
    \R^N_{\updownarrow+}
      &\text{ for $N$ odd, }
  \end{cases} 
\end{align*}
whence $\vec{\mu} \in \R^N_{\updownarrow-}$ for $N$ of either parity.
The definition of $u_i$ is finally concluded by taking
\begin{equation}
\label{eqn:disc_bvp_second_vanishing}
\mubar_i
\vcentcolon=
\begin{cases}
0
&\text{ for } i \in \{1,N\}
\\
-\widetilde{u}_i\bigl(\cos \tfrac{\pi}{2m}, \sin \tfrac{\pi}{2m}\bigr)
&\text{ for }  2 \leq i \leq N-1
\end{cases}
\end{equation}
and setting
\(
u_i
\vcentcolon=
\widetilde{u}_i + \mubar_i \widehat{\overline{v}},
\)
recalling \eqref{eqn:vbarhat_ess_def}
for the definition of $\widehat{\overline{v}}$.
From the above observation
that $\vec{\mu} \in \R^N_{\updownarrow-}$,
the $\G$-equivariance assumed of $(E,f)$,
the $\G$-invariance of $\Sigma$,
and the definition \eqref{eqn:varpi_i_def}
of $\varpi$,
it readily follows that
$\mubar \in \R^{N,0,0}_{\updownarrow+}$
(using also the middle condition
in \eqref{eqn:disc_bvp_first_vanishing}
for $N$ odd)
and $u_D$ is $\G$-equivariant.

Additionally,
it is clear that
\eqref{eqn:first_soln_on_disc_est}
continues to hold with
$u_i$ in place of $\widetilde{u}_i$
and with an additional term
of $\abs{\mubar_i}$ added to its left-hand side.
Using standard elliptic estimates,
we then obtain
\begin{align}
\label{eqn:uD_est}
\nm{u_D}_{C^{2,\alpha}(\Sigma)}
  +\abs{\vec{\mu}}
  +\abs{\vec{\mubar}}
&\leq
C\nm{(E,f)}'_{\alpha,\beta}.
\shortintertext{In fact we have the estimate}
\label{eqn:uD_decay}
\nm{u_D}_{2,\alpha,(1+\beta)/2}
&\leq
C\nm{(E,f)}'_{\alpha,\beta},
\end{align}
as we now explain.
First,
note that the construction of
$(E_D,f_D)$
and the definitions
\eqref{eqn:w_ess_def}
and \eqref{eqn:wbar_def}
of the $w_i$ and $\wbar_i$
ensure that $u_i$
is harmonic on
$\varpi_i(\{\rho \geq 8m\})$,
an $O(m^{-1})$-neighborhood
of the points
on $\Sp^1=\partial \B^2$
with angular coordinate
\begin{equation*}
\theta \in
\begin{cases}
\tfrac{\pi}{2m} + \tfrac{2\pi}{m}\Z
  &\mbox{ if }   i=1
\\
\tfrac{\pi}{2m} + \tfrac{\pi}{m}\Z
  &\mbox{ if }   2 \leq i \leq N-1
\\
(-1)^N\tfrac{\pi}{2m} + \tfrac{2\pi}{m}\Z
  &\mbox{ if }   i=N.
\end{cases}
\end{equation*}
Moreover, the choices of $\vec{\mu}$
and $\vec{\mubar}$
in \eqref{eqn:disc_bvp_first_vanishing}
and \eqref{eqn:disc_bvp_second_vanishing}
were made to ensure that $u_i$
vanishes at the above points.
The estimate \eqref{eqn:uD_decay}
then follows from standard estimates
for harmonic functions on $\R^2$;
in fact one has even faster decay,
but \eqref{eqn:uD_decay} suffices
for the purposes of this proof.

Using the rapid decay
\eqref{eqn:uD_decay},
in particular
to control the error induced
by the cutoff factor
in the definition of $u_D$,
and using
\eqref{eqn:varpi_on_core}
and item \ref{itm:geo_ests_disc:flat}
of Lemma \ref{lem:geo_ests_disc}
(corresponding, slightly oversimplifying, 
to the fact that
$\Sigma \cap \{\sqrt{x^2+y^2<1-10m^{-1}}\}$
consists of $N$ exactly horizontal discs)
and also items
\ref{itm:geo_ests_disc:conf_met}
and \ref{itm:geo_ests_disc:twoff}
of this same lemma,
we conclude
\begin{equation*}
\nm[\Big]{
\Bigl(
    \rho^{-2} J_\Sigma u_D
      -E_D 
      -\vec{\mu} \cdot \vec{w}
      -\vec{\mubar} \cdot \vec{\wbar}, ~
    \rho^{-1}(\eta^\Sigma-1) u_D - f_D 
\Bigr)
}'_{\alpha,\beta}
\leq
C\bigl(m^{-1} + m^{(\beta-1)/2}\bigr)
  \nm[\big]{(E,f)}'_{\alpha,\beta}.
\end{equation*}
From this last estimate along with \eqref{eqn:uK_error_est} we see that the maps
\begin{align*}
\widetilde{R}\colon
C^{0,\alpha}_\G(\Sigma)
  \oplus
  C^{1,\alpha}_\G(\partial\Sigma)
&\to
C^{2,\alpha}_\G(\Sigma)
  \oplus
  \R^N_{\updownarrow-}
  \oplus
  \R^{N,0,0}_{\updownarrow+}
\\
(E,f)
&\mapsto
\bigl(u_K+u_D, ~\vec{\mu}, ~\vec{\mubar}\bigr),
\\[1.5ex]
L\colon
C^{2,\alpha}_\G(\Sigma)
  \oplus
  \R^N_{\updownarrow-}
  \oplus
  \R^{N,0,0}_{\updownarrow+}
&\to
C^{0,\alpha}_\G(\Sigma)
  \oplus
  C^{1,\alpha}_\G(\partial\Sigma)
\\
(u', ~\vec{\mu}', ~\vec{\mubar}')
&\mapsto
\bigl(
  \rho^{-2}J_\Sigma u'
    -\vec{\mu}' \cdot \vec{w}
    -\vec{\mubar}' \cdot \vec{\wbar}, ~
  \rho^{-1}(\eta^\Sigma-1)u'
\bigr)
\end{align*}
satisfy
$
\nm{I-L\widetilde{R}}
\leq
Cm^{(\beta-1)/2}
$,
where $I$ is the identity map
on the domain of $\widetilde{R}$
and the norm is the operator norm
when the latter is endowed with
the $\nm{\cdot}'_{\alpha,\beta}$ norm.
Thus $L\widetilde{R}$ is invertible
for $m$ sufficiently large,
and we conclude the proof
by taking
$
R_\Sigma
\vcentcolon=
\widetilde{R}(L\widetilde{R})^{-1}
$.
The claimed estimates
and symmetry assertions
are then clear from the foregoing construction,
while the continuity assertions
follow from the smooth dependence
of the initial surfaces
and the maps $\kappa_i, \varpi_i^{-1}$
(as maps into $\B^3$)
on the $\vec{\zeta},\vec{\xi}$ parameters.
\end{proof}

\begin{remark}[Global solution operator with homogeneous boundary data]
In practice we will apply
Proposition \ref{pro:global_inverse}
only with boundary data $f=0$.
For brevity
we shall write
$R_\Sigma E$ in place of $R_\Sigma(E,0)$
wherever we invoke Proposition \ref{pro:global_inverse}
in the sequel.
\end{remark}

\paragraph{On the construction of free boundary minimal stackings of any order -- nonlinear theory.}
We fix on $\R^3$ a metric
$\auxmet=\Omega^2(dx^2+dy^2+dz^2)$
which is everywhere conformal
to the standard Euclidean metric,
with conformal factor
\begin{equation*}
\Omega
\vcentcolon=
\begin{cases}
1
  &\text{ on } \
  \{
    (x,y,z)\in\R^3
    \st
    \abs{1-x^2-y^2-z^2} \geq 2/3
  \}
\\
(x^2+y^2+z^2)^{-\frac{1}{2}}
  &\text{ on } \
  \{
    (x,y,z)\in\R^3
    \st
    \abs{1-x^2-y^2-z^2} \leq 1/3
  \}.
\end{cases}
\end{equation*}
On each initial surface $\Sigma$
we pick the unit normal $\nu^\Sigma$
which points upward on the lowest disc.
Given a function $u \colon \Sigma \to \R$
we then define $\iota_u \colon \Sigma \to \R^3$
by
\begin{equation*}
\iota_u(p)
\vcentcolon=
\exp^{\auxmet}_p u(p)\nu^\Sigma(p),
\end{equation*}
where $\exp^{\auxmet}_p \colon T_p\R^3 \to \R^3$
is the exponential map of $\auxmet$ at $p$.
If $u$ is $C^2$ and sufficiently small
in terms of $\Sigma$,
then $\iota_u$ is an immersion
into $\B^3$
with well-defined (scalar-valued) mean curvature
$H_u$.
(In particular $H_0=H^\Sigma$.)
We set
\begin{equation}
\label{eqn:Q_def}
Q_u=
Q^{\vec{\zeta},\vec{\xi}}_u
\vcentcolon=
H_u - H_0 - J_\Sigma u.
\end{equation}

\begin{lemma}[Nonlinear
perturbation of the mean curvature]
\label{lem:quadratic_estimate}
Let $N \geq 2$ be a given integer
and $C_0,c>0$
and
$\alpha,\beta \in \interval{0,1}$
given reals.
There exists
$m_0=m_0(N,C_0,c,\alpha,\beta)>0$
such that for every integer $m \geq m_0$,
all
$
 \vec{\zeta}
 \in
 \IntervaL{-c,c}^{N-1}_{\updownarrow+}
$,
$
 \vec{\xi}
 \in
 \IntervaL{-c,c}^{N-1}_{\updownarrow-}
$,
and any real-valued function
$u$ on
$\Sigma=\Sigma_{N, m, \vec{\zeta},\vec{\xi}}$
with $\nm{u}_{2,\alpha,\beta} \leq C_0\tau_1$
we have
\begin{equation*}
\nm[\Big]{
    \rho^{-2}Q^{\vec{\zeta},\vec{\xi}}_u
}'_{\alpha,\beta}
\leq
\taubar_1^{1+\beta/2}.
\end{equation*}
\end{lemma}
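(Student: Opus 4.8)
The plan is to regard $\rho^{-2}Q_u^{\vec{\zeta},\vec{\xi}}$, in view of \eqref{eqn:Q_def}, as the second-order Taylor remainder of the quasilinear operator $u\mapsto\rho^{-2}H_u$, and to estimate it pointwise and in Hölder seminorm separately on the catenoidal regions $K_i$, through the coordinates $\kappa_i$, and on the disc regions $D_i$, through $\varpi_i$, invoking in each case the geometric bounds of Lemma~\ref{lem:geo_ests_cat}, respectively Lemma~\ref{lem:geo_ests_disc}. Write $\widetilde{g}\vcentcolon=\rho^2\met{\Sigma}$ and let $\widetilde{\Delta}$, $\widetilde{\nabla}$ be its Laplacian and Levi-Civita connection; since $\Sigma$ is two-dimensional, $\widetilde{\Delta}=\rho^{-2}\Delta_\Sigma$ exactly, so the linearization of $u\mapsto\rho^{-2}H_u$ at $u=0$ is $\rho^{-2}J_\Sigma=\widetilde{\Delta}+\rho^{-2}\abs{\twoff{\Sigma}}^2$ (with no first-order term, $\R^3$ being flat and irrespective of $H_0$ not vanishing). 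By Taylor's formula with integral remainder, $\rho^{-2}Q_u$ is then, pointwise, a finite sum of monomials of degree at least two in $(\abs{u},\abs{\widetilde{\nabla}u}_{\widetilde g},\abs{\widetilde{\nabla}^2 u}_{\widetilde g})$, each carrying a coefficient that depends smoothly on $(u,\widetilde{\nabla}u,\widetilde{\nabla}^2 u)$ and the local geometry and is bounded in terms of $\nm{\rho^{-2}\abs{\twoff{\Sigma}}^2}_{C^1(\widetilde g)}$, the $C^2$-geometry of $\widetilde g$, the smooth conformal factor of $\auxmet$, and $\nm{u}_{C^2(\widetilde g)}$ --- provided this last quantity stays below a fixed threshold. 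Replacing the Euclidean normal exponential by $\exp^{\auxmet}$ merely alters these smooth bounded coefficients, so it is harmless.

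Into this expansion I would feed the geometric estimates. On each $K_i$, item~\ref{itm:geo_ests_cat:met} of Lemma~\ref{lem:geo_ests_cat} gives that $\widetilde g$ is $C^2$-close to the flat cylinder $dt^2+d\vartheta^2$ in the $\kappa_i$ coordinates, item~\ref{itm:geo_ests_cat:twoff} that $\rho^{-2}\abs{\twoff{\Sigma}}^2=2\sech^2 t+O(\tau_1)$ is uniformly bounded, and \eqref{eqn:rho_on_cat} that $\rho$ has uniformly bounded logarithmic $\widetilde g$-derivatives; on each $D_i$ the analogous facts follow from items~\ref{itm:geo_ests_disc:flat},~\ref{itm:geo_ests_disc:conf_met},~\ref{itm:geo_ests_disc:twoff} of Lemma~\ref{lem:geo_ests_disc} and from \eqref{eqn:rho_off_cat}. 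Hence every coefficient in the expansion of $\rho^{-2}Q_u$ is bounded by some $C(N)$. Next I would unwind the hypothesis $\nm{u}_{2,\alpha,\beta}\leq C_0\tau_1$: by the definition \eqref{eqn:global_norms} of the weighted norm and the bounded logarithmic $\widetilde g$-derivatives of $\rho$, the quantity $\abs{u}+\abs{\widetilde{\nabla}u}_{\widetilde g}+\abs{\widetilde{\nabla}^2 u}_{\widetilde g}$, as well as its $C^{0,\alpha}(\widetilde g)$ counterpart, is dominated by $\varepsilon\vcentcolon=C(N)C_0\tau_1\,m^\beta\rho^{-\beta}$. Since $\rho\geq m/3$ everywhere by \eqref{eqn:rho_on_cat}--\eqref{eqn:rho_off_cat}, we have $\varepsilon\leq C(N)C_0\tau_1$, which by \eqref{eqn:tau_size} lies below the threshold once $m$ is large, so the expansion is valid, and its degree-at-least-two nature gives $\abs{\rho^{-2}Q_u}\leq C(N)\varepsilon^2$, and likewise in $C^{0,\alpha}(\widetilde g)$ (the Hölder seminorm being controlled because $u$ is bounded in the weighted $C^{2,\alpha}(\widetilde g)$ and $\rho^{\pm\beta}$ has bounded $\widetilde g$-$C^{0,\alpha}$ norm). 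Multiplying by the weight $m^{-\beta}\rho^\beta$ and using $\rho\geq m/3$ once more, $\nm{\rho^{-2}Q_u}_{0,\alpha,\beta}=\nm{m^{-\beta}\rho^\beta\rho^{-2}Q_u}_{C^{0,\alpha}(\Sigma,~\widetilde g)}\leq C(N,\alpha,\beta)\,C_0^2\tau_1^2$.

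It remains to control the additional term $m^2\nm{\rho^{-2}Q_u}_{C^{0,\alpha}(\Sigma\cap\{\sqrt{x^2+y^2}\leq 1-10/m\},~m^2\met{\Sigma})}$ in the $\nm{\cdot}'_{\alpha,\beta}$ norm. On that region $\Sigma$ is a union of $N$ exactly flat horizontal discs and $\rho\equiv m$ (by item~\ref{itm:geo_ests_disc:flat} of Lemma~\ref{lem:geo_ests_disc} and \eqref{eqn:rho_def}), so $H_0=0$, $\abs{\twoff{\Sigma}}^2=0$, and $\rho^{-2}Q_u=m^{-2}(H_u-\Delta_\Sigma u)$ is the quadratic remainder of the mean curvature operator on normal graphs over those flat discs; rerunning the Taylor argument in the $m^2\met{\Sigma}$-scale, where the weight $m^{-\beta}\rho^\beta$ equals $1$ so that $\nm{u}_{C^{2,\alpha}(\Sigma,~m^2\met{\Sigma})}\leq\nm{u}_{2,\alpha,\beta}\leq C_0\tau_1$, one finds the corresponding contribution to the $\nm{\cdot}'_{\alpha,\beta}$ norm is again $\leq C(N)C_0^2\tau_1^2$ (any polynomial-in-$m$ loss from the change of scale being absorbed by \eqref{eqn:tau_size}). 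Combining, $\nm{\rho^{-2}Q_u}'_{\alpha,\beta}\leq C(N,\alpha,\beta)C_0^2\tau_1^2$. Finally, \eqref{eqn:tau_def} with $\abs{\vec{\zeta}}\leq c$ gives $\tau_1=\taubar_1\exp(\zeta_n+\zeta_1/m)\leq e^{2c}\taubar_1$, so $C_0^2\tau_1^2=(C_0^2\tau_1^{1-\beta/2})\,\tau_1^{1+\beta/2}\leq(C_0^2\tau_1^{1-\beta/2})\,e^{3c}\taubar_1^{1+\beta/2}$; since $1-\beta/2>0$ and, by \eqref{eqn:tau_size}, $\tau_{1;N,m,\vec{\zeta}}\to 0$ as $m\to\infty$ uniformly for $\abs{\vec{\zeta}}\leq c$, one chooses $m_0=m_0(N,C_0,c,\alpha,\beta)$ large enough that $C(N,\alpha,\beta)C_0^2e^{3c}\tau_{1;N,m,\vec{\zeta}}^{\,1-\beta/2}\leq 1$ for all $m>m_0$, which finishes the proof.

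The genuinely technical step is the first one: extracting, uniformly in $m$, the degree-at-least-two structure of $\rho^{-2}Q_u$ with all coefficients controlled by Lemmas~\ref{lem:geo_ests_cat}--\ref{lem:geo_ests_disc} after the conformal rescaling by $\rho^2$, which requires a careful bookkeeping of powers of $\rho$ and of the discrepancy between $\exp^{\auxmet}$ and the Euclidean exponential. Once that is in place, the insertion of the hypothesis and the elementary arithmetic relating $\tau_1$ to $\taubar_1$ are routine.
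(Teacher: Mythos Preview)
Your overall scheme matches the paper's, but there is a genuine error in the central pointwise estimate. You claim that $\abs{\rho^{-2}Q_u}\leq C(N)\varepsilon^2$ with $\varepsilon=\nm{u}_{C^2(\widetilde g)}$, the coefficients being bounded by $\nm{\rho^{-2}\abs{\twoff{\Sigma}}^2}_{C^1(\widetilde g)}$ and the $C^2$-geometry of $\widetilde g$. This drops a factor of $\rho$. The nonlinear part of $H_u$ does \emph{not} depend on the embedding only through $\abs{\twoff{\Sigma}}^2$; it contains terms linear in $\twoff{\Sigma}$ (for instance $\twoff{\Sigma}(\nabla u,\nabla u)$ and $u\,\twoff{\Sigma}{:}\nabla^2 u$), and $\abs{\twoff{\Sigma}}_{\met{\Sigma}}\sim\rho$ is \emph{not} bounded on the catenoidal regions (only $\rho^{-2}\abs{\twoff{\Sigma}}^2$ is). Equivalently, by scaling: under dilation by $\lambda=\rho(p)$ the mean curvature becomes $\lambda^{-1}H_u$ while the normal-graph function becomes $\lambda u$, so in the rescaled (bounded) geometry it is $\rho(p)^{-1}Q_u$ that is quadratic in $\rho(p)u$ with bounded coefficients, yielding
\[
\abs{\rho^{-2}Q_u}(p)\;\leq\;C\,\rho(p)\,\nm{u}_{C^{2,\alpha}(B_p,\widetilde g)}^2,
\]
with an unavoidable extra $\rho(p)$. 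This is precisely the estimate the paper derives.

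The effect on your arithmetic is that after multiplying by the weight $m^{-\beta}\rho^\beta$ you pick up $\sup_\Sigma \rho^{1-\beta}\leq C\tau_1^{\beta-1}$ (attained near the catenoidal waists, where $\rho\sim\tau_1^{-1}$), so the correct bound on the first part of $\nm{\rho^{-2}Q_u}'_{\alpha,\beta}$ is $C(N,C_0)m^{\beta}\tau_1^{1+\beta}$, not $C(N,C_0)\tau_1^2$. Together with the $m^3\tau_1^2$ from the flat-disc term (which you in effect acknowledge with your parenthetical about polynomial-in-$m$ losses), one still concludes $\leq\taubar_1^{1+\beta/2}$ for $m$ large via \eqref{eqn:tau_size} and \eqref{eqn:tau_def}. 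So the gap is repairable, but as written your pointwise bound is too strong to be true on the $K_i$.
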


\begin{proof}
Throughout the proof
we will make repeated use of the bound
\begin{equation}
\label{eqn:rho_d_bds}
\abs{d\rho}_{\rho^2 g}
  +\abs{D_{\rho^2 g}^2\rho}_{\rho^2 g}
  +\abs{D_{\rho^2 g}^3\rho}_{\rho^2 g}
\leq
C\rho,
\end{equation}
which holds
(with $D_{\rho^2 g}$ the Levi-Civita connection 
under $\rho^2g$)
pointwise on $\Sigma$
for some universal $C>0$
(independent of $N$, $m$, $\vec{\zeta}$, and $\vec{\xi}\,$)
and follows from
\eqref{eqn:rho_on_cat},
\eqref{eqn:rho_off_cat},
Lemma \ref{lem:geo_ests_cat} \ref{itm:geo_ests_cat:met},
and Lemma~\ref{lem:geo_ests_disc} \ref{itm:geo_ests_disc:conf_met}.
In the remainder of the proof
the value of $C>0$ may change from line to line
and is allowed to depend on
$N,C_0,\alpha,\beta$
but can always be chosen independently of $m$.

Now suppose $p \in \Sigma$
and write $B \subset \Sigma$
be the geodesic disc with center $p$
and radius $1$ with respect to the metric
$\rho^2(p)\met{\Sigma}$.
Recalling the definition
\eqref{eqn:rho_def} of $\rho$,
it is clear from the construction
of the initial surfaces
that
inside $\rho(p)B$
each $k$\textsuperscript{th}
covariant derivative of
the second fundamental form
of $\rho(p)\Sigma$
is bounded by some constant $C(k)>0$.
Furthermore,
each $k$\textsuperscript{th}
covariant derivative
of the Riemann curvature tensor of
the metric $\rho^2(p)\auxmet$
is bounded by $C(k)m^{-2}$
for some $C(k)>0$. 
\pagebreak[2]
Since the mean curvature of $\iota_u$ at $p$
depends smoothly on the derivatives of $u$
at $p$ up to order two,
by scaling we obtain the bound
\begin{equation*}
 \nm[\Big]{\rho^{-1}(p)Q^{\vec{\zeta},\vec{\xi}}_u}_{
  C^{0,\alpha}(
    B,
    \rho^2(p)\met{\Sigma}
  )
}
\leq
  C\nm{\rho(p)u}_{C^{2,\alpha}(B,\rho^2(p)\met{\Sigma})}^2   
\end{equation*}
(since the mean curvature of $\iota_u$
under ambient metric $\rho^2(p)(dx^2+dy^2+dz^2)$
is $\rho^{-1}(p)H_u$,
while the defining function to represent
$\iota_u$ as a graph over $\Sigma$
under the ambient metric $\rho^2(p)\auxmet$
is not $u$ but $\rho(p)u$).

From the preceding displayed inequality
we further estimate
\begin{equation*}
\nm[\Big]{
      \rho^{-2}Q^{\vec{\zeta},\vec{\xi}}_u
    }_{C^{0,\alpha}(B,\rho^2\met{\Sigma})}
\leq
C\rho(p)\nm{u}_{C^{2,\alpha}(B,\rho^2\met{\Sigma})}^2
\leq
C m^{2\beta} \tau_1^2 \rho^{1-2\beta}(p),
\end{equation*}
using \eqref{eqn:rho_d_bds}
and for the final inequality also
the assumed bound on $u$
in the $\nm{\cdot}_{2,\alpha,\beta}$ norm,
as defined in \eqref{eqn:global_norms}.
Recalling also the definition
of the $\nm{\cdot}'_{\alpha,\beta}$ norm
in \eqref{eqn:global_norms}
and using \eqref{eqn:rho_d_bds} again,
we obtain in turn
\begin{equation*}
\nm[\Big]{
      \rho^{-2}Q^{\vec{\zeta},\vec{\xi}}_u
    }'_{\alpha,\beta}
\leq
Cm^{2\beta}\tau_1^2
  [
    m^2 \cdot m^{1-2\beta}
    +\sup_\Sigma
      (m^{-\beta}\rho^\beta \cdot \rho^{1-2\beta})
  ],
\end{equation*}
where for the first term in brackets,
which arises from the second term in the
definition of $\nm{\cdot}'_{\alpha,\beta}$,
we have used
\eqref{eqn:rho_off_cat}.
On the other hand,
using also \eqref{eqn:rho_on_cat}
(and \eqref{eqn:tau_size}),
we have the global bound
$\rho^{1-\beta} \leq C\tau_1^{\beta-1}$,
so, continuing the preceding displayed estimate,
\begin{equation*}
\nm[\Big]{
      \rho^{-2}Q^{\vec{\zeta},\vec{\xi}}_u
    }'_{\alpha,\beta}
\leq
C
  (
    m^3\tau_1^2
    + m^\beta \tau_1^{1+\beta}
  )
\leq
Ce^{2c}\taubar_1^{1+\beta/2}
  (
    m^3 \tau_1^{1-\beta/2}
    +m^\beta \tau_1^{\beta/2}
  ),
\end{equation*}
having used \eqref{eqn:tau_def}
for the final inequality.
The proof is now completed
by invoking \eqref{eqn:tau_size}
and choosing $m$ sufficiently large in terms of
the data $N$, $c$, $\beta$, and $C_0$.
\end{proof}

The next lemma
will enable us to apply
Lemma \ref{lem:coker_control}
to control the mean curvature
in the main existence result
(Proposition \ref{pro:existence_of_fbms})
below.
We refer the reader to
\eqref{eqn:force_def}
for the definition
of the vertical force $\force_i$
on the region $D_i$
of a given initial surface
$\Sigma=\Sigma_{N,m,\vec{\zeta},\vec{\xi}}$.
Given also
$u \in C^2_\G(\Sigma)$
sufficiently small
that $H_u$ is well-defined,
we correspondingly define the perturbed force
\begin{equation}
\label{eqn:perturbed_force_def}
\force_{i;u}
=
\force_{i;u,N,m,\vec{\zeta},\vec{\xi},u}
\vcentcolon=
\int_{D_i}
    \met{\R^3}(\nu^\Sigma, \partial_z|_\Sigma)H_u \,
      \hausint{2}{\met{\R^3}}.
\end{equation}
Note that we are integrating the perturbed mean curvature
over the initial surface
with its unit normal
rather than over the perturbed surface
with its own unit normal,
so $\force_{i;u}$
is not exactly the vertical force
through $\iota_u(D_i)$;
we work with this quantity
because we find it slightly
more convenient, compared to the exact
vertical force,
for the purpose
of monitoring the $\vec{w}$
component of the perturbed mean curvature.
As we do for the $\force_i$ in Appendix \ref{app:forces},
we bundle the $\force_{i;u}$ into a vector
$\vec{\force}_u=\vec{\force}_{u,N,m,\vec{\zeta},\vec{\xi}}$
in the obvious way.

\begin{lemma}[Perturbed forces]
\label{lem:perturbed_force}
Let $N \geq 2$ be a given integer
and $C_0,c>0$ and
$\alpha,\beta \in \interval{0,1}$
given reals.
There exist
$m_0=m_0(N,C_0,c,\alpha,\beta)>0$
and
$C=C(N,C_0)>0$
such that for every integer $m \geq m_0$, all
$
 \vec{\zeta}
 \in
 \IntervaL{-c,c}^{N-1}_{\updownarrow+}
$,
$
 \vec{\xi}
 \in
 \IntervaL{-c,c}^{N-1}_{\updownarrow-}
$,
and any $\G$-equivariant
real-valued function
$u$ on
$\Sigma=\Sigma_{N, m, \vec{\zeta},\vec{\xi}}$
with $\nm{u}_{2,\alpha,\beta} \leq C_0\tau_1$
and $\eta^\Sigma u = u$
we have
\begin{equation*}
 \vec{\force}_{u,N,m,\vec{\zeta},\vec{\xi}}
 \in
 \R^N_{\updownarrow-}
\quad \mbox{and} \quad
\abs{
  \force_{i;u}
  -
  \force_i
}
<
C\tau_1
\quad (1 \leq i \leq N).
\end{equation*}
Furthermore,
$\vec{\force}_{P_{\vec{\zeta},\vec{\xi}}^{-1*}v}$
depends continuously on $(v,\vec{\zeta},\vec{\xi}\,)$.
\end{lemma}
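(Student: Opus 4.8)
The plan is to treat the three assertions separately, the bulk of the work being the comparison $\abs{\force_{i;u}-\force_i}<C\tau_1$.

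For the symmetry $\vec\force_u\in\R^N_{\updownarrow-}$, i.e.\ $\force_{N+1-i;u}=-\force_{i;u}$, I would use the involution $\iota\in\G$ interchanging the top and bottom of $\Sigma$ --- namely $\iota=\refl_{\{z=0\}}$ for $N$ even and $\iota=\rot_{\{y=z=0\}}^\pi$ for $N$ odd. In either case $\iota$ maps $D_i$ onto $D_{N+1-i}$ (a consequence of the symmetry constraints $\vec h^B\in\R^N_{\updownarrow-}$, $\vec h^K\in\R^{N-1}_{\updownarrow-}$, $\vec\tau\in\R^{N-1}_{\updownarrow+}$ together with the identity $D_{m,h^B,\tau_+,h^K_+,\tau_-,h^K_-}=\rot_{\widehat z}^{\pi/m}D_{m,h^B,\tau_-,h^K_-,\tau_+,h^K_+}$) and satisfies $d\iota\,\partial_z=-\partial_z$. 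Since $u$ is $\G$-equivariant, $\iota_u$ commutes with $\iota$, so $\iota$ preserves the perturbed surface and $H_u\circ\iota=\nrmlsgn{\nu^\Sigma}(\iota)\,H_u$, while $\met{\R^3}(\nu^\Sigma,\partial_z|_\Sigma)\circ\iota=-\nrmlsgn{\nu^\Sigma}(\iota)\,\met{\R^3}(\nu^\Sigma,\partial_z|_\Sigma)$ (one sign from the transformation rule for $\nu^\Sigma$, one from $\partial_z\mapsto-\partial_z$). Thus the integrand in \eqref{eqn:perturbed_force_def} is odd under the isometry $\iota$, and pushing the integral over $D_{N+1-i}=\iota(D_i)$ forward by $\iota$ yields $\force_{N+1-i;u}=-\force_{i;u}$.

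For the comparison, recall that $\force_i$ from \eqref{eqn:force_def} equals, by the divergence theorem, the bulk integral $\int_{D_i}\met{\R^3}(\nu^\Sigma,\partial_z|_\Sigma)H^\Sigma\,\hausint{2}{\met{\R^3}}$, so by \eqref{eqn:Q_def}
\[
\force_{i;u}-\force_i=\int_{D_i}\met{\R^3}(\nu^\Sigma,\partial_z|_\Sigma)\bigl(J_\Sigma u+Q^{\vec\zeta,\vec\xi}_u\bigr)\,\hausint{2}{\met{\R^3}}.
\]
The $Q_u$-term is negligible: Lemma~\ref{lem:quadratic_estimate} and the definition of $\nm{\,\cdot\,}'_{\alpha,\beta}$ give $\abs{Q_u}\le C\rho^{2-\beta}m^\beta\taubar_1^{1+\beta/2}$ pointwise, and integrating against the area element $\hausint{2}{\met{\R^3}}=\rho^{-2}\,d(\rho^2\met{\Sigma})$ over $D_i$ --- bounding $\int_{D_i}\rho^{-\beta}\,d(\rho^2\met{\Sigma})\le Cm^{2-\beta}$ via \eqref{eqn:rho_on_cat}, \eqref{eqn:rho_off_cat} and Lemma~\ref{lem:geo_ests_cat}\,\ref{itm:geo_ests_cat:met} --- leaves $\le Cm^2\taubar_1^{1+\beta/2}=o(\tau_1)$ by \eqref{eqn:tau_size}. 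The difficulty is the linear term, where a crude estimate costs a spurious factor $m^2$; instead I would integrate by parts. Writing $v\vcentcolon=\met{\R^3}(\nu^\Sigma,\partial_z|_\Sigma)$ and using the Simons-type identity $J_\Sigma v=\met{\R^3}(\nabla_\Sigma H^\Sigma,\partial_z)$ (a consequence of $\Delta_\Sigma\nu^\Sigma=-\abs{\twoff{\Sigma}}^2\nu^\Sigma+\nabla_\Sigma H^\Sigma$) together with self-adjointness of $J_\Sigma$, Green's identity gives
\[
\int_{D_i}v\,J_\Sigma u=\int_{D_i}u\,\met{\R^3}(\nabla_\Sigma H^\Sigma,\partial_z)+\int_{\partial D_i}\bigl(v\,\eta^\Sigma u-u\,\eta^\Sigma v\bigr).
\]
Here $\abs u\le C\tau_1$ pointwise (from $\nm{u}_{2,\alpha,\beta}\le C_0\tau_1$ and the lower bound $\rho\ge m/3$), with $u$ and its derivatives much smaller still on the catenoidal necks thanks to the $\rho$-weight. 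The bulk term on the right is $o(\tau_1)$ by these bounds on $u$, the mean-curvature estimates of Lemmas~\ref{lem:geo_ests_cat}\,\ref{itm:geo_ests_cat:mc}, \ref{lem:geo_ests_disc}\,\ref{itm:geo_ests_disc:mc} and \ref{lem:initial_mc} (which control $\nabla_\Sigma H^\Sigma$ after reweighting), and the area bounds above. The boundary $\partial D_i$ consists of the interior waist circles at heights $h^K_{i-1},h^K_i$, of total length $O(m\tau_1)$ and lying where $\rho$ is maximal, so their contribution is again $o(\tau_1)$ by the weight; and the free boundary portion $\partial D_i\cap\partial\B^3$, along which the initial surface meets $\partial\B^3$ orthogonally by construction, so $\eta^\Sigma$ is the position field and the hypothesis $\eta^\Sigma u=u$ converts $v\,\eta^\Sigma u-u\,\eta^\Sigma v$ into $u\,(v-\eta^\Sigma v)$. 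There the Gauss equation gives $\eta^\Sigma v=-z\,\twoff{\Sigma}(\eta^\Sigma,\eta^\Sigma)$, which is small by \eqref{eqn:height_bound} and Lemma~\ref{lem:geo_ests_cat}\,\ref{itm:geo_ests_cat:twoff} (and vanishes to leading order on the essentially flat disc portion), while $\int_{\partial D_i\cap\partial\B^3}u\,v$ is $O(\tau_1)$ since $\abs u\le C\tau_1$ there and this curve has length $O(1)$ --- this is the single genuinely $\tau_1$-sized term. Summing, $\abs{\force_{i;u}-\force_i}\le C(N,C_0)\tau_1$ for $m$ large.

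Finally, continuity of $(v,\vec\zeta,\vec\xi)\mapsto\vec\force_{P_{\vec\zeta,\vec\xi}^{-1*}v}$ follows exactly as the analogous assertion in Proposition~\ref{pro:global_inverse}: the surfaces $\Sigma_{N,m,\vec\zeta,\vec\xi}$, the diffeomorphisms $P_{\vec\zeta,\vec\xi}$ and the attendant maps, hence $H_{P_{\vec\zeta,\vec\xi}^{-1*}v}$, depend smoothly on $(v,\vec\zeta,\vec\xi)$ (for $P_{\vec\zeta,\vec\xi}^{-1*}v$ in the relevant small range where $H$ is defined), so the integrals defining $\force_{i;P_{\vec\zeta,\vec\xi}^{-1*}v}$ over the smoothly varying $D_i$ vary continuously. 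The step I expect to be the main obstacle is precisely the linear term $\int_{D_i}v\,J_\Sigma u$: a direct estimate loses a factor $m^2$, and recovering the clean bound $O(\tau_1)$ requires all three of integration by parts, the Robin condition $\eta^\Sigma u=u$ at the free boundary, and the $\rho$-weighting built into $\nm{\,\cdot\,}_{2,\alpha,\beta}$.
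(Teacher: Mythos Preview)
Your proposal is correct and follows essentially the same route as the paper: dismissing the symmetry and continuity assertions, writing $H_u-H^\Sigma=J_\Sigma u+Q_u$, bounding the $Q_u$ contribution via Lemma~\ref{lem:quadratic_estimate}, and---crucially---handling the linear term by integrating by parts using the identity $J_\Sigma v=\partial_z^\top H^\Sigma$ for $v=\met{\R^3}(\nu^\Sigma,\partial_z)$, then splitting $\partial D_i$ into its free-boundary arc (where the Robin condition $\eta^\Sigma u=u$ is used) and the interior waist arcs. The only cosmetic differences are that the paper treats the two free-boundary terms $\int fu$ and $\int u\,\eta^\Sigma f$ separately rather than combining them as you do, and that the formula $\eta^\Sigma v=-z\,\twoff{\Sigma}(\eta^\Sigma,\eta^\Sigma)$ is not really ``the Gauss equation'' but rather a direct consequence of $\partial_z$ being parallel and $\Sigma\perp\Sp^2$.
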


\begin{proof}
The symmetry and continuity assertions
follow easily from the definitions and assumptions,
so we focus on the estimate.
We define $f \colon \Sigma \to \R$
by
\begin{equation*}
f \vcentcolon=
\met{\R^3}(\nu^\Sigma,\partial_z|_\Sigma).
\end{equation*}
Obviously
$\abs{f} \leq 1$,
and it is easy to see from the construction
of $\Sigma$
that we may assume
area and perimeter bounds
for $D_i$ independent
of $m$ and the parameters
$\vec{\zeta}, \vec{\xi}$.
Using the definition \eqref{eqn:Q_def} of $Q_u$,
we write
\begin{equation*}
H_u=H^\Sigma + J_\Sigma u + Q_u.
\end{equation*}
Of course
\begin{equation*}
\int_{D_i} f H^\Sigma
  \, \hausint{2}{\met{\R^3}}
=
\force_i
\end{equation*}
(by applying the formula for the first
variation of area to
definition \eqref{eqn:force_def},
since $\partial_z$ is Killing),
and Lemma \ref{lem:quadratic_estimate}
delivers the necessary estimate
for the integral associated
to the $Q_u$ term:
\begin{align*}
\abs[\bigg]{\int_{D_i}
f Q_u \, \hausint{2}{\met{\R^3}}}
&\leq
C\int_{D_i \setminus K_i}
  \abs{Q_u}
  \, \hausint{2}{\met{\R^3}}
  +
  C\int_{K_i}
    \abs{Q_u}
    \, \hausint{2}{\met{\R^3}}
\\
&\leq
Cm^2\taubar_1^{1+\beta/2}
  +
  C\taubar_1^{1+\beta/2}
    \int_{K_i}
      \rho^2
      \, \hausint{2}{\met{\R^3}}
\\
&\leq
C(m^2+m)\taubar_1^{1+\beta/2}
\leq
C(m^2+m)e^{(2+\beta)c}\tau_1^{1+\beta/2}
\end{align*}
for some $C>0$ independent of $m$,
recalling the definition
in \eqref{eqn:global_norms}
of the $\nm{\cdot}'_{\alpha,\beta}$ norm
(but not even taking advantage of
the built-in decay)
and appealing to \eqref{eqn:rho_off_cat}
to bound $\rho$ on $D_i \setminus K_i$
and to item \ref{itm:geo_ests_cat:met}
of Lemma \ref{lem:geo_ests_cat}
and \eqref{eqn:catr_init_def}
to estimate the $K_i$ integral;
we thus ensure the term in question
satisfies the asserted bound
by recalling \eqref{eqn:tau_size}
and taking $m$ sufficiently large
in terms of $\beta$, $c$,
and the above $C$.
For the remaining, linear term we integrate by parts to get
\begin{align*}
\int_{D_i} fJ_\Sigma u
    \, \hausint{2}{\met{\R^3}}
&=
\int_{D_i} uJ_\Sigma f
    \, \hausint{2}{\met{\R^3}}
  +\int_{\partial D_i}
    \left( 
      f \eta^{D_i} u
      - u \eta^{D_i} f
    \right)
    \, \hausint{1}{\met{\R^3}},
\end{align*}
with $\eta^{D_i}$
the outward unit conormal on $\partial D_i$.

Considering first the boundary integral,
we make the decomposition
$
\partial D_i
=
(\partial D_i \cap \Sp^2)
  \cup
  (\partial D_i \setminus \Sp^2)
$,
where the first set 
is equivalently $\partial D_i \cap \partial \Sigma$,
while the second set consists
of $2m$ (for $2 \leq i \leq N$)
or $m$ (for $i \in \{1,N\}$)
approximate catenoidal half waists.
Using the bound and boundary condition
assumed on $u$,
we estimate
\begin{equation*}
\abs[\bigg]{\int_{\partial D_i \cap \Sp^2}
  f\eta^{D_i}u \, \hausint{1}{\met{\R^3}}}
=
\abs[\bigg]{\int_{\partial D_i \cap \Sp^2}
  fu \, \hausint{1}{\met{\R^3}}}
\leq
C\tau_1\hausmeas{1}{\met{\R^3}}(\partial\Sigma)
\leq
C\tau_1.
\end{equation*}
Since $\Sigma$ meets $\Sp^2$ orthogonally
and $\partial_z$ is parallel in $\R^3$, we have
\begin{equation*}
\eta^\Sigma f
=-\twoff{\Sigma}(\eta^\Sigma,\eta^\Sigma)z|_{\Sigma},
\end{equation*}
so, using also the bound
$\abs{\twoff{\Sigma}}_{\met{\Sigma}} \leq C\rho$
(implied by the stronger estimates in
Lemma \ref{lem:geo_ests_cat} \ref{itm:geo_ests_cat:twoff}
and
Lemma \ref{lem:geo_ests_disc} \ref{itm:geo_ests_disc:twoff}),
\begin{equation*}
\abs[\bigg]{
  \int_{\partial D_i \cap \Sp^2}
  u \eta^{D_i}f \, \hausint{1}{\met{\R^3}}
}
\leq
C\tau_1 \int_{\partial \Sigma}
  \abs{z|_\Sigma}\rho
  \, \hausint{1}{\met{\R^3}}
\leq
Cm^2\tau_1^2
\leq
\tau_1,
\end{equation*}
having used \eqref{eqn:tau_size}
for the final inequality
and \eqref{eqn:height_bound},
\eqref{eqn:rho_off_cat},
\eqref{eqn:rho_on_cat},
and item \ref{itm:geo_ests_cat:met}
of Lemma \ref{lem:geo_ests_cat}
for the integral estimate.

Turning to $\partial D_i \setminus \Sp^2$, we have 
$\hausmeas{1}{\met{\R^3}}(\partial D_i \setminus \Sp^2)\leq Cm\tau_1$ 
and on $D_i \setminus \Sp^2$: 
\begin{align*}
\abs{u}
&\leq Cm^\beta \tau_1^{1+\beta},
& 
\abs{\eta^{D_i}u}
&\leq Cm^\beta\tau_1^\beta\leq C,
\\
\abs{f}
&\leq C\max_j (\tau_j + \abs{h^K_j})\leq Cm\tau_1,
&
\abs{df}
&\leq  C\abs{\twoff{\Sigma}}_{\met{\Sigma}} \leq  C\tau_1^{-1}
\end{align*}
from the bound assumed on $u$
and the fact that each component
of $\partial D_i \setminus \Sp^2$
is the image under the map $\Phi$,
as defined in \eqref{def:Phi},
of a half waist of a catenoid
with vertical axis,
having also used
\eqref{eqn:height_bound}
and \eqref{eqn:tau_size}
for the final bound on $\abs{f}$.
Consequently,
\begin{equation*}
\abs[\bigg]{
  \int_{\partial D_i \setminus \Sp^2}
    (
      f \eta^{D_i} u
      -u \eta^{D_i} f
    ) 
    \, \hausint{1}{\met{\R^3}}
}
\leq
Cm\tau_1
  (
    m\tau_1
    +m^\beta \tau_1^\beta
  )
\leq
C\tau_1,
\end{equation*}
once again using \eqref{eqn:tau_size}.

Finally, for the bulk term,
again exploiting the fact
that $\partial_z$ is Killing
and denoting its projection
onto $\Sigma$ by $\partial_z^\top$,
we find
\begin{align*}
\abs[\bigg]{
  \int_{D_i} uJ_\Sigma f
    \, \hausint{2}{\met{\R^3}}}
&=\abs[\bigg]{
  \int_{D_i} (\partial_z^\top H^\Sigma)u
    \, \hausint{2}{\met{\R^3}}}
\\
&\leq
\int_{D_i}
  \abs{u} \, \rho
  \, \abs{dH}_{\rho^2\met{\Sigma}}
  \, \hausint{2}{\met{\R^3}}
\leq
C\tau_1 m^\beta
  \int_{D_i}
    \rho^{1-\beta}
    \abs{dH}_{\rho^2\met{\Sigma}}
    \, \hausint{2}{\met{\R^3}}.
\end{align*}
Using item \ref{itm:geo_ests_cat:mc}
of Lemma \ref{lem:geo_ests_cat}
and item \ref{itm:geo_ests_disc:mc}
of Lemma \ref{lem:geo_ests_disc}
to estimate $dH$,
along with \eqref{eqn:rho_on_cat}
and \eqref{eqn:rho_off_cat},
we have
\begin{align*}
\int_{D_i}
    \rho^{1-\beta}
    \abs{dH}_{\rho^2\met{\Sigma}}
    \,
    \hausint{2}{\met{\R^3}}
&=
  \int_{D_i \setminus K_i} 
    \rho^{1-\beta}
    \abs{dH}_{\rho^2\met{\Sigma}}
    \,
    \hausint{2}{\met{\R^3}}
  \\
  &\hphantom{{}={}}
  +
  \int_{K_i} 
    \rho^{1-\beta}
    \abs{dH}_{\rho^2\met{\Sigma}}
    \,
    \hausint{2}{\met{\R^3}}
\\
&\leq
Cm^2
  \Bigl(
    \tau_1
    +\abs{\disloc_i}
      \nm{\wbar_i}_{C^1(\rho^2 \met{\Sigma})}
  \Bigr)
    \cdot \hausmeas{2}{\met{\R^3}}(D_i \setminus K_i)
  \\
  &\hphantom{{}={}}
  +
    C\tau_1 \int_{K_i}
    \rho^{2-\beta}
    \,
    \hausint{2}{\met{\R^3}}
\\
&\leq
Cm^2\tau_1(1+c)
  +C\beta^{-1}m^\beta\tau_1,
\end{align*}
having recalled also
the definitions \eqref{eqn:disloc_def}
of $\disloc_i$
and \eqref{eqn:wbar_def} of $\wbar_i$
and having made use of
item \ref{itm:geo_ests_cat:met}
of Lemma \ref{lem:geo_ests_cat}
to estimate the $K_i$ integral.

Feeding this last estimate into the one preceding it,
we conclude
\begin{equation*}
\abs[\bigg]{\int_{D_i} uJ_\Sigma f \, \hausint{2}{\met{\R^3}}}
\leq C[
  (1+c)m^{2+\beta}\tau_1
  +\beta^{-1}m^{2\beta}\tau_1
]\tau_1.
\end{equation*}
By invoking \eqref{eqn:tau_size}
and taking $m$ sufficiently large
in terms of $c$, $\beta^{-1}$,
and the final $C$ above
we confirm the asserted bound
and so complete the proof.
\end{proof}

\paragraph{On the construction of free boundary minimal stackings of any order -- main results.} 
We shall now exploit and combine the preliminary results above to prove our main existence result. 
The actual embeddedness of the surfaces, and the characterization of their maximal symmetry group are postponed to a separate statement.

\begin{proposition}[Existence of free boundary minimal stackings]\label{pro:final_existence_with_estimates}
\label{pro:existence_of_fbms}
Let $N \geq 2$ be an integer
and $\alpha,\beta \in \interval{0,1}$.
There exist
$m_0=m_0(N,\alpha,\beta)>0$
and $C=C(N,\alpha,\beta)>0$
such that for every integer $m>m_0$
there exist an initial surface
$
 \Sigma
 =
 \Sigma_{N, m, \vec{\zeta}, \vec{\xi}}
$
and a function
$u \colon \Sigma \to \R$ 
such that
\begin{equation*}
\tau_1^{-1}\nm{u}_{2,\alpha,\beta}
  +\abs{\vec{\zeta}}
  +\abs{\vec{\xi}}
\leq
C
\end{equation*}
(recalling the definition of $\tau_1$
in \eqref{eqn:tau_def})
and 
$\iota_u \colon \Sigma \to \B^3$
is a free boundary minimal immersion. 
\end{proposition}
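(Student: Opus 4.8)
The plan is to produce the free boundary minimal surface as a normal graph $\iota_u$ over a suitable initial surface $\Sigma=\Sigma_{N,m,\vec{\zeta},\vec{\xi}}$, with $u$ solving the minimal-surface equation together with the free boundary condition, and then to pin down the parameters $(\vec{\zeta},\vec{\xi}\,)$ by a finite-dimensional argument that annihilates the $(N-1)$-dimensional cokernel. Throughout, the parameters are confined to a box $[-c,c]^{N-1}_{\updownarrow+}\times[-c,c]^{N-1}_{\updownarrow-}$, with $c=c(N)$ fixed only at the very end, and $m>m_0(N,c,\alpha,\beta)$ is taken large. The argument thus has two ingredients: a contraction-mapping scheme solving the equation modulo cokernel for each fixed choice of parameters, and a degree-type argument choosing the parameters to kill the cokernel.

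\emph{Solving modulo cokernel.} Fix parameters in the box. By \eqref{eqn:Q_def}, $\iota_u$ is free boundary minimal precisely when $\rho^{-2}\bigl(H^\Sigma+J_\Sigma u+Q_u\bigr)=0$ and $\rho^{-1}(\eta^\Sigma u-u)=0$, the latter being the geometric meaning of orthogonality to $\Sp^2$ for the chosen auxiliary metric $\auxmet$ and the reason the Robin operator $\eta^\Sigma-1$ figures in Proposition~\ref{pro:global_inverse}. For $u\in C^{2,\alpha}_\G(\Sigma)$ with $\nm{u}_{2,\alpha,\beta}\le C_0\tau_1$ (the constant $C_0=C_0(N,\alpha,\beta)$ to be fixed momentarily) set
\[
E_u\vcentcolon=-\bigl(\rho^{-2}H^\Sigma-\vec{\disloc}\cdot\vec{\wbar}\bigr)-\rho^{-2}Q_u\in C^{0,\alpha}_\G(\Sigma),
\]
which by Lemma~\ref{lem:initial_mc}, Lemma~\ref{lem:quadratic_estimate}, and $\taubar_1\asymp\tau_1$ from \eqref{eqn:tau_def} satisfies $\nm{E_u}'_{\alpha,\beta}\le C\tau_1$ uniformly over the box, with $C$ independent of $c$; note that subtracting $\vec{\disloc}\cdot\vec{\wbar}$ is essential, since $\rho^{-2}H^\Sigma$ alone is not small in the primed norm. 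Let $\mathcal{F}(u)$ be the $C^{2,\alpha}$-component of $R_\Sigma(E_u,0)$ from Proposition~\ref{pro:global_inverse}; its estimate gives $\nm{\mathcal{F}(u)}_{2,\alpha,\beta}\le C\tau_1$, and we set $C_0$ equal to this $C$, so that $\mathcal{F}$ is a self-map of the $\G$-equivariant ball $\{\nm{u}_{2,\alpha,\beta}\le C_0\tau_1\}$. The proof of Lemma~\ref{lem:quadratic_estimate} applies verbatim to differences, yielding $\nm{\rho^{-2}(Q_{u_1}-Q_{u_2})}'_{\alpha,\beta}\le C\tau_1\nm{u_1-u_2}_{2,\alpha,\beta}$ on that ball, so $\mathcal{F}$ is a contraction for $m$ large. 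Let $u=u_{\vec{\zeta},\vec{\xi}}$ be its unique fixed point and write $(u,\vec{\mu},\vec{\mubar})\vcentcolon=R_\Sigma(E_u,0)$; the defining property of $R_\Sigma$ gives $\rho^{-2}J_\Sigma u=E_u+\vec{\mu}\cdot\vec{w}+\vec{\mubar}\cdot\vec{\wbar}$, whence
\[
\rho^{-2}H_{u_{\vec{\zeta},\vec{\xi}}}=\vec{\mu}\cdot\vec{w}+\bigl(\vec{\disloc}_{N,m,\vec{\zeta},\vec{\xi}}+\vec{\mubar}\bigr)\cdot\vec{\wbar},\qquad \eta^\Sigma u_{\vec{\zeta},\vec{\xi}}=u_{\vec{\zeta},\vec{\xi}},
\]
with $(\vec{\mu},\vec{\mubar})\in\R^N_{\updownarrow-}\oplus\R^{N,0,0}_{\updownarrow+}$. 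Pulling back by $P_{\vec{\zeta},\vec{\xi}}$ and using the continuity clause of Proposition~\ref{pro:global_inverse} with the uniformity of the contraction, $P^*_{\vec{\zeta},\vec{\xi}}u_{\vec{\zeta},\vec{\xi}}$ and $(\vec{\mu},\vec{\mubar})$ depend continuously on $(\vec{\zeta},\vec{\xi}\,)$.

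\emph{Annihilating the cokernel.} It remains to find parameters in the box with $\vec{\mu}=\vec{0}$ and $\vec{\disloc}_{N,m,\vec{\zeta},\vec{\xi}}+\vec{\mubar}=\vec{0}$; this is a system of $N-1$ scalar equations (since $\dim\R^N_{\updownarrow-}+\dim\R^{N,0,0}_{\updownarrow+}=N-1$) in the $N-1$ parameters. The $\vec{w}$-component is read off from the perturbed vertical forces: integrating the above identity for $\rho^{-2}H_u$ against $\met{\R^3}(\nu^\Sigma,\partial_z|_\Sigma)$ over each $D_i$ and recalling \eqref{eqn:perturbed_force_def} gives $\force_{i;u}=\mu_i\int_{D_i}\rho^2 w_i\,\met{\R^3}(\nu^\Sigma,\partial_z|_\Sigma)+(\disloc_i+\mubar_i)\int_{D_i}\rho^2\wbar_i\,\met{\R^3}(\nu^\Sigma,\partial_z|_\Sigma)$, in which the first integral is a definite nonzero quantity of size $\asymp m$ while the second is of lower order (reflecting that $\widehat{\overline{v}}$ is $\sigma$-independent at $\Sp^2$, so $\int_{\B^2}(\partial_x^2+\partial_y^2)\widehat{\overline{v}}$ vanishes to leading order); together with Lemma~\ref{lem:perturbed_force} this shows that $\vec{\mu}=\vec0$ is, to leading order, equivalent to $\vec{\force}_{u_{\vec{\zeta},\vec{\xi}}}\in\R^N_{\updownarrow-}$ vanishing, which by the approximate balancing built into \eqref{eqn:taubar_def} (see Appendix~\ref{app:forces}) is small and depends, to leading order, linearly and invertibly on $\vec{\zeta}$. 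Likewise $\vec{\disloc}_{N,m,\vec{\zeta},\vec{\xi}}$ depends linearly and invertibly on $\vec{\xi}$ through \eqref{eqn:disloc_def}, with $\abs{\vec{\disloc}}\asymp\abs{\vec{\xi}}\tau_1$, while $\vec{\mubar}=O(\tau_1)$ (uniformly in $c$) is a lower-order perturbation. Hence, with $c=c(N)$ chosen large enough, the continuous map $(\vec{\zeta},\vec{\xi}\,)\mapsto(\vec{\mu},\vec{\disloc}_{N,m,\vec{\zeta},\vec{\xi}}+\vec{\mubar})$ does not vanish on the boundary of the box and is there homotopic through nonvanishing maps to an invertible linear isomorphism; by Lemma~\ref{lem:coker_control} (whose hypotheses are verified via Lemma~\ref{lem:perturbed_force}, Appendix~\ref{app:forces}, and the continuity established above) it therefore has a zero $(\vec{\zeta},\vec{\xi}\,)$ in the box. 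For this choice $H_{u_{\vec{\zeta},\vec{\xi}}}\equiv 0$ and $\eta^\Sigma u_{\vec{\zeta},\vec{\xi}}=u_{\vec{\zeta},\vec{\xi}}$, so $\iota_{u_{\vec{\zeta},\vec{\xi}}}$ is a free boundary minimal immersion of $\Sigma$ into $\B^3$, and $\tau_1^{-1}\nm{u_{\vec{\zeta},\vec{\xi}}}_{2,\alpha,\beta}+\abs{\vec{\zeta}}+\abs{\vec{\xi}}\le C_0+2c\sqrt{N-1}=\vcentcolon C(N,\alpha,\beta)$, as claimed.

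\emph{Main obstacle.} The bulk of the work is in the second step. The first step is a routine contraction-mapping scheme fed by the a~priori estimates of Lemma~\ref{lem:initial_mc}, Lemma~\ref{lem:quadratic_estimate}, and Proposition~\ref{pro:global_inverse}; by contrast, extracting from the perturbed-force estimate and the balancing analysis of Appendix~\ref{app:forces} the non-degeneracy needed to run the degree argument — precisely, controlling how the $N-1$ free parameters move the $(N-1)$-dimensional cokernel, uniformly in $m$ and for arbitrarily many layers — is exactly the point the introduction singles out as the principal new difficulty beyond the $N\in\{2,3\}$ cases.
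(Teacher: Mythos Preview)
Your approach is correct in outline but differs structurally from the paper's.  The paper does \emph{not} decouple the function and the parameters: it defines a single map
\[
S_{N,m,c}\colon B_{N,m}\times\bigl([-c,c]^{N-1}_{\updownarrow+}\times[-c,c]^{N-1}_{\updownarrow-}\bigr)\to C^{2,\alpha}_\G(\Sigma_{\vec 0,\vec 0})\times\bigl(\R^{N-1}_{\updownarrow+}\times\R^{N-1}_{\updownarrow-}\bigr),
\]
where the function component is $P_{\vec{\zeta},\vec{\xi}}^*u_v$ with $(u_v,\vec{\mu}_v,\vec{\mubar}_v)=R_\Sigma(-\rho^{-2}Q_{u^{(1)}+P^{-1*}v})$ and the parameter component is $(\vec{\zeta},\vec{\xi}\,)-\tau_1^{-1}\ptofd^{-1}(\vec{\force}_{u^{(1)}+P^{-1*}v},\,\vec{\disloc}+\vec{\mubar}^{(1)}+\vec{\mubar}_v)$, and then applies the \emph{Schauder} fixed point theorem (the ball $B_{N,m}$ sits in $C^{2,\alpha/2}$ for compactness).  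At the fixed point the parameter equation reads $\vec{\force}_u=\vec0$ and $\vec{\disloc}+\vec{\mubar}^{(1)}+\vec{\mubar}_v=\vec0$; the second directly kills the $\vec{\wbar}$ component of $H_u$, and then the first, via the force integral against the signed integrand $\rho^2 w_i\,g(\nu^\Sigma,\partial_z)$, forces $\vec{\mu}=\vec0$.

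Your two-step scheme (Banach contraction for $u$ at fixed parameters, then a degree argument on the box) is a legitimate alternative and is common in gluing constructions.  Two points deserve care.  First, the contraction step needs a genuine Lipschitz estimate $\nm{\rho^{-2}(Q_{u_1}-Q_{u_2})}'_{\alpha,\beta}\le o(1)\,\nm{u_1-u_2}_{2,\alpha,\beta}$; this does follow from the pointwise quadratic bound in the proof of Lemma~\ref{lem:quadratic_estimate}, but it is not the statement of that lemma and should be stated and proved, not asserted ``verbatim''.  Second, your degree argument is phrased on the target $(\vec{\mu},\vec{\disloc}+\vec{\mubar})$, whereas Lemma~\ref{lem:coker_control} controls $(\vec{\force},\vec{\disloc})$; the cleanest fix is to imitate the paper and run Brouwer on $(\vec{\zeta},\vec{\xi})\mapsto(\vec{\zeta},\vec{\xi})-\tau_1^{-1}\ptofd^{-1}(\vec{\force}_{u_{\vec{\zeta},\vec{\xi}}},\vec{\disloc}+\vec{\mubar})$, which Lemma~\ref{lem:coker_control} and Lemma~\ref{lem:perturbed_force} bound by $C(N)$ uniformly in $c$, so for $c$ large it is a self-map of the box.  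This sidesteps your ``lower order'' claim about $\int\rho^2\wbar_i\,g(\nu^\Sigma,\partial_z)$, which is not needed once the $\vec{\wbar}$ component is killed first.  The trade-off: your route yields uniqueness of $u$ modulo cokernel for each parameter and separates the analytic and finite-dimensional steps; the paper's Schauder route avoids the extra Lipschitz estimate and handles both at once.
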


\begin{proof}
Let $N \geq 2$ be a given integer,
and fix $\alpha,\beta \in \interval{0,1}$.
For any
$\vec{\zeta} \in \R^{N-1}_{\updownarrow+}$
and
$\vec{\xi} \in \R^{N-1}_{\updownarrow-}$
by virtue of
Proposition \ref{pro:global_inverse}
and Lemma \ref{lem:initial_mc}
we can take $m$ large enough in terms
of $N$ and $\abs{\vec{\zeta}}+\abs{\vec{\xi}}$
that
\begin{equation*}
\bigl(u^{(1)},\vec{\mu}^{(1)},\vec{\mubar}^{(1)}\bigr)
\vcentcolon=
R_{\Sigma_{N,m,\vec{\zeta},\vec{\xi}}}
  \Big(
    -\rho^{-2}H^{\Sigma}
      +\vec{\disloc} \cdot \vec{\wbar}
  \Big)
\end{equation*}
is well-defined and satisfies
\begin{equation*}
\nm[\big]{u^{(1)}}_{2,\alpha,\beta}+\abs[\big]{\vec{\mu}^{(1)}}+\abs[\big]{\vec{\mubar}^{(1)}}
\leq C\tau_1
\end{equation*}
for some $C>0$ depending on just $N$. 
Next set
\begin{equation*}
B_{N,m}
\vcentcolon=
\left\{
  v
  \in
  C^{2,\alpha/2}_\G(\Sigma_{N,m,\vec{0},\vec{0}})
  \st
  \nm{v}_{2,\alpha,\beta} \leq \taubar^{1+\beta/3}
\right\}.
\end{equation*}
Then
for any $c>0$
and for all $m$ sufficiently large in terms of $c$
and $N$
we can define the map
\begin{equation*}
S_{N,m,c}
\colon
B_{N,m}
    \times
    \Bigl(
      \IntervaL{-c,c}^{N-1}_{\updownarrow+}
      \times
      \IntervaL{-c,c}^{N-1}_{\updownarrow-}
    \Bigr)
\to
C_\G^{2,\alpha}(\Sigma_{N,m,\vec{0},\vec{0}})
  \times
  \Bigl(
    \R^{N-1}_{\updownarrow+}
    \times
    \R^{N-1}_{\updownarrow-}
  \Bigr)
\end{equation*}
as follows,
where we use in particular the map $\ptofd$
identified in Lemma \ref{lem:coker_control}
in Appendix \ref{app:forces}
(and the reader can consult the cross-references
in the paragraph immediately following the below definition
of $S_{N,m,c}$ for reminders of the remaining notation employed): 
\begin{align*}
S_{N,m,c}\Bigl(v,(\vec{\zeta},\vec{\xi}\,)\Bigr)
&\vcentcolon=
\left(
  P_{\vec{\zeta},\vec{\xi}\,}^*u_v, ~
  \bigl(\vec{\zeta},\vec{\xi}\,\bigr)
  -\tau_1^{-1}\ptofd^{-1}
    \Bigl(
      \vec{\force}_{u^{(1)}+P_{\vec{\zeta},\vec{\xi}}^{-1*}v},~
      \vec{\disloc}+\vec{\mubar}^{(1)}+\vec{\mubar}_v
    \Bigr)
\right)
\shortintertext{with}
(u_v,\vec{\mu}_v,\vec{\mubar}_v)
  &\vcentcolon=
  R_{\Sigma_{N, m, \vec{\zeta},\vec{\xi}}}
    \biggl(
      -\rho^{-2}
        Q^{\vec{\zeta},\vec{\xi}}_{
          u^{(1)}+P_{\vec{\zeta},\vec{\xi}}^{-1*}v}
    \biggr).
\end{align*}

Crucially, 
the estimates we have for
$R_\Sigma$ (Lemma \ref{pro:global_inverse}),
$Q$ (Lemma \ref{lem:quadratic_estimate}),
and the second component
of the value of $S_{N,m,c}$
(Lemma \ref{lem:coker_control}
and Lemma \ref{lem:perturbed_force})
do not depend on the choice of $c$.
The estimates \eqref{eqn:diffeo_pullback_ests}
we have concerning $P_{\vec{\zeta},\vec{\xi}}$
do depend on $c$,
but for any given $c>0$ we may choose $m$
sufficiently large in terms of it
and (exploiting the definition of $B_{N,m}$
and the quadratic estimate
Lemma \ref{lem:quadratic_estimate})
sacrifice an appropriately small power
of $\taubar_1$
to secure a bound on the image of $S_{N,m,c}$
independent of $c$:
in summary we obtain
\begin{align*}
\nm{P_{\vec{\zeta},\vec{\xi}}^*u_v}_{2,\alpha,\beta}
  +\abs{\vec{\mu}_v}
  +\abs{\vec{\mubar}_v}
&\leq
C(c)\taubar_1^{1+\beta/2}
\leq
\taubar_1^{1+\beta/3},
\\
\abs[\Big]{
\bigl(\vec{\zeta},\vec{\xi}\,\bigr)
  -\tau_1^{-1}\ptofd^{-1}
    \Bigl(
      \vec{\force}_{u^{(1)}+P_{\vec{\zeta},\vec{\xi}}^{-1*}v},~
      \vec{\disloc}+\vec{\mubar}^{(1)}+\vec{\mubar}_v
    \Bigr)
}
&\leq C +C\tau_1^{-1} \abs{\vec{\mubar}^{(1)}+\vec{\mubar}_v}
\leq C,
\end{align*}
where $C(c)>0$ is a constant independent of $m$,
while each instance of $C$ is a positive real,
whose value may vary from instance to instance
but in each can be chosen independently
of $m$, $c$, $\vec{\zeta}$, $\vec{\xi}$ and~$v$. 
Consequently there exists some value $c>0$
for which the image of
the continuous map $S_{N,m,c}$
is contained in its domain.
We can now apply the Schauder fixed point theorem
(as in \cite[Theorem~11.1]{GilTru2001}
for instance)
to verify the existence of some
$v,\vec{\zeta},\vec{\xi}$ for which
$S\bigl(v,(\vec{\zeta},\vec{\xi}\,)\bigr)
=\bigl(v,(\vec{\zeta},\vec{\xi})\bigr)$.
The proof will be concluded (for $m$ sufficiently large)
with initial surface
$\Sigma=\Sigma_{N,m,\vec{\zeta},\vec{\xi}}$
for this choice
of $\vec{\zeta},\vec{\xi}$ (depending on $m$)
and with defining function
\[
u \vcentcolon=u^{(1)}+P_{\vec{\zeta},\vec{\xi}}^{-1*}v.
\]
Indeed,
starting with the definition \eqref{eqn:Q_def} of $Q_\cdot$,
we then have
\begin{align*}
H_u
&=
H^\Sigma + J_\Sigma u + Q_u
\\
&=
H^\Sigma
  - H^\Sigma
  +\rho^2\vec{\disloc} \cdot \vec{\wbar}
  +\rho^2 \vec{\mu}^{(1)} \cdot \vec{w}
  +\rho^2 \vec{\mubar}^{(1)} \cdot \vec{\wbar}
  +J_\Sigma P_{\vec{\zeta},\vec{\xi}}^{-1*}v
  +Q_u
\\
&=
\rho^2
  (
    \vec{\mu}^{(1)}+\vec{\mu}_v
  ) \cdot \vec{w}
  +
  \rho^2(
    \vec{\disloc}
    +\vec{\mubar}^{(1)}
    +\vec{\mubar}_v
  ) \cdot \vec{\wbar}.
\end{align*}
Here we have exploited just the first,
functional component of the fixed-point equation, 
which shows that $H_u$ lies in a finite-dimensional subspace.
The second, parametric component of the fixed-point equation implies
\begin{align}
\label{eqn:fpc_p}
\vec{\force}_u &= \vec{0},
&
\vec{\disloc}
  +\vec{\mubar}^{(1)}
  +\vec{\mubar}_v
&=0,
\end{align}
so that in fact
\begin{equation}
\label{eqn:H_in_w}
H_u =  \rho^2 (\vec{\mu}^{(1)}+\vec{\mu}_v ) \cdot \vec{w}.
\end{equation}
In this way the fixed-point condition
directly shows that the $\vec{\wbar}$
component of $H_u$ vanishes,
reflecting the transparent
influence that variation of $\vec{\xi}$
(via $\vec{\disloc}$)
exerts on this component.
On the other hand,
using the definition
\eqref{eqn:perturbed_force_def}
of $\vec{\force}_u$, for each $1 \leq i \leq N$
equation \eqref{eqn:H_in_w} and the first equation in \eqref{eqn:fpc_p}  yield
\begin{equation*}
0=\force_{i;u}
=\bigl(\mu^{(1)}_i+(\mu_v)_i\bigr)
\int_{D_i} \rho^2 w_i  \, \met{\R^3}(\nu^\Sigma, \, \partial_z|_\Sigma),
\end{equation*}
but, referring to the definition \eqref{eqn:w_ess_def}
of $w_i$, it is clear that the integrand has a sign,
implying $H_u=0$.

That
$
\iota_u(\partial \Sigma)
\subseteq
\Sp^2
$
with radial conormal
follows
(cf. \cite[Section 5]{KapouleasWiygul17}
or \cite[Section 5.1]{CSWnonuniqueness}
for a detailed explanation)
from the fact that these same conditions hold
for $\iota$ in place of $\iota_u$,
the fact that $\Sp^2$
is totally geodesic in $(\R^3,\auxmet)$,
and the boundary condition enforced
by $R_\Sigma$.
Finally, the maximum principle
(or alternatively the bound on $u$)
implies
$\iota_u(\Sigma) \subset \B^3$.
\end{proof}

For each $(N,m)$ as in
Proposition \ref{pro:final_existence_with_estimates}
we take $\Sigma$ and $u$ as in the same proposition,
and we define
\begin{equation*}
\Sigma_{N,m} \vcentcolon= \iota_u(\Sigma).
\end{equation*}

\begin{proposition}
[Embeddedness of $\Sigma_{N,m}$ and maximal symmetry group]
\label{pro:final_embd}
Let $N \geq 2$.
There exists $m_0>0$
such that for every integer $m>m_0$
the surface $\Sigma_{N,m}$ is embedded. 
Moreover, the maximal symmetry group of $\Sigma_{N,m}$ equals the prismatic group $\pri_m$ when $N$ is even, 
and the antiprismatic group $\apr_m$ when $N$ is odd. 
\end{proposition}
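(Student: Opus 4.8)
The plan is to treat embeddedness and the determination of the maximal symmetry group separately. Throughout write $\Sigma=\Sigma_{N,m,\vec{\zeta},\vec{\xi}}$ for the initial surface and $u$ for the defining function provided by Proposition~\ref{pro:final_existence_with_estimates}, so that $\Sigma_{N,m}=\iota_u(\Sigma)$ and $\tau_1^{-1}\nm{u}_{2,\alpha,\beta}\le C$; in particular $\abs{u}\le C\tau_1$ everywhere on $\Sigma$, by \eqref{eqn:rho_off_cat} and \eqref{eqn:rho_on_cat}.

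For embeddedness, one first observes that the initial surface $\Sigma$ is itself embedded: it is the union of $N$ nearly-flat graphs over the horizontal discs $B_{h^B_i}$, which are pairwise disjoint since consecutive heights differ by the vertical extent $2\tau_i\arcosh\frac{1}{m\tau_i}$ of a catenoidal ribbon (see \eqref{eqn:cat_extent}, which by \eqref{eqn:tau_size} dwarfs $\tau_1$), joined by the $(N-1)m$ approximate half-catenoidal ribbons (the regions $K_i$ and their symmetric copies) of waist radius $\tau_i\ll\frac1m$, which are mutually disjoint with separations $\asymp\frac1m$. The point is then that the displacement caused by $\iota_u$ is negligible relative to every length scale of this configuration: working at scale $\rho(p)^{-1}$ about each point $p\in\Sigma$, the map $\iota_u$ exhibits $\Sigma_{N,m}$ as a graph over $\Sigma$ whose defining function has $C^{2,\alpha}$-norm $O\bigl((m\tau_1)^\beta\bigr)\to0$ -- this is exactly the rescaling bookkeeping carried out in the proof of Lemma~\ref{lem:quadratic_estimate} -- so $\iota_u$ is an immersion, injective near each point; and since $\abs{u}\le C\tau_1$ is far smaller than the separations listed above, images of distinct pieces stay disjoint, yielding global injectivity. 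The free boundary behaviour was already recorded in Proposition~\ref{pro:final_existence_with_estimates}. These are the arguments of \cite[Section 5]{KapouleasWiygul17} and \cite[Section 5.1]{CSWnonuniqueness}, to which we refer for the details.

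For the maximal symmetry group, let $\grp'\le\Ogroup(3)$ be the full symmetry group of $\Sigma_{N,m}$. As $\auxmet$ is $\Ogroup(3)$-invariant and $u\in C^{2,\alpha}_\G(\Sigma)$, the surface $\Sigma_{N,m}$ is $\G$-invariant, so $\G\le\grp'$ and in particular $\Z_m\le\grp'$. Then: (i) $\grp'$ is finite, since an infinite closed subgroup of $\Ogroup(3)$ contains a one-parameter rotation group, which would force a full circle of curvature-concentration regions between some pair of consecutive layers, contradicting that the construction and the smallness of $u$ produce exactly $m$ discrete such regions there; (ii) since $\grp'\cap\SOgroup(3)$ is normal of index $\le2$ in $\grp'$ and contains $\rot_{\{x=y=0\}}^{2\pi/m}$ of order $m$, the classification of finite subgroups of $\Ogroup(3)$ shows, once $m$ exceeds a universal constant, that $\grp'\cap\SOgroup(3)$ is cyclic or dihedral with the $z$-axis as its unique principal axis, whence (by conjugation-invariance of that axis) all of $\grp'$ preserves the $z$-axis, i.e.\ $\grp'\le\Ogroup(2)\times\Ogroup(1)$.

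The crux -- and the main obstacle -- is to upgrade these ``approximate'' features to the exact claim, by showing the rotation subgroup $C'\vcentcolon=\grp'\cap(\SOgroup(2)\times\{1\})$ equals $\Z_m$ and not a larger $\Z_\ell$ with $m\mid\ell$. Here the \emph{exact} residual $\Z_m$-symmetry of $\Sigma_{N,m}$ is essential. Making ``neck'' precise, say as a connected component of $\{\,p:\abs{\twoff{\Sigma_{N,m}}}_{\met{\Sigma_{N,m}}}(p)>m^{1/2}\,\}$, one sees from Lemma~\ref{lem:geo_ests_cat}\,\ref{itm:geo_ests_cat:twoff}, Lemma~\ref{lem:geo_ests_disc}\,\ref{itm:geo_ests_disc:twoff}, and the smallness of $u$ that this set has $(N-1)m$ components, $m$ of them in the horizontal slab between the two lowest layers, each contained in a tiny ball at distance $\asymp1$ from the $z$-axis and of diameter $\ll\frac1m$. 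Any element of $C'$ preserves each such slab and permutes these $m$ necks; the exact $\Z_m$-symmetry places their centres at a single $\Z_m$-orbit of angular positions, so a rotation permuting them must act through a multiple of $2\pi/m$, forcing $C'=\Z_m$. Finally $\grp'/C'$ embeds into $(\Ogroup(2)\times\Ogroup(1))/(\SOgroup(2)\times\{1\})\cong\Z_2\times\Z_2$, so $\abs{\grp'}\le4m=\abs{\G}$; together with $\G\le\grp'$ this gives $\grp'=\G$, which is $\pri_m$ for $N$ even and $\apr_m$ for $N$ odd.
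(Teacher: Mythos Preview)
Your embeddedness argument is essentially the paper's own: both rest on the bounds in Proposition~\ref{pro:final_existence_with_estimates} for $u$ and $(\vec{\zeta},\vec{\xi}\,)$ (so $\abs{u}\leq C\tau_1$, decaying further on the $K_i$), combined with the embeddedness and quantified separations of the initial surface (waist radii $\sim\tau_1$, layer spacings $\sim m\tau_1$, ribbon separations $\sim 1/m$).

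For the maximal symmetry group your argument is correct but takes a genuinely different route. The paper instead counts umbilics via the Hopf differential: the $N$ points of $\Sigma_{N,m}\cap\{x=y=0\}$ are interior umbilics of order at least $m-2$ by Lemma~\ref{lem:umbilics_fixed_by_rots}; an extra symmetry either moves the $z$-axis (forcing at least $(m+1)N$ such umbilics) or fixes it (forcing order $\geq 2m-2$ at each of the $N$ points), and in either case Proposition~\ref{pro:umbilic_count} yields a lower bound on $8\gamma+4\beta-8$ that contradicts the exact value $4(m-1)N-4m$ from Lemma~\ref{lem:TopologicalType}. Your approach---finiteness of $\grp'$, classification of finite subgroups of $\Ogroup(3)$ to force $\grp'\leq\Ogroup(2)\times\Ogroup(1)$, then identifying $C'=\Z_m$ by the permutation action on the $m$ curvature-concentration necks at a given waist height and concluding via $\abs{\grp'/C'}\leq 4$---is the one the paper explicitly mentions as a viable alternative (``one could argue, somewhat as for the embeddedness, by using the smallness of the defining function\dots'') before opting for umbilics. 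The umbilic argument buys independence from the fine geometry of the construction: it uses only the $\pyr_m$-invariance and the topological type, and would apply to any FBMS with those data. Your argument is more elementary and self-contained (no Hopf differential machinery) but leans on the specific curvature estimates of Lemmata~\ref{lem:geo_ests_cat}--\ref{lem:geo_ests_disc} and Corollary~\ref{cor:final_geo}; to make the neck step fully rigorous you should fix a rotation-equivariant ``angular center'' for each neck (say the midpoint of its $\theta$-shadow) and observe that rotations preserve waist heights $h^K_i$, hence preserve each $\Z_m$-orbit of necks, so that $C'$ permutes the $m$ centers $\{\theta_0+2\pi k/m\}$---which forces the rotation angle into $\tfrac{2\pi}{m}\Z$.
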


\begin{proof}
The embeddedness follows from
the bounds in Proposition \ref{pro:existence_of_fbms}
for the defining function and parameters
(using in particular the decay of the defining
functions on the $K_i$ regions)
and from the embeddedness and specifications
of the initial surfaces
(using in particular the facts
that each catenoid has waist
radius of order $\tau_1$
and that the spacing between the discs
is of order $m\tau_1$).

Turning to the symmetries,
by construction $\Sigma_{N,m}$
is invariant under $\G=\G_{N,m}$,
so it remains only to verify
that $\Sigma_{N,m}$ has no extra symmetries.
For this one could argue, somewhat as for the embeddedness,
by using the smallness of the defining function
and the easily verified
fact that the corresponding initial surface
has maximal symmetry group $\G$,
but we give instead a different proof
that proceeds by counting umbilics
(with multiplicity)
and depends very little on the construction.

Recall that in a tubular neighborhood of
$\{x=y=0\}$
each initial surface is a union
of exact horizontal discs
and $\auxmet$ agrees with the Euclidean metric.
It is then immediate from the construction
of $\Sigma_{N,m}$ as a $\auxmet$ graph
over an initial surface
that $\Sigma_{N,m} \cap \{x=y=0\}$ consists of
exactly $N$ points.
Moreover,
since $\Sigma_{N,m}$ is invariant under $\pyr_m$,
each of these points is
(by Lemma \ref{lem:umbilics_fixed_by_rots})
an interior umbilic of order at least $m-2$.

Suppose now that $\Sigma_{N,m}$
had a symmetry outside $\G$.
Note that whenever $N$ is odd,
$\Sigma_{N,m}$ contains the origin
and has horizontal tangent plane there.
Consequently, if our extra symmetry
does not preserve
$\{x=y=0\}$ as a set,
then $N$ must be even
and $\Sigma_{N,m}$
must contain at least
$(m+1)N$ interior umbilics
each of which has order at least $m-2$ due to Lemma \ref{lem:umbilics_fixed_by_rots},
so that by Proposition~\ref{pro:umbilic_count}
\begin{equation}
\label{eqn:z-axis_not_preserved}
8\gamma+4\beta-8
\geq
2(m-2)(m+1)N.
\end{equation}
If instead our extra symmetry preserves the set $\{x=y=0\}$,
then each point $\{x=y=0\} \cap \Sigma_{N,m}$
must in fact have order at least $2m-2$. (Indeed, this claim follows from the explicit classification of the elements in $\Ogroup(3)$, which allows -- possibly by composing with suitable elements of $\G$ -- to reduce to the case when the extra symmetry is a pure rotation around the $z$-axis, and then noting that since $\Sigma_{N,m}$ is not rotationally symmetric, such a symmetry must rotate by an integer divisor of $2\pi/m$; at that stage one simply appeals to Lemma \ref{lem:umbilics_fixed_by_rots} with $2m$ in lieu of $k$.)
In turn Proposition~\ref{pro:umbilic_count} implies
\begin{equation}
\label{eqn:z-axis_preserved}
8\gamma+4\beta-8 \geq 4(m-1)N.
\end{equation}
On the other hand, $\Sigma_{N,m}$ has the same topological type
as its corresponding initial surface,
so by Lemma \ref{lem:TopologicalType}
\begin{equation*}
8\gamma + 4\beta - 8
=
4(m-1)N-4m
\end{equation*}
(independently of the parity of $N$).
However, it is then readily seen that the preceding equation is incompatible with \eqref{eqn:z-axis_preserved}, as it is in fact with 
\eqref{eqn:z-axis_not_preserved}
provided $m \geq 3$.
\end{proof}

By virtue of the embeddedness in Proposition \ref{pro:final_embd} we can define the diffeomorphism
\begin{equation*}
\varpi_{N,m} \colon \Sigma_{N,m} \to \Sigma
\end{equation*}
such that $\varpi_{N,m} \circ \iota_u$
is the identity on $\Sigma$. That said, we wish to collect in the following statement some sharper geometric estimates that -- besides being informative in their own right -- are auxiliary to the study we perform in the next section and the resulting index estimates.

\begin{corollary}
[Geometric estimates for $\Sigma_{N,m}$]
\label{cor:final_geo}
Let $N \geq 2$.
There exist $\beta \in \interval{0,1}$ and $m>0$
such that for every integer $m>m_0$,
recalling the function $\rho$
defined by \eqref{eqn:rho_def}
on the initial surface
$
 \Sigma
 \vcentcolon=
 \varpi_{N,m}(\Sigma_{N,m})
$,
we have
\begin{align*}
\nm[\Big]{
    \rho^2\varpi_{N,m}^{-1*}\met{\Sigma_{N,m}}
    -\rho^2\met{\Sigma}
  }_{C^0(\Sigma, \rho^2\met{\Sigma})}
&\leq
  (m\tau_1)^\beta,
\\[1ex]
\nm[\Big]{
  \rho^{-2} \varpi_{N,m}^{-1*}
      \abs[\big]{
        \twoff{\Sigma_{N,m}}
      }_{\met{\Sigma_{N,m}}}^2
  -\rho^{-2}
      \abs[\big]{
        \twoff{\Sigma}
      }_{\met{\Sigma}}^2
}_{C^0(\Sigma, \rho^2\met{\Sigma})}
&\leq
  (m\tau_1)^\beta.
\end{align*}
\end{corollary}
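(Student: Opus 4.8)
The plan is to run the same scaling argument as in the proof of Lemma~\ref{lem:quadratic_estimate}, now applied to the induced metric and to $\abs{\twoff{\Sigma_{N,m}}}^2$ in place of the (scalar) mean curvature. Recall from Proposition~\ref{pro:final_existence_with_estimates} that $\Sigma_{N,m}=\iota_u(\Sigma)$ with $\nm{u}_{2,\alpha,\beta}\le C\tau_1$, and that $\varpi_{N,m}^{-1}=\iota_u$ as a map $\Sigma\to\Sigma_{N,m}$; thus the two tensors to be estimated are $\iota_u^*\met{\Sigma_{N,m}}$ and $\iota_u^*\abs{\twoff{\Sigma_{N,m}}}_{\met{\Sigma_{N,m}}}^2$. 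Fix $p\in\Sigma$ and let $B\subset\Sigma$ be the geodesic ball of radius $1$ around $p$ in $\rho^2(p)\met{\Sigma}$. As in the proof of Lemma~\ref{lem:quadratic_estimate}, on $\rho(p)B$ the rescaled surface $\rho(p)\Sigma$ has uniformly bounded geometry (bounded covariant derivatives of its second fundamental form), the metric $\rho^2(p)\auxmet$ has Riemann tensor and derivatives bounded by $C(k)m^{-2}$, and $\iota_u$ is represented as a normal graph over $\rho(p)\Sigma$ in $\rho^2(p)\auxmet$ with defining function $\rho(p)u$.

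The first step is to bound this defining function. From $\nm{u}_{2,\alpha,\beta}\le C\tau_1$, i.e.\ $\nm{m^{-\beta}\rho^\beta u}_{C^{2,\alpha}(\Sigma,\rho^2\met{\Sigma})}\le C\tau_1$, together with the pointwise bound $\abs{d\rho}_{\rho^2\met{\Sigma}}+\abs{D_{\rho^2\met{\Sigma}}^2\rho}_{\rho^2\met{\Sigma}}\le C\rho$ (as in the proof of Lemma~\ref{lem:quadratic_estimate}), which forces $\rho$ to be comparable to $\rho(p)$ on $B$, and the global bound $\rho\le C(N)\tau_1^{-1}$ on $\Sigma$ (a consequence of \eqref{eqn:rho_on_cat}, \eqref{eqn:rho_off_cat}, \eqref{eqn:tau_size} and the structure of \eqref{eqn:tau_def}, using in particular that $\tau_i$ is comparable to $\tau_1$ uniformly in $i$ for $m$ large), one obtains
\[
\nm{\rho(p)u}_{C^{2,\alpha}(B,\rho^2(p)\met{\Sigma})}
\le
Cm^\beta\rho(p)^{1-\beta}\tau_1
\le
Cm^\beta\tau_1^\beta
=
C(m\tau_1)^\beta ,
\]
which tends to zero as $m\to\infty$. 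The second step is a Taylor expansion: the induced metric and $\abs{A}^2$ of a normal graph over $\rho(p)\Sigma$ in the ambient metric $\rho^2(p)\auxmet$ are smooth functions of the $2$-jet of the defining function and of the $2$-jet of $\rho^2(p)\auxmet$, and at defining function $0$ the graph is the inclusion of $\rho(p)\Sigma$, so these functions return exactly $\met{\rho(p)\Sigma}$ and $\abs{\twoff{\rho(p)\Sigma}}^2$. Consequently, using the uniform bounds recalled above (in particular that $\rho^2(p)\auxmet$ is $C^2$-close to Euclidean within $Cm^{-2}$, so that the first-order coefficients differ from their Euclidean values only by $O(m^{-2})$), one gets at $p$
\[
\abs[\big]{\iota_u^*\met{\rho(p)\Sigma_{N,m}}-\met{\rho(p)\Sigma}}_{\met{\rho(p)\Sigma}}
+\abs[\big]{\iota_u^*\abs{\twoff{\rho(p)\Sigma_{N,m}}}^2-\abs{\twoff{\rho(p)\Sigma}}^2}
\le
C\nm{\rho(p)u}_{C^{2}(B,\rho^2(p)\met{\Sigma})}
\le
C(m\tau_1)^\beta .
\]
It is essential here that there is no stand-alone $m^{-2}$ term: because the graph is the inclusion when $u=0$, the factor $m^{-2}$ enters only multiplying $\nm{\rho(p)u}_{C^2}$.

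Finally one undoes the rescalings. For a $(0,2)$-tensor $T$ one has $\abs{\rho^2 T}_{\rho^2\met{\Sigma}}=\abs{T}_{\met{\Sigma}}$ pointwise, and $\abs{\twoff{\rho(p)\Sigma}}^2=\rho(p)^{-2}\abs{\twoff{\Sigma}}^2$, so the displayed inequality is exactly the bound on $\abs{\rho^2\iota_u^*\met{\Sigma_{N,m}}-\rho^2\met{\Sigma}}_{\rho^2\met{\Sigma}}(p)$ and on $\abs{\rho^{-2}\iota_u^*\abs{\twoff{\Sigma_{N,m}}}^2-\rho^{-2}\abs{\twoff{\Sigma}}^2}(p)$. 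Taking the supremum over $p\in\Sigma$ and replacing $\beta$ by, say, $\beta/2$ to absorb the constant for $m$ large (which is legitimate since $\beta$ is ours to choose, and $m\tau_1\to0$) yields the two asserted estimates. The only genuinely delicate points are the bookkeeping of the two rescalings — by the conformal factor $\rho^2$ and by the point value $\rho(p)$ — so that the output is measured in $C^0(\Sigma,\rho^2\met{\Sigma})$, and the observation that the $m^{-2}$ ambient-curvature error, which would dominate $(m\tau_1)^\beta$ if it appeared additively (since $\tau_1$ is exponentially small in $m$), in fact only multiplies the much smaller defining-function norm; both are handled exactly as in Lemma~\ref{lem:quadratic_estimate}.
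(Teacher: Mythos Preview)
Your proof is correct and follows essentially the same approach as the paper's, which simply cites the defining-function estimate from Proposition~\ref{pro:existence_of_fbms} together with the second-fundamental-form bounds in Lemma~\ref{lem:geo_ests_cat} and Lemma~\ref{lem:geo_ests_disc}. You have unpacked precisely the scaling argument that underlies the paper's one-sentence proof: the bounded-geometry claim you invoke for $\rho(p)\Sigma$ is exactly what those two lemmata supply, and your observation that the $m^{-2}$ ambient-curvature contribution enters only multiplicatively (since $\iota_0$ is the identity and the Euclidean quantities agree at $u=0$) is the key point that makes the estimate work.
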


\begin{proof}
This follows from the estimate
for the defining function
in Proposition \ref{pro:existence_of_fbms}
and from the estimates on the second fundamental form
of the initial surfaces
in Lemma \ref{lem:geo_ests_cat}
and Lemma \ref{lem:geo_ests_disc}.
\end{proof}

\section{Morse index and nullity bounds}\label{sec:Index}

We will partition each surface $\Sigma_{N,m}$ constructed above (cf. Proposition \ref{pro:final_existence_with_estimates} and Proposition \ref{pro:final_embd}) into three type of regions -- still to be formally introduced -- one category being that of ``catenoid regions'' and the other two being those of the ``disc regions'' and finally the ``intermediate regions'' in between. 
We are going to apply the Montiel--Ros type machinery, as developed in \cite{CSWSpectral}, so in the first part of this section we will collect some key facts from there, also with the goal of introducing the (somewhat heavy but) convenient notation we shall later employ. We will then proceed with the spectral estimates on such building blocks, and finally prove the global index and nullity bounds on the free boundary minimal disc stackings.

\paragraph{General setup.} We start by considering a Lipschitz domain $\lipdom$ 
of a smooth, compact $d$-dimensional manifold $M$
with (possibly empty) boundary $\partial M$;
by this we mean a nonempty, open subset of $M$
whose boundary is locally (around every point) representable
as the graph of a Lipschitz function, and note that
we do not require $\Omega$ to be connected. One wants to study the spectrum of a given Schr\"{o}dinger operator $\Delta_g + q$ on $\lipdom$ subject to Dirichlet, Neumann, and Robin conditions on pairwise disjoint open subsets $\dbdy \lipdom$, $\nbdy \lipdom$, $\rbdy \lipdom$ respectively, assuming that the union of their closures equals $\partial\lipdom$. We assume the Riemannian metric $g$, the Schr\"{o}dinger potential $\potential \colon \closure{\lipdom} \to \R$ and the Robin potential $\robinpotential \colon \closure{\lipdom} \to \R$ to be smooth (i.\,e., $C^{\infty}$) unless otherwise stated.

One can pose the problem by considering the Sobolev space
$\sob(\lipdom,g)$, with its standard inner product, then its subspace 
\begin{equation}
\label{eqn:H10}
\sobd{\dbdy \lipdom}(\lipdom,g)
\vcentcolon=
\{
  u \in \sob(\lipdom,g)
  \st
  u|_{\dbdy \lipdom}=0
\},
\end{equation}
understood in the sense of traces, and the quadratic form 
$\weakbf=\weakbf[\lipdom,g,\potential,\robinpotential,\dbdy\lipdom,\nbdy\lipdom,\rbdy\lipdom]$ defined by 
\begin{equation}
\label{eqn:bilinear_form_def}
\begin{aligned}
\weakbf\colon\sobd{\dbdy \lipdom}(\lipdom,g)
\times\sobd{\dbdy \lipdom}(\lipdom,g)
&\to\R
\\
(u,v)&\mapsto
  \int_\lipdom
    \Bigl(
      g(\nabla_g u, \nabla_g v)
      -\potential u v
    \Bigr)
    \, \hausint{d}{g}
  -\int_{\rbdy \lipdom}
    \robinpotential u v
    \, \hausint{d-1}{g}
\end{aligned}
\end{equation}
that is symmetric, bounded, coercive, and can thus be diagonalized (see \cite[Section 2]{CSWSpectral}).
Listing the eigenvalues as usual with repetitions in non-decreasing order, we denote the $k\textsuperscript{th}$ eigenvalue of $\weakbf$ by $\lambda_k(\weakbf)$.
Given $t\in\R$, we let $E^{<t}(\weakbf)$ respectively $E^{=t}(\weakbf)$ denote the span in $L^2(\lipdom,g)$ of all eigenfunctions of $\weakbf$ with eigenvalue $\lambda<t$ respectively $\lambda=t$, and $E^{\leq t}(\weakbf)$ be their direct sum. The \emph{index} and the \emph{nullity} of $\Sigma$ 
are then defined as 
\begin{align*}
\ind(\weakbf)\vcentcolon&=\dim E^{<0}(\weakbf), &
\nul(\weakbf)\vcentcolon&=\dim E^{=0}(\weakbf).
\end{align*}

Let $\grp$ be a finite group of smooth diffeomorphisms of $M$, each restricting to a surjective isometry of $(\closure{\lipdom},g)$. 
Given a group homomorphism 
$
 \twsthom
 \colon
 \grp
 \to
 \Ogroup(1)=\{-1,1\}
$
we define the action
\begin{equation*}
(\phi,u)
\mapsto
\twsthom(\phi)(u \circ \phi^{-1})
=
\twsthom(\phi)\phi^{-1*}u
\quad
\mbox{for all }
\phi \in \grp, ~
u \colon \lipdom \to \R,
\end{equation*}
and we call a function
$(\grp,\twsthom)$-invariant
if it is invariant under this action. As defined in \cite[(2.11)]{CSWSpectral}, we denote the orthogonal projection onto $(G,\twsthom)$-invariant functions by $\pi_{G,\twsthom}\colon L^2(\Omega,g)\to L^2(\Omega,g)$, and define the subspaces $\eigenspsym{\weakbf}{<t}{\grp}{\twsthom}, \eigenspsym{\weakbf}{=t}{\grp}{\twsthom}$ (as well as their direct sum $\eigenspsym{\weakbf}{\leq t}{\grp}{\twsthom})$ and in turn the integers 
\begin{align}
\label{eqn:symind}
\symind{G}{\twsthom}(T)
&\vcentcolon=\dim E_{G,\twsthom}^{<0}(T),
&
\symnul{G}{\twsthom}(T)
&\vcentcolon=\dim E_{G,\twsthom}^{=0}(T).
\end{align}
Of special utility, in our study, is the min-max characterization
\begin{equation}
\label{eqn:symminmax}
\eigenvalsym{\weakbf}{i}{\grp}{\twsthom}
=
\min
  \left\{
    \max
      \biggl\{
        \frac{\weakbf(w,w)}
        {\nm{w}_{L^2(\lipdom,g)}^2}
\st
0\neq w \in W
    \biggr\}
\st
    W
    \subspace
    \invproj{\grp}{\twsthom}\bigl(\sobd{\dbdy \lipdom}(\lipdom,g)\bigr),
    ~
    \dim W = i
\right\}.
\end{equation}

When $\abs{G}=2$, there are precisely two homomorphisms $G\to\Ogroup(1)$. 
This leads to defining the $G$-even and $G$-odd index (henceforth denoted $\symind{G}{+}$ and $\symind{G}{-}$ respectively), and likewise for nullity; we always have (cf. \cite[Example 2.3]{CSWSpectral})
\begin{align}
\label{evenOddDecomp}
\ind(\weakbf)
&=\symind{G}{+}({\weakbf})
+\symind{G}{-}(\weakbf), 
&
\nul(\weakbf)
&=\symnul{G}{+}(\weakbf)
+\symnul{G}{-}(\weakbf).
\end{align}  

\paragraph{Adjoining interior boundary conditions and main statement.}

If $\lipdom_1 \subset \lipdom$ is another Lipschitz domain of $M$ we define the sets 
\begin{equation}
\label{eqn:subdomainbdydecomps}
\begin{aligned}
\intbdy\lipdom_1
  &\vcentcolon=
  \partial\lipdom_1 \cap \lipdom,
\qquad
&\extbdy\lipdom_1
  &\vcentcolon=
  \partial \lipdom_1 
    \setminus
    \closure{\intbdy\lipdom_1},
\\[1ex]
\dint{\dbdy}\lipdom_1
  &\vcentcolon=
  (\extbdy\lipdom_1 \cap \dbdy\lipdom)
    \cup \intbdy\lipdom_1,
\qquad
&\nint{\dbdy}\lipdom_1
  &\vcentcolon=
  \extbdy\lipdom_1 \cap \dbdy\lipdom,
\\[1ex]
\dint{\nbdy}\lipdom_1
  &\vcentcolon=
  \extbdy\lipdom_1 \cap \nbdy\lipdom,
\qquad
&\nint{\nbdy}\lipdom_1
  &\vcentcolon=
  (\extbdy\lipdom_1 \cap \nbdy\lipdom)
    \cup \intbdy\lipdom_1,
\\[1ex]
\dint{\rbdy}\lipdom_1
  &\vcentcolon=
  \extbdy\lipdom_1 \cap \rbdy\lipdom,
\qquad
&\nint{\rbdy}\lipdom_1
  &\vcentcolon=
  \extbdy\lipdom_1 \cap \rbdy\lipdom.
\end{aligned}
\end{equation}
and the associated bilinear forms
\begin{equation}
\label{eqn:DirAndNeumInternalizationsOfBilinearForm}
\begin{aligned}
\dint{\weakbf}_{\lipdom_1}
  &\vcentcolon=
  \weakbf[
    \lipdom_1,g,\potential,\robinpotential,
    \dint{\dbdy}\lipdom_1,
    \dint{\nbdy}\lipdom_1,
    \dint{\rbdy}\lipdom_1
  ],
\\[1ex]
\nint{\weakbf}_{\lipdom_1}
  &\vcentcolon=
  \weakbf[
    \lipdom_1,g,\potential,\robinpotential,
    \nint{\dbdy}\lipdom_1,
    \nint{\nbdy}\lipdom_1,
    \nint{\rbdy}\lipdom_1
  ],
\end{aligned}
\end{equation}
defined, respectively, on the Sobolev spaces
$\sobd{\dint{\dbdy}\lipdom_1}(\lipdom_1,g)$
and $\sobd{\nint{\dbdy}\lipdom_1}(\lipdom_1,g)$.

Note that, if we further assume that each element of $\grp$ maps $\lipdom_1$ onto itself then it follows that each of the sets in \eqref{eqn:subdomainbdydecomps} is also invariant under the action of $\grp$.

\begin{proposition}
[\protect{\cite[Proposition 3.1]{CSWSpectral}}]
\label{prop:mr}
In the setting above, suppose we have open $\grp$-invariant Lipschitz subdomains
$\lipdom_1,\ldots,\lipdom_n \subset \lipdom$
which are pairwise disjoint
and whose closures cover $\closure{\lipdom}$. 
We assume further that $\grp$ acts transitively on the connected components of $\lipdom$ (which is automatically the case if $\lipdom$ is connected).
Then the following inequalities hold
for any $t \in \R$:
\begin{enumerate}[label={\normalfont(\roman*)}]
\item \label{mrLower}
$\displaystyle
 \dim \eigenspsym{\weakbf}{<t}{\grp}{\twsthom}
 \geq
 \dim \eigenspsym
       {\dint{\weakbf}_{\lipdom_1}}
       {<t}{\grp}{\twsthom}
   +\sum_{i=2}^n \dim
     \eigenspsym{\dint{\weakbf}_{\lipdom_i}}
                {\leq t}{\grp}{\twsthom}
$, 
\item \label{mrUpper}
$\displaystyle
 \dim \eigenspsym{\weakbf}{\leq t}{\grp}{\twsthom}
 \leq
 \dim \eigenspsym
      {\nint{\weakbf}_{\lipdom_1}}
      {\leq t}{\grp}{\twsthom}
   +\sum_{i=2}^n \dim
     \eigenspsym{\nint{\weakbf}_{\lipdom_i}}
                {<t}{\grp}{\twsthom}
$.
\end{enumerate}
\end{proposition}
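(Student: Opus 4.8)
The plan is to obtain both inequalities directly from the equivariant min-max characterization \eqref{eqn:symminmax}, the first half by building competitors as extensions-by-zero and the second half by restricting competitors, always working inside the $(\grp,\twsthom)$-invariant subspaces. The common mechanism is this: since each $\lipdom_i$ is $\grp$-invariant, the extension-by-zero map $\sobd{\dint{\dbdy}\lipdom_i}(\lipdom_i,g)\to\sobd{\dbdy\lipdom}(\lipdom,g)$ and the restriction map $\sobd{\dbdy\lipdom}(\lipdom,g)\to\sobd{\nint{\dbdy}\lipdom_i}(\lipdom_i,g)$, $u\mapsto u|_{\lipdom_i}$, are well defined (the interior Dirichlet condition $u|_{\intbdy\lipdom_i}=0$ built into $\dint{\weakbf}_{\lipdom_i}$ is exactly what lets one extend by zero without leaving $\sob$, while $\nint{\weakbf}_{\lipdom_i}$ imposes no condition on $\intbdy\lipdom_i$, which is what allows plain restriction), they commute with the $(\grp,\twsthom)$-action, and they are additive at the level of the quadratic forms: because the $\lipdom_i$ partition $\closure{\lipdom}$ up to sets of measure zero and their interfaces $\intbdy\lipdom_i$ lie in $\lipdom$ rather than on $\partial\lipdom$ and hence carry no Robin term of $\weakbf$, one has $\weakbf(w,w)=\sum_i\dint{\weakbf}_{\lipdom_i}(w|_{\lipdom_i},w|_{\lipdom_i})$ for $w$ in the image of the extensions, and $\weakbf(u,u)=\sum_i\nint{\weakbf}_{\lipdom_i}(u|_{\lipdom_i},u|_{\lipdom_i})$ for every $u\in\sobd{\dbdy\lipdom}(\lipdom,g)$. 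All the eigenspaces appearing are finite dimensional by the coercivity of $\weakbf$.

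For \ref{mrLower} I would extend by zero the invariant spaces $\eigenspsym{\dint{\weakbf}_{\lipdom_1}}{<t}{\grp}{\twsthom}$ and $\eigenspsym{\dint{\weakbf}_{\lipdom_i}}{\leq t}{\grp}{\twsthom}$ for $i\geq2$; by disjointness their images span an internal direct sum $W\subset\invproj{\grp}{\twsthom}\bigl(\sobd{\dbdy\lipdom}(\lipdom,g)\bigr)$ of dimension equal to the right-hand side, and by the additivity above $\weakbf(w,w)\leq t\nm{w}_{L^2(\lipdom,g)}^2$ for every $w\in W$. The real task is to upgrade this to $\dim\eigenspsym{\weakbf}{<t}{\grp}{\twsthom}\geq\dim W$ rather than the weaker $\dim\eigenspsym{\weakbf}{\leq t}{\grp}{\twsthom}\geq\dim W$. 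If that failed there would be a nonzero $w\in W$ that is $L^2$-orthogonal to $\eigenspsym{\weakbf}{<t}{\grp}{\twsthom}$; the variational principle applied within the invariant subspace then forces $\weakbf(w,w)=t\nm{w}^2$, so $w$ is an honest $(\grp,\twsthom)$-invariant eigenfunction of $\weakbf$ at eigenvalue $t$, smooth in the interior by elliptic regularity. But the equality $\weakbf(w,w)=t\nm{w}^2$ is incompatible with a nonzero component of $w$ on $\lipdom_1$, because there $\dint{\weakbf}_{\lipdom_1}$ is \emph{strictly} below $t\nm{\cdot}^2$ on nonzero elements of $\eigenspsym{\dint{\weakbf}_{\lipdom_1}}{<t}{\grp}{\twsthom}$; hence $w$ vanishes on $\lipdom_1$, and unique continuation for the elliptic equation solved by $\weakbf$-eigenfunctions propagates the vanishing across the interfaces to the whole connected component of $\lipdom$ meeting $\lipdom_1$. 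The hypothesis that $\grp$ acts transitively on the components of $\lipdom$, together with the $(\grp,\twsthom)$-invariance of $w$, then forces $w\equiv0$, a contradiction.

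For \ref{mrUpper} I would run the dual argument. Set $V\vcentcolon=\eigenspsym{\weakbf}{\leq t}{\grp}{\twsthom}$ and pass to the subspace $V'\subset V$ cut out by requiring $v|_{\lipdom_i}$ to be $L^2(\lipdom_i)$-orthogonal to $\eigenspsym{\nint{\weakbf}_{\lipdom_i}}{<t}{\grp}{\twsthom}$ for each $i\geq2$, so that $\dim V'\geq\dim V-\sum_{i\geq2}\dim\eigenspsym{\nint{\weakbf}_{\lipdom_i}}{<t}{\grp}{\twsthom}$. For $v\in V'$ the variational principle gives $\nint{\weakbf}_{\lipdom_i}(v|_{\lipdom_i},v|_{\lipdom_i})\geq t\nm{v|_{\lipdom_i}}^2$ when $i\geq2$, and feeding this, together with $\weakbf(v,v)\leq t\nm{v}^2$ and the form additivity, into the identity $\nm{v}^2=\sum_i\nm{v|_{\lipdom_i}}^2$ yields $\nint{\weakbf}_{\lipdom_1}(v|_{\lipdom_1},v|_{\lipdom_1})\leq t\nm{v|_{\lipdom_1}}^2$; moreover $v\mapsto v|_{\lipdom_1}$ is injective on $V'$, since a $v\in V'$ with $v|_{\lipdom_1}=0$ makes all the preceding inequalities into equalities, hence is again an invariant $\weakbf$-eigenfunction at eigenvalue $t$ vanishing on $\lipdom_1$, hence $\equiv0$ by the same unique-continuation-and-transitivity step. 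As $v|_{\lipdom_1}$ is invariant, \eqref{eqn:symminmax} applied to $\nint{\weakbf}_{\lipdom_1}$ gives $\dim\eigenspsym{\nint{\weakbf}_{\lipdom_1}}{\leq t}{\grp}{\twsthom}\geq\dim V'$, which rearranges to \ref{mrUpper}.

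The genuinely delicate point in both halves is the step that converts the soft inequality $\weakbf\leq t\nm{\cdot}^2$ on a trial space into the \emph{strict} spectral statement on the distinguished piece $\lipdom_1$: that is precisely where unique continuation for the elliptic boundary value problem enters, and where the transitivity of the $\grp$-action on the connected components of $\lipdom$ is needed (to handle possibly disconnected $\lipdom$). Everything else — checking that traces behave correctly under extension and restriction, keeping the Dirichlet/Neumann/Robin decompositions of \eqref{eqn:subdomainbdydecomps} straight, and verifying the additivity of $\weakbf$ and of the $L^2$ norm over the $\lipdom_i$ — is routine bookkeeping. This reproduces, in the present more general equivariant setting with Robin data, the bracketing argument of Montiel and Ros.
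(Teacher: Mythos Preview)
The paper does not prove this proposition; it is quoted from \cite[Proposition 3.1]{CSWSpectral} and used as a black box. Your reconstruction is correct and follows the standard Montiel--Ros bracketing adapted to the equivariant Robin setting, correctly isolating unique continuation (together with the transitivity of $\grp$ on the components of $\lipdom$) as the mechanism that upgrades the soft inequality $\weakbf\leq t\nm{\cdot}^2$ on the trial space to the strict spectral bound on the distinguished piece $\lipdom_1$.
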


\paragraph{The geometric scenario.} 
For our purposes it is convenient to specialize the discussion above to the geometric case of interest, which in particular warrants special notation. 
Accordingly, let $\B^3$ as usual denote the $3$-dimensional Euclidean unit ball, and let $\Sigma\subset\B^3$ be a properly embdedded (compact) surface. 
In this setting, its Morse index and nullity correspond to the Schr\"odinger operator determined by taking $g$ to be the metric induced by the embedding in $\B^3$, $q=\abs{A}^2$
(the squared norm of the second fundamental form of $\Sigma$) and $r=1$. 
Following the notation of \cite{CSWSpectral}, we define the Jacobi quadratic form 
\begin{align}\label{eqn:Jacobi_quadratic_form}
\begin{aligned}
\indexform{\Sigma}\colon H^1(\Sigma) \times H^1(\Sigma)&\to\R 
\\
(u,v)&\mapsto\int_{\Sigma}
\nabla u\cdot\nabla v-\abs{A}^2uv\,d\hsd^2 
-\int_{\partial\Sigma}u v\,d\hsd^1. 
\end{aligned}
\end{align} 
The (absolute) \emph{index} and the \emph{nullity} of $\Sigma$ 
are then (equivalently) defined as 
\begin{align*}
\ind(\Sigma)&\vcentcolon=\dim E^{<0}(\indexform{\Sigma}),
&
\nul(\Sigma)&\vcentcolon=\dim E^{=0}(\indexform{\Sigma}).
\end{align*} 

Among all possible group actions one can consider, one is especially important for our purposes. 
Recalling that any properly embedded surface in $\B^3$ is two-sided, let us pick a unit normal
$\nu$ on $\Sigma$
and thereby identify -- as usual -- sections of the normal bundle of $\Sigma$
with functions on $\Sigma$.
For $\grp$ again a subgroup of $\Ogroup(3)$ preserving $\Sigma$
we have a natural action given by
\begin{align*}
(\phi,u)
&\mapsto
\nrmlsgn{\nu}(\phi)
  (u \circ \phi^{-1})
\quad
\text{ for all }
\phi \in \grp,~
u \colon \Sigma \to \R,
\end{align*}
where $\nrmlsgn{\nu}(\phi)\vcentcolon=h(\phi_*\nu,\nu)$ is a constant in $\Ogroup(1)=\{1,-1\}$.
We shall further assume
that the action of $\grp$ on $\Sigma$
is faithful, meaning that only the identity
element fixes $\Sigma$ pointwise;
this assumption is always satisfied in our applications. In this context recall we say that a function
$u \colon \Sigma \to \R$
is $\grp$-invariant
if $u=u \circ \phi$
for all $\phi \in \grp$,
and we say rather that
$u$ is $\grp$-equivariant
if $u=\nrmlsgn{\nu}(\phi) u \circ \phi$
for all $\phi \in \grp$
(that is, noting the identity
$
 \nrmlsgn{\nu}(\phi)
 =
 \nrmlsgn{\nu}(\phi^{-1})
$,
provided $u$ is invariant under
the $\nrmlsgn{\nu}$-twisted $\grp$ action). 
The $\grp$-equivariant \emph{index} and the \emph{nullity} of $\Sigma$ are then
\begin{align}\label{eq:MeaningEquivariance}
\equivind{\grp}(\Sigma)
=\equivind{\grp}(\indexform{\Sigma})
&\vcentcolon=
  \symind{\grp}{\nrmlsgn{\nu}}(\indexform{\Sigma}),
&
\equivnul{\grp}(\Sigma)
=\equivnul{\grp}(\indexform{\Sigma})
&\vcentcolon=
  \symnul{\grp}{\nrmlsgn{\nu}}(\indexform{\Sigma});
\end{align}
note that, in the special case when $G$ is the trivial group, this notation is obviously consistent with what we defined above and thus we agree to employ just the former and write $\ind(\Sigma)$ and
$\nul(\Sigma)$ for the (standard, non-equivariant) index and nullity of $\Sigma$. We will also employ this notation, with the obvious changes, for the Jacobi operator of compact, connected, properly embedded minimal surfaces in a given regular subdomain of $\R^3$, subject to suitable boundary conditions specified whenever ambiguity is likely to arise.

\paragraph{Spectral estimates for the building blocks.}  
To apply Proposition \ref{prop:mr} we first need to prove bounds for the subdomains of $\Sigma_{N,m}$ of the three types
previewed at the beginning of this section.
For the following statement,
given one catenoidal ribbon 
\[
\catr_i\vcentcolon=\varpi_{N,m}^{-1}\bigl(K_i(m^{-4})\bigr)
\]
(for $i=1,\ldots,N-1$,
recalling \eqref{eqn:catr_init_def}),
we set
\begin{align*}
Q_{\catr_i}^\dir&=
  \dint{\bigl(Q^{\Sigma_{N,m}}\bigr)}_{\catr_i}, & 
Q_{\catr_i}^\neum&=
  \nint{\bigl(Q^{\Sigma_{N,m}}\bigr)}_{\catr_i},
\end{align*}
and, in addition,
\begin{equation*}
\G_{\catr_i}
\vcentcolon=
\{
  \mathsf{T} \in \G
  \st
  \mathsf{T}\catr_i = \catr_i
\},
\end{equation*}
the subgroup of the symmetry group of $\Sigma_{N,m}$
stabilizing $\catr_i$ as a set. In words, this group only consists of a reflection with respect to the vertical plane of symmetry of the ribbon if $N$ is odd or if $N$ is even and the ribbon in question is entirely contained in the half-space $\left\{z>0\right\}$ or $\left\{z<0\right\}$, and is larger (since it consists of two planar symmetries) if instead $N$ is even and the ribbon in question crosses the $\left\{z=0\right\}$ plane.
Note that, in either case, every element of every $\G_{\catr_i}$ preserves the sides of $\Sigma_{N,m}$ in $\B^3$
(that is: it preserves either choice of global unit normal).

\begin{lemma}[Index and nullity bounds on the catenoid regions]
\label{lem:cat_bounds}
Let $N \geq 2$ be an integer.
There exists $m_0>0$ such that for every integer $m>m_0$
we have
\begin{align*}
\ind(Q_{\catr_i}^\dir)
  \geq
  \equivind{\G_{\catr_i}}(Q_{\catr_i}^\dir)
&\geq
  1,
&
\ind(Q_{\catr_i}^\neum)+\nul(Q_{\catr_i}^\neum)
&\leq
  3,
&
\equivind{\G_{\catr_i}}(Q_{\catr_i}^\neum)
  +\equivnul{\G_{\catr_i}}(Q_{\catr_i}^\neum)
&\leq
  2
\end{align*}
for every catenoid region
$\catr_i \subset \Sigma_{N,m}$
and
\begin{equation*}
\equivind{\G_{\catr_n}}(Q_{\catr_n}^\neum)
  +\equivnul{\G_{\catr_n}}(Q_{\catr_n}^\neum)
  \leq
  1
  \quad \mbox{for $N$ even}.
\end{equation*}
\end{lemma}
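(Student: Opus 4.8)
The plan is to reduce each of the forms $Q_{\catr_i}^\dir$, $Q_{\catr_i}^\neum$ to an explicit model on a long flat rectangle, where all four assertions can be read off by separating variables, and then to absorb the small discrepancy between the model and the true forms into the slack present in the stated inequalities.

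First I would transplant the problem off $\Sigma_{N,m}$. Since $\catr_i=\varpi_{N,m}^{-1}\bigl(K_i(m^{-4})\bigr)$, Corollary~\ref{cor:final_geo} identifies $Q^{\Sigma_{N,m}}$ on $\catr_i$ — after the conformal change $\met{\Sigma_{N,m}}\rightsquigarrow\rho^2\met{\Sigma_{N,m}}$, which in two dimensions leaves the Dirichlet integral unchanged, turns $\abs{A}^2$ into $\rho^{-2}\abs{A}^2$ and the Robin weight $1$ into $\rho^{-1}$, and, since only the sign of the Rayleigh quotient enters the min--max characterization, preserves index and nullity (cf. \cite{CSWSpectral}) — with the analogous rescaled Jacobi form on $K_i(m^{-4})\subset\Sigma$, up to a $C^0$-error of size $(m\tau_1)^\beta$ in metric and potential. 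Pulling back by $\kappa_i$ (see \eqref{def:kappa}), the region $K_i(m^{-4})$ becomes the rectangle $[-a,a]\times[-\tfrac{\pi}{2},\tfrac{\pi}{2}]$ with $a\vcentcolon=\arcosh\tfrac{1}{m^4\tau_i}$, and by \eqref{eqn:tau_size} we have $a\to\infty$ (only polynomially fast in $m$, $\tau_1$ being super-polynomially small); Lemma~\ref{lem:geo_ests_cat}\,\ref{itm:geo_ests_cat:met},\,\ref{itm:geo_ests_cat:twoff} show that there the rescaled metric is $O(m^{-4})$-close to $dt^2+d\vartheta^2$ and the rescaled potential $O(\tau_1)$-close to $2\sech^2 t$, while by \eqref{eqn:rho_on_cat} the rescaled Robin weight $\rho^{-1}=\tau_i\cosh t$ on the edges $\vartheta=\pm\tfrac{\pi}{2}$ is everywhere $\le m^{-4}$. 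Thus, up to $O(m^{-4})$, both $Q_{\catr_i}^\dir$ and $Q_{\catr_i}^\neum$ coincide with the quadratic form
\[
T(f,f)=\int_{[-a,a]\times[-\pi/2,\pi/2]}\bigl(f_t^2+f_\vartheta^2-2\sech^2 t\,f^2\bigr)\,dt\,d\vartheta
\]
carrying Dirichlet (resp. Neumann) conditions at $t=\pm a$ and Neumann conditions at $\vartheta=\pm\tfrac{\pi}{2}$; under this identification $\G_{\catr_i}$ acts by $\vartheta\mapsto-\vartheta$ in general and additionally by $t\mapsto-t$ exactly when $N$ is even and $\catr_i=\catr_n$, and $\G_{\catr_i}$-equivariance coincides with $\G_{\catr_i}$-invariance because every element of $\G_{\catr_i}$ preserves the normal.

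For the lower bound I would test $Q_{\catr_i}^\dir$ against the transplant of $\sech t$ multiplied by a fixed $t$-cutoff vanishing near $\pm a$: this function is constant in $\vartheta$, hence $\G_{\catr_i}$-invariant and admissible for the Dirichlet problem, and its model energy is $\pi\int_{\R}\sech^2 t\,(3\tanh^2 t-2)\,dt<0$, which together with the nonpositive Robin contribution and the $O(m^{-4})$ error remains strictly negative for $m$ large. Hence $\equivind{\G_{\catr_i}}(Q_{\catr_i}^\dir)\ge 1$, and since the equivariant index never exceeds the absolute index this gives $\ind(Q_{\catr_i}^\dir)\ge\equivind{\G_{\catr_i}}(Q_{\catr_i}^\dir)\ge 1$. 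For the upper bounds I would diagonalize $T$ by separation of variables: the Neumann $\vartheta$-eigenvalues are $k^2$ with eigenfunctions $1,\sin\vartheta,\cos 2\vartheta,\dots$, and each sector leaves the one-dimensional operator $-f''+(k^2-2\sech^2 t)f$ on $[-a,a]$. Using the explicit spectrum of $-\partial_t^2-2\sech^2 t$ on the line, together with the ground-state substitutions $f=(\sech t)h$ (on the part orthogonal to the nodeless ground state) and $f=(\tanh t)h$ (on the odd part), which each produce a nonnegative form with a strictly positive boundary term, one sees for Neumann data at $t=\pm a$ that $-\partial_t^2-2\sech^2 t$ on $[-a,a]$ has a single negative eigenvalue (tending to $-1$, even eigenfunction $\to\sech t$) and second eigenvalue $\gtrsim a^{-2}$; hence the $k=0$ sector contributes one negative eigenvalue, the $k=1$ sector one more of size $-\sech^2 a=O(e^{-2a})$ (eigenfunction $\to\sech t\,\sin\vartheta$), and the sectors $k\ge 2$ contribute only positive eigenvalues. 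So the model $Q^\neum$ has index $2$, nullity $0$, with the first eigenvalue above the two negative ones $\gtrsim a^{-2}\gg m^{-4}$, so the $O(m^{-4})$-perturbation can move only the exponentially small $k=1$ eigenvalue across $0$: $\ind(Q_{\catr_i}^\neum)+\nul(Q_{\catr_i}^\neum)\le 3$. Restricting to $\G_{\catr_i}$-invariant functions discards the odd-in-$\vartheta$ sector, removing the $k=1$ negative eigenvalue, so the equivariant model has index $1$, nullity $0$, and the gap $\gtrsim a^{-2}\gg m^{-4}$ gives $\equivind{\G_{\catr_i}}(Q_{\catr_i}^\neum)+\equivnul{\G_{\catr_i}}(Q_{\catr_i}^\neum)\le 2$; when $N$ is even and $\catr_i=\catr_n$ the extra reflection further forces evenness in $t$, discarding the (second) odd-in-$t$ eigenvalue while the $\sech t$-mode persists, so the equivariant model has index $1$, nullity $0$, with remaining eigenvalues $\gtrsim a^{-2}\gg m^{-4}$, whence $\equivind{\G_{\catr_n}}(Q_{\catr_n}^\neum)+\equivnul{\G_{\catr_n}}(Q_{\catr_n}^\neum)\le 1$.

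The step I expect to be the main obstacle is the quantitative comparison: making certain that the spectral gap of the model above its negative eigenvalues really dominates the perturbation supplied by Corollary~\ref{cor:final_geo} and Lemma~\ref{lem:geo_ests_cat}. This is exactly why one truncates at $K_i(m^{-4})$, so that both the metric error and the boundary (Robin) weight are $O(m^{-4})$, and why the length $a$ — hence the gap $a^{-2}$, which is only polynomially small in $m$ while $\tau_1$ is super-polynomially small — must be tracked; by contrast the sign computations for $-\partial_t^2-2\sech^2 t$ are routine, and the $C^0$-closeness of the forms that makes eigenvalue perturbation estimates applicable is precisely the content of those two results.
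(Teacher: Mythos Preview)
Your approach is essentially the same as the paper's: both proofs (i) pass to the conformal metric $\rho^2\met{\Sigma_{N,m}}$ (invoking conformal invariance of index and nullity in dimension two), (ii) transplant via $\kappa_i$ and compare to the model form $Q_{\catdom_T}^{\dir/\neum}$ on the flat rectangle using Corollary~\ref{cor:final_geo} and Lemma~\ref{lem:geo_ests_cat}, (iii) analyze the model spectrum by separation of variables, and (iv) close with a quantitative eigenvalue perturbation argument exploiting that the spectral gap $\gtrsim a^{-2}$ dominates the $O(m^{-4})$ error. The paper simply packages steps (iii) and (iv) as black boxes --- Lemma~\ref{lem:low_spec_cat} for the model spectrum and Lemma~\ref{lem:quant_spec_shift} for the perturbation --- whereas you redo the separation of variables and the ground-state substitutions inline.

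One point to tighten: in step (iv) you need a trace inequality with constant \emph{uniform in $a$} to control the Robin boundary contribution when comparing the two forms (your Robin weight $\rho^{-1}\le m^{-4}$ only helps if the trace constant does not blow up with the rectangle length). The paper handles this via Lemma~\ref{lem:cat_trace}; you implicitly use it but should say so.
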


\begin{proof}
By the conformal invariance of the index and nullity
in dimension two
(as proven, for example, in
\cite{CSWSpectral}*{Proposition 3.11})
\begin{align*} 
\ind(Q^\dir_{\catr_i})
&=\ind\bigl(
  Q^\dir_{\catr_i},
  (\varpi_{N,m}^*\rho^2)\met{\Sigma_{N,m}}
\bigr),
\\[.5ex]
    \ind\bigl(Q^\neum_{\catr_i}\bigr)+\nul\bigl(Q^\neum_{\catr_i}\bigr)
    &=\ind\bigl(
    Q^\neum_{\catr_i},
    (\varpi_{N,m}^*\rho^2)\met{\Sigma_{N,m}}
    \bigr)
    +\nul\bigl(
    Q^\neum_{\catr_i},
    (\varpi_{N,m}^*\rho^2)\met{\Sigma_{N,m}}
  \bigr).
\end{align*}
The claims now follow by applying
Corollary \ref{cor:final_geo}
and items \ref{itm:geo_ests_cat:met}
and \ref{itm:geo_ests_cat:twoff}
of Lemma \ref{lem:geo_ests_cat}
(which ensure closeness,
through the appropriate diffeomorphisms,
of the coefficients of $Q^{\,\cdot}_{\catr_i}$
to those of $Q_{\catdom_T}^{\,\cdot}$,
with the appropriate $T$),
Lemma \ref{lem:quant_spec_shift}
(which compares the spectrum
of the region in question with that of the corresponding model),
and analysis from Lemma \ref{lem:low_spec_cat} (which bounds the low spectrum of the model).
In applying Lemma \ref{lem:quant_spec_shift}
we make use of Lemma \ref{lem:cat_trace}
to ensure \eqref{eqn:trace_inequality}
holds with
$
\lipdom
=
\kappa_i^{-1}(\varpi_{N,m}(\catr_i))
$
uniformly in $m$
for a single choice of $C_\lipdom^{\mathrm{tr}}$.
Note in particular that under the conformal transformation
which we apply
the natural Robin potential $r\equiv 1$
becomes $(\varpi_{N,m}^*\rho)^{-1}$,
tending to zero as $m\to\infty$ and so justifying
the (homogeneous) Neumann condition we impose on the model problem.
\end{proof}

We further define
for each $\Sigma_{N,m}$ and each $D_i$
(as in \eqref{layers_with_half_cats})
on the corresponding initial surface
\begin{align*}
\interr_i
&\vcentcolon=
  \varpi_{N,m}^{-1}
    \bigl(
      D_i(m^{-4})
      \cap 
      \Phi(\{\sigma < 3/m\})
    \bigr),
&
\discr_i
&\vcentcolon=
  \varpi_{N,m}^{-1}
    \bigl(
      D_i(m^{-4})
      \setminus
      \Phi(\{\sigma \leq 3/m\})
    \bigr),
\\[1ex]
Q_{\interr_i}^\neum
&\vcentcolon=
  \nint{\bigl(Q^{\Sigma_{N,m}}\bigr)}_{\interr_i},
&
Q_{\discr_i}^\neum
&\vcentcolon=
  \nint{\bigl(Q^{\Sigma_{N,m}}\bigr)}_{\discr_i}
\end{align*}
(recalling \eqref{eqn:discr_init_def}).
Observe that each
$\discr_i$ is approximately a horizontal disc
whose boundary avoids $\Sp^2$
but rather is shared with one boundary
component of the corresponding $\interr_i$,
an approximately flat annulus
whose remaining boundary component
alternates between arcs on $\Sp^2$
and arcs through catenoidal ribbons,
along which $\interr_i$ borders a
component of some $\pyr_m \catr_j$.
In total we have $N$ disc regions $\discr_i$,
$N$ intermediate regions, $\interr_i$,
and $(N-1)m$ catenoid regions,
the components of $\bigcup_i \pyr_m \catr_i$.
The preceding regions are pairwise disjoint
and their closures cover $\Sigma_{N,m}$.

\begin{lemma}[Equivariant index and nullity upper bounds on the disc regions]
\label{lem:disc_bounds}
Let $N \geq 2$ be an integer.
There exists $m_0>0$
such that for every integer $m>m_0$
we have
\begin{equation*}
\equivind{\pyr_m}(Q_{\discr_i}^\neum)
  +\equivnul{\pyr_m}(Q_{\discr_i}^\neum)
=
\ind(Q_{\discr_i}^\neum)
  +\nul(Q_{\discr_i}^\neum)
\leq
1
\end{equation*}
for every disc region
$\discr_i \subset \Sigma_{N,m}$
and
\begin{equation*}
\equivind{\apr_m}(Q_{\discr_{n+1}}^\neum)
  +\equivnul{\apr_m}(Q_{\discr_{n+1}}^\neum)
=
0
\quad \mbox{for $N$ odd}.
\end{equation*}
\end{lemma}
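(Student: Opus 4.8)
The strategy is entirely parallel to that used for Lemma~\ref{lem:cat_bounds}: we compare the quadratic form $Q_{\discr_i}^\neum$ on the (nearly flat, nearly disc-shaped) region $\discr_i$ with a model form on a genuine Euclidean disc, and then estimate the index and nullity of the model directly. First I would record that, by Corollary~\ref{cor:final_geo} together with Lemma~\ref{lem:geo_ests_disc}~\ref{itm:geo_ests_disc:flat}--\ref{itm:geo_ests_disc:twoff}, the induced metric on $\discr_i=\varpi_{N,m}^{-1}(D_i(m^{-4})\setminus\Phi(\{\sigma\le 3/m\}))$ is, through the diffeomorphism $\varpi_{N,m}$ composed with $\varpi_i$, $C^2$-close to a flat metric on a round disc of radius bounded away from $0$ and $\infty$, while the potential $\abs{A}^2$ is $C^1$-small (of size $O(\tau_1^{3/2})$). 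Crucially, because $\discr_i$ lies in the interior $(1-\tfrac{3}{m})\B^3$, its entire boundary $\partial\discr_i$ is an \emph{interior} boundary of $\Sigma_{N,m}$, so the form $Q_{\discr_i}^\neum$ carries a pure (homogeneous) Neumann condition on all of $\partial\discr_i$ with \emph{no} Robin term — hence no $\int_{\partial}uv$ contribution and no need to invoke a trace inequality as in Lemma~\ref{lem:cat_trace}. The model operator is therefore simply $\Delta$ with Neumann conditions on a flat disc.

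The model computation is elementary: for the Laplacian on a round disc with Neumann boundary conditions the first eigenvalue is $\lambda_1=0$ (constants), with one-dimensional eigenspace, and $\lambda_2>0$. Thus $\ind=0$ and $\nul=1$ for the exact flat model, giving $\ind+\nul=1$. Applying the quantitative spectral comparison (Lemma~\ref{lem:quant_spec_shift}, here in its simplest incarnation since there is no boundary potential), the first few eigenvalues of $Q_{\discr_i}^\neum$ differ from those of the model by $o(1)$ as $m\to\infty$; since the model has a spectral gap between $\lambda_1=0$ and $\lambda_2>0$, for $m$ large the form $Q_{\discr_i}^\neum$ has exactly one non-positive eigenvalue, which is negative unless it is forced to vanish, and in any case $\ind(Q_{\discr_i}^\neum)+\nul(Q_{\discr_i}^\neum)\le 1$. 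The equivariant quantities are squeezed between $0$ and the absolute ones by the general inequality $\symind{\grp}{\twsthom}+\symnul{\grp}{\twsthom}\le\ind+\nul$, so $\equivind{\pyr_m}(Q_{\discr_i}^\neum)+\equivnul{\pyr_m}(Q_{\discr_i}^\neum)\le 1$; and since the single near-kernel direction is the (constant) $\pyr_m$-invariant function, which is $\pyr_m$-equivariant with respect to $\nrmlsgn{\nu}$ because every element of $\pyr_m$ preserves the sides of $\Sigma_{N,m}$ over the disc region, the equivariant count actually equals the absolute one, yielding the stated equality.

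For the final assertion ($N$ odd, $i=n+1$), the point is that $\discr_{n+1}$ is the central disc region, fixed as a set by the additional rotation $\rot_{\{y=z=0\}}^\pi\in\apr_m$ which \emph{reverses} the chosen global unit normal. Hence $\nrmlsgn{\nu}$ acts by $-1$ on this generator, and the constant function — the only near-kernel direction — is \emph{not} $\apr_m$-equivariant: it is odd under an element acting by $-1$ on it but trivially on the domain. Therefore the $\apr_m$-equivariant subspace contains none of the non-positive directions of $Q_{\discr_{n+1}}^\neum$ for $m$ large, giving $\equivind{\apr_m}+\equivnul{\apr_m}=0$. The only genuinely delicate point is verifying that the spectral comparison constants in Lemma~\ref{lem:quant_spec_shift} can be chosen \emph{uniformly in $m$} — but this is exactly what Corollary~\ref{cor:final_geo} and Lemma~\ref{lem:geo_ests_disc} are designed to supply, since they furnish $m$-independent $C^2$/$C^1$ bounds on the relevant geometric quantities over $D_i(m^{-4})$, so no new estimate is needed here.
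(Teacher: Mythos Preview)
Your proposal is correct and follows essentially the same approach as the paper's own proof, which is much terser: the paper simply notes that $\varpi_{N,m}(\discr_i)$ is an exact planar disc, that the Neumann Laplacian on a disc has index zero and nullity one (from the constants), and then invokes Lemma~\ref{lem:quant_spec_shift} together with Corollary~\ref{cor:final_geo}. Your version correctly supplies the details the paper leaves implicit---in particular the absence of any Robin boundary (so the trace hypothesis of Lemma~\ref{lem:quant_spec_shift} is vacuous), the reason the equality $\equivind{\pyr_m}+\equivnul{\pyr_m}=\ind+\nul$ holds (the sole near-kernel direction is the constant, which is $\pyr_m$-equivariant since every element of $\pyr_m$ preserves the normal), and the reason the $\apr_m$-equivariant count vanishes on the central disc for $N$ odd (the half-turn $\rot_{\{y=z=0\}}^\pi$ reverses the normal, so constants are not equivariant).
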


\begin{proof}
Note that $\varpi_{N,m}(\discr_i)$
is an exact planar disc.
Since the Neumann Laplacian on a disc
has index zero and nullity one,
coming from the constants,
the claim follows by applying
Lemma \ref{lem:quant_spec_shift}
in conjunction with
Corollary~\ref{cor:final_geo}.
\end{proof}

\begin{lemma}[Index and nullity
upper bounds on the intermediate regions]
\label{lem:margin_bounds_equivariant}
Let $N \geq 2$ be an integer.
There exists $m_0>0$ such that for every integer $m>m_0$
we have
\begin{equation*}
\equivind{\pyr_m}(Q_{\interr_i}^\neum)
  +\equivnul{\pyr_m}(Q_{\interr_i}^\neum)
\leq
1   
\end{equation*}
for every intermediate region
$\interr_i \subset \Sigma_{N,m}$
and
\begin{equation*}
\equivind{\apr_m}(Q_{\interr_{n+1}}^\neum)
=
\equivnul{\apr_m}(Q_{\interr_{n+1}}^\neum)
=
0
\quad \mbox{for $N$ odd}. 
\end{equation*}
Furthermore
we have
\begin{equation*}
\ind(Q_{\interr_i}^\neum)
  +\nul(Q_{\interr_i}^\neum)
\leq
2m.
\end{equation*}
\end{lemma}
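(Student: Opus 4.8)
The plan is to prove Lemma~\ref{lem:margin_bounds_equivariant} by treating each intermediate region $\interr_i$ as a small perturbation of an \emph{exact} flat model and then applying the spectral stability machinery (Lemma~\ref{lem:quant_spec_shift}) together with the conformal invariance of index and nullity in dimension two. Concretely, the region $\varpi_{N,m}(\interr_i)$ is, by the very definitions in \eqref{layers_with_half_cats}, \eqref{building_blocks}, \eqref{eqn:discr_init_def}, and the estimates in Lemma~\ref{lem:geo_ests_disc} (items \ref{itm:geo_ests_disc:flat}, \ref{itm:geo_ests_disc:met_Phullback}, \ref{itm:geo_ests_disc:conf_met}), $C^2$-close (after pulling back by $\Phi$ and rescaling the flat metric $d\sigma^2+(1-\sigma)^2\,d\theta^2$ conformally by $\rho^2$, whose value there is comparable to $m^2$) to a genuinely flat annular sector: topologically $[0,1]\times(\text{arc})$ with the inner boundary shared with $\discr_i$ (a Neumann-type internal boundary), the outer boundary alternating between $\Sp^2$-arcs (carrying the original Robin-type condition, which after conformal rescaling degenerates to homogeneous Neumann as in the proof of Lemma~\ref{lem:cat_bounds}) and catenoidal-ribbon arcs (again internal, hence Neumann for $\nint{Q}$). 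The potential $\abs{\twoff{\Sigma_{N,m}}}^2$ vanishes on the part inside $(1-3/m)\B^3$ and is $O(\tau_1^{3/2})$ elsewhere after rescaling, by item \ref{itm:geo_ests_disc:twoff} of Lemma~\ref{lem:geo_ests_disc} and Corollary~\ref{cor:final_geo}, so the model operator is essentially just the flat Laplacian with all-Neumann boundary.

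First I would fix the model: let $\lipdom^M$ denote the flat rectangle-type domain obtained as the image under $\Phi^{-1}$ of $\interr_i$, equipped with the rescaled flat metric, and observe that the Neumann Laplacian on it has index zero and nullity one (the constants), while any Dirichlet-type or mixed version has index zero. Then I would invoke Lemma~\ref{lem:quant_spec_shift}, using the closeness estimates above to conclude that the negative spectrum of $Q_{\interr_i}^\neum$ has dimension zero and its nullity is at most one — these are robust under $C^0$-small perturbations of the coefficients (with the trace inequality \eqref{eqn:trace_inequality} holding uniformly in $m$, verified exactly as in Lemma~\ref{lem:cat_bounds} via a Lemma~\ref{lem:cat_trace}-type argument for this concrete Lipschitz domain). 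This immediately gives $\ind(Q_{\interr_i}^\neum)+\nul(Q_{\interr_i}^\neum)\leq 1$, which is even stronger than the claimed $\leq 2m$ bound; but since the statement only asks for $\leq 2m$ in the absolute case, it suffices to remark that the sharp bound $\leq 1$ trivially implies it for $m\geq 1$. (One should double-check whether the intended absolute bound is genuinely $2m$ because the intermediate region, before being cut by $\pyr_m$-symmetry, might in some formulation be the full annulus with $m$ ribbon-arcs; if so, the argument is the same applied to each of the $m$ congruent sectors, summing to $\leq m$, still within $2m$.)

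For the equivariant statements, the key point is that $\pyr_m$ acts on $\interr_i$ with the flat model $\lipdom^M$ as a fundamental domain (up to the reflection generating $\pyr_m/\Z_m$), so $\equivind{\pyr_m}(Q_{\interr_i}^\neum)+\equivnul{\pyr_m}(Q_{\interr_i}^\neum)$ is bounded by the even-or-odd index-plus-nullity of the model on the fundamental sector, which is $\leq 1$ by the same perturbation argument (the constant function is $\pyr_m$-invariant and contributes the one unit of nullity; there is no negative eigenvalue). For $N$ odd and $i=n+1$, the relevant region is centered on $\{z=0\}$ and the antiprismatic group $\apr_m$ contains the extra rotation $\rot_{\{y=z=0\}}^\pi$, which acts on $\discr_{n+1}$ and $\interr_{n+1}$ \emph{reversing} the normal $\nu$; hence the constant function is \emph{not} $(\apr_m,\nrmlsgn{\nu})$-equivariant, the one unit of nullity is killed, and one gets $\equivind{\apr_m}(Q_{\interr_{n+1}}^\neum)=\equivnul{\apr_m}(Q_{\interr_{n+1}}^\neum)=0$.

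The main obstacle I anticipate is making the comparison with the flat model fully rigorous near the boundary — specifically, verifying that the various boundary pieces of $\interr_i$ (the $\Sp^2$-arcs carrying the degenerating Robin condition, the internal arcs shared with catenoid ribbons, and the internal boundary with $\discr_i$) fit together as a genuine Lipschitz domain to which Lemma~\ref{lem:quant_spec_shift} applies with a uniform trace constant, and that the conformal rescaling sends the Robin potential $r\equiv 1$ to something uniformly small (of order $(\varpi_{N,m}^*\rho)^{-1}=O(m^{-1})$) so the model may legitimately be taken with homogeneous Neumann data there. This is the same technical core as in the catenoid case (Lemma~\ref{lem:cat_bounds}), so the resolution should parallel that proof closely; the only genuinely new bookkeeping is tracking which symmetries of $\G$ stabilize $\interr_i$ and how they act on $\nu$, which is what distinguishes the generic bound $\leq 1$ from the vanishing obtained for the equatorial region when $N$ is odd.
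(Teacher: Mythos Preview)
Your equivariant arguments are essentially correct and match the paper's approach: reduce to a $\pyr_m$-fundamental domain (this is the ``reduction principle'' of \cite[Lemma~3.5]{CSWSpectral}), compare that single sector to the flat perforated-rectangle model via Lemma~\ref{lem:quant_spec_shift} (using Lemma~\ref{lem:low_spec_margin} for the model spectrum and Lemma~\ref{lem:perf_rect_trace} for the uniform trace constant), and observe that for $N$ odd the central first eigenfunction fails $\apr_m$-equivariance because it has a sign.

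However, your \emph{primary} route to the absolute bound has a real gap. You propose to show directly that $\ind(Q_{\interr_i}^\neum)+\nul(Q_{\interr_i}^\neum)\le 1$ by comparing the whole $\interr_i$ to a flat model. The problem is quantitative: after rescaling by $m^2$ (equivalently by $\rho^2$ away from the perforations), the full intermediate region becomes a thin flat annulus of width $\sim 3$ and circumference $\sim 2\pi m$, whose second Neumann eigenvalue is $\sim m^{-2}$. Meanwhile the coefficient perturbation you must absorb is of order $m^{-1}$ --- this is the size of both the metric error $(1-\sigma)^2\,d\theta^2$ versus $d\theta^2$ on $\{\sigma<3/m\}$ and the rescaled Robin potential. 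Lemma~\ref{lem:quant_spec_shift} therefore cannot protect $\lambda_2$: the gap on the model is smaller than the perturbation, and you cannot conclude that only one eigenvalue is nonpositive. Your claimed bound $\le 1$ is thus not established (and the paper does not assert it).

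Your parenthetical fallback --- cut $\interr_i$ into congruent sectors and ``sum'' --- is the correct idea, and is exactly what the paper does, but it should be the main argument rather than an afterthought. The paper cuts $\interr_i$ by the $m$ vertical planes of symmetry into $2m$ $\pyr_m$-fundamental domains $\Omega_1,\dots,\Omega_{2m}$. Each $\Omega_j$, rescaled by $m^2$, is a \emph{bounded} perforated rectangle $\margindom_r$ or $\margindom_{r,r}$ with a uniform lower bound $\lambda_2>c>0$ (Lemma~\ref{lem:low_spec_margin}), so Lemma~\ref{lem:quant_spec_shift} now legitimately gives $\ind+\nul\le 1$ on each $\Omega_j$. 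The ``summing'' step is not automatic: you must invoke Proposition~\ref{prop:mr}\,\ref{mrUpper} (Neumann bracketing in Montiel--Ros form), which yields $\ind(Q_{\interr_i}^\neum)+\nul(Q_{\interr_i}^\neum)\le 2m$. Note that after perturbation the lowest eigenvalue on each $\Omega_j$ may be slightly negative, so you only know $\ind_j\le 1$, not $\ind_j=0$; this is why the bound is $2m$ and not $1$.
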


\begin{proof}
Let us first recall the general ``reduction principle'' given in \cite[Lemma 3.5]{CSWSpectral} and the related comments presented in Example 3.7 therein: computing $\equivind{\pyr_m}(Q_{\interr_i}^\neum)$
  and $\equivnul{\pyr_m}(Q_{\interr_i}^\neum)$ is equivalent to studying the index and nullity for the Jacobi operator on a fundamental domain for the $\pyr_m$-action, subject to adjoined Neumann boundary conditions in the interior boundary.
Since
\begin{equation*}
\equivind{\pyr_m}(Q_{\interr_i}^\neum)
+\equivnul{\pyr_m}(Q_{\interr_i}^\neum)
=
\equivind{\pyr_m}
    (
      Q_{\interr_i}^\neum,
      m^2\met{\Sigma_{N,m}} 
    )
    +\equivnul{\pyr_m}
  (
    Q_{\interr_i}^\neum, m^2\met{\Sigma_{N,m}}
  ),
\end{equation*}
the claim then follows by applying
Lemmata \ref{lem:low_spec_margin}, \ref{lem:perf_rect_trace} and \ref{lem:quant_spec_shift},
in view of Corollary~\ref{cor:final_geo} and items 
\ref{itm:geo_ests_disc:met_Phullback}
and \ref{itm:geo_ests_disc:twoff_Phullback} of Lemma \ref{lem:geo_ests_disc}
(and, for the final two claimed equalities in the statement,
the fact that the first eigenfunction
of $Q_{\interr_i}^\neum$
has a sign).
We point out again in particular
that the geometric Robin potential
$r \equiv 1$
is rescaled to $m^{-1}$,
justifying the comparison
with the Neumann problem on the model.

Let us turn to the final claim: if $\Omega_1, \ldots, \Omega_{2m}$
denote the interiors of the connected components of
\begin{equation*}
\interr_i\setminus \bigl\{(x,y,z)\st \exists j \in \Z\quad y=x \tan\bigl(\tfrac{2j+1}{2m}\pi\bigr)\bigr\} ,
\end{equation*}
where we are cutting
by the vertical planes of symmetry, the conclusion comes straight from employing the preceding argument, which precisely gives
\begin{equation*}
\ind\Bigl(\nint{\bigl(Q_{\interr_i}^\neum\bigr)}_{\Omega_j}\Bigr)
  +\nul\Bigl(\nint{\bigl(Q_{\interr_i}^\neum\bigr)}_{\Omega_j}\Bigr)
\leq 1 
\end{equation*}
for each $\Omega_j$ as input for applying Proposition \ref{prop:mr} \ref{mrUpper}.
\end{proof}

\paragraph{Conclusions.} 
We now have all necessary tools in place to prove the estimates on index and nullity claimed in Theorem \ref{thm:Main} and in Remark \ref{rem:IndexTop}. 
Let us start with the upper bounds.

\begin{proposition}
[Index and nullity upper bounds for $\Sigma_{N,m}$]\label{pro:Index_UpperBounds}
Let $N \geq 2$. There exists $m_0>0$
such that for every integer $m>m_0$
\begin{equation*}
\ind(\Sigma_{N,m}) + \nul(\Sigma_{N,m})
\leq
m(5N-3) + N.
\end{equation*}
\end{proposition}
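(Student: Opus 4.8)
The plan is to apply Proposition~\ref{prop:mr}\ref{mrUpper} with the trivial group $\grp=\{\mathrm{id}\}$ and with the partition of $\Sigma_{N,m}$ into the $N$ disc regions $\discr_i$, the $N$ intermediate regions $\interr_i$, and the $(N-1)m$ catenoid regions (the connected components of $\bigcup_i\pyr_m\catr_i$). These are pairwise disjoint open Lipschitz subdomains whose closures cover $\Sigma_{N,m}$, as recorded right before Lemma~\ref{lem:disc_bounds}, so Proposition~\ref{prop:mr} applies with $t=0$. First I would choose one region to play the role of $\lipdom_1$ in the statement, for which only the ``$<0$'' count enters (via $\nint{\weakbf}_{\lipdom_1}$); any of the regions works, and I would pick a catenoid region for definiteness. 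For all other regions the inequality feeds in $\dim E^{<0}$ of the interior-Neumann form, which is bounded above by $\ind+\nul$ of that form.

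The key counting step is then to sum the bounds from Lemma~\ref{lem:cat_bounds}, Lemma~\ref{lem:disc_bounds}, and Lemma~\ref{lem:margin_bounds_equivariant}. Concretely, for each catenoid region Lemma~\ref{lem:cat_bounds} gives $\ind(Q^\neum_{\catr_i})+\nul(Q^\neum_{\catr_i})\leq 3$; note each of the $N-1$ ``ribbon conjugacy classes'' contains exactly $m$ congruent components under $\pyr_m$, contributing $3(N-1)m$ in total (the one chosen as $\lipdom_1$ only contributes its $\dim E^{<0}\leq 3$, so this bound is not lost). For each of the $N$ disc regions Lemma~\ref{lem:disc_bounds} gives $\ind(Q^\neum_{\discr_i})+\nul(Q^\neum_{\discr_i})\leq 1$, contributing $N$. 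For each of the $N$ intermediate regions the last inequality of Lemma~\ref{lem:margin_bounds_equivariant} gives $\ind(Q^\neum_{\interr_i})+\nul(Q^\neum_{\interr_i})\leq 2m$, contributing $2Nm$. Adding these:
\begin{equation*}
\ind(\Sigma_{N,m})+\nul(\Sigma_{N,m})
\leq
3(N-1)m + N + 2Nm
=
m(5N-3)+N,
\end{equation*}
which is exactly the claimed bound. I would also note that $\ind(\Sigma_{N,m})+\nul(\Sigma_{N,m})\leq \dim E^{\leq 0}(\indexform{\Sigma_{N,m}})$ and that Proposition~\ref{prop:mr}\ref{mrUpper} with $t=0$ bounds precisely $\dim E^{\leq 0}$, so no slack is introduced at the final assembly.

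The main obstacle is essentially bookkeeping rather than a genuine analytic difficulty: one must be careful that the partition used is exactly the one for which Proposition~\ref{prop:mr} was designed (disjoint, covering closures), and that the counting of congruent ribbon components ($m$ per class, $N-1$ classes) is done correctly, since the proposition is applied \emph{without} exploiting symmetry here (so every one of the $(N-1)m$ catenoid components must be listed separately). One small point worth spelling out is that Lemma~\ref{lem:cat_bounds}, Lemma~\ref{lem:disc_bounds}, and Lemma~\ref{lem:margin_bounds_equivariant} each provide a threshold $m_0$ depending only on $N$, so taking the maximum of the three (and of whatever $m_0$ is needed for $\Sigma_{N,m}$ to be defined and embedded) yields a single admissible $m_0=m_0(N)$. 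With that, the proof is just the displayed arithmetic above combined with Proposition~\ref{prop:mr}\ref{mrUpper}.
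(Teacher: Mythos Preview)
Your proof is correct and follows exactly the paper's approach: apply Proposition~\ref{prop:mr}\ref{mrUpper} with trivial $\grp$ to the partition into disc, intermediate, and catenoid regions, then sum the bounds $3m(N-1)+N+2mN$ from Lemmata~\ref{lem:cat_bounds}, \ref{lem:disc_bounds}, and \ref{lem:margin_bounds_equivariant}. One small slip: in Proposition~\ref{prop:mr}\ref{mrUpper} it is $\lipdom_1$ that receives the $\leq t$ count and the remaining $\lipdom_i$ that receive the strict $<t$ count, not the other way around---but since you bound every piece by $\ind+\nul$ anyway, this is harmless.
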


\begin{proof}
We apply Proposition~\ref{prop:mr}~\ref{mrUpper}
(with trivial $\grp$),
relying upon Lemma \ref{lem:cat_bounds},
Lemma \ref{lem:disc_bounds},
and Lemma \ref{lem:margin_bounds_equivariant}.
To the resulting upper bound
the catenoid regions make total contribution
$3m(N-1)$,
the disc regions $N$,
and the intermediate regions $2mN$.
\end{proof}

\begin{proposition}
[Equivariant index and nullity upper bounds
for $\Sigma_{N,m}$]\label{pro:Index_Equiv_UpperBounds}
Let $N \geq 2$ be an integer. 
There exists $m_0>0$ such that for every integer $m>m_0$
\begin{equation*}
\equivind{\G}(\Sigma_{N,m})\leq
\begin{cases}
2N-1 & \mbox{for $N$ even,}
\\
2N-2 & \mbox{for $N$ odd}.
\end{cases}
\end{equation*}
\end{proposition}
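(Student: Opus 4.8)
The plan is to derive the bound from Proposition~\ref{prop:mr}~\ref{mrUpper}, applied to a $\G$-invariant partition of $\Sigma_{N,m}$, together with the building-block estimates of Lemma~\ref{lem:cat_bounds}, Lemma~\ref{lem:disc_bounds} and Lemma~\ref{lem:margin_bounds_equivariant}. Writing $n=\floor{N/2}$ and letting $\twsthom=\nrmlsgn{\nu}$ be the twisting homomorphism attached to a choice of global unit normal on $\Sigma_{N,m}$, it suffices to bound $\dim E^{\leq 0}_{\G,\twsthom}\bigl(\indexform{\Sigma_{N,m}}\bigr)$, since this dominates $\equivind{\G}(\Sigma_{N,m})$ (and in fact also dominates its sum with $\equivnul{\G}(\Sigma_{N,m})$, which is the quantity entering Theorem~\ref{thm:Main}). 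The first step is to organize the regions $\catr_i$, $\discr_i$, $\interr_i$ -- which individually are \emph{not} $\G$-invariant -- into their $\G$-orbits. Since the construction forces $\vec{h}^B\in\R^N_{\updownarrow-}$ and $\vec{h}^K\in\R^{N-1}_{\updownarrow-}$, the distinguished ``sign symmetry'' -- namely $\refl_{\{z=0\}}$ when $N$ is even and $\rot_{\{y=z=0\}}^\pi$ when $N$ is odd -- sends disc layer $D_i$ to $D_{N+1-i}$ and catenoidal level $i$ to level $N-i$. Hence, for $N$ even, the disc regions and the intermediate regions each form $n$ $\G$-orbit-pairs $\widehat{\discr}_i\vcentcolon=\discr_i\cup\discr_{N+1-i}$ and $\widehat{\interr}_i\vcentcolon=\interr_i\cup\interr_{N+1-i}$, while the catenoid regions form $n-1$ orbit-pairs together with one self-invariant orbit $\pyr_m\catr_n$ (the level of height $h^K_n=0$); for $N$ odd the catenoid regions form $n$ orbit-pairs, and the disc and intermediate regions form $n$ orbit-pairs each together with one self-invariant region, $\discr_{n+1}$ respectively $\interr_{n+1}$ (the middle layer). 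In all cases these orbit-unions are pairwise disjoint, $\G$-invariant, Lipschitz, with closures covering $\Sigma_{N,m}$, and $\G$ acts transitively on the connected components of each of them, so Proposition~\ref{prop:mr} applies with $\grp=\G$.

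The second step is to reduce the equivariant spectral count on each orbit-union to the corresponding quantity on a single representative region. For an orbit-union $\widehat{\Omega}$ with representative $\Omega_0$ and stabilizer $\G_0=\{\mathsf{T}\in\G\st\mathsf{T}\Omega_0=\Omega_0\}$, restriction of functions to $\Omega_0$ identifies the $(\G,\twsthom)$-equivariant spectral data of $\nint{\bigl(\indexform{\Sigma_{N,m}}\bigr)}_{\widehat{\Omega}}$ with the $(\G_0,\twsthom|_{\G_0})$-equivariant data of $\nint{\bigl(\indexform{\Sigma_{N,m}}\bigr)}_{\Omega_0}$ -- this is the reduction principle \cite[Lemma~3.5]{CSWSpectral}, and no new interior boundary is created, since the regions in one $\G$-orbit are mutually disjoint. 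For a disc- or intermediate-orbit-pair one has $\G_0=\pyr_m$ and $\twsthom|_{\pyr_m}$ trivial, so the count equals $\equivind{\pyr_m}(Q_{\discr_i}^\neum)+\equivnul{\pyr_m}(Q_{\discr_i}^\neum)$, respectively the analogue for $\interr_i$, which is at most $1$ by Lemma~\ref{lem:disc_bounds} and Lemma~\ref{lem:margin_bounds_equivariant}. For a catenoid-orbit-pair, iterating the reduction (first to one of the two $\pyr_m$-orbits, then within it to a single $\catr_i$, whose stabilizer is the order-$2$ group $\G_{\catr_i}$) yields $\equivind{\G_{\catr_i}}(Q_{\catr_i}^\neum)+\equivnul{\G_{\catr_i}}(Q_{\catr_i}^\neum)\leq 2$, again by Lemma~\ref{lem:cat_bounds}; for the self-invariant orbit $\pyr_m\catr_n$ the stabilizer of $\catr_n$ is the order-$4$ group $\G_{\catr_n}$, and the sharper bound $\equivind{\G_{\catr_n}}(Q_{\catr_n}^\neum)+\equivnul{\G_{\catr_n}}(Q_{\catr_n}^\neum)\leq 1$ of Lemma~\ref{lem:cat_bounds} applies. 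Finally, for $N$ odd one has $\twsthom(\rot_{\{y=z=0\}}^\pi)=-1$, and the two self-invariant regions drop out entirely: the $(\apr_m,\twsthom)$-equivariant index and nullity of $Q_{\discr_{n+1}}^\neum$ and $Q_{\interr_{n+1}}^\neum$ both vanish by Lemma~\ref{lem:disc_bounds} and Lemma~\ref{lem:margin_bounds_equivariant}.

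The third step is the bookkeeping. I would apply Proposition~\ref{prop:mr}~\ref{mrUpper} with $t=0$, choosing the distinguished subdomain $\Omega_1$ to be one of the catenoid orbit-unions, so that only $\Omega_1$ enters through $\dim E^{\leq 0}$ while every other term enters through $\dim E^{<0}\leq\dim E^{\leq 0}$. For $N=2n$ even this bounds $\dim E^{\leq 0}_{\G,\twsthom}\bigl(\indexform{\Sigma_{N,m}}\bigr)$ by $2+2(n-2)+1+n+n=4n-1=2N-1$ (contributions: $\Omega_1$; the remaining $n-2$ catenoid-pairs; the orbit $\pyr_m\catr_n$; the $n$ disc-pairs; the $n$ intermediate-pairs). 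For $N=2n+1$ odd it gives $2+2(n-1)+n+n=4n=2N-2$ (the $\discr_{n+1}$ and $\interr_{n+1}$ terms being $0$). In either case $\equivind{\G}(\Sigma_{N,m})$ is bounded by the asserted quantity, as wanted. (The small cases, where there is no catenoid-\emph{pair} -- i.e.\ $N\in\{2,3\}$ -- are handled identically by taking $\Omega_1$ to be any of the subdomains.)

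The main obstacle is organizational rather than analytic: all the genuine spectral work is already packaged in the three building-block lemmas, so the task is to get the $\G$-orbit structure and the stabilizers exactly right, and to track the restriction of the twisting homomorphism, so that the reduction principle lands precisely on the equivariant quantities bounded by those lemmas. The two most delicate points are recognizing that the extra reflection $\refl_{\{z=0\}}$ inside $\G_{\catr_n}$ (for $N$ even) places us in the ``sharper'' case of Lemma~\ref{lem:cat_bounds}, and that for $N$ odd it is exactly the nontriviality of $\twsthom$ on $\rot_{\{y=z=0\}}^\pi$ that makes the two middle regions contribute nothing -- which is the source of the difference between the bounds $2N-1$ and $2N-2$.
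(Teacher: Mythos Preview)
Your proof is correct and follows essentially the same approach as the paper's: partition $\Sigma_{N,m}$ into $\G$-invariant orbit-unions of the three region types, invoke the reduction principle to pass to a single representative with its stabilizer, apply Lemmata~\ref{lem:cat_bounds}, \ref{lem:disc_bounds} and \ref{lem:margin_bounds_equivariant}, and feed the results into Proposition~\ref{prop:mr}~\ref{mrUpper}. Your write-up is in fact considerably more detailed than the paper's own proof (which merely enumerates the number of orbit-unions of each type and asserts that ``carefully accounting'' yields the bound), and you correctly isolate the two parity-sensitive points---the sharper stabilizer $\G_{\catr_n}$ for the middle catenoid level when $N$ is even, and the vanishing contribution of $\discr_{n+1},\interr_{n+1}$ when $N$ is odd---as well as the small cases $N\in\{2,3\}$.
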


\begin{proof}
We similarly apply
Lemma \ref{lem:cat_bounds},
Lemma \ref{lem:disc_bounds},
Lemma \ref{lem:margin_bounds_equivariant},
and
Proposition \ref{prop:mr} \ref{mrUpper},
but now with $\grp=\G$ (the maximal symmetry group of the surface); each $\Omega_i$ of the Montiel--Ros partition we wish to employ is 
a $\G$-invariant set that is obtained starting with either a disc region or an intermediate region or a catenoid region.
We note that, modulo the symmetries, we have:
\begin{itemize}
\item for $N$ even, 
exactly $n$ such $\Omega_i$ consisting of catenoidal components, $n$ consisting of intermediate regions,
and $n$ consisting of disc regions;
\item for $N$ odd, 
exactly $n$ such $\Omega_i$ consisting of catenoidal components, $n+1$ consisting of intermediate regions,
and $n+1$ consisting of disc regions.
\end{itemize}

Carefully accounting for the contribution of all such components, based on the aforementioned lemmata, gives the desired result.
\end{proof}

We now switch gears and move to studying \emph{lower} bounds on the Morse index of the free boundary minimal disc stackings. To begin with, it is instructive to first recall from \cite{CSWSpectral}
the following index lower bounds
in terms of symmetries of the surface in question.

\begin{proposition}[Index lower bounds under
pyramidal and prismatic symmetry {\cite[Prop. 4.2]{CSWSpectral}}]
\label{indBelowPyr}
Let $\Sigma$ be a connected, embedded free boundary minimal surface in $\B^3$.
Assume that $\Sigma$ is not a disc or critical catenoid, that $\Sigma$ is invariant under reflection through a plane $\Pi_1$, and that $\Sigma$ is also invariant under rotation through an angle $\alpha \in \interval{0,2\pi}$ about a line $\xi \subset \Pi_1$.
Then $\alpha$ is a rational multiple of $2\pi$, there is a largest integer $k \geq 2$ such that rotation about $\xi$ through angle $2\pi/k$ is also a symmetry of $\Sigma$, and
\begin{enumerate}[label={\normalfont(\roman*)}]
\item\label{prop:indBelowPyr-i}
  $\ind(\Sigma) \geq 2k-1$,
\item\label{prop:indBelowPyr-ii}
 $\symind{\cycgrp{\Pi_1}}{-}(\Sigma) \geq k-1$,
\item\label{indBelowPri}
  if $\Sigma$ is additionally invariant
under reflection through a plane $\Pi_\perp$ orthogonal to $\xi$,
then in fact
$\symind{\cycgrp{\Pi_\perp}}{+}(\Sigma) \geq 2k-1$.
\end{enumerate}
\end{proposition}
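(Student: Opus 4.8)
The plan is to extract all the required negative directions from a single object --- the Jacobi field generated by the rotational symmetry --- after localizing it by means of the Montiel--Ros-type partition principle of Proposition~\ref{prop:mr}. First I would dispatch the preliminary assertions: the rotations about $\xi$ preserving $\Sigma$ form a closed subgroup of $\SOgroup(2)$, hence either a finite cyclic group or all of $\SOgroup(2)$; in the second case $\Sigma$ would be a surface of revolution about $\xi$ and so, by the classification of rotationally symmetric free boundary minimal surfaces in $\B^3$, the equatorial disc orthogonal to $\xi$ or the critical catenoid with axis $\xi$, both excluded by hypothesis. Thus that subgroup is $\Z_k$ for a unique largest $k$, necessarily $\geq 2$ since it contains the rotation through $\alpha\in\interval{0,2\pi}$, and $\alpha$ is a rational multiple of $2\pi$. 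Together with $\refl_{\Pi_1}$ (and, under the hypothesis of \ref{indBelowPri}, also $\refl_{\Pi_\perp}$) this shows $\Sigma$ is invariant under $\pyr_k$ (respectively $\pri_k$), whose $k$ mirror planes all contain $\xi$ and cut $\B^3$, hence $\Sigma$, into $2k$ congruent ``sectors''.

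Next I would introduce the key test function. Let $X$ be the Killing field of $\R^3$ generating the rotations about $\xi$ and put $\psi\vcentcolon=\met{\R^3}(X,\nu)$. A standard first-variation computation shows that $\psi$ is a Jacobi field obeying the natural Robin condition $\eta^\Sigma\psi=\psi$ on $\partial\Sigma$, so that $\indexform{\Sigma}(\psi,v)=0$ for every $v$ and in particular $\indexform{\Sigma}(\psi,\psi)=0$; moreover $\psi\not\equiv 0$, for otherwise $X$ would be everywhere tangent to $\Sigma$ and $\Sigma$ a surface of revolution about $\xi$, again excluded. The crucial point is the behaviour of $\psi$ under the reflections of the symmetry group. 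Since $\Sigma$ is connected and lies in no plane through the origin, it meets every mirror plane $P\ni\xi$ of $\pyr_k$ transversally, so $\refl_P$ restricts to a reflection of each tangent plane $T_p\Sigma$ at $p\in\Sigma\cap P$; hence $\nrmlsgn{\nu}(\refl_P)=+1$, and since such reflections generate $\pyr_k$ one gets $\nrmlsgn{\nu}\equiv +1$ on $\pyr_k$ (and, by transversality to $\Pi_\perp$, on $\pri_k$ too). Using that $\refl_P$ conjugates the flow of $X$ to that of $-X$, while $\refl_{\Pi_\perp}$ and the rotations about $\xi$ commute with it, one computes $\psi\circ\refl_P=-\psi$ --- in particular $\psi$ vanishes on $\Sigma\cap P$ --- together with $\psi\circ g=\psi$ for $g\in\Z_k$ and $\psi\circ\refl_{\Pi_\perp}=\psi$. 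In particular $\psi^2$ is $\pri_k$-invariant and, being the square of a nonzero solution of a second-order elliptic equation, has zero set with empty interior, so $\psi|_\Omega\not\equiv 0$ for every sector $\Omega$.

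It remains to assemble the bounds. In each case one partitions $\Sigma$ into $\grp$-invariant Lipschitz subdomains, each a union of sectors, so that every interior boundary arc of every piece lies on a mirror plane of $\pyr_k$. On such a piece $\Omega$ the restriction $\psi|_\Omega$ then has vanishing trace on the interior boundary $\intbdy\Omega$, is not identically zero, lies in the relevant $(\grp,\twsthom)$-invariant Sobolev space (as $\psi$ is globally invariant for the corresponding twisted action and $\Omega$ is $\grp$-invariant), and satisfies $\dint{\bigl(\indexform{\Sigma}\bigr)}_{\Omega}(\psi|_\Omega,\psi|_\Omega)=0$, since it is a rational multiple of the vanishing quantity $\indexform{\Sigma}(\psi,\psi)$, all the relevant integrands being $\pyr_k$-invariant; hence, by \eqref{eqn:symminmax}, the first $(\grp,\twsthom)$-eigenvalue of $\dint{\bigl(\indexform{\Sigma}\bigr)}_{\Omega}$ is $\leq 0$. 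Feeding this into Proposition~\ref{prop:mr}~\ref{mrLower}: for \ref{prop:indBelowPyr-i}, taking $\grp$ trivial and the partition into the $2k$ sectors, $\ind(\Sigma)\geq 0+(2k-1)=2k-1$; for \ref{prop:indBelowPyr-ii}, taking $\grp=\cycgrp{\Pi_1}$ with the sign homomorphism as twist --- for which $\psi$ is admissible, being $\refl_{\Pi_1}$-odd --- and the $k$ pieces given by the $\refl_{\Pi_1}$-orbits of sectors, $\symind{\cycgrp{\Pi_1}}{-}(\Sigma)\geq 0+(k-1)=k-1$; and for \ref{indBelowPri}, taking $\grp=\cycgrp{\Pi_\perp}$ with the trivial twist (legitimate since $\nrmlsgn{\nu}(\refl_{\Pi_\perp})=+1$ and $\psi$ is $\refl_{\Pi_\perp}$-even) and the $2k$ sectors (each preserved by the horizontal reflection $\refl_{\Pi_\perp}$), $\symind{\cycgrp{\Pi_\perp}}{+}(\Sigma)\geq 0+(2k-1)=2k-1$.

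I expect the main obstacle to be the sign and transversality bookkeeping of the middle step: one must check that $\Sigma$ is transverse to each relevant mirror plane, deduce that $\nrmlsgn{\nu}$ is trivial on the symmetry group, and then identify precisely which twisting homomorphism $\psi$ is invariant for under each reflection --- this is exactly what makes the single field $\psi$ serve as an admissible test function for all three conclusions simultaneously --- together with the (routine but essential) verification that the internalized boundary condition built into Proposition~\ref{prop:mr} coincides with the natural Robin condition satisfied by $\psi$ on $\partial\Sigma$. The remaining ingredients --- the classification of rotationally symmetric free boundary minimal surfaces, the first-variation identity for $\psi$, and the min-max inequalities --- are standard.
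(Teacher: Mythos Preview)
The paper does not prove this proposition: it is quoted verbatim from \cite{CSWSpectral} and used as a black box (see Corollary~\ref{cor:lower}), so there is no in-paper argument to compare against.

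That said, your argument is correct and is exactly the approach one expects from the Montiel--Ros framework of \cite{CSWSpectral} recalled in this paper: the rotational Killing field yields a Jacobi field $\psi$ in the kernel of $\indexform{\Sigma}$ which is odd under each reflection through a plane containing $\xi$, hence restricts to an admissible Dirichlet test function on every sector of the partition, forcing the first interior-Dirichlet eigenvalue of each piece to be nonpositive; Proposition~\ref{prop:mr}~\ref{mrLower} then gives the three bounds. Two points deserve slightly more care than you gave them. First, the assertion $\nrmlsgn{\nu}(\refl_P)=+1$ for every mirror plane $P$ (including $\Pi_\perp$) is most cleanly obtained not by ``transversality'' but by noting that if $\nrmlsgn{\nu}(\refl_P)=-1$, then $\refl_P|_\Sigma$ fixes some point $p\in\Sigma\cap P$ with identity differential (since $T_p\Sigma=P$), forcing $\refl_P|_\Sigma=\mathrm{id}_\Sigma$ and hence $\Sigma\subset P$, contrary to hypothesis. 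Second, for item~\ref{prop:indBelowPyr-ii} you should observe that $\refl_{\Pi_1}$ has no fixed wedge among the $2k$ sectors (an easy parity check), so the orbit partition really does consist of $k$ pieces; you should also note that the interior boundary of each such orbit still lies entirely on mirror planes of $\pyr_k$, so that $\psi$ retains its Dirichlet trace there. With these clarifications your proof is complete.
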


\begin{corollary}[Index lower bounds for $\Sigma_{N,m}$ from symmetries]\label{cor:lower}
Let $\Sigma_{N,m}$ be any embedded free boundary minimal surface in $\B^3$ with the topology and symmetry group given in Theorem \ref{thm:Main}. Then
\[
\ind(\Sigma_{N,m})\geq\begin{cases}
2m & \text{ for $N$ even, }\\
2m-1 & \text{ for $N$ odd. } 
\end{cases}
\]
\end{corollary}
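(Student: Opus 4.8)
The plan is to deduce Corollary~\ref{cor:lower} directly from Proposition~\ref{indBelowPyr} by identifying, inside the maximal symmetry group of $\Sigma_{N,m}$, a reflection plane and a rotation axis lying in it, and then reading off the integer $k$ that Proposition~\ref{indBelowPyr} produces. First I would observe that $\Sigma_{N,m}$ is connected and embedded (by Proposition~\ref{pro:final_embd} and Lemma~\ref{lem:TopologicalType}) and is neither a disc nor the critical catenoid, since for $m$ large its genus and/or number of boundary components as computed in Theorem~\ref{thm:Main} exceed those of a disc or critical catenoid. Hence the hypotheses of Proposition~\ref{indBelowPyr} on $\Sigma$ itself are met.

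Next I would supply the required symmetries. In both parities the group $\G$ contains the pyramidal subgroup $\pyr_m$, which is generated by the rotation $\rot_{\{x=y=0\}}^{2\pi/m}$ about the vertical axis together with the reflection $\refl_{\{y=x\tan(\pi/2m)\}}$ through a vertical plane $\Pi_1$ containing that axis; thus $\Sigma_{N,m}$ is invariant under reflection through $\Pi_1$ and under rotation through $2\pi/m$ about the line $\xi=\{x=y=0\}\subset\Pi_1$. Now I invoke Proposition~\ref{indBelowPyr}: there is a largest integer $k\geq 2$ such that rotation about $\xi$ through $2\pi/k$ is a symmetry of $\Sigma_{N,m}$. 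Because the maximal symmetry group is $\pri_m$ (for $N$ even) or $\apr_m$ (for $N$ odd) by Proposition~\ref{pro:final_embd}, and in either case the rotations about the vertical axis inside this group form exactly the cyclic group $\Z_m$, the largest such $k$ is precisely $m$. Part~\ref{prop:indBelowPyr-i} of Proposition~\ref{indBelowPyr} then gives $\ind(\Sigma_{N,m})\geq 2m-1$ immediately, covering the odd case.

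For the even case I would sharpen this using part~\ref{indBelowPri}: when $N$ is even, $\G=\pri_m$ also contains the reflection $\refl_{\{z=0\}}$ through the horizontal plane $\Pi_\perp=\{z=0\}$, which is orthogonal to $\xi$. Hence $\symind{\cycgrp{\Pi_\perp}}{+}(\Sigma_{N,m})\geq 2k-1=2m-1$, and since by \eqref{evenOddDecomp} the total index splits as $\ind(\Sigma_{N,m})=\symind{\cycgrp{\Pi_\perp}}{+}(\Sigma_{N,m})+\symind{\cycgrp{\Pi_\perp}}{-}(\Sigma_{N,m})\geq\symind{\cycgrp{\Pi_\perp}}{+}(\Sigma_{N,m})$, we only get $\geq 2m-1$ this way. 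To reach $2m$ I would instead argue that $\symind{\cycgrp{\Pi_\perp}}{-}(\Sigma_{N,m})\geq 1$: the odd part contains at least the first Dirichlet-type contribution coming from the $\Z$-odd eigenfunction associated to the reflection through $\{z=0\}$, equivalently one uses that the catenoid regions straddling $\{z=0\}$ contribute a negative direction odd under $\refl_{\{z=0\}}$ (this is exactly the content of Lemma~\ref{lem:cat_bounds}, which gives a genuine equivariant negative direction on $Q^{\dir}_{\catr_i}$ with respect to the stabilizer, and that stabilizer for a $\{z=0\}$-crossing ribbon includes $\refl_{\{z=0\}}$). Adding the two halves yields $\ind(\Sigma_{N,m})\geq (2m-1)+1=2m$ for $N$ even.

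The main obstacle is precisely this last half-unit improvement in the even case: Proposition~\ref{indBelowPyr} is stated for the full index and for the $\Pi_\perp$-even part, so extracting the extra $\Pi_\perp$-odd negative direction requires a genuinely separate (if short) argument, most cleanly via the Montiel--Ros lower bound Proposition~\ref{prop:mr}~\ref{mrLower} applied to a single $\{z=0\}$-crossing catenoid region with its stabilizer $\G_{\catr_n}$ and the twisting $\nrmlsgn{\nu}$, together with the Dirichlet index bound $\equivind{\G_{\catr_n}}(Q^{\dir}_{\catr_n})\geq 1$ from Lemma~\ref{lem:cat_bounds}; one must check that the relevant twisted action is nontrivial on $\refl_{\{z=0\}}$ so that this direction is indeed $\Pi_\perp$-odd and hence not already counted in the $2m-1$. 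Everything else is bookkeeping about which rotations and reflections belong to $\pri_m$ and $\apr_m$.
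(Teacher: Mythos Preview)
Your treatment of the odd case and the first half of the even case (applying Proposition~\ref{indBelowPyr}\,\ref{prop:indBelowPyr-i} and~\ref{indBelowPri} with the vertical axis and a vertical plane of symmetry to obtain $\symind{\cycgrp{\{z=0\}}}{+}(\Sigma_{N,m})\geq 2m-1$) is correct and coincides with the paper's argument.

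The gap is in how you extract the additional $\{z=0\}$-odd negative direction when $N$ is even. Your proposal is to use the Dirichlet first eigenfunction on the central catenoid region $\catr_n$ via Lemma~\ref{lem:cat_bounds}. This fails for two reasons. First, and decisively, the check you flag as necessary comes out the wrong way: as the paper notes just before Lemma~\ref{lem:cat_bounds}, every element of $\G_{\catr_i}$ \emph{preserves} the global unit normal, so $\nrmlsgn{\nu}(\refl_{\{z=0\}})=+1$. Concretely, tracking the normal through each catenoidal ribbon flips its vertical component, so for $N$ even the normal on $D_N$ points downward; $\refl_{\{z=0\}}$ then carries $\nu|_{D_1}$ to $\nu|_{D_N}$. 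Hence the (sign-definite) first Dirichlet eigenfunction on $\catr_n$ is $\{z=0\}$-\emph{even} and contributes to $\symind{\cycgrp{\{z=0\}}}{+}$, not to $\symind{\cycgrp{\{z=0\}}}{-}$; it may already be counted among the $2m-1$. Second, the corollary is stated for \emph{any} embedded free boundary minimal surface with the given topology and symmetry group, so an argument relying on catenoid regions of the specific construction would not establish it in that generality.

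The paper obtains the missing unit by a second application of Proposition~\ref{indBelowPyr}, this time item~\ref{prop:indBelowPyr-ii}, with the roles of plane and axis changed: take $\Pi_1=\{z=0\}$ and $\xi=\{y=x\tan(\pi/(2m))\}\cap\{z=0\}$, a horizontal line contained in $\Pi_1$. The composition $\refl_{\{z=0\}}\circ\refl_{\{y=x\tan(\pi/(2m))\}}\in\pri_m$ is rotation by $\pi$ about $\xi$, so $k=2$ (and no larger, since the maximal symmetry group is exactly $\pri_m$). Item~\ref{prop:indBelowPyr-ii} then gives $\symind{\cycgrp{\{z=0\}}}{-}(\Sigma_{N,m})\geq k-1=1$, and adding this to the $2m-1$ from item~\ref{indBelowPri} via \eqref{evenOddDecomp} yields $\ind(\Sigma_{N,m})\geq 2m$. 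This argument uses only symmetries and applies to any surface satisfying the hypotheses of the corollary.
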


\begin{proof}
For $N$ odd we apply Proposition \ref{indBelowPyr} \ref{prop:indBelowPyr-i} with $\Pi_1$ any vertical plane of symmetry, $k=m$ and $\xi=\{x=y=0\}$. 
For $N$ even we similarly apply Proposition \ref{indBelowPyr} \ref{indBelowPri} for $k=m$,
with $\Pi_1$ any vertical plane of symmetry and $\xi=\{x=y=0\}$, and then Proposition \ref{indBelowPyr} \ref{prop:indBelowPyr-ii} for $k=2$
with $\Pi_1$ the horizontal plane $\left\{z=0\right\}$ and $\xi=\{y=x\tan(\pi/(2m))\}$.
\end{proof}
 
In the case $N=2$, we expect the lower index bound stated in Corollary \ref{cor:lower} to be sharp, as explained in \cite[Conjecture 7.8 (iv)]{CSWnonuniqueness}. 
As anticipated in the introduction, we wish to refine our analysis so as to get a bound that also accounts for the number of layers, as seems natural. 
Indeed, Lemma \ref{lem:cat_bounds} allows us to derive the following lower bound, an improvement to the former whenever $N \geq 3$.

\begin{proposition}[Index lower bounds for $\Sigma_{N,m}$ depending on $N$]\label{pro:Index_LowerBounds}
Let $N \geq 2$ be an integer.
There exists $m_0>0$ such that
for every integer $m>m_0$
\begin{equation*}
\ind(\Sigma_{N,m}) \geq m(N-1).
\end{equation*}
\end{proposition}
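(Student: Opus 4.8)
The plan is to apply the lower-bound half of the Montiel--Ros machinery, namely Proposition \ref{prop:mr} \ref{mrLower}, with trivial group $\grp$, to the partition of $\Sigma_{N,m}$ into the $(N-1)m$ catenoid regions (the connected components of $\bigcup_i \pyr_m\catr_i$), the $N$ intermediate regions $\interr_i$, and the $N$ disc regions $\discr_i$. The key input is that each catenoid region contributes a strictly positive amount to the right-hand side: by Lemma \ref{lem:cat_bounds} we have $\ind(Q^{\dir}_{\catr_i}) \geq 1$ for every $i$, hence also $\dim E^{\leq 0}(Q^{\dir}_{\catr_i}) \geq 1$, for each of the $(N-1)m$ components. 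Since Proposition \ref{prop:mr} \ref{mrLower} takes one region with a Dirichlet-type (interior) condition ``for free'' and then sums $\dim E^{\leq t}$ over the Dirichlet forms on all the remaining regions (evaluated at $t=0$), the natural strategy is to designate one catenoid region as the distinguished $\lipdom_1$ (where we only get $\dim E^{<0}\geq 1$, which still holds by Lemma \ref{lem:cat_bounds}) and let the other $(N-1)m - 1$ catenoid regions, together with all intermediate and disc regions, be the $\lipdom_i$ for $i\geq 2$.

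Concretely, first I would verify that the regions listed above form an admissible partition in the sense of Proposition \ref{prop:mr}: they are pairwise disjoint open Lipschitz subdomains whose closures cover $\Sigma_{N,m}$ — this is exactly the content of the remark following the definition of $Q^{\neum}_{\discr_i}$ and $Q^{\neum}_{\interr_i}$ in the text. Second, I would observe that $Q^{\Sigma_{N,m}}$ has empty Robin/Dirichlet boundary obstructions internal to the catenoid regions in the sense needed, so that the interior Dirichlet form on each $\catr_i$ is precisely $Q^{\dir}_{\catr_i}$ as defined before Lemma \ref{lem:cat_bounds}. Third, I would apply Proposition \ref{prop:mr} \ref{mrLower} at $t=0$ with $\lipdom_1$ one catenoid region and the remaining regions indexed arbitrarily, discarding (i.e.\ bounding below by zero) the contributions of the intermediate and disc regions and of the nullities, and keeping only
\[
\ind(\Sigma_{N,m})
=
\dim E^{<0}(Q^{\Sigma_{N,m}})
\geq
\dim E^{<0}(Q^{\dir}_{\catr_1})
+
\sum_{\catr_i \neq \catr_1}
\dim E^{\leq 0}(Q^{\dir}_{\catr_i})
\geq
1 + \bigl((N-1)m - 1\bigr)
=
(N-1)m,
\]
where the middle inequality is Lemma \ref{lem:cat_bounds} applied to each of the $(N-1)m$ catenoid regions and the fact that $\dim E^{\leq 0} \geq \dim E^{<0} \geq 1$ for the $\lipdom_i$ with $i\geq 2$.

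I expect the proof to be essentially this short, since all the genuinely hard analysis — the geometric estimates comparing $\Sigma_{N,m}$ to its initial surface and to the model half-catenoid, and the spectral comparison establishing $\ind(Q^{\dir}_{\catr_i})\geq 1$ — has already been carried out in Lemma \ref{lem:cat_bounds} and its supporting lemmata. The only point that requires a little care is the bookkeeping in applying Proposition \ref{prop:mr} \ref{mrLower}: one must check that counting a single catenoid region as the privileged $\lipdom_1$ (with the weaker $E^{<0}$ bound) and all others with the $E^{\leq0}$ bound indeed yields the stated total, and that no region is double-counted or omitted. This is routine given the explicit description of the partition. If one wished, one could alternatively distinguish an intermediate or disc region as $\lipdom_1$, which would lose nothing since those regions only contribute nonnegatively anyway; designating a catenoid region as $\lipdom_1$ is slightly more economical but not essential. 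No compactness, regularity, or equivariance subtleties arise here because we work with the trivial group and only extract a crude lower bound.
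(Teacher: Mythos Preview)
Your proof is correct and follows essentially the same route as the paper: apply Proposition~\ref{prop:mr}\,\ref{mrLower} with trivial group and harvest one unit of Dirichlet index from each of the $(N-1)m$ catenoid regions via Lemma~\ref{lem:cat_bounds}. The only cosmetic difference is that the paper uses the coarser partition into the $(N-1)m$ catenoid regions plus their single complement (so $m(N-1)+1$ pieces, with the complement as the distinguished $\lipdom_1$), whereas you further split the complement into the $\interr_i$ and $\discr_i$ and take a catenoid region as $\lipdom_1$; both choices yield the same bound.
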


\begin{proof}
We apply
Proposition \ref{prop:mr} \ref{mrLower}
(with trivial $\grp$) with a partition of $\Sigma_{N,m}$
into precisely $m(N-1)+1$ domains, namely all (connected) catenoid regions, plus the complement of their disjoint union.
Applying the local lower bound given in
Lemma \ref{lem:cat_bounds}, the proof is readily completed.
\end{proof}

We state also an equivariant counterpart.
\begin{proposition}
[Equivariant index lower bounds for $\Sigma_{N,m}$]\label{pro:Index_Equiv_LowerBounds}
Let $N \geq 2$ be an integer.
There exists $m_0>0$ such that for every integer
$m>m_0$
\begin{equation*}
\equivind{\G}(\Sigma_{N,m})
\geq
n=\floor{N/2}.
\end{equation*}
\end{proposition}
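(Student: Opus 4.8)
The plan is to apply Proposition~\ref{prop:mr}~\ref{mrLower} with the group $\grp=\G$ and the twisting homomorphism $\nrmlsgn{\nu^{\Sigma_{N,m}}}$, using a $\G$-invariant partition of $\Sigma_{N,m}$ built out of the catenoid regions $\catr_i$ (for $i=1,\dots,N-1$) and the complement of their $\G$-orbit. Since $\G$ acts on the index $N-1$ set of catenoid ``levels'' with orbits determined by the reflection $\refl_{\{z=0\}}$ (when $N$ is even) or the rotation $\rot_{\{y=z=0\}}^\pi$ (when $N$ is odd), the number of orbits of catenoid regions is exactly $\floor{(N-1)/2}$ when $N$ is odd and $n=N/2$ when $N$ is even; in the odd case one of the central ribbons $\catr_n$ sits symmetrically, but the relevant count is still that we can extract $n=\floor{N/2}$ pairwise $\G$-inequivalent catenoid-type pieces $\Omega_1,\dots,\Omega_n$ (each $\Omega_j$ being a full $\G$-orbit of some $\catr_i$), together with $\Omega_{n+1}$ being the complement. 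Then Proposition~\ref{prop:mr}~\ref{mrLower} with $t=0$ gives
\begin{equation*}
\equivind{\G}(\Sigma_{N,m})
=\dim \eigenspsym{Q^{\Sigma_{N,m}}}{<0}{\G}{\nrmlsgn{\nu}}
\geq
\sum_{j=1}^{n}\dim\eigenspsym{\dint{\bigl(Q^{\Sigma_{N,m}}\bigr)}_{\Omega_j}}{\leq 0}{\G}{\nrmlsgn{\nu}}
+\dim\eigenspsym{\dint{\bigl(Q^{\Sigma_{N,m}}\bigr)}_{\Omega_{n+1}}}{<0}{\G}{\nrmlsgn{\nu}},
\end{equation*}
so it suffices to show that each catenoid-orbit term contributes at least $1$.

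The key point, then, is the $\G$-equivariant reduction principle from \cite[Lemma 3.5]{CSWSpectral}: computing $\dim\eigenspsym{\dint{(Q^{\Sigma_{N,m}})}_{\Omega_j}}{\leq 0}{\G}{\nrmlsgn{\nu}}$ for $\Omega_j$ a full $\G$-orbit of $\catr_i$ with adjoined interior Dirichlet conditions is equivalent to computing $\equivind{\G_{\catr_i}}(Q_{\catr_i}^\dir)+\equivnul{\G_{\catr_i}}(Q_{\catr_i}^\dir)$ on a single representative ribbon $\catr_i$ with its stabilizer $\G_{\catr_i}$, again with the interior Dirichlet condition along $\intbdy\catr_i$ (the catenoidal waists) — here one uses that every element of $\G_{\catr_i}$ preserves both sides of $\Sigma_{N,m}$, as remarked just before Lemma~\ref{lem:cat_bounds}, so the twisting is trivial on $\G_{\catr_i}$. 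Lemma~\ref{lem:cat_bounds} supplies precisely the lower bound $\equivind{\G_{\catr_i}}(Q_{\catr_i}^\dir)\geq 1$, hence $\dim\eigenspsym{\dint{(Q^{\Sigma_{N,m}})}_{\Omega_j}}{\leq 0}{\G}{\nrmlsgn{\nu}}\geq\equivind{\G_{\catr_i}}(Q_{\catr_i}^\dir)\geq 1$ for each $j=1,\dots,n$. Discarding the (nonnegative) final term, summing over $j$ yields $\equivind{\G}(\Sigma_{N,m})\geq n=\floor{N/2}$.

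The main obstacle I anticipate is bookkeeping rather than analysis: one must carefully verify that the chosen $\Omega_1,\dots,\Omega_n$ are genuinely $\G$-invariant, pairwise disjoint, have closures covering $\Sigma_{N,m}$ together with $\Omega_{n+1}$, and — crucially — that the $n$ catenoid orbits are distinct, i.e. that the $\G$-action on $\{1,\dots,N-1\}$ (induced on the catenoid levels) has at least $\floor{N/2}$ orbits. For $N$ even the reflection $\refl_{\{z=0\}}$ pairs level $i$ with level $N-i$, giving $N/2$ orbits among the $N-1$ catenoids (the middle one $i=n$ being fixed), and $\floor{N/2}=N/2$; for $N$ odd the half-turn $\rot_{\{y=z=0\}}^\pi$ pairs $i$ with $N-i$, and among the $N-1=2n$ levels this produces $n$ orbits (with $i=n$ and $i=n+1$ swapped, i.e. no fixed level), so again $n=\floor{N/2}$ orbits — in either case the count matches. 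One should also double-check that adjoining interior Dirichlet conditions on the waists $\intbdy\catr_i$ is exactly what Proposition~\ref{prop:mr}~\ref{mrLower} requires for the non-first pieces (the ``$\leq t$'' terms) and that the hypothesis that $\G$ acts transitively on the connected components of $\Sigma_{N,m}$ is met since $\Sigma_{N,m}$ is connected. Once these combinatorial checks are in place, the analytic content is entirely encapsulated in Lemma~\ref{lem:cat_bounds} and the reduction principle, and the argument closes immediately.
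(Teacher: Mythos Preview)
Your proof is correct and follows essentially the same approach as the paper: apply Proposition~\ref{prop:mr}\ref{mrLower} with $\grp=\G$ to a $\G$-invariant partition, and harvest a contribution of at least $1$ from each of the $n$ catenoid orbits via Lemma~\ref{lem:cat_bounds}. The paper actually reuses the finer partition from the upper-bound proof (Proposition~\ref{pro:Index_Equiv_UpperBounds}), which also includes disc and intermediate orbits, but those pieces contribute nothing to the Dirichlet lower bound, so your coarser partition (catenoid orbits plus a single complementary piece) is a harmless simplification of the same argument.
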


\begin{proof}
Here we just reconsider the very same partition employed in the proof of Proposition \ref{pro:Index_Equiv_UpperBounds},
but this time we apply
Lemma \ref{lem:cat_bounds}
and Proposition \ref{prop:mr} \ref{mrLower},
with $\grp=\G$.
\end{proof}

\appendix

\section{Vertical forces}
\label{app:forces}
This appendix contains the key estimates
establishing that the cokernel of the construction
can be controlled by variation
of the $\vec{\zeta},\vec{\xi}$ parameters.
The treatment is similar to
that in \cite[Section 3]{Wiygul2020},
but with symmetry constraints
and some simplifications.
Most notably we appeal
to the Perron--Frobenius theorem
to streamline the proof
of Lemma \ref{lem:limiting_waist_ratios}
(in comparison to \cite[Lemma 3.18]{Wiygul2020}),
and we prove uniqueness there
(whose analogue was not asserted
in \cite{Wiygul2020}). 
The use of forces as proxies
for components of the cokernel
follows \cite{KapouleasYangTorusDoubling},
while the introduction of dislocations
is motivated by the methodology of 
\cite{KapouleasWenteTori}.
The same underlying ideas
were applied in \cite{KapouleasWiygul17},
but Lemma \ref{lem:limiting_waist_ratios}
is trivial in that case
and the proof of
the corresponding special case
of Lemma \ref{lem:coker_control}
much simpler.

We begin with a lemma
which is needed for the definition
of the initial surfaces
and for the remainder of this appendix.

\begin{lemma}[Limiting waist ratios]
\label{lem:limiting_waist_ratios}
Let $N \geq 2$ be an integer 
and $n=\floor{N/2}$.
There exists
$\vec{x}=(x_1,\ldots,x_{N-1}) \in \R^{N-1}_{\updownarrow+}$
such that 
\begin{equation}
\label{eqn:x_ordering}
0 < x_1 < \cdots < x_{n-1} < x_n = 1,
\end{equation}
and
\begin{equation}
\label{eqn:x_balancing}
x_{i-1} + x_{i+1}
=
2\frac{x_{n-1}+N \bmod 2}{1+N \bmod 2}x_i
\quad
\mbox{for each }
1 \leq i \leq N-1,
\end{equation}
understanding $x_0=x_N=0$.
Furthermore,
$\vec{x}$
is the only vector in $\R^{N-1}$
satisfying \eqref{eqn:x_balancing}
and having $x_n=1$
and all entries strictly positive.
\end{lemma}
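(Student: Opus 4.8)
The plan is to exhibit $\vec x$ by an explicit trigonometric formula and then deduce uniqueness from the Perron--Frobenius theorem. Write $\epsilon\vcentcolon=N\bmod 2\in\{0,1\}$, so that $N=2n+\epsilon$. The first observation is that, once a candidate $\vec x=(x_1,\dots,x_{N-1})$ is fixed and the convention $x_0=x_N=0$ is adopted, the balancing relations \eqref{eqn:x_balancing} read $A\vec x=2c\,\vec x$, where $A$ is the $(N-1)\times(N-1)$ adjacency matrix of the path on $N-1$ vertices (i.e.\ ones on the two off-diagonals, zeros elsewhere) and $c=c(\vec x)\vcentcolon=(x_{n-1}+\epsilon)/(1+\epsilon)$ is simply a scalar determined by $\vec x$. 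Thus the apparent circularity of \eqref{eqn:x_balancing} dissolves: for any actual solution the relations form a genuine linear eigenvalue equation. We shall use the classical spectrum of $A$: its eigenvalues are $2\cos(j\pi/N)$, $j=1,\dots,N-1$, with $2\cos(j\pi/N)$ having eigenvector $\bigl(\sin(kj\pi/N)\bigr)_{k=1}^{N-1}$; in particular the spectral radius is $2\cos(\pi/N)$.

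For existence I would set
\begin{equation*}
x_i\vcentcolon=\frac{\sin(i\pi/N)}{\sin(n\pi/N)}\qquad(i=0,1,\dots,N),
\end{equation*}
which forces $x_0=x_N=0$ and $x_n=1$. Membership in $\R^{N-1}_{\updownarrow+}$ is the identity $\sin((N-i)\pi/N)=\sin(i\pi/N)$; the ordering \eqref{eqn:x_ordering} holds because $i\mapsto i\pi/N$ lands in $(0,\pi/2]$ for $0<i\le n$, on which $\sin$ is strictly increasing; and $x_{i-1}+x_{i+1}=2\cos(\pi/N)x_i$ is the sum-to-product formula $\sin((i-1)\theta)+\sin((i+1)\theta)=2\cos\theta\sin(i\theta)$ with $\theta=\pi/N$, i.e.\ $A\vec x=2\cos(\pi/N)\vec x$. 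It remains to verify the self-consistency $c(\vec x)=\cos(\pi/N)$, that is $x_{n-1}=\cos(\pi/(2n))$ if $N=2n$ and $x_{n-1}=2\cos(\pi/(2n+1))-1$ if $N=2n+1$. The even case is immediate from $\sin((n-1)\pi/(2n))=\cos(\pi/(2n))$; the odd case follows from $\sin((n+1)\pi/N)=\sin(n\pi/N)$ (since $(n+1)\pi/N+n\pi/N=\pi$) together with $2\cos(\pi/N)\sin(n\pi/N)=\sin((n+1)\pi/N)+\sin((n-1)\pi/N)$.

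For uniqueness, let $\vec x\in\R^{N-1}$ satisfy \eqref{eqn:x_balancing} with $x_n=1$ and all entries strictly positive. Putting $c\vcentcolon=c(\vec x)>0$, we get $A\vec x=2c\,\vec x$, so $\vec x$ is a \emph{positive} eigenvector of $A$. Since $A$ is nonnegative, symmetric and irreducible (the path on $N-1$ vertices being connected), the Perron--Frobenius theorem ensures that its spectral radius is a simple eigenvalue with a positive eigenvector, and that no eigenvector associated with any other eigenvalue is a positive vector. Hence $2c$ must equal $2\cos(\pi/N)$ and $\vec x$ must be a positive multiple of $\bigl(\sin(k\pi/N)\bigr)_{k=1}^{N-1}$; the normalization $x_n=1$ then pins down the multiple, so $\vec x$ coincides with the vector constructed above (which in particular shows, a posteriori, that $\vec x\in\R^{N-1}_{\updownarrow+}$).

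I expect the only genuinely fiddly step to be the self-consistency identity in the odd case --- a short, parity-dependent trigonometric computation; the remainder is either the explicit formula or a direct invocation of the (textbook) spectrum of the path adjacency matrix together with Perron--Frobenius.
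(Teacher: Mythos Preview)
Your proof is correct and takes a genuinely different route from the paper's. The paper argues abstractly: it applies Perron--Frobenius to the matrix $B$ (your $A$) to obtain a simple top eigenvalue $\lambda$ with a positive eigenvector, normalizes to $x_n=1$, deduces $\vec x\in\R^{N-1}_{\updownarrow+}$ from the fact that $B$ respects the decomposition $\R^{N-1}=\R^{N-1}_{\updownarrow+}\oplus\R^{N-1}_{\updownarrow-}$, and then \emph{derives} the self-consistency $\lambda=2(x_{n-1}+\epsilon)/(1+\epsilon)$ from the eigenvalue equation at $i=n$. The ordering $x_1<\cdots<x_n$ is obtained indirectly, by first showing $\lambda<2$ via Cauchy--Schwarz and then running a downward induction. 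Uniqueness is proved by orthogonality to the Perron vector.

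You instead exploit the explicit diagonalization of the path adjacency matrix: you write $x_i=\sin(i\pi/N)/\sin(n\pi/N)$ and verify symmetry, ordering, and self-consistency by direct trigonometric identities, reserving Perron--Frobenius solely for the uniqueness step. This buys you concrete formulas (in particular $\lambda=2\cos(\pi/N)$) and a one-line proof of the ordering, at the cost of invoking the classical spectrum of the path. The paper's approach avoids that invocation and is in that sense more self-contained, but pays with the Cauchy--Schwarz plus induction argument for the ordering. Both proofs ultimately rest on Perron--Frobenius for the uniqueness assertion.
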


\begin{proof}
Let $B$ be the $(N-1) \times (N-1)$ matrix such that
\begin{equation*}
B_{ij}=
\begin{cases}
1 &\mbox{if } \abs{i-j}=1
\\
0 &\mbox{otherwise},
\end{cases}
\end{equation*}
and let $I$ be
the $(N-1) \times (N-1)$
identity matrix.
Then $(I+B)^{N-2}$
is entrywise strictly positive,
and by the Perron--Frobenius theorem
(see for example
\cite{MacCluerPF}
or the special case treated in \cite{NinioSimplePF},
which suffices for this application)
the maximum eigenvalue $\lambda$
of $B$ is strictly positive
and has eigenspace of dimension one
(in other words $\lambda$ is a simple eigenvalue)
which moreover includes a vector
with all entries strictly positive. In particular there exists a unique
vector $\vec{x} \in \R^{N-1}$
such that $x_n=1$
and $B\vec{x}=\lambda \vec{x}$,
and we then have also
$x_i>0$ for all $i$.
Since left multiplication by $B$
preserves
each of the two summands
in the decomposition
$
 \R^{N-1}
 =
 \R^{N-1}_{\updownarrow+}
   \oplus \R^{N-1}_{\updownarrow-}
$,
it follows from the simplicity of $\lambda$
and the positivity of the entries that
$\vec{x} \in \R^{N-1}_{\updownarrow+}$.

From $B\vec{x}=\lambda \vec{x}$
and the definition of $B$
we have
\begin{equation*}
x_{i-1}+x_{i+1}
=
\lambda x_i
\quad
\mbox{for each } 1 \leq i \leq N-1.
\end{equation*}
In particular,
by taking $i=n$
and applying the choice $x_n=1$
we find
\begin{equation*}
x_{n-1} + x_{n+1}
=
\lambda,
\end{equation*}
but since $\vec{x} \in \R^{N-1}_{\updownarrow+}$ one has in particular
\begin{equation*}
x_{n+1}
=
\begin{cases}
x_{n-1} & \mbox{if } N \bmod 2 = 0
\\
x_n=1 & \mbox{if } N \bmod 2 = 1,
\end{cases}
\end{equation*}
and so we conclude
\begin{equation*}
\lambda
=
2\frac{x_{n-1} + N \bmod 2}{1 + N \bmod 2}.
\end{equation*}
Consequently
\eqref{eqn:x_balancing} is satisfied.

If $\vec{y} \in \R^{N-1}$
satisfies \eqref{eqn:x_balancing}
(with $y$ in place of $x$),
then $\vec{y}$ is an eigenvector
of $B$.
If the corresponding eigenvalue
differs from $\lambda$,
then, by symmetry of $B$,
$\vec{y}$ is orthogonal to $\vec{x}$,
so cannot have all entries strictly positive.
If instead
the corresponding eigenvalue
agrees with $\lambda$,
then 
by the latter's simplicity and the assumption
$y_n=1$ imply $\vec{y}=\vec{x}$.
Thus we have verified the uniqueness assertion.

Finally, we observe that
\begin{equation*}
\lambda
=
\frac{{\vec{x}\vphantom{\big\vert}}^\intercal B \vec{x}}
  {\abs{\vec{x}}^2}
=
\frac{2}{\abs{\vec{x}}^2}
  \sum_{i=1}^{N-2} x_ix_{i+1}
<2,
\end{equation*}
where $\abs{{}\cdot{}}$
is the Euclidean norm on $\R^{N-1}$
and the strict inequality
follows from the Cauchy--Schwarz inequality
and the fact that $x_1=x_{N-1}>0$ (because of the known positivity of all entries, as mentioned above).
Therefore we have
\begin{equation*}
x_{n-1}
=
\frac{1+N \bmod 2}{2}\lambda - N \bmod 2
<
1=x_n
\end{equation*}
and
\begin{equation*}
x_{i-1}
=
\lambda x_i - x_{i+1}
<
x_i + (x_i - x_{i+1})
\quad
\mbox{for each }
2 \leq i \leq n-1,
\end{equation*}
whence, by downward induction starting at $i=n-1$, follows
\eqref{eqn:x_ordering},
completing the proof.
\end{proof}

Next, for each initial surface
$\Sigma=\Sigma_{N,m,\vec{\zeta},\vec{\xi}}$ (as designed in the first half of Section \ref{sec:Stackings})
we define
$
 \vec{\force}
 =
 \vec{\force}_{N,m,\vec{\zeta},\vec{\xi}}
 \in
 \R^N_{\updownarrow-}$
by
\begin{equation}
\label{eqn:force_def}
\force_i
=
\force_{i;N,m,\vec{\zeta},\vec{\xi}}
\vcentcolon=
\int_{\partial D_i}
  \met{\R^3}(\eta^{D_i}, \partial_z|_\Sigma)
  \, \hausint{1}{\met{\R^3}}
\end{equation}
for each region
$D_i \subset \Sigma$
as given in \eqref{layers_with_half_cats},
with $\eta^{D_i}$ denoting the outward unit conormal.
We will refer to the $\force_i$ as forces,
since if $D_i$ is interpreted physically
as an idealized film (not in equilibrium),
then $-\force_i$ represents
the $z$ component of the net force
that $D_i$ exerts, through surface tension,
on $\partial D_i$.
(Integrals of type \eqref{eqn:force_def}, also called fluxes,
provide a standard tool in the study of (exactly) minimal surfaces; see for example \cite[Chapter 1, \S3.1]{CM11}.)

By the first variation formula 
and the fact $\partial_z$ is Killing,
$\force_i$ is equivalently
the integral over $D_i$
of the $z$ component of the (vector-valued)
mean curvature of $D_i$,
which perspective motivates
our interest in $\vec{\force}$:
by manipulating
(via Lemma~\ref{lem:coker_control})
the components
of the initial mean curvature that
the $\force_i$ represent
we will obtain control
(in Proposition \ref{pro:existence_of_fbms},
with the aid of Lemma \ref{lem:perturbed_force})
over the subspace of the approximate cokernel
(that is the span of the $w_i$ and $\wbar_i$
functions as defined in
\eqref{eqn:w_ess_def}
and \eqref{eqn:wbar_def} respectively,
and as applied in
Proposition \ref{pro:global_inverse})
spanned by the $w_i$.
The span of the $\wbar_i$
will instead be controlled,
much more directly,
by adjusting the dislocations
$\disloc_i$
(introduced in \eqref{eqn:disloc_def}).
We will see below that the assumptions of
Lemma \ref{lem:limiting_waist_ratios}
can be understood as enforcing
approximate balancing
(that is vanishing of the $\force_i$).
Lemma \ref{lem:coker_control}
represents finer control
over the $\force_i$,
as well as the $\disloc_i$.

We proceed to estimate the $\force_i$.
In doing so we will make repeated use
of \eqref{eqn:tau_size},
without any further comment.
We will also apply,
throughout this appendix,
the notation
$a=b+O(c)$
(and simply $a=O(c)$ in case $b=0$)
as follows.
Here $a,b \in \R$ and $c>0$ are real numbers
which may depend on initial-surface data
$(N,m,\vec{\zeta},\vec{\xi}\,)$.
Then we stipulate that 
$a=b+O(c)$ if and only if 
\begin{equation*}
\exists
  m_0
  =
  m_0(N,\abs{\vec{\zeta}}+\abs{\vec{\xi}})
  >
  0
\quad
\exists
  C
  =
  C(N)
  >
  0
\quad
\forall m\geq m_0
: \quad
\frac{\abs{a-b}}{c} \leq C.
\end{equation*}
From the definitions \eqref{eqn:force_def} of $\force_i$ and \eqref{layers_with_half_cats} of $D_i$
(using in particular
the orthogonality of $\Sigma$
to $\partial \B^3$)
we find
\begin{align*}
\force_i
&=
\int_{\partial D_i \cap \Sp^2}
  z
  \, \hausint{1}{\met{\R^3}}
+\int_{
    \partial D_i \cap \Phi(\{\omega=h^K_i\})
  }
  \met{\R^3}(\eta^{D_i},\partial_z|_\Sigma)
  \, \hausint{1}{\met{\R^3}}
\\
&\hphantom{{}={}}
  +\int_{
    \partial D_i \cap \Phi(\{\omega=h^K_{i-1}\})
  }
  \met{\R^3}(\eta^{D_i},\partial_z|_\Sigma)
  \, \hausint{1}{\met{\R^3}},
\end{align*}
where we understand
$\{\omega=h^K_0\}=\{\omega=h^K_N\}=\emptyset$,
so that the corresponding integrals terms
for $i=1$ and $i=N$,
are omitted.
With the aid of \eqref{eqn:Phi_pullback_metric}
and then \eqref{eqn:height_bound}
we estimate
\begin{equation*}
\abs[\Big]{
  (
    \Phi^*\met{\R^3}
    -\met{\dom \Phi}
  )|_{
    \partial D_i \cap \{\omega=h^K_j\}
  }
}_{\met{\dom \Phi}}
=
O\bigl(\tau_j^2+(h^K_j)^2\bigr)
=
O(m^2\tau_1^2),
\end{equation*}
whence
\begin{equation}
\label{eqn:force_est_waists}
\force_i
=
\int_{\partial D_i \cap \Sp^2}
  z
  \, \hausint{1}{\met{\R^3}}
  + m\pi\tau_i - m\pi\tau_{i-1}
  +O(m^3\tau_1^3),
\end{equation}
where we understand $\tau_0=\tau_N=0$.

\begin{figure}%
\providecommand{\bdry}{
--++(\tauA,0)
..controls+(0,0.04)and+(-3:-0.1)..
++(pi/2/\mpar-\tauA,\hB)
..controls+(-3:0.1)and+(0,-0.04)..
++(pi/2/\mpar-\tauB,\hK-\hB)
--++(2*\tauB,0)
..controls+(0,-0.04)and+(3:-0.1)..
++(pi/2/\mpar-\tauB,\hB-\hK)
..controls+(3:0.1)and+(0,0.04)..
++(pi/2/\mpar-\tauA,-\hB)--++(\tauA,0)
}
\pgfmathsetmacro{\mpar}{8}
\pgfmathsetmacro{\tauA}{0.0387}
\pgfmathsetmacro{\tauB}{0.0274}
\pgfmathsetmacro{\hK}{0.1315}
\pgfmathsetmacro{\hB}{0.0713}
\pgfmathsetmacro{\dashheight}{0.15}
\begin{tikzpicture}[line cap=round,line join=round,scale={\mpar/2/pi*\textwidth/2.01cm}] 
\begin{scope}[very thick]
\clip(-2*pi/\mpar-pi/2/\mpar,\hB-\dashheight)rectangle++(4*pi/\mpar,2*\dashheight);
\draw[Blau](-4*pi/\mpar,0)\bdry\bdry\bdry;
\foreach\k in {-2,-1,...,2}{
\begin{scope}[shift={(\k*pi/\mpar,\hB)}]
\draw[dotted,thin](pi/4/\mpar,-\dashheight)--++(0,2*\dashheight)(-pi/4/\mpar,-\dashheight)--++(0,2*\dashheight);;
\draw[dashed,semithick](0,-\dashheight)--++(0,2*\dashheight)node[below right,inner sep=0]{~$L_i^{\Sp^2}$};
\end{scope}
}
\begin{scope}[Orange]
\clip
(2*pi/\mpar-pi/4/\mpar,\hB-\dashheight)rectangle++(pi/2/\mpar,2*\dashheight)
(pi/\mpar-pi/4/\mpar,\hB-\dashheight)rectangle++(pi/2/\mpar,2*\dashheight)
(-pi/4/\mpar,\hB-\dashheight)rectangle++(pi/2/\mpar,2*\dashheight) 
(-pi/\mpar-pi/4/\mpar,\hB-\dashheight)rectangle++(pi/2/\mpar,2*\dashheight)
(-2*pi/\mpar-pi/4/\mpar,\hB-\dashheight)rectangle++(pi/2/\mpar,2*\dashheight);
\draw(-4*pi/\mpar,0)\bdry\bdry\bdry;
\end{scope}
\draw[gray]
(-\tauA,0)--++(2*\tauA,0)
(-\tauA+2*pi/\mpar,0)--++(2*\tauA,0)
(-\tauA-2*pi/\mpar,0)--++(2*\tauA,0)node[near end,inner sep=0.5pt](H-){}
;
\draw[gray]
(-\tauB-pi/\mpar,\hK)--++(2*\tauB,0)
(-\tauB+pi/\mpar,\hK)--++(2*\tauB,0)node[near start,inner sep=0.5pt](H+){}
;
\end{scope}
\draw[latex-latex](pi/\mpar,0)--++(-pi/4/\mpar,0)node[midway,below]{$\frac{\pi}{4m}$};
\draw[Blau](-pi/2/\mpar,\hB)node[below]{$\Gamma^B_i$};
\draw[Orange](pi/4/\mpar-pi/\mpar,\hB)node[below left]{$\Gamma^K_i$};
\draw(-3*pi/2/\mpar,\hB-\dashheight)node[above=-.5pt,rectangle,fill=white](h-){$\scriptstyle\partial D_i \cap \Phi(\{\omega=h^K_{i-1}\})$};
\draw(pi/3.9/\mpar,\hB+\dashheight)node[below right=-.5pt,rectangle,fill=white](h+){$\scriptstyle\partial D_i \cap \Phi(\{\omega=h^K_{i}\})$};
\draw[->](h-.west)to[in=-90,out=180](H-);
\draw[->](h+.east)to[in=90,out=0](H+);
\end{tikzpicture}
\caption{Schematic visualization of $\partial D_i$ (for $i \not \in \{1,N\}$, cf. Figure \ref{fig:Initialsurface}) 
and the curves $\Gamma^K_i$ and $\Gamma^B_i$.}%
\label{fig:decomposition}%
\end{figure}

To estimate the remaining integral
we will decompose its domain of integration
into two (disconnected) curves,
$\Gamma^K_i$ and $\Gamma^B_i$,
which,
recalling
\eqref{eqn:axes_def},
we specify in terms of
the union
$L_i^{\Sp^2} \vcentcolon= \Phi(L_i)$
of meridians on $\Sp^2$
whose inverse images under $\Phi$
are axes of catenoids employed
in the assembly of $D_i$.
Thus $L_i^{\Sp^2}$ is a union of $m$ meridians
when $D_i$ is outermost and otherwise
a union of $2m$ meridians.
Now let $\Gamma^K_i$ be the intersection
of $\partial D_i$ with a tubular neighborhood
in $\Sp^2$ of $L_i^{\Sp^2}$ of (longitudinal) radius
$\frac{\pi}{4m}$,
and let
$
 \Gamma^B_i
 \vcentcolon=
 \partial D_i \cap \Sp^2 \setminus \Gamma^K_i
$.
See Figure \ref{fig:decomposition}.

For each $j \in \{i-1,i\}$
we fix a component
$\gamma^K_{i,j}$ of $\Gamma^K_i$
such that $\gamma^K_{i,j}$ has a boundary point
on $\Phi(\{\omega=h^K_j\})$;
we understand
$\gamma^K_{1,0}=\gamma^K_{N,N}=\emptyset$.
We fix also a component $\gamma^B_i$
of $\Gamma^B_i$.
In the estimates to follow
we will make use of
parametrizations of the $\gamma^K_{i,j}$
and $\gamma^B_i$
that can be read off
by inspection of
\eqref{layers_with_half_cats}
(defining the $D_i$)
and the supporting definitions,
at the heart of which are
the defining functions
\eqref{angular_height_function_one_bridge}
and \eqref{angular_height_function_two_bridges}
near $\Phi^{-1}\partial\Sigma$
in the domain of $\Phi$.

Referring to \eqref{def:Phi}
and again using
both \eqref{eqn:Phi_pullback_metric}
and \eqref{eqn:height_bound},
we continue by estimating
\begin{equation}
\label{eqn:bdy_met_and_height_est}
\biggl(
  \abs[\Big]{
    (
      \Phi^*\met{\R^3}
      -\met{\dom \Phi}
    )
  }_{\met{\dom \Phi}}
  +(\Phi^*z-\omega)
\biggr)
\bigg|_{\Phi^{-1}(\partial D_i \cap \Sp^2)}
=
O(\omega^2)
=
O(m^2\tau_1^2).
\end{equation}
Using this last estimate,
the additional estimate
(for the integral of the height function
along a catenary arc)
\begin{equation*}
\int_0^{\arcosh \frac{\pi}{4m\tau_j}}
  \Bigl(h^K_j - (-1)^{i-j} \tau_j t\Bigr)
  \, \tau_j \cosh t \, dt
=
\frac{\pi}{4m}
  \Bigl(
    h^K_j
    -(-1)^{i-j}(\delta h^K)_j/2
  \Bigr)
  +O(m^{-1}\tau_1)
\end{equation*}
(using in particular
\eqref{eqn:cat_extent}
and
$
\arcosh \frac{\pi}{4m\tau_j}
=
\arcosh \frac{1}{m\tau_j} + O(1)
$),
and \eqref{eqn:dislocated_matching},
we in turn obtain
(for $\gamma^K_{i,j} \neq \emptyset$)
\begin{equation}
\label{eqn:K_bdy_int}
\int_{\gamma^K_{i,j}} z \, \hausint{1}{\met{\R^3}}
=
\frac{\pi}{4m}
\Bigl(
    h^B_i
    +(-1)^{i-j}\disloc_i
\Bigr)
  +O(m^{-1}\tau_1).
\end{equation}
We next observe
(referring ultimately
to the defining functions
\eqref{angular_height_function_one_bridge}
and \eqref{angular_height_function_two_bridges})
the estimate
\begin{equation*}
\frac{d\omega|_{\Phi^{-1}(\gamma^B_i)}}
  {d\theta|_{\Phi^{-1}(\gamma^B_i)}}
=
O(m^2\tau_1)
\end{equation*}
and from here, again using
\eqref{eqn:bdy_met_and_height_est}
and \eqref{eqn:dislocated_matching},
we obtain also
\begin{equation}
\label{eqn:B_bdy_int}
\int_{\gamma^B_i} z \, \hausint{1}{\met{\R^3}}
=
\begin{cases}
\frac{\pi}{2m}h^B_i
  +O(m^{-1}\tau_1)
  &\mbox{for } 2 \leq i \leq N-1
\\
\frac{3\pi}{2m}h^B_i
  +O(m^{-1}\tau_1)
  &\mbox{for } i \in \{1,N\},
\end{cases}
\end{equation}
noting in particular
that,
to leading order,
the contribution
of $\disloc_i$
to $z$ is antisymmetric in $\theta$
so integrates to zero.

Using
\eqref{eqn:force_est_waists},
\eqref{eqn:K_bdy_int},
and \eqref{eqn:B_bdy_int}
(and recalling
from \eqref{eqn:disloc_def}
that $\disloc_1=\disloc_N=0$),
we conclude
\begin{equation}
\label{eqn:force_est}
\force_i
=
2\pi h^B_i + m\pi\tau_i - m\pi\tau_{i-1}
  +O(\tau_1),
\end{equation}
noting in particular that the $\disloc_i$
contributions from the components
of $\Gamma^K_i$
(as identified in \eqref{eqn:K_bdy_int})
cancel in pairs
(or vanish identically from the start
in case $i \in \{1,N\}$).

For the proof of Lemma \ref{lem:coker_control} below
it is useful to introduce and examine
$
 \vec{\widetilde{\force}}
 \in
 \R^{N-1}_{\updownarrow+}
$,
which,
recalling
$\vec{\disloc} \in \R^{N,0,0}_{\updownarrow+}$
from \eqref{eqn:disloc_def},
we define
by
\begin{equation}
\label{eqn:Ftilde_def}
\widetilde{\force}_i
\vcentcolon=
\frac{
  (\force_{i+1}-\force_i)
  -2(\disloc_i + \disloc_{i+1})
  }
  {\pi \tau_n}.
\end{equation}
From \eqref{eqn:force_est}
and
\eqref{eqn:disloc_def},
and using
\eqref{eqn:dislocated_matching}
and
\eqref{eqn:cat_extent}
(after expressing $\arcosh$ therein
in terms of $\log$)
to estimate $h^B_{i+1}-h^B_i$,
we find
\begin{equation*}
\widetilde{\force}_i
=
4\frac{\tau_i}{\tau_n} \log \frac{1}{m\tau_i}
  +m\frac{\tau_{i+1}}{\tau_n}
  -2m\frac{\tau_i}{\tau_n}
  +m\frac{\tau_{i-1}}{\tau_n}
  +O(1).
\end{equation*}
Applying the definition
\eqref{eqn:tau_def}
of the $\tau_j$
(and the supporting definition
\eqref{eqn:taubar_def}
of the $\taubar_j$)
with $x_1,\ldots,x_{N-1}$
as in Lemma \ref{lem:limiting_waist_ratios}
(and understanding
$x_0=\zeta_0=x_N=\zeta_N=0$),
we
further estimate
\begin{equation}
\label{eqn:Fdiff_est}
\begin{aligned}
\widetilde{\force}_i
&=
m
  \left(
    x_{i-1}
        e^{\frac{\zeta_{i-1}-\zeta_n}{m}}
    -2\frac{x_{n-1} + N \bmod 2}{1 + N \bmod 2}x_i
        e^{\frac{\zeta_i-\zeta_n}{m}}
    +x_{i+1}
        e^{\frac{\zeta_{i+1}-\zeta_n}{m}}
  \right)
  -4x_i\zeta_n
  +O(1)
\\
&=
-x_{i-1}(\zeta_i-\zeta_{i-1})
  +x_{i+1}(\zeta_{i+1}-\zeta_i)
  -4x_i\zeta_n
  +O(1).
\end{aligned}
\end{equation}

We are now ready to state and prove
the following lemma,
which,
as anticipated above,
plays an indispensable role
in managing certain apparent obstructions
(that originate from the cokernel
of the model linear problem on the disc)
in the proof
of the existence
(Proposition \ref{pro:existence_of_fbms})
of an exactly free boundary minimal
graph over each initial surface
with $m$ sufficiently large.

\begin{lemma}[Control of forces and dislocations]
\label{lem:coker_control}
Let $N \geq 2$ be an integer.
There exist
$C=C(N)>0$
and
an invertible linear map
\begin{equation*}
\ptofd=\ptofd_N
\colon
\R^{N-1}_{\updownarrow+}
  \oplus
  \R^{N-1}_{\updownarrow-}
\to
\R^N_{\updownarrow-}
  \oplus
  \R^{N,0,0}_{\updownarrow+},
\end{equation*}
such that for every $c>0$
there exists
$m_0=m_0(N,c)$ such that
for every integer $m>m_0$
and for every
$
 (\vec{\zeta},\vec{\xi}\,)
 \in
 \IntervaL{-c,c}^{N-1}_{\updownarrow+}
  \times
  \IntervaL{-c,c}^{N-1}_{\updownarrow-}
$
we have
\begin{equation*}
\nm{\ptofd}+\nm{\ptofd^{-1}}
+
\abs[\big]{
  \tau_n^{-1}
  (
    \vec{\force}_{N,m,\vec{\zeta},\vec{\xi}},~
    \vec{\disloc}_{N,m,\vec{\zeta},\vec{\xi}}
  )
  -\ptofd(\vec{\zeta},\vec{\xi})
}
\leq
C,
\end{equation*}
where $\nm{\cdot}$
is the operator norm
with the Euclidean norm
on the corresponding domain and target.
\end{lemma}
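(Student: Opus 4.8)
The plan is to produce $\ptofd$ explicitly as a fixed ($m$-independent) linear map assembled from the vector $\vec x$ and the tridiagonal matrix $B$ of (the proof of) Lemma~\ref{lem:limiting_waist_ratios}, and then to read off the estimate from the expansion \eqref{eqn:Fdiff_est}, the definition \eqref{eqn:Ftilde_def}, and the exact formula \eqref{eqn:disloc_def}. First I would set up three $N$-dependent linear maps. Let $\delta\colon\R^N_{\updownarrow-}\to\R^{N-1}_{\updownarrow+}$ be the first-difference operator $(\delta\vec F)_i\vcentcolon=F_{i+1}-F_i$; one checks it does land in $\R^{N-1}_{\updownarrow+}$, is injective (its kernel is the constants, of which $\R^N_{\updownarrow-}$ contains only $\vec 0$), and hence, since $\dim\R^N_{\updownarrow-}=\floor{N/2}=\ceil{(N-1)/2}=\dim\R^{N-1}_{\updownarrow+}$, is an isomorphism. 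Let $\mathsf D_0\colon\R^{N-1}_{\updownarrow-}\to\R^{N,0,0}_{\updownarrow+}$ be $\vec\xi\mapsto\tau_n^{-1}\vec\disloc$, which by \eqref{eqn:disloc_def} is linear, independent of $m$ and $\vec\zeta$, takes values in $\R^{N,0,0}_{\updownarrow+}$, and is injective (if $\xi_i=\xi_{i-1}$ for all $1<i<N$ then $\vec\xi$ is constant, so $\vec 0$), hence again (dimensions agreeing) an isomorphism. Finally let $L_0\colon\R^{N-1}_{\updownarrow+}\to\R^{N-1}_{\updownarrow+}$ be the linear part of $\vec{\widetilde\force}$ on the right of \eqref{eqn:Fdiff_est}, namely $(L_0\vec\zeta)_i=-x_{i-1}(\zeta_i-\zeta_{i-1})+x_{i+1}(\zeta_{i+1}-\zeta_i)-4x_i\zeta_n$ (with $x_0=\zeta_0=x_N=\zeta_N=0$), and let $D\colon\R^{N-1}_{\updownarrow-}\to\R^{N-1}_{\updownarrow+}$ be $(D\vec\xi)_i\vcentcolon=2\tau_n^{-1}(\disloc_i+\disloc_{i+1})$ — the containments $L_0\vec\zeta,\,D\vec\xi\in\R^{N-1}_{\updownarrow+}$ coming from $\vec x\in\R^{N-1}_{\updownarrow+}$ (Lemma~\ref{lem:limiting_waist_ratios}) and $\mathsf D_0\vec\xi\in\R^N_{\updownarrow+}$ respectively. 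I would then \emph{define}
\[
\ptofd(\vec\zeta,\vec\xi)\vcentcolon=\bigl(\delta^{-1}(\pi L_0\vec\zeta+D\vec\xi),~\mathsf D_0\vec\xi\bigr)\in\R^N_{\updownarrow-}\oplus\R^{N,0,0}_{\updownarrow+},
\]
which is linear with $\nm{\ptofd}\leq C(N)$ because every ingredient is determined by $N$ alone.

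Next I would check that $\ptofd$ is invertible. Since its domain and target both have dimension $N-1$, it suffices to show injectivity, so suppose $\ptofd(\vec\zeta,\vec\xi)=\vec 0$. The second coordinate forces $\mathsf D_0\vec\xi=\vec 0$, hence $\vec\xi=\vec 0$, and then the first coordinate forces $L_0\vec\zeta=\vec 0$. Writing $\lambda\vcentcolon=2(x_{n-1}+N\bmod 2)/(1+N\bmod 2)$, the balancing relation \eqref{eqn:x_balancing} says exactly $B\vec x=\lambda\vec x$, and, as established in the proof of Lemma~\ref{lem:limiting_waist_ratios}, $\lambda$ is the simple positive Perron--Frobenius eigenvalue of $B$. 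The substitution $u_i\vcentcolon=x_i\zeta_i$ recasts $L_0\vec\zeta=\vec 0$ as $(B-\lambda I)\vec u=4\zeta_n\vec x$ (with $u_0=u_N=0$); taking the inner product with $\vec x$ and using $B=B^\intercal$ and $(B-\lambda I)\vec x=\vec 0$ forces $\zeta_n=0$, whence $\vec u\in\ker(B-\lambda I)=\R\vec x$ by simplicity, i.e. $\zeta_i$ is independent of $i$, and combined with $\zeta_n=0$ this gives $\vec\zeta=\vec 0$. So $\ptofd$ is invertible and $\nm{\ptofd^{-1}}\leq C(N)$.

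It then remains to estimate $\abs{\tau_n^{-1}(\vec\force,\vec\disloc)-\ptofd(\vec\zeta,\vec\xi)}$. The dislocation coordinate is matched \emph{exactly}, since $\tau_n^{-1}\vec\disloc=\mathsf D_0\vec\xi$. For the force coordinate, combining \eqref{eqn:Ftilde_def} with \eqref{eqn:disloc_def} yields the exact identity $\delta(\tau_n^{-1}\vec\force)=\pi\vec{\widetilde\force}+D\vec\xi$, while \eqref{eqn:Fdiff_est} gives $\vec{\widetilde\force}=L_0\vec\zeta+\vec e$ with $\abs{\vec e}\leq C(N)$ for all integers $m\geq m_0$; restricting to $\vec\zeta\in\IntervaL{-c,c}^{N-1}_{\updownarrow+}$ and $\vec\xi\in\IntervaL{-c,c}^{N-1}_{\updownarrow-}$ bounds $\abs{\vec\zeta}+\abs{\vec\xi}$ in terms of $N$ and $c$, so that $m_0$ may be taken of the form $m_0(N,c)$, and moreover $\vec e=\vec{\widetilde\force}-L_0\vec\zeta\in\R^{N-1}_{\updownarrow+}$. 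Hence $\delta\bigl(\tau_n^{-1}\vec\force-\delta^{-1}(\pi L_0\vec\zeta+D\vec\xi)\bigr)=\pi\vec e$, so applying $\delta^{-1}$ gives $\abs{\tau_n^{-1}\vec\force-\delta^{-1}(\pi L_0\vec\zeta+D\vec\xi)}\leq\pi\nm{\delta^{-1}}C(N)$, and taking $C=C(N)$ large enough to dominate $\nm{\ptofd}+\nm{\ptofd^{-1}}+\pi\nm{\delta^{-1}}C(N)$ completes the argument.

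The hard part is the injectivity of $L_0$ in the second step: unlike the rest, it is not formal, and hinges on the simplicity of the Perron eigenvalue of $B$ (equivalently, on the uniqueness clause of Lemma~\ref{lem:limiting_waist_ratios}), which I would exploit through the change of variables $u_i=x_i\zeta_i$ and the Fredholm alternative for $B-\lambda I$. Everything else is careful but routine bookkeeping of the $\updownarrow\pm$ symmetry subspaces and of the fact that the implied constants and thresholds on $m$ in \eqref{eqn:Fdiff_est} depend only on $N$ and, via the parameter range, on $c$.
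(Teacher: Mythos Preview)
Your proof is correct and follows essentially the same route as the paper's: the same definition of $\ptofd$ (your maps $\delta,\mathsf D_0,L_0,D$ are the paper's $T|_{\R^N_{\updownarrow-}},\tfrac12 T_{0,0},FT(\cdot)-4(\cdot)_n\vec x,ST_{0,0}$ in different notation), the same use of \eqref{eqn:Ftilde_def} and \eqref{eqn:Fdiff_est} for the estimate, and the same reduction of invertibility to injectivity of $L_0$. The only real difference is in that last step: you substitute $u_i=x_i\zeta_i$ to obtain $(B-\lambda I)\vec u=4\zeta_n\vec x$ and invoke simplicity of the Perron eigenvalue, whereas the paper argues directly from $\ker F^\intercal=\R\vec x$ (which needs only positivity of the $x_i$) together with bijectivity of $\vec v\mapsto(T\vec v,v_n)$ on $\R^{N-1}_{\updownarrow+}$; both are short and valid, and yours makes the role of Perron--Frobenius more transparent.
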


\begin{proof}
We begin by defining some linear maps:
\begin{align*}
\begin{aligned}
S
  \colon
  \R^{N,0,0}
  &\to
  \R^{N-1}
  &&\mbox{ by } &
  (S\vec{v}\,)_i
    &\vcentcolon=
    v_i + v_{i+1},
\\
T=T^{(d)}
  \colon
  \R^{d+1}
  &\to
  \R^d
  &&\mbox{ by } &
  (T\vec{v}\,)_i
    &\vcentcolon=
    v_{i+1}-v_i,
\\
T_{0,0}=T^{(d+2)}_{0,0}
  \colon
  \R^{d+1}
  &\to
  \R^{d+2,0,0}
  &&\mbox{ by } &
  T_{0,0}(v)_i
    &\vcentcolon=
    \begin{cases}
      0
        &\mbox{if } i \in \{1,d+2\},
      \\
      v_i-v_{i-1}
        &\mbox{if } 2 \leq i \leq d+1,
    \end{cases}
\\
F
  \colon
  \R^{N-2}
  &\to
  \R^{N-1}
  &&\mbox{ by } &
  (Fv)_i
   &\vcentcolon=
    -x_{i-1}v_{i-1} + x_{i+1}v_i,
\end{aligned}
\end{align*}
where $d$ is any nonnegative integer,
$x_1,\ldots,x_{N_1}$
are as in Lemma \ref{lem:limiting_waist_ratios},
and we understand $x_0=v_0=x_N=v_{N-1}=0$ in the definition of $F$.
Then, $T|_{\R^N_{\updownarrow-}}=T^{(N-1)}|_{\R^N_{\updownarrow-}}$
is a bijection onto $\R^{N-1}_{\updownarrow+}$ and 
\begin{align*} 
T_{0,0}(\R^{N-1}_{\updownarrow-})
&\subseteq\R^{N,0,0}_{\updownarrow+},
&
T(\R^{N-1}_{\updownarrow+})
&\subseteq\R^{N-2}_{\updownarrow-},
&
S(\R^{N,0,0}_{\updownarrow+})
&\subseteq\R^{N-1}_{\updownarrow+},
&
F(\R^{N-2}_{\updownarrow \pm})
&\subseteq\R^{N-1}_{\updownarrow \mp}.
\end{align*}
Thus we can define
$
\ptofd=\ptofd_N
\colon
\R^{N-1}_{\updownarrow+}
  \oplus
  \R^{N-1}_{\updownarrow-}
\to
\R^N_{\updownarrow-}
  \oplus
  \R^{N,0,0}_{\updownarrow+},
$
by
\begin{equation*}
\ptofd(\vec{\zeta},\vec{\xi}\,)
\vcentcolon=
\left(
  T|_{\R^N_{\updownarrow-}}^{-1}
\Bigl(
\pi FT\vec{\zeta}-4\pi\zeta_n\vec{x}+S T_{0,0}\vec{\xi}\,
\Bigr), ~
  \frac{1}{2}T_{0,0}\vec{\xi}
\right),
\end{equation*}
which is obviously
(in view of the bounds in Lemma \ref{lem:limiting_waist_ratios})
a bounded linear map.
The asserted bound on
$
\tau_n^{-1}(\vec{\force},\vec{\disloc})
-\ptofd(\vec{\zeta},\vec{\xi})
$
follows by the further observations
\begin{align*}
\vec{\disloc}
&=
  \frac{\tau_n}{2}  T_{0,0}\vec{\xi},
&
\pi \tau_n \vec{\widetilde{\force}}
&=
  T\vec{\force}
  -2S\vec{\disloc},
&
\vec{\widetilde{\force}}
&=
  FT\vec{\zeta}
  -4\zeta_n\vec{x}
  +O(1),
\end{align*}
referring to the definition \eqref{eqn:disloc_def}
of $\vec{\disloc}$ for the first equality,
to the definition $\eqref{eqn:Ftilde_def}$
of $\vec{\widetilde{\force}}$
for the second equality,
and to estimate \eqref{eqn:Fdiff_est} for the final estimate.

Finally, to conclude the invertibility of $\ptofd$
we use the easily verified facts that 
$T_{0,0}|_{\R^{N-1}_{\updownarrow-}}$ is a bijection onto $\R^{N,0,0}_{\updownarrow+}$, 
that $\ker F^\intercal = \R\vec{x}$, 
where $F^\intercal$ is the transpose of $F$, 
and that the map 
\begin{equation*}
\R^{N-1}_{\updownarrow+}
\ni \vec{v}
\mapsto
(T\vec{v},v_n)
\in
\R^{N-2}_{\updownarrow-} \oplus \R
\end{equation*}
is bijective. 
For the bound on $\ptofd^{-1}$ we appeal again to the bounds in Lemma \ref{lem:limiting_waist_ratios}.
\end{proof}

\section{Quantitative geometry of the initial surfaces}
\label{app:geo_ests}
Here we collect some calculations which can be used to verify fundamental estimates used for both the construction and the index estimate.

\paragraph{Method of computation.}
Let $g$ be a Riemannian metric on $\R^3$,
$\varphi \colon S \to \R^3$ a two-sided embedding
of a compact smooth surface,
and $\nu \colon S \to \R^3$
a nowhere vanishing normal field,
not necessarily unit.
We consider the corresponding
scalar-valued second fundamental form
\begin{equation*}
A=
-\frac{1}{2}
  \varphi^*
  \mathscr{L}_{\widetilde{\nu}/\abs{\widetilde{\nu}}}g,
\end{equation*}
where $\mathscr{L}$ denotes Lie differentiation,
$\widetilde{\nu} \colon \R^3 \to \R^3$
is a smooth vector field such that
$\nu=\widetilde{\nu} \circ \varphi$
but is otherwise arbitrary,
and
$
 \abs{\widetilde{\nu}}
 \vcentcolon=
 \abs{\widetilde{\nu}}_g
$.
In this context
the above equality
can be written somewhat more concretely:
for any vector fields $V,W$ on $\Sigma$
\begin{equation*}
-2\abs{\nu}A(V,W)
=
(\nu g)(V\varphi,W\varphi)
  +g(V\varphi,W\nu)
  +g(W\varphi,V\nu),
\end{equation*}
where each of the vector fields
$\nu,V,W$
acts on the indicated argument
($g$, $\varphi$, or $\nu$)
by componentwise differentiation,
relative to the standard coordinates on $\R^3$,
and
of course we have also
\begin{equation*}
(\varphi^*g)(V,W)
=
(g \circ \varphi)(V\varphi,W\varphi).
\end{equation*}

In the following applications
we will have a global coordinate system
$(s^1,s^2)$
on $S$,
and we will construct $\nu$ as
$g$ cross product of the coordinate vector fields,
rescaled as convenient.
Moreover, we will have
$g=\Phi^*(dx^2+dy^2+dz^2)$,
with $\Phi$ as in \eqref{def:Phi},
so that
\begin{equation}
\label{eqn:Phi_pullback_metric}
g
=
g^\Phi
\vcentcolon=
\Phi^*(dx^2+dy^2+dz^2)
=
d\sigma^2
  + (1-\sigma)^2 \cos^2 \omega \, d\theta^2
  + (1-\sigma)^2 \, d\omega^2.
\end{equation}
In particular we will take
\begin{equation}
\label{eqn:cross_prod_normal}
\nu=
\bigl(
  \varphi^\theta_{,1}\varphi^\omega_{,2}
  -\varphi^\theta_{,2}\varphi^\omega_{,1}
\bigr) \, \partial_\sigma
+
\frac{
  \varphi^\omega_{,1}\varphi^\sigma_{,2}
  -\varphi^\sigma_{,1}\varphi^\omega_{,2}
}
  {(1-(\sigma \circ \varphi))^2 \cos^2 (\omega \circ \varphi)}
  \, \partial_\theta
+
\frac{
  \varphi^\sigma_{,1}\varphi^\theta_{,2}
  -\varphi^\theta_{,1}\varphi^\sigma_{,2}
}
  {(1-(\sigma \circ \varphi))^2}
  \, \partial_\omega
\end{equation}
up to arbitrary rescalings, which we will declare at due course.

\paragraph{$K_i$ regions in $(t,\vartheta)$ coordinates.}
In correspondence with each $\kappa_i$
defined in \eqref{def:kappa}
we define
$\widehat{\kappa}_i$ -- having
the same domain as $\kappa_i$
but target instead the domain of $\Phi$ -- by
the same rule as for $\kappa_i$
but without applying $\Phi$,
so that
$\kappa_i=\Phi \circ \widehat{\kappa}_i$.
We compute
\begin{align*}
\partial_t\widehat{\kappa}_i
&=
  (\tau_i \sinh t)
  (
    \cos \vartheta \, \partial_\sigma
    +\sin \vartheta \, \partial_\theta
    +\csch t \, \partial_\omega
  ),
\\
\partial_\vartheta\widehat{\kappa}_i
&=
  (\tau_i \cosh t)
  (
    -\sin \vartheta \, \partial_\sigma
    +\cos \vartheta \, \partial_\theta
  ).
\end{align*}
Abbreviating
\begin{align*}
r_i
&\vcentcolon=
  \tau_i \cosh t,
&
\sigma_i
&\vcentcolon=
  \widehat{\kappa}_i^*\sigma
  =
  \tau_i \cosh t \cos \vartheta,
&
\omega_i
&\vcentcolon=
  \widehat{\kappa}_i^*\omega
  =
  h^K_i + \tau_i t,
\end{align*}
we find
(using
$
 \kappa_i^*\met{\Sigma}
 =
 \widehat{\kappa}_i^*g^\Phi
$)
\begin{align}
\label{eqn:cat_met_calc}
r_i^{-2}\kappa_i^*\met{\Sigma}
&=
 dt^2+d\vartheta^2
 -\sigma_i(2-\sigma_i) \sech^2 t \, dt^2 
 \\\notag
  &\hphantom{{}={}}
 -\bigl(\sigma_i(2-\sigma_i) \cos^2 \omega_i +\sin^2 \omega_i\bigr)
 \cdot
  \bigl(
    \tanh^2 t \sin^2 \vartheta \, dt^2
    +\tanh t \sin 2\vartheta \, dt \, d\vartheta
    +\cos^2 \vartheta \, d\vartheta^2
  \bigr),
\end{align}
and we take for \eqref{eqn:cross_prod_normal}
(after a rescaling)
\begin{align*}
\nu_{\widehat{\kappa_i}}
&=
(1-\sigma_i) \sech t \cos \vartheta
    \, \partial_\sigma
  +(1-\sigma_i)^{-1} \sech t \sec^2 \omega_i 
      \sin \vartheta \, \partial_\theta
  -(1-\sigma_i)^{-1} \tanh t \, \partial_\omega.
\end{align*}
Then
(recalling in particular that $\nu_{\widehat{\kappa}_i}g^\Phi$
denotes the $(\sigma, \theta, \omega)$-componentwise normal derivative
of the ambient metric)
\begin{equation*}
\begin{aligned}
\abs{\nu_{\widehat{\kappa}_i}}
  &=
  \sqrt{
    1+\sech^2 t \tan^2 \omega_i \sin^2 \vartheta
    -\sigma_i(2-\sigma_i) \sech^2 t \cos^2 \vartheta
  },
\\[1ex]
\nu_{\widehat{\kappa}_i}g^\Phi
  &=
  2(1-\sigma_i) \cos \omega_i
  [
    \tanh t \sin \omega_i
    -(1-\sigma_i) \sech t
        \cos \omega_i
        \cos \vartheta
  ] \, d\theta^2
  \\
  &\hphantom{{}={}}
  -2(1-\sigma_i)^2 \sech t \cos \vartheta
    \, d\omega^2,
\\[1ex]
\partial_t \nu_{\widehat{\kappa}_i}
  &=
  -\tanh t \cos \vartheta
  [
    \tau_i \cos \vartheta
    +(1-\sigma_i) \sech t
  ] \, \partial_\sigma
  \\
  &\hphantom{{}={}}
  +(1-\sigma_i)^{-1}
    \sech t
    \sec^2 \omega_i
    \sin \vartheta
    [
      \tau_i(1-\sigma_i)^{-1} \sinh t \cos \vartheta
      -\tanh t
      +2 \tau_i \tan \omega_i
    ] \, \partial_\theta
  \\
  &\hphantom{{}={}}
    -(1-\sigma_i)^{-2} \sech t
    [
      \tau_i \sinh^2 t \cos \vartheta
      -(1-\sigma_i) \sech t
    ] \, \partial_\omega,
\\[1ex]
\partial_\vartheta \nu_{\widehat{\kappa}_i}
  &=
  \sin \vartheta
  [
    \tau_i \cos \vartheta
    -(1-\sigma_i) \sech t
  ] \, \partial_\sigma
  +
  \tau_i(1-\sigma_i)^{-2}
    \sinh t \sin \vartheta
    \, \partial_\omega
  \\
  &\hphantom{{}={}}
  +(1-\sigma_i)^{-1} \sec^2 \omega_i
  [
    \sech t \cos \vartheta
    -\tau_i(1-\sigma_i)^{-1} \sin^2 \vartheta
  ] \, \partial_\theta.
\end{aligned}
\end{equation*}
We then obtain
(recalling the choice of direction
for $\twoff{\Sigma}$ declared
below \eqref{eqn:init_surf_def})
\begin{equation}
\label{eqn:cat_twoff_calc}
\begin{aligned}
\frac{
  \abs{
    \nu_{\widehat{\kappa}_i}
    }
  }
  {\tau_i}
    \kappa_i^*
    \twoff{\Sigma}
  &=
  (-1)^{i-1}
  \Bigl[
    (1-\sigma_i)
    +2\tau_i
        \sinh t
        \tanh t
        \cos^3 \vartheta
    -2\tau_i(1-\sigma_i)
        \tan \omega_i
        \tanh t
        \sin^2 \vartheta
  \\
  &\hphantom{{}=+\Bigl[}
    +\tau_i (1-\sigma_i)^2
        \cos^2 \omega_i
        \sinh t \tanh t
        \sin^2 \vartheta
        \cos \vartheta
    +\tau_i (1-\sigma_i)^2
        \sech t
        \cos \vartheta
  \\
  &\hphantom{{}=+\Bigl[}
    -\tau_i (1-\sigma_i)
        \sin \omega_i
        \cos \omega_i
        \sinh^2 t \tanh t
        \sin^2 \vartheta
  \Bigr] \, dt^2
  \\
  &\hphantom{{}={}}
  +\Bigl[
    \tau_i(1-\sigma_i)^2
        \cos^2 \omega_i
        \sinh t
        \sin 2\vartheta 
        \cos \vartheta
      -2\tau_i
      \sinh t
      \sin 2\vartheta
      \cos \vartheta
    \\
  &\hphantom{{}=+\Bigl[}
      -\tau_i (1-\sigma_i)
          \tan \omega_i
          (\cos^2 \omega_i \sinh^2 t + 1)
          \sin 2\vartheta
  \Bigr] \, dt \, d\vartheta
  \\
  &\hphantom{{}={}}
  +\Bigl[ 
   (\sigma_i-1)
   +2\tau_i
       \cosh t
       \sin^2 \vartheta 
       \cos \vartheta
    +\tau_i(1-\sigma_i)^2
        \cos^2 \omega_i
        \cosh t
        \cos^3 \vartheta
  \\ 
  &\hphantom{{}=+\Bigl[}
    -\tau_i(1-\sigma_i)
        \sin \omega_i
        \cos \omega_i
        \sinh t \cosh t
        \cos^2 \vartheta
  \Bigr] \, d\vartheta^2.
\end{aligned}
\end{equation}
As a result, by taking $m$ sufficiently large in terms of
$N$, $\abs{\vec{\zeta}}$,
and $\abs{\vec{\xi}}$
we can ensure
the existence of some $C(N)>0$,
independent of $m$, $\vec{\zeta}$, and $\vec{\xi}$,
so that for any $R \in \intervaL{0,(2m)^{-1}}$
\begin{equation}
\label{eqn:cat_aux_fun_ests}
\begin{aligned}
\nm{r_i}_{
  C^2(\kappa_i^{-1}(K_i(R)), ~dt^2+d\vartheta^2)
  }
  +\nm{\sigma_i}_{
    C^2(\kappa_i^{-1}(K_i(R)), ~dt^2+d\vartheta^2)
    }
&\leq
C(N)R,
\\
\nm{\sin \omega_i}_{
    C^2(\kappa_i^{-1}(K_i(R)), ~dt^2+d\vartheta^2)
    }
&\leq
C(N)m\tau_1,
\\
\nm{
 \abs{\nu_{\widehat{\kappa}_i}} 
 -1
}_{C^1(\kappa_i^{-1}(K_i(R)), ~ dt^2+d\vartheta^2)}
&\leq
C(N)\tau_1,
\end{aligned}
\end{equation}
where
for the middle and final inequalities
we have used
\eqref{eqn:tau_size}
\eqref{eqn:height_bound},
and
\eqref{eqn:catr_init_def}.

\paragraph{$D_i$ regions in $(\sunder,\thunder)$ coordinates.}
Each $D_i$,
as defined in
\eqref{layers_with_half_cats},
is constructed in such a way
that $\Phi^{-1}(D_i)$ is a graph
over a subset of the $\{\omega=0\}$ plane.
We set
$h_i \vcentcolon= \omega|_{\Phi^{-1}(D_i)}$,
an appropriately symmetric extension of
a function from
\eqref{angular_height_function_one_bridge}
and \eqref{angular_height_function_two_bridges}
with the corresponding parameter values
as set from the data for the construction.
We therewith parametrize $\Phi^{-1}(D_i)$ by
\begin{equation*}
\varphi_i
\colon
(\sunder,\thunder)
\mapsto
\bigl(\sunder,\thunder,h_i(\sunder,\thunder)\bigr).
\end{equation*}
Then $\partial_\sunder \varphi_i=\partial_\sigma + h_{i,\sunder} \, \partial_\omega$ 
and $\partial_\thunder \varphi_i=\partial_\theta + h_{i,\thunder} \, \partial_\omega$,  
we have the metric
\begin{equation}
\label{eqn:dsc_met_calc}
\begin{aligned}
(\Phi \circ \varphi_i)^*(\met{\Sigma}|_{D_i})
  &=
  d\sunder^2
    + (1-\sunder)^2 \cos^2 h_i \, d\thunder^2
  \\
  &\hphantom{{}={}}
  +(1-\sunder)^2
\Bigl(
    h_{i,\sunder}^2
      \, d\sunder^2
    + 2h_{i,\sunder}h_{i,\thunder}
      \, d\sunder \, d\thunder
    +h_{i,\thunder}^2
      \, d\thunder^2
\Bigr),
\end{aligned}
\end{equation}
and we take  
\begin{align}\notag
\nu_{\varphi_i}
&=(1-\sunder)h_{i,\sunder} \, \partial_\sigma
  +\frac{h_{i,\thunder}}{(1-\sunder) \cos^2 h_i}
    \, \partial_\theta
  -(1-\sunder)^{-1} \, \partial_\omega.
\intertext{Thus
(recalling the ambient metric $g^\Phi$ from \eqref{eqn:Phi_pullback_metric}
and the fact that $\nu_{\varphi_i}g^\Phi$
denotes its $(\sigma,\theta,\omega)$-componentwise normal derivative)} 
\label{dsc_nu_norm}
\abs{\nu_{\varphi_i}}
&=\sqrt{
    1+\abs{dh_i}_{d\sunder^2+d\thunder^2}^2
    -\sunder(2-\sunder)h_{i,\sunder}^2
    +h_{i,\thunder}^2 \tan^2 h_i
  },
\\[1ex]\notag
\nu_{\varphi_i}g^\Phi
  &=
  (1-\sunder)
  \bigl[
    \sin 2h_i
    -2(1-\sunder)h_{i,\sunder} \cos^2 h_i
  \bigr] \, d\theta^2
  -2(1-\sunder)^2h_{i,\sunder} \, d\omega^2,
\\[1ex]\notag
\partial_\sunder \nu_{\varphi_i}
  &=
  [
    (1-\sunder)h_{i,\sunder\sunder}
    -h_{i,\sunder}
  ] \, \partial_\sigma
  -(1-\sunder)^{-2} \, \partial_\omega
  \\\notag
  &\hphantom{{}={}}
  +\left[
    \frac{h_{i,\sunder\thunder}}
      {(1-\sunder)\cos^2 h_i}
    +\frac{h_{i,\thunder}}
      {(1-\sunder)^2 \cos^2 h_i}
    +\frac{2h_{i,\sunder}h_{i,\thunder} \tan h_i}
      {(1-\sunder)\cos^2 h_i}
  \right] \, \partial_\theta,
\\[1ex]\notag
\partial_\thunder \nu_{\varphi_i}
  &=
  (1-\sunder)h_{i,\sunder\thunder} \, \partial_\sigma
  +
  \left[
    \frac{h_{i,\thunder\thunder}}
      {(1-\sunder) \cos^2 h_i}
    +\frac{2h_{i,\thunder}^2 \tan h_i}
      {(1-\sunder) \cos^2 h_i}
  \right] \, \partial_\theta,
\end{align}
and, recalling the direction chosen in defining
$\twoff{\Sigma}$
below \eqref{eqn:init_surf_def},
\begin{equation}
\label{eqn:dsc_twoff_calc}
\begin{aligned}
\abs{\nu_{\varphi_i}}
    (\Phi \circ \varphi_i)^*
    \twoff{\Sigma}|_{D_i}
  &=
  (-1)^i
  \Bigl[
    (\sunder-1)h_{i,\sunder\sunder}
    +2h_{i,\sunder}
    +(1-\sunder)^2 h_{i,\sunder}^3
  \Bigr] \, d\sunder^2
  \\
  &\hphantom{{}={}}
  +2\Bigl[ 
    (\sunder-1) h_{i,\sunder\thunder}
    +(1-\sunder)^2 h_{i,\sunder}^2 h_{i,\thunder}
    -(1-\sunder) h_{i,\sunder} h_{i,\thunder} \tan h_i
  \Bigr] \, d\sunder \, d\thunder
  \\
  &\hphantom{{}={}}
  +\Bigl[
    (\sunder-1)h_{i,\thunder\thunder}
    +(1-\sunder)^2 h_{i,\sunder} \cos^2 h_i
    -\frac{1}{2}(1-\sunder) \sin 2h_i
    \\
    &\hphantom{{}=+\Bigr[}
    -2(1-\sunder)h_{i,\thunder}^2 \tan h_i
    +(1-\sunder)^2 h_{i,\sunder}h_{i,\thunder}^2
  \Bigr] \, d\thunder^2.
\end{aligned}
\end{equation}
Referring to
\eqref{eqn:disc_def_fun_Phi}
and \eqref{eqn:height_bound},
we can take
$m$ sufficiently large in terms of
$\abs{\vec{\zeta}},\abs{\vec{\xi}}$
to ensure
\begin{equation*}
\nm[\Big]{\omega^B_{m,h^B_i}}_{
  C^3(
    \Lambda^\Phi_m, ~
    d\sunder^2+d\thunder^2
  )
  }
\leq
C(N)m\tau_1
\end{equation*}
for some $C(N)>0$,
independent of $m,\vec{\zeta},\vec{\xi},i$.
Similarly,
now recalling
\eqref{eqn:tau_size}
and
\eqref{eqn:cat_def_fun_Phi},
for each $q>1$
and each integer $k \geq 0$
we can take $m$ large enough in terms
of $q,\vec{\zeta},\vec{\xi}$
to ensure
\begin{equation*}
\nm[\Big]{\omega^{K,\pm}_{m,\tau_j,h^K_j,h^B_i}}_{
  C^k(
    \Lambda^\Phi_m
      \cap
      (\Phi \circ \varphi_i)^{-1}(D_i(m^{-q}))
      , ~
    d\sunder^2+d\thunder^2
  )
}
\leq
C(k,N)m^{\max \{kq, 1\}}\tau_1
\end{equation*}
for each $j$
such that $K_j \cap D_i \neq \emptyset$
and for some $C(k,N)>0$,
independent of $m,\vec{\zeta},\vec{\xi},i$.
In turn,
again
recalling \eqref{angular_height_function_one_bridge}
and \eqref{angular_height_function_two_bridges}
and taking $m$ sufficiently large
in terms of $q,\vec{\zeta},\vec{\xi}$,
we ensure
\begin{equation}
\label{eqn:dsc_def_fun_est}
\nm{h_i}_{
  C^k(
    (\Phi \circ \varphi_i)^{-1}(D_i(m^{-q})), ~
    d\sunder^2+d\thunder^2
  )
}
\leq
C(k,N)m^{k+\max \{kq,1\}}\tau_1.
\end{equation}
Last we focus on the region
\begin{equation*}
\Omega_i
\vcentcolon=
(\Phi \circ \varphi_i)^{-1}
\Bigl(
    D_i(\tfrac{1}{4m})
    \cap\bigl\{\textstyle\sqrt{x^2+y^2}\geq 1-\tfrac{5}{m}\bigr\}\Bigr).
\end{equation*}
From
\eqref{eqn:dsc_met_calc},
\eqref{dsc_nu_norm},
\eqref{eqn:dsc_twoff_calc},
and
\eqref{eqn:dsc_def_fun_est}
we find
\begin{equation}
\label{eqn:dsc_mc_est}
\nm[\big]{
  (\Phi \circ \varphi_i)^*H^\Sigma
  -
  (-1)^{i-1}(\partial_\sunder^2 + \partial_\thunder^2)h_i
}_{
  C^1(
    \Omega_i, ~
    m^2(d\sunder^2+d\thunder^2)
  )
}
\leq
Cm^2\tau_1,
\end{equation}
where we have also used \eqref{eqn:tau_size}
and we again assume
$m$ sufficiently large
in terms of
$N,m,\abs{\vec{\zeta}},\abs{\vec{\xi}}$.
Recalling 
\eqref{eqn:dislocated_matching}
and referring again to
\eqref{angular_height_function_one_bridge},
\eqref{angular_height_function_two_bridges},
\eqref{eqn:dsc_def_fun_est},
and \eqref{layers_with_half_cats}
we conclude with the estimate
\begin{equation}
\label{eqn:disloc_mc_est}
\nm[\big]{
  (\partial_\sunder^2 + \partial_\thunder^2)h_i
  -(-1)^{i-1}\disloc_i(\partial_\sunder^2+\partial_\thunder^2)
  [
    \psi_m^\sigma(\psi_m^+ - \psi_m^-)
  ]
}_{C^1(\Omega_i \cap \Lambda^\Phi_m, m^2(d\sunder^2+d\thunder^2))}
\leq
C(N)m^2\tau_1.
\end{equation}

\section{Analysis of the Jacobi operator on the models}

\paragraph{Catenoid.}
Given $T \in \R \cup \{\infty\}$,
we identify in the $(t,\vartheta)$-plane
the rectangle
\begin{align*}
\catdom_T&\vcentcolon=\{\abs{t} < T\} \cap \{\abs{\vartheta} < \tfrac{\pi}{2}\},
\intertext{and we define thereon the differential operator}
L_K&\vcentcolon=\partial_t^2 + \partial_\vartheta^2 + 2 \sech^2 t, 
\end{align*}
(where $\sech t=1/\cosh t$). 
If we parametrize half of
the standard (unit waist) catenoid
by $\catdom_\infty$ via
\[
 (t,\vartheta)
 \mapsto
 (
   \cosh t \, \cos \vartheta,
   \cosh t \, \sin \vartheta,
   t
 ),
 \]
then it has induced metric
$(\cosh^2 t)(dt^2+d\vartheta^2)$
and Jacobi operator equal to
$(\sech^2 t)L_K$. 
We also define the associated bilinear form 
\begin{align*}
Q_{\catdom_T}^\neum(u,v)
&\vcentcolon=
\int_{\catdom_T}
  \Bigl((\partial_t u)(\partial_tv)
    +(\partial_\vartheta u)
     (\partial_\vartheta v)
   -2uv \sech^2 t\Bigr) \, dt \, d\vartheta
\end{align*}
for $u,v \in \sob(\catdom_T, dt^2+d\vartheta^2)$ 
and, assuming $T$ finite
and recalling \eqref{eqn:H10} concerning how we encode Dirichlet boundary conditions,
its restriction
\begin{equation*}
Q_{\catdom_T}^\dir
\vcentcolon=
\left.Q_{\catdom_T}^\neum\right|_{
  \sobd{\{\abs{t}=T\}}(\catdom_T, dt^2+d\vartheta^2)
  \times
  \sobd{\{\abs{t}=T\}}(\catdom_T, dt^2+d\vartheta^2)
}.
\end{equation*}
Focusing, for instance, on the first quadratic form we say (consistently with the conventions stipulated in \cite[Section2]{CSWSpectral}) that $\lambda\in\R$ is an eigenvalue of $Q_{\catdom_T}^\neum$ with eigenfunction $u\in \sob(\catdom_T, dt^2+d\vartheta^2)$ if  
\[
Q_{\catdom_T}^\neum(u,v)=\lambda \langle u, v\rangle_{L^2} \quad \forall v\in \sob(\catdom_T, dt^2+d\vartheta^2),
\]
which happens if and only if
\[\left\{
\begin{aligned}
L_K u &= -\lambda u
  &&\text{ in } \catdom_T,
\\
\partial_t u &= 0
  &&\text{ on } \{\abs{t}=T\},
\\
\partial_\vartheta u &= 0
  &&\text{ on } \{\abs{\vartheta}=\tfrac{\pi}{2}\}.
\end{aligned}\right.
\]
We write 
\begin{align*}
\cycgrp{\{t=0\}}
&=
\sk[\big]{
  (t,\vartheta) \mapsto (-t,\vartheta)
},
\\
\cycgrp{\{\vartheta=0\}}
&=
\sk[\big]{
  (t,\vartheta) \mapsto (t,-\vartheta)
},
\\
\cycgrp{\{t=\vartheta=0\}}
&=
\sk[\big]{
  (t,\vartheta) \mapsto -(t,\vartheta)
}
\end{align*}
for the order-$2$ groups of isometries
of $(\catdom_T,dt^2+d\vartheta^2)$
generated by reflection through,
respectively,
$\{t=0\}$,
$\{\vartheta=0\}$
and the origin.

\begin{lemma}[Low spectrum of $L_K$]
\label{lem:low_spec_cat}
There exist $c,T_0>0$ such that for each finite $T>T_0$
\begin{leqnoalign} 
\tag*{(i)}\label{itm:low_spec_cat:dir}
\lambda_1(Q_{\catdom_T}^\dir)
  =
  \eigenvalsym{Q_{\catdom_T}^\dir}{1}
      {\cycgrp{\{t=0\}}}{+} 
  &=\mathrlap{
  \eigenvalsym{Q_{\catdom_T}^\dir}{1}
      {\cycgrp{\{\vartheta=0\}}}{+} 
  =
  \eigenvalsym{Q_{\catdom_T}^\dir}{1}
      {\cycgrp{\{t=\vartheta=0\}}}{+}
  \xrightarrow[T \to \infty]{}
  -1}
\\[1ex]
\tag*{(ii)}\label{itm:low_spec_cat:neum_t_even}
\eigenvalsym{Q_{\catdom_T}^\neum}{2}{\cycgrp{\{t=0\}}}{+}
<0&<\eigenvalsym{Q_{\catdom_T}^\neum}{3}{\cycgrp{\{t=0\}}}{+},
& 
cT^{-2}
&<\eigenvalsym{Q_{\catdom_T}^\neum}{3}{\cycgrp{\{t=0\}}}{+},
\\
\tag*{(iii)}\label{itm:low_spec_cat:neum_t_odd}
0&<\eigenvalsym{Q_{\catdom_T}^\neum}{1}{\cycgrp{\{t=0\}}}{-},
&
cT^{-2}
&<\eigenvalsym{Q_{\catdom_T}^\neum}{2}{\cycgrp{\{t=0\}}}{-},
\\
\tag*{(iv)}\label{itm:low_spec_cat:neum_theta_even}
\eigenvalsym{Q_{\catdom_T}^\neum}{1}{\cycgrp{\{\vartheta=0\}}}{+}
<0&<\eigenvalsym{Q_{\catdom_T}^\neum}{2}{\cycgrp{\{\vartheta=0\}}}{+},
&
cT^{-2}
&<\eigenvalsym{Q_{\catdom_T}^\neum}{3}{\cycgrp{\{\vartheta=0\}}}{+},
\\
\tag*{(v)}\label{itm:low_spec_cat:neum_origin_even}
\eigenvalsym{Q_{\catdom_T}^\neum}{1}{\cycgrp{\{t=\vartheta=0\}}}{+}
<0&<\eigenvalsym{Q_{\catdom_T}^\neum}{2}{\cycgrp{\{t=\vartheta=0\}}}{+},
&
cT^{-2}
&<\eigenvalsym{Q_{\catdom_T}^\neum}{2}{\cycgrp{\{t=\vartheta=0\}}}{+}.
\end{leqnoalign}
\end{lemma}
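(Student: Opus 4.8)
The plan is to exploit separation of variables together with the explicit spectral theory of the one-dimensional model operator $-\partial_t^2 - 2\sech^2 t$ on $(-T,T)$, which is the $\vartheta$-independent part of $-L_K$. Writing $\mathcal{L}_T^\dir$ (resp. $\mathcal{L}_T^\neum$) for this operator with Dirichlet (resp. Neumann) conditions at $t=\pm T$, one knows that on all of $\R$ the operator $-\partial_t^2 - 2\sech^2 t$ is a reflectionless P\"oschl--Teller operator whose only $L^2$ eigenvalue is $-1$, with eigenfunction $\sech t$, and whose continuous spectrum is $[0,\infty)$. Consequently, as $T\to\infty$, $\mathcal{L}_T^\dir$ and $\mathcal{L}_T^\neum$ each have exactly one negative eigenvalue, converging to $-1$, with the rest of the low spectrum converging to $0$ from above at rate comparable to $T^{-2}$ (the standard quantization of the free Laplacian on an interval of length $2T$). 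I would first record these one-dimensional facts carefully, including the quantitative lower bound $\lambda_2(\mathcal{L}_T^\dir), \lambda_2(\mathcal{L}_T^\neum) \geq cT^{-2}$ for some $c>0$ and all $T$ large, and the parity of the relevant eigenfunctions: the ground state $\approx \sech t$ is even in $t$, and the first non-ground state is odd in $t$.

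Next I would use the orthogonal decomposition of $\sob(\catdom_T)$ (and of $\sobd{\{|t|=T\}}(\catdom_T)$) into Fourier modes in $\vartheta$. Because the $\vartheta$-boundary condition is Neumann at $\vartheta = \pm\pi/2$, the admissible modes are $\cos(k(\vartheta + \pi/2))$ for $k \in \Z_{\geq 0}$ (equivalently $\cos(k\vartheta)$ and $\sin(k\vartheta)$ with the appropriate phase), contributing $+k^2$ to the eigenvalue. Thus every eigenvalue of $Q_{\catdom_T}^{\dir/\neum}$ is of the form $\mu + k^2$ with $\mu$ an eigenvalue of $\mathcal{L}_T^{\dir/\neum}$ and $k \in \Z_{\geq 0}$; moreover the three symmetry subspaces in the statement ($\cycgrp{\{t=0\}}$-even/odd, $\cycgrp{\{\vartheta=0\}}$-even, $\cycgrp{\{t=\vartheta=0\}}$-even) are respected by this decomposition and simply restrict which $(\mu,k)$ pairs and which $t$-parities occur. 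The only candidate eigenvalues below $cT^{-2}$ are then those with $k=0$ and $\mu$ the single negative eigenvalue of $\mathcal{L}_T$, so at most one negative eigenvalue arises in each of the listed symmetry classes, and the next eigenvalue in each class is at least $\min\{cT^{-2}, 1 - o(1)\} \geq c'T^{-2}$; the $t$-parity of the ground state ($\sech t$-like, even in $t$) determines that this negative mode lands in the $\cycgrp{\{t=0\}}$-even, $\cycgrp{\{\vartheta=0\}}$-even, $\cycgrp{\{t=\vartheta=0\}}$-even sectors, and is \emph{absent} from the $\cycgrp{\{t=0\}}$-odd sector, which yields \ref{itm:low_spec_cat:neum_t_odd} with its first eigenvalue already positive. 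For \ref{itm:low_spec_cat:dir}, the eigenfunction realizing $\lambda_1(Q_{\catdom_T}^\dir)$ is the $k=0$ ground-state mode, which is automatically invariant under all three reflections, giving the string of equalities, and the limit $-1$ follows from the one-dimensional convergence.

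The main obstacle I anticipate is proving the quantitative gap $\lambda_2 \geq cT^{-2}$ uniformly in $T$ for the one-dimensional operators, rather than merely $\lambda_2 \to 0^+$. For the Dirichlet case this is straightforward: on the interval $(-T,T)$ with the potential $-2\sech^2 t \geq -2\sech^2 t$ absent only near $t=0$, one can compare with the free Dirichlet Laplacian minus a compactly supported bounded potential and use that the second Dirichlet eigenvalue of the interval is $\pi^2/T^2$ while the perturbation shifts eigenvalues by $O(1)$ only in a one-dimensional subspace — more precisely, one uses the min-max characterization on the codimension-one subspace orthogonal to the ground state together with the fact that $-2\sech^2 t$ is integrable, so its contribution to the Rayleigh quotient of any function orthogonal to the (exponentially localized) ground state is $o(T^{-1})$ by a weighted Poincaré/Hardy estimate. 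The Neumann case is slightly more delicate because the constant function has Rayleigh quotient $\int_{-T}^{T}(-2\sech^2 t)\,dt / (2T) \sim -2\pi/(2T) \to 0^-$, so one must check this is not the ground state but rather mixes with $\sech t$; a direct two-dimensional test-function computation (or a Temple/Kato-type bound) shows the true ground state is close to $\sech t$ with eigenvalue close to $-1$, well-separated from the rest, and then the same orthogonality argument as in the Dirichlet case gives $\lambda_2 \geq cT^{-2}$. Once these one-dimensional statements are in hand, the separation of variables makes the rest of the proof essentially bookkeeping over Fourier modes and parities.
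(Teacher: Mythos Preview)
Your separation-of-variables strategy is the right one and matches the paper's approach, but your bookkeeping of the low spectrum contains a genuine error that breaks item~(ii). You assert that ``at most one negative eigenvalue arises in each of the listed symmetry classes,'' yet item~(ii) says $\eigenvalsym{Q_{\catdom_T}^\neum}{2}{\cycgrp{\{t=0\}}}{+}<0$, i.e.\ there are \emph{two} negative eigenvalues in the $t$-even sector. The one you missed is the $k=1$ mode: $u(t)\sin\vartheta$ with $u$ the one-dimensional $t$-even Neumann ground state. Its eigenvalue is $\mu_1^{\neum,+}+1$, and the sign of this quantity is the whole point. Your soft argument (``the ground state eigenvalue converges to $-1$'') does not decide whether $\mu_1^{\neum,+}$ lies above or below $-1$; in fact it lies strictly \emph{below}, so $\mu_1^{\neum,+}+1<0$ and this mode contributes a second negative direction in the $t$-even class (while, being $\vartheta$-odd and origin-odd, it does not contribute to items (iv)--(v), which is exactly why those items have only one negative eigenvalue).

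The paper settles this by writing down the explicit P\"oschl--Teller solutions $u_\gamma^\pm$ of $L_K u=\gamma^2 u$ and imposing the boundary conditions algebraically: for the $t$-even Neumann problem one finds the unique positive root satisfies $\gamma>1$, hence $\mu_1^{\neum,+}=-\gamma^2<-1$. (If you prefer a soft fix, plugging $\sech t$ as a test function into the Neumann Rayleigh quotient and integrating by parts gives $-1-2\sech^2 T\tanh T/\int_{-T}^T\sech^2<-1$, which already yields the needed strict inequality.) A secondary issue: your claim $\lambda_2(\mathcal{L}_T^\neum)\geq cT^{-2}$ for the \emph{full} one-dimensional Neumann operator is false, because the first $t$-odd Neumann eigenvalue $\mu_1^{\neum,-}$ is positive but exponentially small (of order $T^{-1}\sech^2 T$), not $\gtrsim T^{-2}$. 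The lemma is therefore careful to place the $cT^{-2}$ lower bound only on the correct eigenvalue in each parity sector; to recover this you must track the $t$-even and $t$-odd one-dimensional spectra separately, as the paper does.
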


\begin{proof}
By Fourier decomposition in $\vartheta$
we can reduce the problem
(in both the Dirichlet and Neumann cases)
to the study of the eigenvalues
of the $\vartheta$-invariant eigenfunctions:
more precisely, we will appeal to the fact
that each eigenfunction of
$Q_{\catdom_T}^\dir$ or $Q_{\catdom_T}^\neum$
with eigenvalue $\lambda$
is a linear combination
of (finitely many) functions of the form
$u_{\lambda,n}(t)e^{i n\vartheta}$,
where
$i$ is the usual imaginary unit
(rather than an integral index,
as elsewhere in this article),
$n \in \Z$,
and $u_{\lambda,n}(t)$ is a $\vartheta$-invariant
function satisfying
\begin{equation*}
L_K u_{\lambda,n} = (n^2 - \lambda)u
\end{equation*}
and the correspondingly Dirichlet or Neumann
boundary conditions.
Moreover, we can further split
each $\vartheta$-invariant eigenspace
into its $t$-even and $t$-odd subspaces.

To proceed we write
$\lambda^{\dir,t,+}_k(T)$
for the $k$\textsuperscript{th} eigenvalue
of $Q_{\catdom_T}^\dir$
restricted to 
$\vartheta$-invariant functions
that are even in $t$,
and we similarly write
$\lambda^{\neum,t,\pm}_k(T)$
for the $k$\textsuperscript{th} eigenvalue
of $Q_{\catdom_T}^\neum$
restricted to $\vartheta$-invariant functions
that are either even in $t$, for the $+$ choice in $\pm$,
or odd in $t$, for the $-$ choice. 
We will show that for some $c,T_0>0$ and all $T>T_0$
\begin{align}
\label{eqn:D+}
\lim_{T\to \infty}\lambda^{\dir,t,+}_1(T)
&=-1,
\\[.5ex]
\label{eqn:N+<}
-2<\lambda^{\neum,t,+}_1(T)
&<-1<0<\lambda^{\neum,t,+}_2(T),
\\
\label{eqn:N-<}
\lambda^{\neum,t,-}_1(T)
&>0,
\\[1.5ex]
\label{eqn:N+>}
\lambda^{\neum,t,+}_2(T)
&>cT^{-2},
\\
\label{eqn:N->}
\lambda^{\neum,t,-}_2(T)
&>cT^{-2}.
\end{align}
Provisionally assuming the claims
\eqref{eqn:D+}--\eqref{eqn:N->},
we now observe
how they imply the lemma.
First, item \ref{itm:low_spec_cat:dir}
follows from \eqref{eqn:D+},
given that any first eigenfunction
must be $\vartheta$-invariant
and $t$-even.
Next,
\eqref{eqn:N+<} and \eqref{eqn:N-<}
imply
(assuming $T>T_0$)
that the sum of the eigenspaces
of $Q^\neum_{\catdom_T}$ of nonpositive eigenvalue
is spanned by
$u^{\neum,t,+}_1$
and $u^{\neum,t,+}_1 \sin \vartheta$,
where $u^{\neum,t,+}_1$
is the first $t$-even, $\vartheta$-invariant
eigenfunction of $Q_{\catdom_T}^\neum$
(noting that
$u^{\neum,t,+}_1 \cos \vartheta$
violates the homogeneous Neumann condition
along $\{\abs{\vartheta}=\pi/2\}$,
while
$u^{\neum,t,+}_1 e^{in\vartheta}$
has eigenvalue
$\lambda_1^{\neum,t,+}(T) + n^2 > 0$
whenever $\abs{n} \geq 2$).
This implies the first part of
\ref{itm:low_spec_cat:neum_t_even} and \ref{itm:low_spec_cat:neum_t_odd},
and, since $\sin \vartheta$ is $\vartheta$-odd,
also of \ref{itm:low_spec_cat:neum_theta_even} and \ref{itm:low_spec_cat:neum_origin_even}.
By additionally applying
\eqref{eqn:N+>} and \eqref{eqn:N->}
we obtain also
the second part 
for each of claims \ref{itm:low_spec_cat:neum_t_even}--\ref{itm:low_spec_cat:neum_origin_even}.

It remains to prove claims \eqref{eqn:D+}--\eqref{eqn:N->}.
The $\vartheta$-invariant solutions of $L_K u = \gamma^2 u$ with $\gamma^2 \in \R$ are spanned by
\begin{equation}
\label{eqn:cat_rot_inv_efs_wo_bcs}
\begin{aligned}
u_\gamma^+(t)
&\vcentcolon=
\begin{cases}
1 - t \tanh t
  &\mbox{for } \gamma=0
\\
\sech t
  &\mbox{for } \gamma=1
\\
\gamma \cosh \gamma t - \sinh \gamma t \, \tanh t
  &\mbox{for }
    \gamma \in \interval{0,1} \cup \interval{1,\infty}
\\
\frac{\gamma}{i}
    \cos \frac{\gamma}{i} t
  - \sin \frac{\gamma}{i} t \, \tanh t
  &\mbox{for } \gamma \in i \interval{0,\infty},
\end{cases}
\\[1ex]
u_\gamma^-(t)
&\vcentcolon=
\begin{cases}
\tanh t
  &\mbox{for } \gamma=0
\\
\sinh t + t \sech t
  &\mbox{for } \gamma=1
\\
\gamma \sinh \gamma t - \cosh \gamma t \, \tanh t
  &\mbox{for }
    \gamma \in \interval{0,1} \cup \interval{1,\infty}
\\
\frac{\gamma}{i}
    \sin \frac{\gamma}{i} t
  + \cos \frac{\gamma}{i} t \, \tanh t
  &\mbox{for } \gamma \in i \interval{0,\infty},
\end{cases}
\end{aligned}
\end{equation}
where $i$ continues, in this proof, to denote the usual imaginary unit.
Note that $u_\gamma^\pm$ is a $t$-even/$t$-odd, $\vartheta$-invariant eigenfunction
of $Q_{\catdom_T}^\dir$ or $Q_{\catdom_T}^\neum$
whenever it satisfies, respectively,
the homogeneous Dirichlet or Neumann boundary condition
on $\{\abs{t}=T\}$,
which we proceed to impose.

Considering first $\gamma=0$ and $\gamma=1$,
it is easy to see that
for all $T$ sufficiently large
neither $u_\gamma^+$ nor $u_\gamma^-$
satisfies either the homogeneous Dirichlet
or homogeneous Neumman condition,
and so
neither
$L_K u = 0$ nor $L_K u = u$
has a nontrivial $\vartheta$-invariant solution
satisfying either $u(T)=u(-T)=0$
or $(\partial_t u)(T)=(\partial_t u)(-T)=0$,
and we accordingly henceforth assume $\gamma \not \in \{0,1\}$.

Suppose now that $\gamma \in \interval{0,1} \cup \interval{1,\infty}$.
From \eqref{eqn:cat_rot_inv_efs_wo_bcs} we find that 
\begin{alignat}{3}
\label{eqn:u+condition}
u^+_\gamma(T)&=0=u^+_\gamma(-T) &
&\Leftrightarrow &
\tanh\gamma T&=\gamma\coth T,
\\[.5ex]\label{eqn:u-condition}
u^-_\gamma(T)&=0=u^-_\gamma(-T) &
\quad&\Leftrightarrow\quad & 
\gamma\tanh\gamma T&=\tanh T.
\end{alignat} 
Condition \eqref{eqn:u-condition} for $u_\gamma^-$ holds only for the excluded value $\gamma=1$,
while condition \eqref{eqn:u+condition} for $u_\gamma^+$ is equivalent to
$\tanh s = s T^{-1} \coth T$ (with $s=\gamma T$) and so,
assuming $T$ sufficiently large
in terms of a universal constant,
has exactly one strictly positive solution,
$\gamma=\tanh T \, \tanh \gamma T < 1$,
which moreover converges to $1$ as $T \to \infty$.
We have in particular established
\eqref{eqn:D+}.

Continuing to assume $\gamma \in \interval{0,1} \cup \interval{1,\infty}$, 
we next find from \eqref{eqn:cat_rot_inv_efs_wo_bcs} that
\begin{align*}
(\partial_t u^\pm_\gamma)(T)&=0=(\partial_t u^\pm_\gamma)(-T) 
\quad \Leftrightarrow \quad
f_{\pm,T}(\gamma)=0,
\shortintertext{where} 
f_{+,T}(\gamma)
&\vcentcolon=
  \gamma^2
  -\gamma \coth \gamma T \, \tanh T
  -\sech^2 T,
\\
f_{-,T}(\gamma)
&\vcentcolon=
  \gamma^2
  -\gamma \tanh \gamma T \, \tanh T
  -\sech^2 T.
\end{align*} 
Since $f_{+,T}|_{\interval{0,1}}<0$, 
in order for $f_{+,T}(\gamma)$ to vanish
we must have $\gamma>1$, but
\begin{equation*}
f_{+,T}'(\gamma)
=
2\gamma
  -\coth \gamma T \, \tanh T
  +\gamma T \csch^2 \gamma T \, \tanh T
>
0
\quad\mbox{ on } \interval{1,\infty},
\end{equation*}
so $f_{+,T}(\gamma)$
has exactly one positive zero,
$\gamma>1$,
which must then moreover
tend to $1$ as $T \to \infty$.
This establishes
\eqref{eqn:N+<}.
On the other hand,
$f_{-,T}(\gamma)=0$
is equivalent to
\begin{equation*}
s \tanh s \, \tanh T
  =
  \frac{s^2}{T} - T \sech^2 T,
\quad
s=\gamma T,
\end{equation*}
which has exactly one positive solution,
but 
$f_{-,T}(1)=0$
and $\gamma=1$ has been excluded,
yielding
\eqref{eqn:N-<}.

Finally,
now assuming $\gamma \in \interval{0,\infty}$,
from \eqref{eqn:cat_rot_inv_efs_wo_bcs} we also find
\begin{alignat}{3}
\label{eqn:ui+condition}
(\partial_t u^+_{i\gamma})(T)&=0=(\partial_t u^+_{i\gamma})(-T) 
&&\Leftrightarrow&
\gamma^2 + \gamma \cot \gamma T \, \tanh T + \sech^2 T
&=0,
\\[.5ex]\label{eqn:ui-condition}
(\partial_t u^-_{i\gamma})(T)&=0=(\partial_t u^-_{i\gamma})(-T) 
&\quad&\Leftrightarrow\quad&
\gamma^2-\gamma \tan \gamma T \, \tanh T + \sech^2 T
&=0.
\end{alignat}
Condition \eqref{eqn:ui+condition} for $u_{i\gamma}^+$ is equivalent to
\begin{equation*}
-s \cot s \tanh T=\frac{s^2}{T} + T \sech^2 T,
\quad
s=\gamma T,
\end{equation*}
whose smallest positive solution $s$
tends to $\frac{\pi}{2}$,
the smallest positive zero of $s \cot s$,
as $T \to \infty$, establishing \eqref{eqn:N+>},
while condition \eqref{eqn:ui-condition} for $u_{i\gamma}^-$ is equivalent to
\begin{equation*}
s \tan s \tanh T = \frac{s^2}{T} + T \sech^2 T,
\quad
s=\gamma T,
\end{equation*}
whose smallest positive solution
tends to $0$ as $T \to \infty$
and whose next smallest positive solution
must be at least $\pi$, the least positive zero of $s \tan s$.
This completes the proof of \eqref{eqn:N->}. 
\end{proof}

For each $\alpha,\beta \in \interval{0,1}$,
each integer $k \geq 0$,
and any functions $u \colon \catdom_\infty \to \R$,
$f \colon \partial \catdom_\infty \to \R$
we define
\begin{align*}
\nm{u}_{k,\alpha,\beta}
&\vcentcolon=
  \nm{u \sech \beta t}_{k,\alpha},
&
\nm{f}_{k,\alpha,\beta}
&\vcentcolon=
  \nm{f \sech \beta t}_{k,\alpha}
\end{align*}
and set
\begin{alignat*}{2}
C^{k,\alpha,\beta}_{\mathrm{sym}}(\catdom_\infty)
  &\vcentcolon=
  \{
    u \in C^{k,\alpha}(\catdom_\infty)
    & &\st
    u(t,\vartheta) \equiv u(t,-\vartheta)
     \text{ and } 
    \nm{u}_{k,\alpha,\beta}<\infty
  \},
\\[1ex]
C^{k,\alpha,\beta}_{\mathrm{sym}}(\partial\catdom_\infty)
  &\vcentcolon=
  \{
    f \in C^{k,\alpha}(\partial \catdom_\infty)
    & &\st
    f(t,\vartheta) \equiv f(t,-\vartheta)
     \text{ and } 
    \nm{f}_{k,\alpha,\beta}<\infty
  \}.
\end{alignat*}
We also write $\eta^{\catdom_\infty}$
for the outward unit conormal
on $\partial \catdom_\infty$.

\begin{lemma}[Invertibility of $L_K$]\label{lem:cat_model_sol}
Given $\alpha, \beta \in \interval{0,1}$ there exists a bounded linear map
\begin{equation*}
R_K
\colon
C^{0,\alpha,\beta}_{\mathrm{sym}}(\catdom_\infty)
  \oplus
  C^{1,\alpha,\beta}_{\mathrm{sym}}(\partial\catdom_\infty)
\to
C^{2,\alpha,\beta}_{\mathrm{sym}}(\catdom_\infty)
\end{equation*}
such that
for all
$(E,f)$ in the domain of $R_K$
\begin{equation*}
\left\{
\begin{aligned}
L_K R_K(E,f)
&=E
&&\text{ in } \catdom_\infty
\\
\eta^{\catdom_\infty}R_K(E,f)
&=f
&&\text{ on } \partial\catdom_\infty.
\end{aligned}
\right.
\end{equation*}
\end{lemma}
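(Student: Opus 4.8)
The plan is to reduce, by linearity, to the case of homogeneous boundary data and then to invert $L_K$ on the strip $\catdom_\infty$ by separating variables in the $\vartheta$-direction and solving the resulting family of ordinary differential equations in $t$ on all of $\R$. First I would produce a bounded linear extension operator $f\mapsto u_f\in C^{2,\alpha,\beta}_{\mathrm{sym}}(\catdom_\infty)$ with $\eta^{\catdom_\infty}u_f=f$ on $\partial\catdom_\infty$ and $\nm{u_f}_{2,\alpha,\beta}+\nm{L_K u_f}_{0,\alpha,\beta}\le C\nm{f}_{1,\alpha,\beta}$: near each boundary line $\{\abs{\vartheta}=\tfrac{\pi}{2}\}$ one represents $u_f$ as a cut-off single-layer potential in the $t$-variable of the density $t\mapsto f(t,\pm\tfrac{\pi}{2})$, whose $C^{1,\alpha}$ regularity (after multiplication by the slowly varying weight $\sech\beta t$, which commutes with the construction up to lower order since $\abs{\tfrac{d}{dt}\log\sech\beta t}\le\beta$) yields $C^{2,\alpha}$ regularity. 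Replacing $(E,f)$ by $(E-L_K u_f,0)$, it then remains to construct a bounded right inverse for $L_K$ under the homogeneous Neumann condition on $\{\abs{\vartheta}=\tfrac{\pi}{2}\}$, among functions even in $\vartheta$.

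For that problem I would write $E=\sum_{n\ge0}e_n(t)\cos(2n\vartheta)$ and seek $u=\sum_{n\ge0}a_n(t)\cos(2n\vartheta)$, so that $a_n$ must solve $a_n''+(2\sech^2 t-4n^2)a_n=e_n$ on $\R$ --- the functions $\cos(2n\vartheta)$ being exactly the $\vartheta$-even eigenfunctions of $-\partial_\vartheta^2$ on $\interval{-\tfrac{\pi}{2},\tfrac{\pi}{2}}$ obeying the homogeneous Neumann condition. For $n\ge1$ the homogeneous equation has solutions asymptotic to $e^{\pm2nt}$ at $\pm\infty$; since $2n\ge2>\beta$, the solutions decaying at $+\infty$ and at $-\infty$ are independent with Wronskian bounded away from $0$ uniformly in $n$, furnishing a Green's function $G_n$ with $\nm{\sech(\beta t)\int_\R G_n(\cdot,s)e_n(s)\,ds}_{C^0}\le C(2n-\beta)^{-1}\nm{\sech(\beta t)e_n}_{C^0}$, and one takes $a_n$ to be this convolution. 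The mode $n=0$ is the crux: $L_K$ has no decaying homogeneous solution there, only the bounded Jacobi field $\tanh t$ and the linearly growing $1-t\tanh t$, whose Wronskian is identically $1$. Here I would split $e_0$ by the sign of $t$ via a smooth partition of unity and solve each half by variation of parameters with kernel $K(t,s)=(\tanh s)(1-t\tanh t)-(1-s\tanh s)(\tanh t)$, integrating inward from the truncated side; the essential point is that although $K(t,s)=O(1+\abs{t})$, there is a cancellation --- modeled on $\int_{t_0}^t(t-s)e^{\beta s}\,ds=\beta^{-2}e^{\beta t}+O(\abs{t})$ --- so that $a_0$ still obeys the $\sech(\beta t)$-weighted $C^0$ bound (with a constant of size $\beta^{-2}$, which is admissible since $C$ may depend on $\beta$). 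Assembling $u\vcentcolon=\sum_{n\ge0}a_n\cos(2n\vartheta)$, which converges in the weighted $C^0$ norm, produces a function even in $\vartheta$ satisfying $L_K u=E$ distributionally together with the homogeneous Neumann condition.

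Finally I would upgrade regularity and globalize the estimate. Since $E\in C^{0,\alpha,\beta}$ and $L_K$ is a smooth, uniformly elliptic operator with bounded coefficients on $\catdom_\infty$, interior and up-to-the-Neumann-boundary Schauder estimates on balls and half-balls of unit radius give $\nm{u}_{C^{2,\alpha}(B(p,1)\cap\catdom_\infty)}\le C\bigl(\nm{u}_{C^0(B(p,2)\cap\catdom_\infty)}+\nm{E}_{C^{0,\alpha}(B(p,2)\cap\catdom_\infty)}\bigr)$ with $C$ independent of $p$, by translation invariance in $t$; multiplying through by $\sech\beta t$ (which varies by a bounded factor across each such ball) and invoking the weighted $C^0$ control of $u$ and $E$ converts this into $\nm{u}_{2,\alpha,\beta}\le C\nm{E}_{0,\alpha,\beta}$. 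Setting $R_K(E,f)\vcentcolon=u_f+u$, which is linear because every step is given by an explicit linear formula and bounded by the estimates above, completes the argument; note that in the application only $f=0$ is needed, so the extension step may in fact be bypassed. The main difficulty is the zero-resonance of $L_K$ in the rotationally invariant mode $n=0$, handled by the cancellation just described; a secondary point is that one cannot sum the Fourier series directly in $C^{2,\alpha}$, which is why the $C^{2,\alpha}$ bound is obtained \emph{a posteriori}, by elliptic bootstrapping from the weighted $C^0$ bound, rather than mode by mode.
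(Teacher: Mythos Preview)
Your proposal is correct and follows essentially the same approach as the paper: reduce to homogeneous Neumann data via an extension, solve by separation of variables in $\vartheta$ to obtain the weighted $C^0$ estimate, then upgrade to $C^{2,\alpha}$ by local Schauder estimates (the paper's proof is a three-sentence outline of exactly this scheme, deferring to \cite{KapouleasWiygul17}*{Proposition 6.9} for details). Your identification of the $\vartheta$-even Neumann basis as $\cos(2n\vartheta)$, the Green's-function argument for $n\ge 1$, and in particular the cancellation in the $n=0$ mode (where the bounded Jacobi field $\tanh t$ and the linearly growing $1-t\tanh t$ combine so that the $O(t)$ contributions drop out against $e^{\beta t}$) are all correct and supply the details the paper omits.

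One small point: your description of the extension $u_f$ as a ``cut-off single-layer potential in the $t$-variable'' is imprecise, since for $f$ growing like $\cosh\beta t$ the usual single-layer integral diverges; one should instead take, for instance, a localized harmonic (or $L_K$-harmonic) extension near each boundary line, or simply observe --- as you in fact do --- that only $f=0$ occurs in the application, so this step may be bypassed altogether.
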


\begin{proof}
By constructing 
$v \in C^{2,\alpha}_{\mathrm{sym}}(\catdom_\infty)$
with
$
 (\partial_\vartheta v)|_{
     \{
       \abs{\vartheta}
       =
       \pi/2
     \}
  }
=
f
$
we reduce the problem to the case
of homogeneous boundary data.
Using separation of variables
one can directly solve the problem and prove the
$C^0$ estimate with the required decay;
all claims then follow by elliptic regularity cast in weighted spaces.
Full details can be found in
the proof of
\cite[Proposition 6.9]{KapouleasWiygul17}.
\end{proof}

\begin{lemma}
[Partial trace inequality on $\catdom_T$]
\label{lem:cat_trace}
There exists a constant $C>0$
(independent of $T$)
such that for any finite $T>0$
and all
$u \in \sob(\catdom_T, dt^2+d\vartheta^2)$
\begin{equation*}
\int_{\{\abs{\vartheta}=\pi/2\} \cap \partial\catdom_T}
  u^2 \, dt
\leq
C\int_{\catdom_T}
\Bigl(\abs{u}\abs{du}_{dt^2+d\vartheta^2}+u^2\Bigr)
\, dt \, d\vartheta.
\end{equation*}
\end{lemma}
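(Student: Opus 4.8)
The plan is to prove the inequality first for smooth $u \in C^\infty(\closure{\catdom_T})$, and then to extend it to all of $\sob(\catdom_T, dt^2+d\vartheta^2)$ by density of smooth functions together with continuity of the trace operator on the (fixed) domain $\catdom_T$; since the only quantitative input will be the smooth case, it suffices to make sure that the constant obtained there does not depend on $T$. The guiding idea is that one should integrate along the $\vartheta$-direction, whose length is the fixed number $\pi$, rather than along the $t$-direction, whose length $2T$ is exactly what could make a naive trace constant degenerate as $T \to \infty$.

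Concretely, I would fix $u \in C^\infty(\closure{\catdom_T})$ and $t \in \interval{-T,T}$ and, for each $\vartheta \in \interval{-\pi/2,\pi/2}$, write by the fundamental theorem of calculus
\[
u\bigl(t,\tfrac{\pi}{2}\bigr)^2
=
u(t,\vartheta)^2
+2\int_\vartheta^{\pi/2} (u \, \partial_\vartheta u)(t,s) \, ds.
\]
Averaging this identity over $\vartheta \in \interval{-\pi/2,\pi/2}$ and bounding, for every $\vartheta$, the inner integral in absolute value by $\int_{-\pi/2}^{\pi/2} \abs{(u\,\partial_\vartheta u)(t,s)}\,ds$, one obtains
\[
u\bigl(t,\tfrac{\pi}{2}\bigr)^2
\leq
\frac{1}{\pi}\int_{-\pi/2}^{\pi/2} u(t,\vartheta)^2 \, d\vartheta
+2\int_{-\pi/2}^{\pi/2} \abs{(u\,\partial_\vartheta u)(t,\vartheta)} \, d\vartheta,
\]
and the same bound holds with $-\tfrac{\pi}{2}$ in place of $\tfrac{\pi}{2}$.

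Next I would integrate both of these inequalities over $t \in \interval{-T,T}$ and add them, which yields
\[
\int_{\{\abs{\vartheta}=\pi/2\}\cap\partial\catdom_T} u^2 \, dt
\leq
\frac{2}{\pi}\int_{\catdom_T} u^2 \, dt\, d\vartheta
+4\int_{\catdom_T} \abs{u\,\partial_\vartheta u} \, dt\, d\vartheta
\leq
4\int_{\catdom_T} \bigl(\abs{u}\abs{du}_{dt^2+d\vartheta^2}+u^2\bigr)\, dt\, d\vartheta,
\]
using $\abs{\partial_\vartheta u} \leq \abs{du}_{dt^2+d\vartheta^2}$ pointwise and $\tfrac{2}{\pi}<4$; this already gives the claim with $C=4$, uniformly in $T$. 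Finally, for an arbitrary $u \in \sob(\catdom_T, dt^2+d\vartheta^2)$ I would approximate it in the $\sob$-norm by smooth functions and pass to the limit in the inequality, both sides being continuous under $\sob$-convergence (the boundary term via continuity of the trace on the fixed domain $\catdom_T$). I do not expect a genuine obstacle here: the only thing requiring care is the bookkeeping that keeps every constant independent of $T$, and that is precisely what is ensured by carrying out the one-dimensional integration in the bounded $\vartheta$-direction rather than in $t$.
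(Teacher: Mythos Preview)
Your proof is correct and follows essentially the same approach as the paper: both exploit that the $\vartheta$-interval has fixed length $\pi$ by integrating in the $\vartheta$-direction. The paper compresses your argument into a single line by applying the divergence theorem on $\catdom_T$ to the vector field $u^2 \sin\vartheta \,\partial_\vartheta$, which gives the inequality directly (the $\sin\vartheta$ weight makes the flux through $\{\abs{t}=T\}$ vanish and picks up $u^2$ on both sides $\{\vartheta=\pm\pi/2\}$ at once), but this is just a packaged form of your fundamental-theorem-of-calculus computation.
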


\begin{proof}
Apply the divergence theorem on $\catdom_T$ to the vector field
$u^2 \sin \vartheta \, \partial_\vartheta$.
\end{proof}

\paragraph{Perforated rectangles.}
Given $0<r<\pi/2$, we define the following rectangle and perforated rectangles in the $(\sunder,\thunder)$-plane, as visualized in Figure \ref{fig:perforated}. 
\begin{align*}
\margindom
&\vcentcolon=
  \{0 < \sunder < 3\}
    \cap
    \{-\pi/2 < \thunder < \pi/2\},
\\
\margindom_r
&\vcentcolon=
  \margindom
    \setminus
    \{
        \sunder^2
          +(
            \thunder-\pi/2
          )^2
        \leq
        r^2
    \},
\\
\margindom_{r,r}
&\vcentcolon=
  \margindom_r
    \setminus
    \{
        \sunder^2
          +(
            \thunder+\pi/2
          )^2
        \leq
        r^2
    \}.
\end{align*}
For $\Omega$ any of the preceding three domains
we denote by $Q_\Omega^\neum$
the bilinear form
\begin{equation*}
Q_\Omega^\neum(u,v)
\vcentcolon=
\int_\Omega
\Bigl(
   (\partial_\sunder u)(\partial_\sunder v)
    +(\partial_\thunder u)(\partial_\thunder v)
\Bigr)\, d\sunder \, d\thunder
\end{equation*}
defined for $u,v \in \sob(\Omega, d\sunder^2+d\thunder^2)$, 
corresponding to the Neumann Laplacian
on $\Omega$ in the sense of \eqref{eqn:bilinear_form_def}. 

\begin{figure}
\pgfmathsetmacro{\rpar}{0.75}
\begin{tikzpicture}[line cap=round,line join=round]
\fill[black!33](0,-pi/2)rectangle(3,pi/2);	
\draw[->](0,0)--(3.5,0)node[right]{$\sunder$};	
\draw[->](0,-pi/2)--(0,2.2)node[below right,inner sep=0]{~$\thunder$};	
\draw plot[plus](0,0)node[below left]{$0$};
\draw plot[vdash](3,0)node[below]{$3$};
\draw plot[hdash](0,pi/2)node[left]{$\frac{\pi}{2}$};
\draw plot[plus](0,-pi/2)node[left]{$-\frac{\pi}{2}$};
\path(1.66,0.5)node[anchor=base]{$\margindom$};
\end{tikzpicture}
\hfill
\begin{tikzpicture}[line cap=round,line join=round]
\fill[black!33]
(0,pi/2-\rpar)arc(-90:0:\rpar)-|(3,-pi/2)-|cycle;
\draw[->](0,0)--(3.5,0)node[right]{$\sunder$};	
\draw[->](0,-pi/2)--(0,2.2)node[below right,inner sep=0]{~$\thunder$};	
\draw plot[plus](0,0)node[below left]{$0$};
\draw plot[vdash](3,0)node[below]{$3$};
\draw plot[hdash](0,pi/2)node[left]{$\frac{\pi}{2}$};
\draw plot[plus](0,-pi/2)node[left]{$-\frac{\pi}{2}$};
\path(1.66,0.5)node[anchor=base]{$\margindom_{r}$};
\draw[latex-latex](0, pi/2)--++(0:\rpar) node[midway,below]{$r$};
\end{tikzpicture}
\hfill
\begin{tikzpicture}[line cap=round,line join=round]
\fill[black!33]
(0,pi/2-\rpar)arc(-90:0:\rpar)-|(3,-pi/2)-|(\rpar,-pi/2)arc(0:90:\rpar)--cycle;
\draw[->](0,0)--(3.5,0)node[right]{$\sunder$};	
\draw[->](0,-pi/2)--(0,2.2)node[below right,inner sep=0]{~$\thunder$};	
\draw plot[plus](0,0)node[below left]{$0$};
\draw plot[vdash](3,0)node[below]{$3$};
\draw plot[hdash](0,pi/2)node[left]{$\frac{\pi}{2}$};
\draw plot[plus](0,-pi/2)node[left]{$-\frac{\pi}{2}$};
\path(1.66,0.5)node[anchor=base]{$\margindom_{r,r}$};
\draw[latex-latex](0, pi/2)--++(0:\rpar) node[midway,below]{$r$};
\draw[latex-latex](0,-pi/2)--++(0:\rpar) node[midway,above]{$r$};
\end{tikzpicture}
\caption{The open domains $\margindom$, $\margindom_{r}$ and $\margindom_{r,r}$.}%
\label{fig:perforated}%
\end{figure}

\begin{lemma}
[Low spectrum of Neumann Laplacian
on $\margindom_r$ and $\margindom_{r,r}$]
\label{lem:low_spec_margin}
There exist
$c, r_0>0$
such that for each $0<r<r_0$
and each
$
 \Omega
 \in 
 \{\margindom_r,\margindom_{r,r}\}
$
\begin{align*}
\ind(Q_\Omega^\neum)&=0, &
\nul(Q_\Omega^\neum)&=1, &
\lambda_2(Q_\Omega^\neum)&>c.
\end{align*}
In particular $\lambda_2(Q_\Omega^\neum)$
is the lowest positive eigenvalue
of $Q_\Omega^\neum$.
\end{lemma}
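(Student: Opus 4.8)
The plan is to read off the index and nullity assertions directly and then devote the work to the uniform lower bound on $\lambda_2$. The Neumann form $Q_\Omega^\neum(u,u)=\int_\Omega\bigl((\partial_\sunder u)^2+(\partial_\thunder u)^2\bigr)\,d\sunder\,d\thunder$ is nonnegative, so $\ind(Q_\Omega^\neum)=0$ for $\Omega\in\{\margindom_r,\margindom_{r,r}\}$; its kernel is the space of locally constant functions, and since the excised quarter-discs sit at distinct corners of $\margindom$ (hence are disjoint and do not disconnect $\margindom$) the domains $\margindom_r$ and $\margindom_{r,r}$ are connected whenever $r$ is small, giving $\nul(Q_\Omega^\neum)=1$. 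Thus $0=\lambda_1(Q_\Omega^\neum)<\lambda_2(Q_\Omega^\neum)\leq\cdots$, and it suffices to find $c>0$, independent of $r$, with $\lambda_2(Q_\Omega^\neum)\geq c$; equivalently, a Poincar\'e--Wirtinger inequality $\int_\Omega u^2\leq c^{-1}\int_\Omega\bigl((\partial_\sunder u)^2+(\partial_\thunder u)^2\bigr)$ valid for all $u\in H^1(\Omega)$ with $\int_\Omega u=0$ and with $c^{-1}$ uniform in $r$. The final assertion of the lemma (that $\lambda_2$ is the lowest positive eigenvalue) is then immediate.

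I would establish this uniform inequality by contradiction and compactness, after flattening the perforations by reflection. Assume the bound fails for a sequence $r_k\to 0$: then there are $u_k\in H^1(\Omega_{r_k})$ (with $\Omega_{r_k}$ either $\margindom_{r_k}$ or $\margindom_{r_k,r_k}$) satisfying $\int_{\Omega_{r_k}}u_k=0$, $\|u_k\|_{L^2}=1$, $\|\nabla u_k\|_{L^2}\to 0$. Reflecting $u_k$ evenly across the two flat sides of $\margindom$ meeting at each perforated corner --- even reflection across a flat boundary piece being a bounded operation $H^1\to H^1$ requiring no boundary condition --- replaces $\Omega_{r_k}$ by a fixed rectangle $\widehat\Omega$ from which finitely many pairwise disjoint discs $D_{r_k}$ of radius $r_k$, centred at fixed interior points and mutually separated by a fixed distance, have been removed, and replaces $u_k$ by $U_k\in H^1(\widehat\Omega\setminus\bigcup D_{r_k})$ for which $\|U_k\|_{L^2}$, $\int U_k=0$, and $\|\nabla U_k\|_{L^2}\to 0$ persist up to fixed positive constants.

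Next I would extend each $U_k$ across the vanishing holes: on each disc $D_{r_k}$ let $\widehat U_k$ be the harmonic extension of the trace of $U_k$ on $\partial D_{r_k}$, so that $\widehat U_k\in H^1(\widehat\Omega)$. Rescaling by $x\mapsto r_k^{-1}x$ --- under which the two-dimensional Dirichlet integral is scale invariant --- reduces the required estimates to fixed annulus-to-disc ones and yields $\|\nabla\widehat U_k\|_{L^2(D_{r_k})}^2\leq C\|\nabla U_k\|_{L^2(D_{2r_k}\setminus D_{r_k})}^2$ together with $\|\widehat U_k\|_{L^2(D_{r_k})}^2\leq C\bigl(\|U_k\|_{L^2(D_{2r_k}\setminus D_{r_k})}^2+r_k^2\|\nabla U_k\|_{L^2(D_{2r_k}\setminus D_{r_k})}^2\bigr)$, with $C$ independent of $k$. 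Consequently $\|\nabla\widehat U_k\|_{L^2(\widehat\Omega)}\to 0$, the sequence $\|\widehat U_k\|_{L^2(\widehat\Omega)}$ is bounded and bounded below (by $\|U_k\|_{L^2(\widehat\Omega\setminus\bigcup D_{r_k})}$, a fixed positive constant), and $\bigl|\int_{\widehat\Omega}\widehat U_k\bigr|=\bigl|\int_{\bigcup D_{r_k}}\widehat U_k\bigr|\leq Cr_k\to 0$. Passing to a subsequence, $\widehat U_k\rightharpoonup\widehat U$ weakly in $H^1(\widehat\Omega)$ with $\nabla\widehat U=0$ and $\int_{\widehat\Omega}\widehat U=0$, whence $\widehat U\equiv 0$; but then Rellich's theorem forces $\widehat U_k\to 0$ strongly in $L^2(\widehat\Omega)$, contradicting the uniform lower bound on $\|\widehat U_k\|_{L^2(\widehat\Omega)}$. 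This produces the constant $c$ and proves the lemma.

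The only genuinely delicate point I anticipate is the extension step: one must arrange not merely that $U_k$ extends to $\widehat U_k\in H^1(\widehat\Omega)$ with bounded norm, but that the extra gradient energy on the shrinking holes tends to zero. This is exactly where the conformal (hence scale-invariant) character of the Dirichlet integral in dimension two enters, applied to the energy-minimizing harmonic extension; the remaining ingredients --- nonnegativity of the form, connectedness, reflection across flat sides, and Rellich compactness --- are routine.
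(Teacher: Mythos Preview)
Your proof is correct and follows the same overall architecture as the paper's: argue by contradiction with $r_k\to 0$, extend the sequence uniformly in $H^1$ to a fixed unperforated domain, and derive a contradiction via Rellich compactness. The difference lies only in the extension mechanism. The paper works directly on $\margindom$ and extends each $u_n$ across the corner hole by conformal inversion $E_n$ through the perforation's circular arc, precomposed with a fixed cutoff $\psi$ supported near that corner; conformal invariance of the two-dimensional Dirichlet integral then gives $\|\overline u_n\|_{H^1(\margindom)}\leq\sqrt 2\,\|\psi u_n\|_{H^1(\margindom_{r_n})}$ in one stroke, and the limit argument proceeds as yours does (using the indicator functions $\chi_n$ to track the contribution of the shrinking hole). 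Your route---reflecting across the two flat sides meeting at the corner to convert quarter-holes into interior full discs, then extending harmonically with scale-invariant annulus-to-disc estimates---is a step longer but more standard, and makes the role of scale invariance particularly transparent. Both approaches rest on the same two-dimensional phenomenon (conformal invariance of Dirichlet energy) to keep the extension energy from blowing up as the holes shrink.
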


\begin{proof}
The asserted values
of the index and nullity
hold for the Neumann Laplacian on any connected domain
with Lipschitz boundary.
Then suppose the remaining claim failed,
say on $\margindom_r$
(the proof for $\margindom_{r,r}$ being essentially the same).
Then there would exist sequences
of strictly positive reals
$r_n \downarrow 0$,
and
$\lambda^{(n)} \downarrow 0$
and a sequence of functions
$u_n \colon \margindom_{r_n} \to \R$
such that
$u_n$ is an $L^2$-normalized
eigenfunction of $Q_{\margindom_{r_n}}^\neum$
with eigenvalue $\lambda^{(n)}$;
in particular each $u_n$
is $L^2$-orthogonal to the constants
on $\margindom_{r_n}$.

Fix, independently of $n$,
a smooth radial cutoff function $\psi$
on the $(\sunder,\thunder)$ plane
that takes the constant value $1$
on the closed disc of radius $10^{-2}$
and center $(0,\frac{\pi}{2})$
and the constant value $0$
outside the open disc of radius $10^{-1}$
and the same center.
We take $n$ large enough that $r_n<10^{-2}$.
By conformal inversion $E_n$
across the circle
containing the boundary of the perforation
we extend each $u_n$ to
$\overline{u}_n$
on all of $\margindom$
by
\begin{equation*}
\overline{u}_n|_{
    \margindom \setminus \margindom_{r_n}
}(\sunder,\thunder)
\vcentcolon=
\begin{cases}
(\psi u_n)(E_n(\sunder,\thunder))
  &\mbox{for }
  (\sunder,\thunder) \neq (0,\frac{\pi}{2})
\\
0
  &\mbox{for } (\sunder,\thunder)=(0,\frac{\pi}{2}),
\end{cases}
\end{equation*}
where $\psi u_n$ is defined on the entire plane,
vanishing outside the support of $\psi$.
Using the conformal invariance in dimension two
of the Dirichlet energy,
it follows that
$
 \nm{\overline{u}_n}_{H^1(\margindom)}
 \leq
 \sqrt{2}\nm{\psi u_n}_{H^1(\margindom_{r_n})}
$,
and so the $\overline{u}_n$
are bounded in $H^1(\margindom)$,
uniformly in $n$,
by a constant depending on $\psi$.

We may assume (by tacitly passing to a subsequence)
that $\{\overline{u}_n\}$ converges weakly in $H^1$
and strongly in $L^2$ to a function
$\overline{u} \in H^1(\margindom)$.
Writing $\chi_n$ for the indicator function
of $\margindom_{r_n}$,
we in turn have
\begin{align*}
0
  =
  \sk{u_n,1}_{L^2(\margindom_{r_n})}
  =
  \sk{\chi_n \overline{u}_n,1}_{L^2(\margindom)}
&\xrightarrow[n \to \infty]{}
  \sk{\overline{u},1}_{L^2(\margindom)},
\\
1
  =
  \nm{u_n}_{L^2(\margindom_{r_n})}^2
  =
  \sk{\chi_n \overline{u}_n, \overline{u}_n}_{
      L^2(\margindom)}
&\xrightarrow[n \to \infty]{}
  \nm{\overline{u}}_{L^2(\margindom)}^2,
\\
\lambda^{(n)}
    \sk{
      u_n,
      \overline{u}|_{\margindom_{r_n}}
    }_{L^2(\margindom_{r_n})}
  =
  \sk{
    \nabla u_n,
    (\nabla \overline{u})|_{\margindom_{r_n}}
  }_{L^2(\margindom_{r_n})}
  =
  \sk{
    \chi_n \nabla \overline{u}_n,
    \nabla \overline{u}
  }_{L^2(\margindom)}
&\xrightarrow[n \to \infty]{}
  \nm{\nabla \overline{u}}_{L^2(\margindom)}^2.
\end{align*}
Consequently
$\overline{u}$ is nonconstant
with vanishing weak gradient,
which contradiction concludes the proof.
\end{proof}

\begin{lemma}
[Partial trace inequality on
$\margindom_r$ and $\margindom_{r,r}$]
\label{lem:perf_rect_trace}
There exists $C>0$
such that
for any $r<\pi/2$,
for each
$
 \Omega
 \in
 \{\margindom_r, \margindom_{r,r}\}
$,
and for all
$u \in \sob(\Omega, d\sunder^2 + d\thunder^2)$
\begin{equation*}
\int_{\{\sunder=0\} \cap \partial \Omega}
    u^2 \, d\thunder
\leq
C\int_\Omega
\Bigl(\abs{u}\abs{du}_{d\sunder^2+d\thunder^2}+u^2\Bigr)
\, d\sunder \, d\thunder.
\end{equation*}
\end{lemma}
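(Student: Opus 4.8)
The plan is to run the same kind of argument used for the partial trace inequality on $\catdom_T$ in Lemma~\ref{lem:cat_trace}: apply the divergence theorem to a vector field chosen so that its flux through $\{\sunder=0\}\cap\partial\Omega$ reproduces the left-hand side, while its flux through the remaining boundary pieces either vanishes or carries a helpful sign. The only feature absent from Lemma~\ref{lem:cat_trace} is the perforation arc (or arcs), and the crux will be to check that these contribute with the favourable sign and with constants independent of $r$.

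Concretely, on $\Omega\in\{\margindom_r,\margindom_{r,r}\}$ I would take
\[
V\vcentcolon=-\bigl(1-\tfrac{\sunder}{3}\bigr)u^2\,\partial_\sunder,
\]
first assuming $u\in C^\infty(\closure\Omega)$ and then passing to the general case $u\in\sob(\Omega,d\sunder^2+d\thunder^2)$ by density of functions smooth up to the boundary of the Lipschitz domain $\Omega$, using that both sides of the asserted estimate are continuous under $H^1$-convergence (the boundary term via the trace theorem). Decomposing $\partial\Omega$: on $\{\sunder=0\}\cap\partial\Omega$ the outward conormal is $-\partial_\sunder$, so $\sk{V,\nu}=u^2$; on $\{\sunder=3\}$ the coefficient $1-\sunder/3$ vanishes, so $\sk{V,\nu}=0$; on $\{\thunder=\pm\pi/2\}$ the field $V$ is tangential (being purely horizontal), so $\sk{V,\nu}=0$; and on each circular arc of a perforation, centred at $(0,\pm\pi/2)$, the outward conormal to $\Omega$ points towards that centre and hence has $\sunder$-component $-\sunder/r\le 0$, giving $\sk{V,\nu}=(1-\tfrac{\sunder}{3})u^2\,\sunder/r\ge 0$ since $0\le\sunder<3$. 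The divergence theorem then yields
\[
\int_{\{\sunder=0\}\cap\partial\Omega}u^2\,d\thunder
=\int_\Omega\operatorname{div}V\,d\sunder\,d\thunder-\int_{(\mathrm{arcs})}\sk{V,\nu}
\le\int_\Omega\operatorname{div}V\,d\sunder\,d\thunder.
\]

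To finish, I would compute $\operatorname{div}V=\tfrac13 u^2-2\bigl(1-\tfrac{\sunder}{3}\bigr)u\,\partial_\sunder u$, so that pointwise on $\Omega$
\[
\abs{\operatorname{div}V}\le\tfrac13 u^2+2\,\abs{u}\,\abs{du}_{d\sunder^2+d\thunder^2},
\]
and the claimed inequality follows with, say, $C=2$, uniformly in $r<\pi/2$ and in the choice of $\Omega$. I do not anticipate any genuine difficulty: the two points meriting a word of care are the validity of the Gauss--Green identity for the merely $W^{1,1}$ field $V$ (handled by the smooth approximation above, observing that $u^2\in W^{1,1}$ with weak gradient $2u\,\nabla u$) and the sign of the perforation-arc terms, which is precisely the elementary computation displayed above.
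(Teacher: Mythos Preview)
Your proof is correct and essentially identical to the paper's: the paper applies the divergence theorem on $\Omega$ to the vector field $u^2(\tfrac{\sunder}{3}-1)\,\partial_\sunder$ (your $V$), noting only that this field is never inward along $\partial\Omega$. You have simply unpacked that remark piece by piece and added the density argument for general $H^1$ functions.
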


\begin{proof}
Apply the divergence theorem
on $\Omega$
to the vector field
$u^2(\frac{\sunder}{3}-1) \, \partial_\sunder$,
which is never inward
(has nonnegative inner product
with the outward unit normal)
along $\partial \Omega$.
\end{proof}

\section{Bounds on spectral shift}
Let $(\Omega,\dbdy\Omega,\nbdy\Omega,\rbdy\Omega)$
and $(\grp,\twsthom)$
be as recalled at the beginning of Section \ref{sec:Index} (see \cite{CSWSpectral}),
and in place of $(g,q,r)$ assume we are given two triples of data
$(g_i,q_i,r_i)$, for $i=1,2$.
Recalling \eqref{eqn:bilinear_form_def}, set
\begin{equation*}
T_i=T[\Omega,g_i,q_i,r_i,\dbdy\Omega,\nbdy\Omega,\rbdy\Omega];
\end{equation*}
we shall always associate with $T_i$,
when speaking of its eigenvalues and eigenfunctions,
the corresponding $g_i$.
We assume further that 
$C_\Omega^{\mathrm{tr}}>0$
is a constant such that
\begin{equation}
\label{eqn:trace_inequality}
\int_{\rbdy\Omega} u^2 \, \hausint{d-1}{g_1}
\leq
C_\Omega^{\mathrm{tr}}
\int_\Omega
\left(
 \abs{u}\abs{du}_{g_1}
 +u^2
\right) \, \hausint{d}{g_1}
\quad
\forall
u \in \sob(\Omega,g_1).
\end{equation}
Although such $C_\Omega^{\mathrm{tr}}$
exists for any Lipschitz domain $\Omega$,
in our applications
it will be provided by either
Lemma \ref{lem:cat_trace} or Lemma \ref{lem:perf_rect_trace}.
In fact each of these lemmata
concerns a certain infinite family of domains
and asserts
that a single constant can be chosen
for which \eqref{lem:cat_trace} holds
over the entire family
(in other words,
$C^{\mathrm{tr}}_\lipdom$ can be chosen
independently of $\lipdom$ in the family);
since we will also have uniform control
over the metrics and Schr\"{o}dinger
and Robin potentials appearing in these applications, 
Lemma \ref{lem:quant_spec_shift}
will thereby deliver uniform bounds on
the corresponding spectral shift.
(We will also apply Lemma \ref{lem:quant_spec_shift}
with $(\lipdom,g_1)$ a Euclidean disc,
but in this case we will have
$\rbdy\lipdom=\emptyset$,
so that \eqref{eqn:trace_inequality}
holds vacuously.)

For the statement of the following lemma
we also introduce the abbreviations
\begin{align*}
\widetilde{g}
&\vcentcolon=
  \nm{g_2-g_1}_{C^0(\Omega,g_1)},
\\
\widetilde{q}
&\vcentcolon=
  \widetilde{g}\nm{q_1}_{C^0(\Omega)}
    +\nm{q_2-q_1}_{C^0(\Omega)},
\\
\widetilde{r}
&\vcentcolon=
  C_\Omega^{\mathrm{tr}}
    (
      1
      +C_\Omega^{\mathrm{tr}}\nm{r_1}_{C^0(\Omega)}
    )
    (1+\nm{q_1}_{C^0(\Omega)})
    (\widetilde{g} 
      +\nm{r_2-r_1}_{C^0(\partial\Omega)}
    ).
\end{align*}

\begin{lemma}
\label{lem:quant_spec_shift}
With notation and assumptions
as in the preceding paragraph,
there exist
$\epsilon(d),C(d)>0$
(depending on just the dimension
of $\Omega$)
such that
for each
integer $k \geq 1$
\begin{align*}
\widetilde{g}&<\epsilon
\quad\Rightarrow\quad
\eigenvalsym{T_2}{k}{\grp}{\twsthom}
\leq
\eigenvalsym{T_1}{k}{\grp}{\twsthom}
  +C(\widetilde{q}+\widetilde{r})
  +C\eigenvalsym{T_1}{k}{\grp}{\twsthom}
    (\widetilde{g}+\widetilde{r}).
\end{align*}
\end{lemma}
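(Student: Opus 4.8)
The strategy is to compare the Rayleigh quotients of $T_2$ and $T_1$ directly through the min-max characterization \eqref{eqn:symminmax}. Fix an integer $k\geq 1$ and let $W\subseteq\invproj{\grp}{\twsthom}\bigl(\sobd{\dbdy\lipdom}(\lipdom,g_1)\bigr)$ be a $k$-dimensional subspace realizing the minimum for $\eigenvalsym{T_1}{k}{\grp}{\twsthom}$. Since $g_1$ and $g_2$ are $\epsilon$-close in $C^0$, for $\epsilon=\epsilon(d)$ small enough the identity map is a bi-Lipschitz equivalence between $(\lipdom,g_1)$ and $(\lipdom,g_2)$ with constants uniformly close to $1$; in particular $\sobd{\dbdy\lipdom}(\lipdom,g_1)$ and $\sobd{\dbdy\lipdom}(\lipdom,g_2)$ coincide as sets with comparable norms, the $(\grp,\twsthom)$-invariance is preserved (the group acts by isometries of both metrics), and $W$ is still an admissible $k$-dimensional competitor for $\eigenvalsym{T_2}{k}{\grp}{\twsthom}$. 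Hence it suffices to bound, uniformly over $0\neq w\in W$,
\begin{equation*}
\frac{T_2(w,w)}{\nm{w}_{L^2(\lipdom,g_2)}^2}
\leq
\eigenvalsym{T_1}{k}{\grp}{\twsthom}
  +C(\widetilde{q}+\widetilde{r})
  +C\eigenvalsym{T_1}{k}{\grp}{\twsthom}(\widetilde{g}+\widetilde{r}),
\end{equation*}
which then yields the claim upon taking the max over $w\in W$ and the min over all such $W$ on the left.

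The heart of the argument is a termwise estimate $T_2(w,w)-T_1(w,w)\leq(\text{error})\cdot\bigl(\nm{dw}_{L^2(\lipdom,g_1)}^2+\nm{w}_{L^2(\lipdom,g_1)}^2\bigr)$ together with a comparison of the two $L^2$ norms. For the gradient term, the difference $g_2(\nabla_{g_2}w,\nabla_{g_2}w)-g_1(\nabla_{g_1}w,\nabla_{g_1}w)$ together with the difference of volume densities is controlled by $C(d)\widetilde{g}\,\abs{dw}_{g_1}^2$ pointwise, since this quantity depends smoothly on the metric coefficients and their inverse, and $\widetilde{g}<\epsilon$ keeps everything in a compact neighborhood; integrating gives a contribution $\leq C\widetilde{g}\nm{dw}_{L^2(\lipdom,g_1)}^2$. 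For the potential term, writing $\potential_2\,d\mathscr H^d_{g_2}-\potential_1\,d\mathscr H^d_{g_1}$ and again bounding the density difference by $\widetilde g$, one gets $\leq C(\widetilde g\nm{\potential_1}_{C^0}+\nm{\potential_2-\potential_1}_{C^0})\nm{w}_{L^2(\lipdom,g_1)}^2=C\widetilde q\,\nm{w}_{L^2(\lipdom,g_1)}^2$. The Robin term is the delicate one: the boundary integral $\int_{\rbdy\lipdom}(\robinpotential_2\,d\mathscr H^{d-1}_{g_2}-\robinpotential_1\,d\mathscr H^{d-1}_{g_1})w^2$ is estimated by $C(\widetilde g\nm{\robinpotential_1}_{C^0}+\nm{\robinpotential_2-\robinpotential_1}_{C^0})\int_{\rbdy\lipdom}w^2\,d\mathscr H^{d-1}_{g_1}$, and then the trace inequality \eqref{eqn:trace_inequality} converts $\int_{\rbdy\lipdom}w^2$ into $C_\lipdom^{\mathrm{tr}}\int_\lipdom(\abs w\abs{dw}_{g_1}+w^2)$; a Cauchy--Schwarz / Young step $\abs w\abs{dw}\leq \tfrac12\delta^{-1}w^2+\tfrac12\delta\abs{dw}^2$ then trades part of it against the gradient integral, with the parameter $\delta$ chosen (depending only on $d$ and the size of $\nm{\robinpotential_1}_{C^0}$, which is already folded into $\widetilde r$ through the factor $(1+C_\lipdom^{\mathrm{tr}}\nm{\robinpotential_1}_{C^0})$) so that the resulting gradient term is absorbed.

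Assembling, one obtains $T_2(w,w)\leq T_1(w,w)+C(\widetilde q+\widetilde r)\nm{w}_{L^2(\lipdom,g_1)}^2+C(\widetilde g+\widetilde r)\bigl(\nm{dw}_{L^2(\lipdom,g_1)}^2+\nm{w}_{L^2(\lipdom,g_1)}^2\bigr)$. Now invoke the crucial observation that $W$ lies in the span of eigenfunctions of $T_1$ with eigenvalue at most $\eigenvalsym{T_1}{k}{\grp}{\twsthom}$, so that for every $w\in W$ one has $T_1(w,w)\leq\eigenvalsym{T_1}{k}{\grp}{\twsthom}\nm{w}_{L^2(\lipdom,g_1)}^2$, and hence by the definition of $T_1$ (and absorbing the bounded potential and Robin contributions, again using $\widetilde g<\epsilon$ and the control already present in $\widetilde r$) also $\nm{dw}_{L^2(\lipdom,g_1)}^2\leq C(1+\nm{\potential_1}_{C^0})\bigl(\eigenvalsym{T_1}{k}{\grp}{\twsthom}+1\bigr)\nm{w}_{L^2(\lipdom,g_1)}^2$; the factor $(1+\nm{\potential_1}_{C^0})$ is precisely what appears in the definition of $\widetilde r$, so the product $(\widetilde g+\widetilde r)\nm{dw}_{L^2(\lipdom,g_1)}^2$ is bounded by $C(\widetilde q+\widetilde r)\nm{w}_{L^2(\lipdom,g_1)}^2+C\eigenvalsym{T_1}{k}{\grp}{\twsthom}(\widetilde g+\widetilde r)\nm{w}_{L^2(\lipdom,g_1)}^2$ after rearrangement. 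Dividing through by $\nm{w}_{L^2(\lipdom,g_2)}^2$ and using $\nm{w}_{L^2(\lipdom,g_1)}^2\leq(1+C\widetilde g)\nm{w}_{L^2(\lipdom,g_2)}^2$ (absorbing the $O(\widetilde g)$ correction into the error terms) completes the estimate. The main obstacle is bookkeeping: one has to be careful that every constant that enters is genuinely a function of $d$ alone, routing all dependence on $\nm{\potential_1}_{C^0}$, $\nm{\robinpotential_1}_{C^0}$, $C_\lipdom^{\mathrm{tr}}$ and $\eigenvalsym{T_1}{k}{\grp}{\twsthom}$ through the prescribed combinations $\widetilde q$, $\widetilde r$ and the explicit factor multiplying $\eigenvalsym{T_1}{k}{\grp}{\twsthom}$ — in particular the trace-inequality step must be performed \emph{before} fixing $\delta$, and the quadratic-in-$C_\lipdom^{\mathrm{tr}}$ term in $\widetilde r$ is exactly what the absorption produces.
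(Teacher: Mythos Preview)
Your proposal is correct and follows essentially the same approach as the paper's proof: use the span of the first $k$ $(\grp,\twsthom)$-eigenfunctions of $T_1$ as a test space in the min-max characterization for $T_2$, compare the three pieces of the bilinear form termwise, invoke the trace inequality \eqref{eqn:trace_inequality} for the Robin contribution, and close by bounding $\nm{dw}_{L^2(g_1)}^2$ via $T_1(w,w)\leq\eigenvalsym{T_1}{k}{\grp}{\twsthom}\nm{w}_{L^2(g_1)}^2$ together with a Cauchy--Schwarz absorption of the $\abs{w}\abs{dw}$ cross term. The paper organizes the bookkeeping in the same way, and your remark that the quadratic-in-$C_\lipdom^{\mathrm{tr}}$ factor in $\widetilde r$ is exactly what the absorption produces is on target.
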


\begin{proof}
Let $k \geq 1$ be a given integer, and let $\{u_1,\ldots,u_k\}$
be an $L^2(\lipdom,g_1)$ orthonormal collection of eigenfunctions of $T_1$
with respective eigenvalues
$\eigenvalsym{T_1}{1}{\grp}{\twsthom},\ldots,\eigenvalsym{T_1}{k}{\grp}{\twsthom}$.
Then the span $X$ of $u_1,\ldots,u_k$
is a $k$-dimensional subspace
of
$
\invproj{\grp}{\twsthom}(
  \sobd{\dbdy\lipdom}(\lipdom,g_2)
)
$,
so by the min-max characterization of eigenvalues
we have
\begin{equation*}
\eigenvalsym{T_2}{k}{\grp}{\twsthom}
\leq
\max_{0 \neq u \in X} \frac{T_2(u,u)}{\nm{u}_{L^2(\lipdom, g_2)}^2},
\end{equation*}
and we will accordingly complete the proof
by arranging a suitable upper bound for
the right-hand side of this last inequality.
Throughout the proof
$C=C(d)$ and $\epsilon=\epsilon(d)$
will denote constants
whose value may change from line to line
but at each step
(and so also after the finitely many steps of the proof)
can be chosen in terms of $d$ alone
to ensure the various asserted inequalities
whenever $\widetilde{g}<\epsilon$.

Now suppose we are given some
$0 \neq u \in X$.
We then have
\begin{equation*}
\abs[\Big]{
  \nm{u}_{L^2(\lipdom,g_2)}^2
  -\nm{u}_{L^2(\lipdom,g_1)}^2
}
\leq
C\widetilde{g}\nm{u}_{L^2(\lipdom,g_1)}^2,
\end{equation*}
and so
\begin{equation*}
\frac{T_2(u,u)}{\nm{u}_{L^2(\lipdom,g_2)}^2}
\leq
\frac{T_2(u,u)}{\nm{u}_{L^2(\lipdom,g_1)}^2}
  (1+C\widetilde{g}).
\end{equation*}
(For both inequalities
one can use the fact that
$g_1+t(g_2-g_1)$
is a metric,
whose volume form is moreover a smooth function of $t$,
say for all $\abs{t}<1$
and for $\epsilon$,
so $\widetilde{g}$, sufficiently small.)

Similarly,
\begin{align*}
\abs{
  \nm{du}_{L^2(\lipdom,g_2)}^2
  -\nm{du}_{L^2(\lipdom,g_1)}^2
}
&\leq
C\widetilde{g}\nm{u}_{\sob(\lipdom,g_1)}^2,
\\[1ex]
\abs{
  \sk{q_2 u, u}_{L^2(\lipdom,g_2)}
  -\sk{q_1 u, u}_{L^2(\lipdom,g_1)}
}
&\leq
C\widetilde{q}\nm{u}_{L^2(\lipdom,g_1)}^2,
\\[1ex]
\abs{
  \sk{r_2 u, u}_{L^2(\rbdy\lipdom, g_2)}
  -\sk{r_1 u, u}_{L^2(\rbdy\lipdom, g_1)}
}
&\leq
C(
  \widetilde{g}\nm{r_1}_{C^0(\lipdom)}
  +\nm{r_2-r_1}_{C^0(\rbdy\lipdom)}
)\nm{u}_{L^2(\rbdy\lipdom,g_1)}^2,
\end{align*}
and, using \eqref{eqn:trace_inequality},
\begin{equation*}
\nm{u}_{L^2(\rbdy\lipdom,g_1)}^2
\leq
C_\Omega^{\mathrm{tr}}\nm{u}_{H^1(\lipdom,g_1)}^2.
\end{equation*}
Finally, recalling again \eqref{eqn:bilinear_form_def},
\begin{align*}
\nm{u}_{\sob(\lipdom,g_1)}^2
&=
T_1(u,u)
  +\sk{q_1u,u}_{L^2(\lipdom,g_1)}
  +\sk{r_1u,u}_{L^2(\rbdy\lipdom,g_1)}
\\[1ex]
&\leq
\eigenvalsym{T_1}{k}{\grp}{\twsthom}
    \nm{u}_{L^2(\lipdom,g_1)}^2
  +\nm{q_1}_{C^0(\lipdom)}
      \nm{u}_{L^2(\lipdom,g_1)}^2
\\&\hphantom{{}\leq{}}
  +C_\Omega^{\mathrm{tr}}
      \nm{r_1}_{C^0(\rbdy\lipdom)}
      \Bigl(
        \sk{u,\abs{du}_{g_1}}_{L^2(\lipdom,g_1)}
        +\nm{u}_{L^2(\lipdom,g_1)^2}
      \Bigr),
\end{align*}
where in particular we have again applied
\eqref{eqn:trace_inequality}.
The proof is now concluded
by applying the Cauchy--Schwarz inequality
(with an absorption factor, to the term
with the trace inequality)
in order to obtain a bound
on $\nm{u}_{\sob(\lipdom,g_1)}^2$
in terms of $\nm{u}_{L^2(\lipdom,g_1)}^2$
and then combining the foregoing
inequalities.
\end{proof}

\section{Umbilics of free boundary minimal surfaces in the ball}
Following H. Hopf's proof
\cite{HopfCMCsphere}
(presented also in the lecture notes
\cite[Chapter VI]{HopfLectures})
that every immersed topological sphere 
of constant mean curvature in $\R^3$
is round,
the Hopf differential
(defined below in the setting of the present article)
has proved a valuable tool
in the study of constant-mean-curvature
surfaces in space forms.
In the context of
free boundary minimal surfaces
in $\B^n$ it has been exploited
(see \cites{Nitsche1985, FraserSchoenUniqueness})
to establish uniqueness among solutions
having the topological type of a disc.
For surfaces in $\B^3$ of higher topology
it can be used to count umbilics,
as we briefly now explain;
cf. \cite[Proposition 1.5]{Law70}
concerning the umbilics of closed surfaces
in the round $3$-sphere.
A careful and detailed treatment
of umbilics of constant-mean-curvature surfaces
with boundary can be found in \cite{Choe2002}.
A generalization of Proposition \ref{pro:umbilic_count}
can be found in \cite{DomingosSantosVitorio}.

Let
$\varphi \colon \Sigma \to \B^3$
be a free boundary minimal immersion
of a compact, connected surface with boundary.
By the regularity theory for such maps
(we can appeal specifically to
\cite{GrueterHildebrandtNitscheBoundaryBehavior})
$\Sigma$ then inherits (from $\varphi$) the structure of a bordered Riemann surface.
We write $\gamma$ for the genus of $\Sigma$
and $\beta$ for the number of its boundary components.
Then the double $\underline{\Sigma}$ of $\Sigma$
is a closed Riemann surface of genus
$2\gamma + \beta - 1$.
(The notions of a bordered Riemann surface
and its double, sometimes called Schottky double,
can be found in
\cite[II.3]{AhlforsSario} and many other
textbooks treating Riemann surfaces.)

We define,
as a section of $(T^*\Sigma \otimes \C)^{\odot 2}$
(the symmetric product of $T^*\Sigma \otimes \C$
with itself),
the Hopf differential
$\Phi$ of $\varphi$ by
\begin{equation*}
\Phi(X,Y)
=
\frac{1}{4}A(\varphi_{\ast}X,\varphi_{\ast}Y)
-\frac{1}{4}A(\varphi_{\ast}JX,\varphi_{\ast}JY)
  -\frac{1}{4}iA(\varphi_{\ast}JX,\varphi_{\ast}Y)-\frac{1}{4}iA(\varphi_{\ast}X,\varphi_{\ast}JY),
\end{equation*}
where $X$ and $Y$ are sections of $T\Sigma \otimes \C$,
$J$ is the complex structure of $\Sigma$,
$A$ is the second fundamental form of $\varphi$
extended to be $\C$-bilinear and the differential of $\varphi$ is similarly understood extended by $\C$-linearity.
Equivalently, if $w$ is a local complex coordinate
on $\Sigma$, then
\begin{equation*}
\Phi = A\bigl(\varphi_\ast(\partial_w),\varphi_{\ast}(\partial_w)\bigr) \, dw^2.
\end{equation*}
By the Codazzi equation
(and using the minimality of $\varphi$
and the flatness of $\R^3 \supset \B^3$)
$\Phi$ is holomorphic inside $\Sigma$.
Furthermore, the free boundary condition
and the fact that $\partial \B^3$ is
totally umbilic
imply that $\partial \Sigma$
is a line of curvature,
whence it follows that
$\Phi$ is real along $\partial \Sigma$.
Consequently $\Phi$ extends
(by complex reflection)
uniquely to a holomorphic quadratic differential $\underline{\Phi}$
on the double $\underline{\Sigma}$.
The zeros of $\underline{\Phi}$ on $\Sigma$
(including on its boundary) are precisely
the umbilics of $\varphi$,
and we define the \emph{order} 
$\ord p$ of each umbilic $p$
to be the multiplicity of the corresponding zero
of $\underline{\Phi}$.
(If $\underline{\Phi}$ vanishes identically,
then every point of $\Sigma$ is an umbilic
of infinite order;
otherwise, by the holomorphicity of $\underline{\Phi}$,
each umbilic is isolated and has finite order.)

\pagebreak[2]

\begin{proposition}[Count of umbilics; cf. \protect{\cite[Theorem 4.2]{DomingosSantosVitorio}}]
\label{pro:umbilic_count}
Let $\varphi \colon \Sigma \to \B^3$
be a free boundary minimal immersion
of a compact, connected surface
having genus $\gamma$ and $\beta$ boundary components.
If $\Sigma$ is not a disc,
then the set $Z$ of umbilics of $\varphi$ is finite and
\begin{equation*}
\sum_{p \in Z \cap \partial \Sigma} \ord p
  +2\sum_{p \in Z\setminus \partial \Sigma}
    \ord p
=
8\gamma + 4\beta - 8.
\end{equation*}
\end{proposition}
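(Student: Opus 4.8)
The plan is to deduce the umbilic count directly from the Riemann--Roch theorem (or, equivalently, the degree formula for line bundles) applied to the holomorphic quadratic differential $\underline{\Phi}$ on the closed double $\underline{\Sigma}$, together with a careful bookkeeping of how zeros on $\partial\Sigma$ are reflected across the boundary. First I would record the basic setup already established in the text preceding the statement: $\underline{\Sigma}$ is a closed Riemann surface of genus $\underline{\gamma}\vcentcolon=2\gamma+\beta-1$, the Hopf differential $\Phi$ extends by complex (Schwarz) reflection to a holomorphic quadratic differential $\underline{\Phi}$ on $\underline{\Sigma}$, the zeros of $\underline{\Phi}$ lying in $\Sigma$ are exactly the umbilics of $\varphi$, and $\ord p$ is by definition the multiplicity of the zero. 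Since $\Sigma$ is assumed not to be a disc, I would first argue that $\underline{\Phi}$ does not vanish identically: if it did, every point would be umbilic, $\varphi(\Sigma)$ would be totally umbilic (hence totally geodesic, being minimal), so $\Sigma$ would be a flat equatorial disc, contradicting the hypothesis. Therefore $\underline{\Phi}$ is a nonzero holomorphic section of the line bundle $K_{\underline{\Sigma}}^{\otimes 2}$, its zero set is finite, and in particular $Z$ is finite.

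The core computation is then the degree count. A nonzero holomorphic quadratic differential on a closed Riemann surface of genus $\underline{\gamma}$ has total zero order equal to $\deg(K_{\underline{\Sigma}}^{\otimes 2}) = 2(2\underline{\gamma}-2) = 4\underline{\gamma}-4$. Substituting $\underline{\gamma} = 2\gamma+\beta-1$ gives a total zero order on $\underline{\Sigma}$ equal to $4(2\gamma+\beta-1)-4 = 8\gamma+4\beta-8$. Next I would analyze how this total splits under the reflection involution $\iota$ of $\underline{\Sigma}$ that exchanges the two copies of $\Sigma$ and fixes $\partial\Sigma$ pointwise. Because $\underline{\Phi}$ is constructed by complex reflection, it is $\iota$-equivariant in the appropriate sense (the pullback $\iota^*\underline{\Phi}$ equals $\overline{\underline{\Phi}}$), so the zero divisor of $\underline{\Phi}$ is $\iota$-invariant. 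Thus a zero $p$ in the interior of $\Sigma$ has a mirror zero $\iota(p)$ in the interior of the other copy of the same order $\ord p$, while a zero $p$ on $\partial\Sigma$ is fixed by $\iota$ and contributes its order $\ord p$ just once to the total. Consequently
\[
8\gamma+4\beta-8
=
\sum_{p\in Z\cap\partial\Sigma}\ord p
\;+\;
2\sum_{p\in Z\setminus\partial\Sigma}\ord p,
\]
which is exactly the claimed identity.

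The main obstacle, and the step deserving the most care, is the precise statement and justification of the reflection/order-matching at boundary zeros: one must verify that the order of $\underline{\Phi}$ at a boundary point $p$ (computed in a local coordinate on $\underline{\Sigma}$ in which the boundary arc is the real axis and the involution is complex conjugation) coincides with the order of vanishing of $A(\varphi_*\partial_w,\varphi_*\partial_w)$ at $p$ as a function on $\Sigma$ near its boundary, and that no additional contribution is hidden. This rests on the facts, already invoked in the text, that $\partial\Sigma$ is a line of curvature (from the free boundary condition together with total umbilicity of $\partial\B^3$) so that $\Phi$ is real along $\partial\Sigma$, and that the Schwarz reflection across a real-analytic boundary arc produces a holomorphic extension whose zero order at a boundary point is unchanged; here the boundary regularity theory (e.g.\ \cite{GrueterHildebrandtNitscheBoundaryBehavior}) guarantees enough smoothness of $\varphi$ up to $\partial\Sigma$ for $\underline{\Phi}$ to be genuinely holomorphic across the seam. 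Once this local picture is pinned down, the global degree count above finishes the proof; I would also remark that the exclusion of the disc case is essential precisely because $K_{\underline{\Sigma}}^{\otimes 2}$ has negative degree when $\underline{\gamma}=0$, so a nonzero holomorphic quadratic differential cannot exist and the formula would be vacuous or false.
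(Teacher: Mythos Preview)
Your proof is correct and follows essentially the same approach as the paper's: both argue that $\underline{\Phi}$ is a nonzero holomorphic quadratic differential on the closed double $\underline{\Sigma}$ (ruling out identical vanishing via the disc classification), invoke the degree formula to get total zero order $4\underline{\gamma}-4=8\gamma+4\beta-8$, and then identify this total with the left-hand side of the asserted identity. Your involution bookkeeping makes explicit what the paper leaves as a one-line observation, and your worry about boundary zero orders is moot given that the paper \emph{defines} $\ord p$ as the multiplicity of the zero of $\underline{\Phi}$, so no separate matching argument is needed.
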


\begin{proof}
We define $\underline{\Sigma}$
and $\underline{\Phi}$
as in the preceding paragraph.
If $\Phi$ vanished identically on $\Sigma$,
then $\Sigma$ would be totally umbilic, hence totally geodesic by minimality, and so in particular a disc.
Therefore $\underline{\Phi}$
is a nonzero
holomorphic quadratic differential
on the closed Riemann surface $\underline{\Sigma}$.
Consequently its zeros are isolated,
so (by compactness of $\Sigma$) constitute a finite set,
and moreover
(see for example
\cite[Remark IV.13.5]{Freitag2011})
the sum of their multiplicities
is $4\underline{\gamma}-4$,
where $\underline{\gamma}=2\gamma+\beta-1$
is the genus of $\underline{\Sigma}$.
The proof is now completed by the observation
that the left-hand side of the equation
in the proposition is precisely
the sum of the multiplicities
of the zeros of $\underline{\Phi}$.
\end{proof}

Naturally the symmetries of $\varphi$ constrain its umbilics and their orders.
In particular we have the following result; here we will assume embeddedness, which is essential but serves merely to simplify its statement.

\begin{lemma}[Umbilics fixed by a nontrivial rotation]
\label{lem:umbilics_fixed_by_rots}
Let $\Sigma$ be an embedded free boundary minimal surface in $\B^3$,
let $\rot \in \SOgroup(3)$ be a rotation through angle $2\pi/k$ for some integer $k \geq 3$,
and assume that $\rot \Sigma = \Sigma$.
Then any point of $\Sigma$  fixed by $\rot$ is an umbilic of order at least $k-2$.
\end{lemma}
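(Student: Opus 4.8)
The plan is to read off the order of the umbilic at $p$ directly from the rotational symmetry $\rot$, working with the (local Taylor expansion of the) holomorphic quadratic differential $\Phi$ introduced above. First I would dispose of the trivial case: if $\Phi$ vanishes identically then $\Sigma$ is totally umbilic, hence totally geodesic by minimality, so every point is an umbilic of infinite order and there is nothing to prove. So from now on assume $\Phi$ (equivalently $\underline{\Phi}$) is not identically zero, in which case umbilics are isolated, the order $\ord p$ being the order of vanishing of $\underline{\Phi}$ at $p$.

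Next I would locate $p$ and understand how $\rot$ acts near it. Write $\xi$ for the (one--dimensional) axis of $\rot$, so that $p\in\xi$. I claim $p$ is an interior point of $\Sigma$: otherwise $p\in\xi\cap\Sp^2$, and since $\Sigma$ is free boundary the conormal $\eta^\Sigma(p)$ -- a unit vector along $\xi$ -- would lie in $T_p\Sigma$; but $\rot\Sigma=\Sigma$ and $\rot(p)=p$ force $T_p\Sigma$ to be invariant under $d\rot_p$, and the only $2$--plane through $p$ invariant under a rotation by $2\pi/k$ about $\xi$ with $k\geq 3$ is $\xi^{\perp}$, which does not contain the $\xi$--direction. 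Since $\Sigma$ is embedded and $\rot\in\SOgroup(3)$ is an ambient isometry preserving $\Sigma$, it restricts to an isometry $g\vcentcolon=\rot|_\Sigma$ of the (real--analytic) Riemann surface $\Sigma$; because $g^\ell=\mathrm{id}$ would force $\Sigma\subseteq\mathrm{Fix}(\rot^\ell)=\xi$ unless $\rot^\ell=\mathrm{id}$, the order of $g$ is exactly $k$, and by analyticity and connectedness so is the order of its germ at $p$, whence $dg_p|_{T_p\Sigma}$ is an orthogonal transformation of order $k\geq 3$ -- a rotation, not a reflection -- and in particular $d\rot_p$ fixes the unit normal at $p$. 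Connectedness of $\Sigma$ then gives that $g$ is orientation--preserving, hence holomorphic, and that $\nrmlsgn{\nu}(\rot)=1$; consequently $g^*J=J$ and $g^*A=A$, so $g^*\Phi=\Phi$. Finally, Bochner--Cartan linearization furnishes a local holomorphic coordinate $w$ centred at $p$ in which $g$ is $w\mapsto\lambda w$, with $\lambda$ the multiplier $(dg_p)^{(1,0)}$, and $\lambda$ is a \emph{primitive} $k$-th root of unity, since $\lambda^j=1$ with $j<k$ would -- after linearizing and using analyticity -- force $g^j=\mathrm{id}$, contradicting $\ord(g)=k$.

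With this in hand the conclusion is immediate: writing $\Phi=\phi(w)\,dw^2$ near $p$ with $\phi$ holomorphic, $\phi(w)=\sum_{n\geq n_0}a_nw^n$ and $a_{n_0}\neq 0$, so that $\ord p=n_0$, the identity $g^*\Phi=\Phi$ becomes $\lambda^{n+2}a_n=a_n$ for every $n$; in particular $\lambda^{n_0+2}=1$, i.e. $k\mid n_0+2$, and since $n_0\geq 0$ and $k\geq 3$ this forces $n_0\geq k-2$, as claimed.

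The one genuinely delicate point is the orientation and finite--order bookkeeping in the middle step -- securing the \emph{unsigned} invariance $g^*\Phi=\Phi$ (rather than $g^*\Phi=-\Phi$) and the primitivity of $\lambda$ -- which is precisely where embeddedness, the hypothesis $\rot\in\SOgroup(3)$, and the real--analyticity of minimal surfaces all get used; once these are settled, the Taylor--coefficient comparison requires nothing further.
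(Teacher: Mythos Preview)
Your proof is correct and follows essentially the same strategy as the paper's: linearize the symmetry in a local holomorphic coordinate centred at the fixed point and read off the vanishing order of $\Phi$ from the Taylor coefficients. The paper's version is terser --- it uses the classification of conformal automorphisms of the unit disc in place of your Bochner--Cartan linearization, and simply asserts $w\circ\rot=e^{2\pi i/k}w$ and $\rot^*\Phi=\Phi$ --- whereas you take more care to justify the sign in $g^*\Phi=\Phi$ (via $\nrmlsgn{\nu}(\rot)=1$), the primitivity of the multiplier $\lambda$, and the totally umbilic case $\Phi\equiv 0$; these are genuine points the paper glosses over but which do not affect the outcome.
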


\begin{proof}
Suppose $p \in \Sigma$
satisfies $\rot p = p$.
Note that $p$ must be an interior point,
since $\rot$ is a symmetry of $\Sigma$
and $k>2$.
We may assume, without loss of generality, that $\rot$ fixes 
the $z$-axis pointwise,
and we can choose $\epsilon>0$
and a complex coordinate $w$
on a neighborhood of $p$ such that
$w$ takes
the component of
$\{x^2+y^2<\epsilon^2\} \cap \Sigma$
containing $p$
onto the open unit disc in $\C$
with $w(p)=0$.
(This can be done essentially by hand
using the assumption of the minimality of $\Sigma$
in $\R^3$, or one can appeal more generally
to the existence of isothermal coordinates for surfaces
and to the Riemann mapping theorem.)
Then $w \circ \rot \circ w^{-1}$
is a conformal automorphism of the unit disc
fixing its center, so (by standard classification results) is a rotation;
since its differential at $0$
is rotation through angle $2\pi/k$
we conclude that in fact
\begin{equation}
\label{rot_in_complex_coords}
w \circ \rot = e^{2 \pi i / k}w.
\end{equation}
Now let $\Phi$ be the Hopf differential of $\Sigma$.
Working with respect to the above complex coordinate $w$,
by the holomorphicity of $\Phi$ we have 
\begin{equation*}
\Phi(w)=\sum_{n=0}^\infty c_n w^n \, dw^2
\end{equation*}
for some coefficients $\{c_n\} \subset \C$.
The assumption $\rot \Sigma = \Sigma$
and the equality \eqref{rot_in_complex_coords}
imply $\Phi(w)=\Phi(w \circ \rot)=\Phi(e^{2 \pi i / k}w)$, and so 
\begin{equation*}
c_n e^{2n \pi i / k}e^{4 \pi i/k} = c_n
\end{equation*}
for all $n \geq 0$, whence $c_n=0$ for all $n<k-2$, completing the proof.
\end{proof}

\setlength{\parskip}{1ex plus 1pt minus 1pt}
\bibliography{fbms-bibtex}

\printaddress
\end{document}